\newtheorem{thm}{Theorem}[section]
\newtheorem{lem}[thm]{Lemma}
\newtheorem{cor}[thm]{Corollary}
\newtheorem{prop}[thm]{Proposition}
\newtheorem{const}[thm]{Construction}
\newtheorem{notn}[thm]{Notation}
\theoremstyle{definition}
\newtheorem{defi}[thm]{Definition}
\newtheorem{example}[thm]{Example}
\theoremstyle{remark}
\newtheorem{rem}{Remark}
\newcommand{\N}
{\mathbb N}
\newcommand{\Z}{\mathbb Z}
\newcommand{\h}{\mathcal{H}}
\newcommand{\kg}{{\mathcal{K}_G}}
\newcommand{\hatkg}{\widehat{\mathcal{K}_G}}
\newcommand{\R}{\mathbb R}
\newcommand{\GA}{G\text{-}\mathbb{A}^\nu}
\newcommand{\A}{\mathbb{A}^\nu}
\newcommand{\C}{\mathcal C}
\DeclarePairedDelimiterX\braket[2]{\langle}{\rangle}{#1 \delimsize\vert #2}
\DeclarePairedDelimiterX\inr[2]{\langle}{\rangle}{#1 \delimsize, #2}
\newcommand{\colim@}[2]{%
  \vtop{\m@th\ialign{##\cr
    \hfil$#1\operator@font colim$\hfil\cr
    \noalign{\nointerlineskip\kern1.5\ex@}#2\cr
    \noalign{\nointerlineskip\kern-\ex@}\cr}}%
}
\newcommand{\colim}{%
  \mathop{\mathpalette\colim@{\rightarrowfill@\textstyle}}\nmlimits@
}
\title{Equivariant KK-theory and model categories}
\author{\scalebox{1.1}{Anupam Datta \thanks{Universität Bonn, anupamd@math.uni-bonn.de}, Michael Joachim\thanks{Universität Münster, joachim@uni-muenster.de}}}
\date{}
\begin{document}

\maketitle
\begin{abstract}
    We cast Kasparov's equivariant KK-theory in the framework of model categories. We obtain a stable model structure on a certain category of locally multiplicative convex $G$-$C^*$-algebras, which naturally contains the stable $\infty$-category $KK^G_{\operatorname{sep}}$ as described by Bunke, Engel, Land (\cite{BEL}). Non-equivariantly, $KK$-theory was studied using model categories by Joachim-Johnson (\cite{MJ}). We generalize their ideas in the equivariant case, and also fix some critical errors that their work had.
\end{abstract}
\tableofcontents
\newpage
\section{Introduction}
In this article, we study Kasparov's equivariant $KK$-theory from the perspective of model categories. The interest in this study is two-fold: firstly, off late a lot has been done in casting operator algebraic bivariant $K$-theories in the language of $\infty$-categories following the works of Bunke and coauthors; see \cite{BUNBEN},\cite{BEL},\cite{BD}. It is natural to wonder whether the stable $\infty$-category $KK^G_{\operatorname{sep}}$ constructed in \cite{BEL} can be realized as a full subcategory of the stable $\infty$-category underlying a stable model category. Although such model categories may be constructed as localizations of certain presheaf categories (see \cite[Proposition A.3.7.6]{HTT}), we construct a stable model structure on a certain category of topological $*$-algebras. This not only gives us a much more concrete category, but also a framework which the operator algebraists would appreciate. 

For the second motivation, we recall that we can view $KK$-theory as homotopy classes of certain equivariant quasihomomorphisms following Cuntz-Meyer. 
\begin{thm} {\cite[Theorem 5.5]{Mey}} \label{thm: Intro}
    Let $G$ be a locally compact second countable group, and let $A,B$ be $G$-C*-algebras\footnote{recall that this means the group acts via automorphisms on the $C^*$-algebra, and the action is continuous in the point-norm topology.}, with $A$ separable and $B$ $\sigma$-unital. Then, there is a natural isomorphism 
    \[
    KK^G(A,B) \cong [q(A \otimes \mathcal{K}_G)\otimes \kg, B \otimes \kg],
    \]
    where $q$ denotes the kernel of the fold map $A*A \rightarrow A$, and $\kg$ denote the $G$-$C^*$-algebra of compact operators on the $G$-Hilbert space $L^2G \otimes l^2\N$.
\end{thm}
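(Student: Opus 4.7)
The plan is to invoke Cuntz's description of equivariant $KK$-theory via quasihomomorphisms, and then to translate quasihomomorphisms into honest $*$-homomorphisms using the universal property of the algebra $qA$. The two appearances of $\kg$ on the right-hand side correspond to the two standard stabilizations in Kasparov theory: the freedom in the underlying $G$-Hilbert $B$-module, and the matrix/direct-sum stabilization needed to recover the abelian group structure of $KK^G$.

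First, I would recall that $KK^G(A,B)$ can be identified with the set of homotopy classes of equivariant quasihomomorphisms $(\phi_+, \phi_-) : A \rightrightarrows M(B \otimes \kg)$ whose difference $\phi_+ - \phi_-$ takes values in $B \otimes \kg$. The role of $\kg = \mathcal{K}(L^2 G \otimes \ell^2 \N)$ here is dictated by the equivariant Kasparov stabilization theorem: every countably generated $G$-Hilbert $B$-module embeds as a direct summand of the standard module $L^2 G \otimes \ell^2 \N \otimes B$, so every Kasparov cycle can be represented on this single module.

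Next, I would apply the universal property of $qA$: pairs of equivariant $*$-homomorphisms $A \rightrightarrows C$ are in natural bijection with equivariant $*$-homomorphisms $qA \to C$. Taking $C = M(B \otimes \kg)$ rephrases quasihomomorphisms as $*$-homomorphisms out of $qA$. To remove the multiplier algebra from the picture, I would stabilize the source $A$ by $\kg$: by the equivariant absorption theorem, after this stabilization any such pair can be replaced, up to homotopy, by one whose components already land in $B \otimes \kg$ itself. This accounts for the inner $\kg$-tensoring. The outer $\kg$ records matrix stabilization; together with the isomorphism $\kg \otimes \kg \cong \kg$ this ensures we are looking at the correct stable homotopy classes, and thus recover the full abelian group $KK^G(A,B)$ rather than merely a monoid of homotopy classes.

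The technical heart of the argument is the compatibility of these bijections with homotopies. In one direction, a homotopy of Kasparov cycles (a continuously varying pair consisting of a representation and a Fredholm operator) must be converted into a $*$-homomorphism homotopy $q(A \otimes \kg) \otimes \kg \to C([0,1]) \otimes B \otimes \kg$; in the other direction, any $*$-homomorphism homotopy must be shown to give a valid KK-homotopy. Both steps rest on the equivariant Kasparov technical theorem and on careful bookkeeping for the $G$-action. I expect the principal obstacle to be precisely this equivariant bookkeeping: verifying that the absorption and rotation unitaries may be chosen $G$-equivariantly, and that the resulting map on homotopy classes is well-defined and mutually inverse to the natural map in the opposite direction. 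Once this is in place, naturality in $A$ and $B$ follows from the functoriality of $q$, of $\otimes \kg$, and of the Cuntz bijection.
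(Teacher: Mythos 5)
This statement is not proved in the paper at all: it is quoted verbatim as \cite[Theorem 5.5]{Mey}, so the only meaningful comparison is with Meyer's proof, and your outline does follow the same broad strategy (Cuntz's quasihomomorphism picture, the universal property of $q$, equivariant stabilization, and a homotopy-bookkeeping argument in both directions). In that sense the approach is the right one, and your identification of the equivariant choices of absorption/rotation unitaries as the crux is accurate.

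However, one step of your sketch is wrong as stated, and it misattributes the role of the inner stabilization. You claim that after tensoring $A$ with $\kg$ every quasihomomorphism can be replaced, up to homotopy, by one whose \emph{components} land in $B \otimes \kg$; this cannot be true in general, since a pair of honest homomorphisms into $B \otimes \kg$ only produces differences of classes induced by $*$-homomorphisms, which do not exhaust $KK^G(A,B)$. The multiplier algebra never needs to be ``removed'': in the Cuntz picture only the \emph{difference} $\phi_+ - \phi_-$ must take values in the ideal $B \otimes \kg$, and the induced map $q(A\otimes\kg) \to B\otimes\kg$ exists automatically, with the components still mapping into $M(B\otimes\kg)$ (or an auxiliary algebra). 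The actual reason for the inner stabilization $A \mapsto A \otimes \mathcal{K}(L^2G\otimes\ell^2\N)$ in Meyer's theorem is equivariance, not absorption: a Kasparov cycle has an operator $F$ that is only invariant modulo compacts, and it is the $L^2G$-factor that allows one to average and replace cycles by ones with \emph{exactly} $G$-equivariant data, so that the resulting quasihomomorphism is genuinely equivariant; the outer $\kg$ then provides the room needed for the injectivity/surjectivity (degenerate cycles, rotation homotopies) arguments. Finally, the ``technical heart'' you defer — the two-way translation between operator homotopies of cycles and $*$-homomorphism homotopies out of $q(A\otimes\kg)\otimes\kg$ — is essentially the entire content of Meyer's proof, so as it stands your text is a correct plan of attack rather than a proof.
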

It is plausible to ask whether $q(A \otimes \kg) \otimes \kg$ is a cofibrant replacement to $A$, and $B \otimes \kg$ is a fibrant replacement to $B$ for a certain model structure on $G$-$C^*$-algebras, whereby we would obtain \cref{thm: Intro} as a corollary to the fundamental theorem of model categories. We give a partial answer to this question in this paper, in the sense mentioned in the next paragraphs.

We now give an overview of the main results of this article. \textit{Throughout this article, we work with a locally compact, second countable group $G$.}\footnote{Although \cite{BEL},\cite{BUNBEN} is written for countable, discrete groups, the results from it that we use in this article can be readily verified to be true for general locally compact, second countable group actions.} We then consider the category of $\nu$-complete locally multiplicative convex (abbreviated lmc) $G$-$C^*$-algebras, where $\nu \gg 0$ is a cardinal (see \cref{defi: nu complete} for details). We denote this category by $\GA$. We can now state our main theorem.
\begin{thm}[=\cref{thm: diff formulation of main}]\label{thm: intro main}
   There is a simplicial model structure on the category of $\nu$-complete locally multiplicative convex $G$-$C^*$-algebras satisfying the following properties:
    \begin{enumerate}
        \item for a separable $G$-$C^*$-algebra $A$ and $B \in \GA$, there is a natural equivalence 
        \[
        \operatorname{Map}_{\GA[w^{-1}]}(A,B)\simeq KK^G(A,B)_\bullet,
        \]
        where the first entity stands for the mapping anima in the $\infty$-category obtained by inverting the weak equivalences of the model structure. In particular, when $A$,$B$ are separable $G$-$C^*$-algebras, we have a natural isomorphism
        \[
        \operatorname{Ho}(A,B) \cong KK^G_{0}(A,B).
        \]
        \item a $G$-equivariant $*$-homomorphism $A \rightarrow A'$ of separable $G$-$C^*$-algebras is a weak equivalence for the model category structure if and only if it is a $KK^G$-equivalence. It is a fibration if and only if the induced map \[\operatorname{Hom}(q(D\otimes \kg)\otimes \kg,A \otimes \kg)_\bullet \rightarrow \operatorname{Hom}(q(D \otimes \kg) \otimes \kg,A' \otimes \kg)_\bullet\] is a Kan fibration for separable $G$-$C^*$-algebras $D$.
        \item the model structure is cofibrantly generated.
        \item each object is fibrant.
        \item the model structure is stable, and the equivalences in Point 1 refine to equivalences of commutative groups.
    \end{enumerate}
\end{thm}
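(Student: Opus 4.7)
The plan is to apply Kan's recognition theorem for cofibrantly generated model categories, choosing generating (acyclic) cofibrations tailored to reproduce the fibration characterization in Point 2, then verifying the remaining properties one by one. The $\nu$-completeness of $\GA$ is what will provide enough cocompleteness and smallness to run the small object argument, and a clean simplicial enrichment via tensoring with $C(\Delta^n)$.

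Concretely, I would first define $\operatorname{Hom}(A,B)_\bullet$ to have $n$-simplices the $G$-equivariant $*$-homomorphisms $A \to B \otimes C(\Delta^n)$, with tensors $A \otimes K$ for simplicial sets $K$ via left Kan extension. Picking a set $\{D_\alpha\}$ of representatives of separable $G$-$C^*$-algebras, I take the generating cofibrations $I$ to be the maps
\[
q(D_\alpha \otimes \kg) \otimes \kg \otimes \partial\Delta^n \hookrightarrow q(D_\alpha \otimes \kg) \otimes \kg \otimes \Delta^n,
\]
and the generating acyclic cofibrations $J$ to be the analogous horn-inclusion versions (possibly augmented by an explicit $\kg$-stabilization map to force fibrant replacement). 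Kan's criterion then reduces the existence of the model structure to showing that $J$-cell attachments are weak equivalences and that cell complexes are monomorphisms, at which point the right lifting property against $J$ unravels into exactly the Kan-fibration condition of Point 2.

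For Point 1, one derives $\operatorname{Map}_{\GA[w^{-1}]}(A,B) \simeq \operatorname{Hom}(q(A\otimes\kg)\otimes\kg,\, B\otimes\kg)_\bullet$ from the cofibrant/fibrant replacements $q(A\otimes\kg)\otimes\kg \to A$ and $B \to B\otimes\kg$, and then identifies this simplicial set with $KK^G(A,B)_\bullet$ of \cite{BEL} using a simplicially enriched version of the Cuntz–Meyer isomorphism (\cref{thm: Intro} gives only the $\pi_0$ statement). Point 2 then follows: the fibration half is by definition, and the weak equivalence half follows from the mapping-space formula applied with $D$ ranging over all separable test objects. Point 3 is automatic. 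For Point 4, one verifies that $\operatorname{Hom}(q(D\otimes\kg)\otimes\kg,\, A\otimes\kg)_\bullet$ is always a Kan complex, which reduces to the standard concatenation/reparametrization homotopies for quasihomomorphisms into stabilized $G$-$C^*$-algebras. Point 5 (stability) will follow from Bott periodicity in equivariant $KK$-theory: the suspension functor $(-)\otimes C_0(\R)$ becomes homotopy-invertible on $\GA[w^{-1}]$, yielding the stable structure and upgrading the equivalences of Point 1 to equivalences of abelian groups (equivalently, infinite-loop spaces).

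The main obstacle will be checking that in the $\nu$-complete lmc setting the three operations $q(-)$, $(-)\otimes\kg$, and $(-)\otimes C(\Delta^n)$ interact compatibly --- for instance, that $q(A\otimes\kg)\otimes\kg \otimes C(\Delta^n)$ really parameterizes a homotopy-coherent family of quasihomomorphisms over $\Delta^n$ --- since this is precisely the compatibility whose failure forced the correction of the non-equivariant Joachim–Johnson arguments and motivated the move from $C^*$-algebras to $\nu$-complete lmc algebras in the first place. A secondary technical point is fixing $\nu$ large enough that every domain $q(D_\alpha \otimes \kg) \otimes \kg \otimes \partial\Delta^n$ is $\nu$-small, so that the small object argument terminates.
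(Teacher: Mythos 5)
There is a genuine gap at the heart of your construction: your generating sets do not produce the fibration class of Point 2. With domains $\mathfrak{q}D_\alpha=q(D_\alpha\otimes\kg)\otimes\kg$ tensored with $\partial\Delta^n\hookrightarrow\Delta^n$ (resp. horns), the right lifting property unravels, via the $\operatorname{sSet}$-tensor/cotensor adjunction, into the condition that $\operatorname{Hom}(\mathfrak{q}D,B)_\bullet\rightarrow\operatorname{Hom}(\mathfrak{q}D,C)_\bullet$ be a (trivial) Kan fibration --- with \emph{unstabilized} targets. There is no way to make the $\otimes\kg$ appear on the target side from the simplicial tensoring alone, so your $I$ and $J$ do not detect the classes $\operatorname{Hom}(\mathfrak{q}D,B\otimes\kg)_\bullet\rightarrow\operatorname{Hom}(\mathfrak{q}D,C\otimes\kg)_\bullet$ of the statement, and the resulting (hypothetical) transferred structure would not have the asserted weak equivalences; in particular the homotopy category would compute homotopy classes of maps out of $\mathfrak{q}A$, not $KK^G$. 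The paper's fix is precisely the point you are missing: one constructs a \emph{left adjoint} $-\boxtimes\mathcal{K}(\h)$ to $-\otimes\mathcal{K}(\h)$ (\cref{prop: adj}, via Freyd's adjoint functor theorem, using \cref{lem: cpt preserve lim}; this is one of the main reasons for passing to $\GA$ at all) and takes the generating (acyclic) cofibrations with domains $\mathfrak{q}D\boxtimes\hatkg$, so that lifting against them is, by \cref{prop: hom class} and \cref{lem: uli}, exactly the stabilized condition of Point 2, and $\mathfrak{q}A\boxtimes\hatkg$ is visibly cofibrant (\cref{cor: qA boxtimes hatkg is cofibrant}) --- whereas cofibrancy of $\mathfrak{q}A$ itself for your structure is exactly the kind of lifting problem the introduction explains cannot be resolved. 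Your fallback of ``augmenting $J$ by a $\kg$-stabilization map'' does not repair this: such maps would impose a nontrivial right lifting condition on fibrant objects and so contradict Point 4, besides requiring smallness and acyclicity checks you have not indicated.

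Two further steps are substantially underestimated. First, smallness is not achieved by merely choosing $\nu$ large: objects of a locally convex category are not small relative to arbitrary maps, and the paper must show the generating cofibrations (and hence all cell maps) have the seminorm extension property, which for the $\tilde{I}$-maps rests on the stable cone seminorm extension property of $\mathfrak{q}A\boxtimes\hatkg$ (\cref{prop: stable cone seminorm extension}), itself relying on embedding into the left $KK^G$-contractible algebra $\mathcal{B}(\mathcal{H}_G)^c$ (\cref{cor: stable cone seminorm on B(H)}); only then does \cref{prop: small wrt sep} apply. Second, for stability you invoke classical Bott periodicity, but the weak equivalences are tested by $\operatorname{Hom}(\mathfrak{q}D,-\otimes\kg)_\bullet$ on \emph{arbitrary} $B\in\GA$, so you need that $1_A\otimes\beta$ is a left $KK^G$-equivalence for every $A\in\GA$, not just for separable $C^*$-algebras; this is the content of \cref{lem: technical stability} and \cref{cor: Bott}, whose proof requires the Cuntz--Meyer Kasparov product machinery ($\varphi_A\colon\mathfrak{q}A\rightarrow\mathfrak{q}^2A$ and its homotopy inverse) developed in Section 6, and does not follow formally from Bott periodicity in $KK^G_{\operatorname{sep}}$.
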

In order to motivate our category, we mention briefly some related work done in this realm. We stick to the non-equivariant case for simplicity. In \cite{UUY}, a stable category of fibrant objects structure on separable $C^*$-algebras is constructed, whose underlying stable $\infty$-category is naturally equivalent to the category $KK_{\operatorname{sep}}$ à la Bunke. The fibrations are the so-called \textit{Schochet fibrations}; a morphism $B \rightarrow D$ of separable $C^*$-algebras such that for every separable $C^*$-algebra $A$, the induced map 
\[
\operatorname{Hom}(A,B) \rightarrow \operatorname{Hom}(A,D)
\]
is a Serre fibration, where the hom sets are equipped with the point-norm topology. The weak equivalences are of course the $KK$-equivalences. In light of the work of Uuye, we could have hoped to construct a model structure on $C^*$-algebras by declaring a map $B \rightarrow D$ to be a fibration (resp. weak equivalence) if and only if the induced map 
\[
\operatorname{Hom}(qA,B\otimes\mathcal{K}(l^2)) \rightarrow \operatorname{Hom}(qA,D\otimes \mathcal{K}(l^2))
\]
is a fibration (resp. weak equivalence). But then, it would not be clear at all when an object is cofibrant; a $C^*$-algebra $A$ would be cofibrant if and only if we could find a lift in the diagram
\[
\begin{tikzcd}
	0 & B \\
	A & D
	\arrow[from=1-1, to=1-2]
	\arrow[from=1-1, to=2-1]
	\arrow["\simeq", two heads, from=1-2, to=2-2]
	\arrow[dashed, from=2-1, to=1-2]
	\arrow[from=2-1, to=2-2]
\end{tikzcd}
\]
for all acylic fibrations $B \rightarrow D$. This in general is a very hard problem, as experience shows that verifying a certain morphism of $C^*$-algebras is a Schochet fibration is either obvious, or very difficult. Therefore, it would also not be clear whether we could fulfill the functorial factorization axioms. However, if we could form a left adjoint to the functor $- \otimes \mathcal{K}(l^2)$, denoted by $- \boxtimes \mathcal{K}(l^2)$, and deem a morphism to be a fibration (resp. weak equivalence) if and only if the morphism
\[
\operatorname{Hom}(qA \boxtimes \mathcal{K}(l^2),B) \rightarrow \operatorname{Hom}(qA \boxtimes \mathcal{K}(l^2),D)
\]
is a fibration (resp. weak equivalence), then clearly (see \cref{cor: qA boxtimes hatkg is cofibrant}) we have a lift 
\[
\begin{tikzcd}
	0 & B \\
	{qA \boxtimes \mathcal{K}(l^2)} & D
	\arrow[from=1-1, to=1-2]
	\arrow[from=1-1, to=2-1]
	\arrow["\simeq", two heads, from=1-2, to=2-2]
	\arrow[dashed, from=2-1, to=1-2]
	\arrow[from=2-1, to=2-2]
\end{tikzcd}
\]
for every separable $C^*$-algebra $A$, and every acylic fibration $B \rightarrow D$, thereby making $qA \boxtimes \mathcal{K}(l^2)$ a cofibrant object. But, such left adjoints cannot be constructed in the category of $C^*$-algebras, as $- \otimes \mathcal{K}(l^2)$ does not preserve all limits\footnote{the reader may check that the natural map $(\prod_{n \in \N} \mathbb{C}) \otimes \mathcal{K}(l^2) \rightarrow \prod_{n \in \N}(\mathbb{C} \otimes \mathcal{K}(l^2))$ is not surjective, where the product is taken in the category of $C^*$-algebras.}. In the larger category of $\nu$-complete locally multplicative convex $C^*$-algebras, however, $- \otimes \mathcal{K}(l^2)$ indeed has a left adjoint; see \cref{prop: adj}.

In order to facilitate the construction of functorial factorizations of a morphism in $\GA$, we use the well-known \textit{Quillen's small object argument}, which requires that the algebras $qA \boxtimes \mathcal{K}(l^2)$ are \textit{small} whenever $A$ is a separable $C^*$-algebra. Since locally convex topologies are intrinsically a cofiltered limit topology, we cannot work with (Cauchy) complete algebras to fulfill the smallness requirement. To that end, we need to work with algebras where, roughly speaking, we can bound the length of the Cauchy nets required to form the completion. This calls for working with $\nu$-complete locally multiplicative convex $C^*$-algebras. Note that we do require some form of completeness so as to accommodate $C^*$-algebras within our category.

We furthermore mention that non-equivariantly, the problem of studying $KK$-theory using model categories had been addressed by Joachim-Johnson in \cite{MJ}. However, there were many technical issues in their work. The authors therein equipped the mapping sets between the algebras with the compact open topology, which simply fails at a technical level, as it does not render certain maps continuous. We remedy this by working with the more combinatorial category of simplicial sets. Moreover, the $\nu$-completion functor as described in loc. cit. is incorrect, and that makes us do some extra work in Section 7 to make the small object argument work. On top of that, we also show that our model category is stable, which was not addressed in loc. cit. This enables us to get a natural group structure on the homotopy category of our model category, which coincides with $KK(A,B)$ when $A$ and $B$ are separable $C^*$-algebras. Finally, as mentioned in the first paragraph, we remind the reader that $KK^G_{\operatorname{sep}}$ à la Bunke et. al. sits fully faithfully inside the stable $\infty$-category presented by our model category; see \cref{bunke fully faithful}.

We finally discuss the organization of this article. We begin by discussing general preliminaries on equivariant topological $*$-algebras in Section 2. We present sufficient details on the categorical aspect of these analytic objects, which is not readily available in the literature. In Section 3, we enrich the category of lmc $G$-$C^*$-algebras over simplicial sets. We next turn to the construction of universal $G$-equivariant $C^*$-algebras on a set of $G$-equivariant generators and relations in Section 4. This construction does not seem to have been studied before when the group is not discrete (for the discrete case, a reference is \cite{BUNBEN}). The construction could not have been carried out within the category of $G$-$C^*$-algebras, and we expect the construction to be of independent interest. Using this, we show that $\GA$ is tensored over simplicial sets, a feature which is also not possible in the category of $C^*$-algebras. In Section 5, we describe the crucial functor $- \boxtimes \mathcal{K}(\h)$, where $\h$ is a separable $G$-Hilbert space. In Section 6, we review some classical equivariant $KK$-theory. We also discuss some basic properties of Cuntz's $q$-construction in the generality of topological $*$-algebras. Using it, we formulate and study the concept of a left $KK^G$-equivalence between objects in $\GA$, which obviously restricts to the classical concept of $KK^G$ equivalence when restricted to separable $G$-$C^*$-algebras. The last three sections are devoted to finally construct the sought after model structures on $\GA$.
\\\\
\textit{Acknowledgements: The authors would like to thank Emma Brink, Ulrich Bunke, Benjamin Duenzinger, Johannes Ebert, Markus Hausmann, Matthias Lesch, Vikram Nadig, Maxime Ramzi, Stefan Schwede and Shervin Sorouri for discussions at several points which clarified some mathematical issues. The project as a whole was supported by the Deutsche Forschungsgemeinschaft
(DFG, German Research Foundation) under Germany's Excellence Strategy –
EXC-2047/1 – 390685813, Hausdorff Center for Mathematics, and EXC 2044 – 390685587, Mathematics M{\"u}nster: Dynamics–Geometry–Structure.} 
\section{Equivariant topological *-algebras}
In this section, we lay the background definitions and properties from the theory of (equivariant) topological *-algebras.
\subsection{Some categories of (equivariant) topological *-algebras}
We begin by recalling some standard definitions and properties from the non-equivariant theory. 
\begin{defi}
    A topological $*$-algebra $A$ is a (complex) algebra equipped with a vector space topology such that the multiplication $A \times A \rightarrow A$ is jointly continuous, together with a continuous involution $*:A \rightarrow A$. We denote by $\operatorname{Top}\!*\operatorname{-alg}$ the category whose objects are topological $*$-algebras and morphisms are continuous $*$-homomorphisms.
\end{defi}
\begin{rem}
    More succinctly, a topological-$*$-algebra is a $*$-algebra object in $\operatorname{Top}$, the category of all topological spaces and continuous maps. One could also consider the category of $*$-algebra objects internal to a convenient category of topological spaces, as described in \cite{DUBUC}, for example. But we don't consider this setup in this article, and are more in sync with classical functional analysis.
\end{rem}
\begin{defi}
    A $C^*$-seminorm on a $*$-algebra $A$ is a seminorm $p$ satisfying \[p(ab) \leq p(a)p(b),~ p(a^*)=p(a),~\text{and}~p(a^*a)=p(a)^2.\]
    for all $a,b \in A$. Given a $C^*$-seminorm $p$ on $A$, we denote by $A_p$ the $C^*$-algebra obtained as the Hausdorff completion of $A$ with respect to the seminorm $p$, i.e., $A_p:=\overline{A/\ker p}$. 
\end{defi}
\begin{defi}
    Let $A$ be a topological $*$-algebra. $A$ is called a locally multiplicative convex (abbreviated lmc) $C^*$-algebra if the topology on $A$ is induced by a family of $C^*$-seminorms. The set of all continuous $C^*$-seminorms on a lmc $C^*$-algebra will be denoted by $\mathcal{S}(A).$ We denote by $\operatorname{lmc}\!C^*\operatorname{-alg}$ the full subcategory of $\operatorname{Top}\!*\operatorname{-alg}$ containing locally multiplicative convex $C^*$-algebras.
\end{defi}
The following remarks are in order:
\begin{rem}
\hfill
\begin{itemize}
    \item In general, we do not demand any Hausdorff assumptions on our lmc $C^*$-algebras. However, most of the algebras (but not all, especially some algebras that come up in an intermediate construction, see \cref{thm: uni alg} for example) we will study in this article will be Hausdorff, and we will be accommodating them in the full subcategory of Hausdorff lmc $C^*$-algebras. $\mathbb{A}$ denotes the full subcategory of $\operatorname{lmc}\!C^*\operatorname{-alg}$ consisting of Hausdorff lmc $C^*$-algebras.
    \item We also do not require lmc $C^*$-algebras to be complete. However, we have to work with a weak form of completeness in this article, as defined below. We will only be interested in completion procedures on Hausdorff algebras.
    \item As mentioned in the Introduction, in order to facilitate some set theoretic arguments / bounds in our work, we cannot work with Cauchy complete algebras, but only with $\nu$-complete algebras (see \cref{defi: nu complete}), where $\nu$ is a cardinal such that $\lvert \nu \rvert \geq 2^{2^{\lvert \N \rvert}}$. Although in \cite{MJ}, the authors considered $\nu$-complete algebras with $\lvert \nu \rvert \geq \lvert \N \rvert$, we have the larger lower bound so as to have \cref{prop: CXB} available.
    \item From \cite[Theorem III.3.1]{MAL}, we see that each $A \in \mathbb{A}$ can be naturally embedded in the Cauchy complete lmc $C^*$-algebra \[\varprojlim_{p\in \mathcal{S}(A)} A_p \cong \overline{A}\]
\end{itemize}
\end{rem}
For the remainder of this article, we fix a cardinal $\nu$ such that $\lvert \nu \rvert \geq 2^{2^{\lvert \N \rvert}}$.
\begin{defi}\label{defi: nu complete}
    Let $A$ be a Hausdorff topological vector space. We say that $A$ is $\nu$-complete if every $\nu$-Cauchy sequence in $A$ converges. Recall that a $\nu$-sequence is a function $x:\kappa \rightarrow A$, where $\kappa \leq \nu$ is some ordinal. A $\nu$-sequence is called Cauchy if it has the property that for any open neighborhood $U$ of $0 \in A$, there exists $\gamma \in \kappa$ such that $~\forall~\alpha,\beta \geq \gamma, x_\alpha-x_\beta \in U$. 
    
    We will denote by $\mathbb{A}^{\nu}$ the full subcategory of $\mathbb{A}$ consisting of $\nu$-complete, Hausdorff lmc $C^*$-algebras. 
\end{defi}
\begin{defi}
    Let $A \in \mathbb{A}$. A $\nu$-completion of $A$ is a $\nu$-complete algebra $A'$, together with a dense embedding $\iota:A \rightarrow A'$ such that we have a natural isomorphism \[
    \operatorname{Hom}_{\mathbb{A}^\nu}(A',B) \xrightarrow[\cong]{\iota^*} \operatorname{Hom}_{\mathbb{A}}(A,B),
    \]
    for all $B \in \A$
\end{defi}
Since there isn't adequate literature coverage of $\nu$-complete algebras, we give a proof of the following result. The analogous result for Cauchy complete algebras is well known.
\begin{lem}\label{lem: nu completion construction}
   Each $A \in \mathbb{A}$ has a $\nu$-completion, which is unique upto isomorphism.
\end{lem}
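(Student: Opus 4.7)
The plan is to realize the $\nu$-completion as the smallest $\nu$-complete $*$-subalgebra of the full Cauchy completion $\overline{A} = \varprojlim_{p \in \mathcal{S}(A)} A_p$ that contains $A$. Uniqueness of such a $\nu$-completion is a formal consequence of the universal property, so all of the substance lies in the existence construction.

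First I would observe that $\overline{A}$ is itself $\nu$-complete: any $\nu$-Cauchy sequence in the sense of \cref{defi: nu complete}, viewed as a net indexed by its ordinal, is a Cauchy net in the ordinary sense and therefore converges in the Cauchy complete space $\overline{A}$. I would then define $A'$ to be the intersection, taken inside $\overline{A}$, of all $\nu$-complete $*$-subalgebras containing $A$. This intersection contains $\overline{A}$, is clearly a $*$-subalgebra of $\overline{A}$, and I would verify that it is itself $\nu$-complete: if $(x_i)$ is a $\nu$-Cauchy sequence in $A'$, its limits inside the various intersected $\nu$-complete $*$-subalgebras coincide by Hausdorffness of $\overline{A}$, and the common limit lies in every such subalgebra, hence in $A'$. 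Density of $A$ in $A'$ is automatic from density of $A$ in the ambient $\overline{A}$ combined with the fact that $A'$ carries the subspace topology.

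For the universal property, given $f : A \to B$ with $B \in \mathbb{A}^\nu$, I would first extend $f$ through the usual Cauchy completion construction to a continuous $*$-homomorphism $\overline{f} : \overline{A} \to \overline{B}$. The key step is to check that $\overline{f}^{-1}(B) \subseteq \overline{A}$ is a $\nu$-complete $*$-subalgebra containing $A$: if $(x_i)$ is $\nu$-Cauchy in $\overline{f}^{-1}(B)$ with limit $x \in \overline{A}$, then $(\overline{f}(x_i))$ is $\nu$-Cauchy in $B$ and hence converges to some $y \in B$ by $\nu$-completeness of $B$, while Hausdorffness of $\overline{B}$ forces $y = \overline{f}(x)$, so $x \in \overline{f}^{-1}(B)$. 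By the minimality built into the definition of $A'$ we conclude $\overline{f}(A') \subseteq B$, producing the required extension. Uniqueness of the extension follows from density of $A$ in $A'$ together with Hausdorffness of $B$.

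The main obstacle is really the interplay between completeness and Hausdorffness: both the assertion that an intersection of $\nu$-complete subalgebras is $\nu$-complete and the assertion that $\overline{f}^{-1}(B)$ is $\nu$-complete rest on uniqueness of limits in a Hausdorff space, which is why the entire construction has to take place inside $\mathbb{A}$. An alternative, more constructive route is the transfinite iteration $A_0 = A$, $A_{\alpha+1} = \{\text{limits in }\overline{A}\text{ of } \nu\text{-Cauchy sequences in } A_\alpha\}$, $A_\lambda = \bigcup_{\alpha < \lambda} A_\alpha$, which stabilizes at the regular cardinal $\nu^+$ because any $\nu$-sequence in $A_{\nu^+}$ has image bounded in some $A_\alpha$ for $\alpha < \nu^+$; however, the intersection description is cleaner and avoids the bookkeeping needed to verify that each $A_\alpha$ is closed under the algebra operations.
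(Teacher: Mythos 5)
Your proof is correct, and it takes a mildly different route from the paper's. The paper constructs the $\nu$-completion ``bottom-up'' as the transfinite $\nu$-sequential closure of $A$ inside $\overline{A}$ (iterating the operation ``adjoin limits of $\nu$-Cauchy sequences'' through successor and limit ordinals until the process stabilizes), and then asserts that the universal property is routine; you construct it ``top-down'' as the intersection of all $\nu$-complete $*$-subalgebras of $\overline{A}$ containing $A$, and verify the universal property via the observation that $\overline{f}^{-1}(B)$ is such a subalgebra, so minimality forces $\overline{f}(A')\subseteq B$. The two constructions produce the same object (the sequential closure is contained in every $\nu$-complete subalgebra containing $A$ by transfinite induction, and is itself $\nu$-complete once stabilized), and your write-up is actually more complete on the adjunction than the paper's one-line dismissal. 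What the paper's iterative description buys --- and the reason the authors footnote it --- is explicit control over the ordinal at which the closure stabilizes for particular algebras, which is exploited later in the smallness arguments of Section 7; your intersection description does not directly expose that ordinal, though your aside correctly notes stabilization by $\nu^{+}$ via regularity. One cosmetic slip: you write that the intersection ``contains $\overline{A}$'' when you mean that the family of subalgebras being intersected is nonempty because it contains $\overline{A}$ itself.
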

\begin{proof}
    We just give a construction of a $\nu$-completion here; the verification of everything else is routine. One model for the $\nu$-completion of $A$ is the \textit{$\nu$-sequential closure} of $A$ inside $\overline{A}$, which can be described as follows: for any subset $S$ of $\overline{A}$, define $\nu\operatorname{-scl}(S)$ to be the set of limits in $A$ of $\nu$-sequences in $S$. Then by transfinite induction, we can define for a successor ordinal $\alpha+1$, \[\nu\operatorname{-scl}^{\alpha+1}(S):=\nu\operatorname{-scl}(\nu\operatorname{-scl}^{\alpha}(S)),\] and for a limit ordinal $\alpha$, \[\nu\operatorname{-scl}^{\alpha}(S):=\underset{\beta<\alpha}{\bigcup} \nu\operatorname{-scl}^{\beta}(S).\]
    The process will stabilize at a certain ordinal depending on $A$ \footnote{it will be crucial in \cref{sec: sep} that we have a better understanding of this ordinal for certain (equivariant) lmc $C^*$-algebras.} (see \cite{SEQCLOSURE}), yielding the sought after $\nu$-completion of $A$. The adjunction property can be verified directly from the construction.
\end{proof}
\begin{rem}
    We denote the $\nu$-completion of an algebra $A \in \mathbb{A}$ by $\overline{A}^\nu$. The Cauchy completion will be denoted (as usual) by $\overline{A}$.
\end{rem}
The key points about the categories described thus far are contained in the following proposition:

\begin{prop}\label{prop: non-eq cat}
    \hfill
    \begin{enumerate}
        \item $\operatorname{Top}\!*\operatorname{-alg}$ is bicomplete.
        \item There exists a left adjoint to the inclusion functor $\operatorname{lmc}\!C^*\operatorname{-alg} \rightarrow \operatorname{Top}\!*\operatorname{-alg}$, which we denote by $\operatorname{lmc-fication}$.
        \item There exists a left adjoint to the inclusion functor $\mathbb{A} \rightarrow \operatorname{lmc}\!C^*\operatorname{-alg}$, given by the Hausdorff quotient functor.
        \item There exists a left adjoint to the inclusion functor $\mathbb{A}^{\nu} \rightarrow \mathbb{A}$, given by the $\nu$-completion functor.
    \end{enumerate}
\end{prop}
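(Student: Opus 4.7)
The strategy is to handle each of the four items by giving explicit constructions. Items (2)--(4) amount to verifying universal properties of natural adjoints, while (1) requires a hands-on construction of limits and colimits.

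For (1), limits are straightforward: given $D : I \to \operatorname{Top}\!*\operatorname{-alg}$, I would form the limit of the underlying $*$-algebras as a sub-$*$-algebra of $\prod_i D(i)$, and equip it with the subspace of the product topology; joint continuity of multiplication and continuity of involution are inherited from the factors. For colimits, I would take the colimit $C$ of the underlying $*$-algebras and endow it with the supremum $\tau$ (in the poset of topologies on $C$) of the family $\mathcal{T}$ of all topologies on $C$ under which (i) every structure map $D(i) \to C$ is continuous and (ii) multiplication $C \times C \to C$ and involution are (jointly) continuous. One checks that $\mathcal{T}$ is closed under arbitrary suprema: for (i) this is immediate from $f_i^{-1}\!\left(\bigcup \bigcap U_{\alpha,k}\right) = \bigcup \bigcap f_i^{-1}(U_{\alpha,k})$, and for (ii) one uses that if $\tau_\alpha \subseteq \tau$ then the product topology $\tau_\alpha \times \tau_\alpha$ is contained in $\tau \times \tau$, so preimages of subbasic opens for $\tau$ under multiplication are already open. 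The universal property follows because, given a compatible family of continuous maps $\{g_i: D(i) \to B\}$ inducing a $*$-algebra map $g: C \to B$, the pullback of the topology of $B$ along $g$ lies in $\mathcal{T}$, hence is contained in $\tau$, making $g$ continuous.

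For (2), I would define the lmc-fication of $A$ as the same underlying $*$-algebra re-topologized by the family $\s_c(A)$ of all continuous $C^*$-seminorms on $A$. This is an lmc $C^*$-algebra by construction, and the defining $C^*$-seminorm inequalities ensure the algebraic operations remain jointly continuous. To establish the adjunction, any continuous $*$-homomorphism $f : A \to B$ with $B$ lmc pulls back each continuous $C^*$-seminorm on $B$ to an element of $\s_c(A)$; since the topology of $B$ is generated by such seminorms, $f$ is automatically continuous for the coarser lmc topology on $A$.

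For (3), the closure of $\{0\}$ in an lmc $C^*$-algebra equals $I := \bigcap_{p \in \s(A)} \ker p$, and each $\ker p$ is a closed two-sided $*$-ideal since $p$ is a $C^*$-seminorm; hence so is $I$. The quotient $A / I$ is lmc (topologized by the quotient seminorms, which coincide with the original $p$ because $I \subseteq \ker p$) and Hausdorff (as the induced seminorms now separate points). Any continuous $*$-homomorphism into a Hausdorff lmc $C^*$-algebra must send $I = \overline{\{0\}}$ into the closure of zero in the target, which is trivial, yielding the required factorization. Finally, (4) is precisely \cref{lem: nu completion construction}. The main subtlety is verifying that the supremum construction in (1) actually lies in $\mathcal{T}$; the key point is the compatibility of joint continuity of multiplication with suprema of topologies, via the product-topology observation above. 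Once (1) is established, parts (2)--(4) reduce to routine universal-property checks.
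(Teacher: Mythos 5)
Your proposal is correct, and items (2)--(4) coincide with what the paper does (the paper itself only sketches these, deferring to the equivariant version in \cref{prop: eq cat} and to \cite{MJ}): the lmc-fication is the retopologization by all continuous $C^*$-seminorms, the Hausdorff quotient is $A/\overline{\{0\}}$ with $\overline{\{0\}}=\bigcap_{p}\ker p$, and the $\nu$-completion is \cref{lem: nu completion construction}. The genuine divergence is in the colimit half of (1). The paper reduces to finite coproducts, coequalizers and directed colimits and cites \cite{MJ} for the explicit topology on the $*$-algebraic free product, whereas you build \emph{all} colimits at once by equipping the algebraic colimit $C$ with the supremum of the family $\mathcal{T}$ of topologies compatible with the structure maps and the algebra operations. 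Your key observation --- that joint continuity of multiplication survives passage to the supremum because a subbasic open for $\tau=\sup_\alpha\tau_\alpha$ lies in some $\tau_\alpha$ and $\tau_\alpha\times\tau_\alpha\subseteq\tau\times\tau$ --- is correct, and the verification of the universal property via the pullback topology $g^{-1}(\tau_B)\in\mathcal{T}$ is sound. This buys uniformity and avoids any case analysis, at the cost of giving a less explicit description of the colimit topology than the seminorm picture of \cref{rem: colimit seminorm}, which the paper later needs. Two small points you should make explicit: your condition (ii) on members of $\mathcal{T}$ must also include continuity of addition and of scalar multiplication (so that $\tau$ is a vector space topology, as the paper's definition of topological $*$-algebra requires) --- the same subbasis argument handles these --- and $\mathcal{T}\neq\emptyset$ since the indiscrete topology belongs to it, so the supremum exists.
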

This result is well known. We will prove an equivariant version of this in \cref{prop: eq cat}, and direct the reader to the proof of it for the details. Note that 1. of \cref{prop: non-eq cat} is proven in Lemma 2.4 and Lemma 2.5 in \cite{MJ}.

Summarizing \cref{prop: non-eq cat}, we get:
\begin{cor}\label{cor: non-eq cats}
    We have a sequence of adjunctions, where the left adjoints go from left to right, and all the right adjoints are fully faithful:
        \[
\begin{tikzcd}
	{\operatorname{Top}\!*\operatorname{-alg}} && {\operatorname{lmc}\!C^*\operatorname{-alg}} && {\mathbb{A}} && {\mathbb{A}^{\nu}}
	\arrow["{\operatorname{lmc-fication}}", from=1-1, to=1-3]
	\arrow[bend left, hook, from=1-3, to=1-1]
	\arrow["{\operatorname{Haus. quotient}}", from=1-3, to=1-5]
	\arrow[bend left, hook, from=1-5, to=1-3]
	\arrow["{\nu-\operatorname{completion}}", from=1-5, to=1-7]
	\arrow[bend left, hook, from=1-7, to=1-5]
\end{tikzcd}
        \]
In particular, each of the categories mentioned above are bicomplete.
\end{cor}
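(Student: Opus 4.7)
The plan is a direct application of standard categorical facts given \cref{prop: non-eq cat}. First, each right adjoint in the displayed sequence is the inclusion of a full subcategory by construction: $\operatorname{lmc}\!C^*\operatorname{-alg}$, $\mathbb{A}$ and $\mathbb{A}^\nu$ are introduced as full subcategories of the preceding entry in the chain. So the fully faithful claim is immediate, with no further work required.

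For the bicompleteness claim, I would invoke the following well-known fact about reflective subcategories: if $\iota : D \hookrightarrow C$ is a full subcategory inclusion admitting a left adjoint $L \dashv \iota$, and $C$ is bicomplete, then so is $D$. Concretely, limits in $D$ are created by $\iota$ (given a diagram $J \to D$, take its limit in $C$; the universal property of the limit together with full faithfulness of $\iota$ shows that it already represents the limit in $D$), while colimits in $D$ are computed as $L$ applied to the colimit in $C$ of the composed diagram.

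Combining the two, I would proceed by induction along the chain. \cref{prop: non-eq cat}(1) gives bicompleteness of $\operatorname{Top}\!*\operatorname{-alg}$; parts (2), (3), (4) then supply reflective full subcategory inclusions, one after the other, so applying the categorical lemma above three times propagates bicompleteness all the way down to $\mathbb{A}^\nu$.

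There is essentially no technical obstacle here, since the corollary is a formal consequence of \cref{prop: non-eq cat}; the real work has already been done in establishing the adjunctions there. The only mild point to watch is that in the reflective situation one genuinely needs full faithfulness of the right adjoint to conclude that limits in $C$ already represent limits in $D$, but this is automatic from our setup.
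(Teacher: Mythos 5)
Your proposal is correct and follows essentially the same route as the paper: the paper likewise deduces bicompleteness by citing the general fact that a reflective full subcategory of a bicomplete category is bicomplete (limits created by the inclusion, colimits obtained by applying the reflector to the ambient colimit), applied iteratively along the chain supplied by \cref{prop: non-eq cat}.
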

\begin{proof}
    This follows from a general result in category theory; see \cite[Proposition~7.21]{BUNKE3} for example. We mention the recipe of computing limits and colimits here for the ease of reference later. Each of the inclusion functors preserve and reflect limits\footnote{note that in loc. cit., its only mentioned that the right adjoint detects limits. The present assertion follows from the proof of it; see also \cite[Section 3]{CONS}.}. If \[
\begin{tikzcd}
	\C & \mathcal{D}
	\arrow["L", from=1-1, to=1-2]
	\arrow["R", shift left=3, hook, from=1-2, to=1-1]
\end{tikzcd}
    \]
    is a typical adjunction above, then the colimit of a diagram $D:I \rightarrow \mathcal{D}$ is given by 
    \[
    \underset{I}{\operatorname{colim}}D \cong L(\underset{I}{\operatorname{colim}}R(D)).
    \]
\end{proof}
\begin{rem}\label{rem: lim top non-eq}
    The fact that the underlying space of a limit of topological $*$-algebras is the limit of the corresponding underlying spaces is a key fact for many arguments in this article. See \cite[III, Lemma 2.1]{MAL} for the details.
\end{rem}
\begin{rem}\label{rem: colimit seminorm}
    For later uses, we present here another description of colimits in $\operatorname{lmc}\!C^*\operatorname{-alg}$. If $F:I \rightarrow \operatorname{lmc}\!C^*\operatorname{-alg}$ is any diagram, then $\operatorname{colim}_{\operatorname{lmc}\!C^*\operatorname{-alg}}F$ is the *-algebra $\operatorname{colim}_{\operatorname{*-alg}}U \circ F$, (where 
    $
    U:\operatorname{lmc}\!C^*\operatorname{-alg}\rightarrow \operatorname{*-alg}$
    is the forgetful functor), equipped with the following collection $\mathcal{S}$ of $C^*$-seminorms:
    \[
    \text{A $C^*$-seminorm $p \in \mathcal{S}$ $\iff$ $p \circ f_i \in \mathcal{S}(F(i))$ $\forall ~i \in I$},
    \]
    where $f_i:F(i) \rightarrow \operatorname{colim}_{\operatorname{*-alg}}U \circ F$ are the bonding maps of the universal cocone. The fact that it is indeed the colimit in $\operatorname{lmc}\!C^*\operatorname{-alg}$ follows by a direct verfication of the relevant universal property. 
\end{rem}
\begin{rem}
    One can construct limits and colimits in $\operatorname{Top}\!*\operatorname{-alg}$ from limits and colimits in $\operatorname{*-alg}$, using ideas similar to the one mentioned in \cref{rem: colimit seminorm}. This could be accomplished by first establishing both left and right adjoint to the forgetful functor $U:\operatorname{Top}\!*\operatorname{-alg}\rightarrow \operatorname{*-alg}$, and arguing that $U$ lift both limits and colimits. We, however, don't take that route\footnote{the reader who wants to tread along this path may find \cite[Chapter 6]{JOY} interesting!}, and consider topological $*$-algebras as primitive objects in our work.
\end{rem}
Now, we come to the equivariant definitions. Recall that, in this article, $G$ denotes a locally compact, second countable group.
\begin{defi}\label{defi: G-action}
     A $G$-algebra $A$ is a topological $*$-algebra $A$ together with an action of $G$ on $A$, i.e., a homomorphism from $G$ to $\operatorname{Aut}(A)$ (the set of all algebraic $*$-automorphisms of $A$ which are continuous and have continuous inverse) such that for each $a \in A$, the map $g \mapsto g.a$ is continuous. \textit{Throughout the article, we will only consider equivariant continuous $*$-homomorphisms, often without the adjective ``equivariant", between $G$-algebras.}

     The category of $G$-algebras and equivariant continuous $*$-homomorphisms will be denoted by $G\text{-}\!\operatorname{Top}\!*\operatorname{-alg}$. The hom set from algebras $A$ to $B$ will be denoted by $\operatorname{Hom}(A,B)$.
\end{defi}
\begin{defi}
A $C^*$-seminorm $p$ on a G-algebra $A$ is called $G$-invariant if 
\[
    p(g.a)=p(a) ~\forall~g \in G, ~\forall~a \in A.
\]
\end{defi}
\begin{defi}\label{defi: eq lmc G alg}
    A $G$-algebra $A$ is called a lmc $G$-$C^*$-algebra if the topology on $A$ is induced by a family of $G$-invariant $C^*$-seminorms. We abuse notation, and in the equivariant context, denote the set of all $G$-invariant continuous $C^*$-seminorms on $A$ as $\mathcal{S}(A)$. The full subcategory of $G\text{-}\!\operatorname{Top}\!*\operatorname{-alg}$ consisting of lmc $G$-$C^*$-algebras will be denoted by $G\text{-}\!\operatorname{lmc}\!C^*\operatorname{-alg}$.
\end{defi}
\begin{rem}
    It is crucial to assume in \cref{defi: eq lmc G alg} that the algebra had a continuous $G$-action to begin with. For example, consider $\C(S^1)$ with the actions
    \begin{align*}
        \alpha_n: S^1 &\times \C(S^1) \rightarrow \C(S^1) \\
        (u,&f) \longmapsto v \mapsto f(u^nv)        
    \end{align*}
    Consider the product 
    \[
    A := \prod_{n \in \N} \C(S^1,\alpha_n)
    \]
    in the category of $C^*$-algebras. It can be checked that the action of $S^1$ on $A$ is not continuous. Nevertheless, the $C^*$-norm on $A$ is $G$-invariant.
\end{rem}
The following corollary is an equivariant analogue of \cite[Lemma 1.1.5]{Phi}.
\begin{cor}\label{cor: factor}
    Let $A$ be a lmc $G$-$C^*$-algebra and $B$ a $G\text{-}C^*$-algebra. Then, any continuous $*$-homomorphism $\varphi: A \rightarrow B$ factors through a $G\text{-}C^*$-quotient of $A$.
\end{cor}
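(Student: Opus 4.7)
The plan is to imitate the standard non-equivariant factorization of Phillips, pulling back the $C^*$-norm of $B$ to a continuous $C^*$-seminorm on $A$, and then doing the extra bookkeeping to make the construction $G$-equivariant.

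Concretely, define $p : A \to [0,\infty)$ by $p(a) := \norm{\varphi(a)}_B$, where $\norm{\cdot}_B$ denotes the $C^*$-norm on $B$. Since $\varphi$ is a continuous $*$-homomorphism and $\norm{\cdot}_B$ is a $C^*$-norm, a direct check shows $p$ is a continuous $C^*$-seminorm on $A$. For $G$-invariance, recall that every $*$-automorphism of a $C^*$-algebra is isometric, so the action of $G$ on $B$ preserves $\norm{\cdot}_B$; combined with the $G$-equivariance of $\varphi$, this gives
\[
p(g.a) = \norm{\varphi(g.a)}_B = \norm{g.\varphi(a)}_B = \norm{\varphi(a)}_B = p(a),
\]
so $p \in \mathcal{S}(A)$. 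In particular $\ker p$ is a closed, $G$-invariant two-sided $*$-ideal of $A$, and the quotient $A/\ker p$ carries an induced $G$-action by $*$-automorphisms, together with a $G$-invariant $C^*$-norm $\bar p$. Forming the Hausdorff completion $A_p = \overline{A/\ker p}$ gives a $C^*$-algebra with an algebraic $G$-action by isometric $*$-automorphisms, and the original map $\varphi$ factors (by the universal property of $A_p$, applied to the composition $A \to A/\ker p \hookrightarrow B$ which is isometric for $\bar p$ and $\norm{\cdot}_B$) through a $*$-homomorphism $A_p \to B$ of $C^*$-algebras.

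The only genuinely non-formal step is verifying that the induced $G$-action on $A_p$ is continuous in the point-norm topology, which is what is required to call $A_p$ a $G$-$C^*$-algebra. For $a \in A$, continuity of $g \mapsto g.a$ in the topology of $A$ together with the continuity of the quotient map $A \to A_p$ yields continuity of $g \mapsto g.\iota(a)$ in $A_p$, where $\iota$ denotes the canonical map. For a general $b \in A_p$, choose $a \in A$ with $\norm{b - \iota(a)} < \varepsilon/3$; using that $G$ acts isometrically on $A_p$,
\[
\norm{g.b - h.b} \leq \norm{g.b - g.\iota(a)} + \norm{g.\iota(a) - h.\iota(a)} + \norm{h.\iota(a) - h.b} < \tfrac{2\varepsilon}{3} + \norm{g.\iota(a) - h.\iota(a)},
\]
and the last term is small for $h$ close to $g$ by the already established continuity on the dense subspace $\iota(A)$. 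The main (mild) obstacle is thus this density argument; everything else reduces to the non-equivariant Phillips statement applied to the underlying topological $*$-algebra of $A$ and the $C^*$-algebra $B$, with the $G$-invariance of $p$ ensuring that all ideals and quotients formed along the way remain $G$-equivariant.
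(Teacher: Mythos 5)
Your proof is correct and takes essentially the same route as the paper, whose entire argument is the observation that $a \mapsto \lVert \varphi(a) \rVert_B$ is a continuous $G$-invariant $C^*$-seminorm on $A$; you simply fill in the routine details (factorization through $A_p$ and the $\varepsilon/3$ density argument for point-norm continuity of the induced $G$-action).
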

\begin{proof}
    Follows easily from the fact that $a \mapsto \lVert \varphi(a) \rVert$ is a continuous $G$-invariant C*-seminorm on $A$.
\end{proof}
\begin{defi}
    The full subcategory of Hausdorff algebras in $G\text{-}\!\operatorname{lmc}\!C^*\operatorname{-alg}$ will be denoted by $G\text{-}\mathbb{A}$. The full subcategory of $G\text{-}\mathbb{A}$ comprising of $\nu$-complete algebras will be denoted by $\GA$.
\end{defi}
We now mention the equivariant analogue of \cref{prop: non-eq cat}:
\begin{prop}\label{prop: eq cat}
    \hfill
    \begin{enumerate}
        \item $G\text{-}\!\operatorname{Top}\!*\operatorname{-alg}$ is bicomplete.
        \item There exists a left adjoint to the inclusion functor $G\text{-}\!\operatorname{lmc}\!C^*\operatorname{-alg} \rightarrow G\text{-}\!\operatorname{Top}\!*\operatorname{-alg}$, which we denote by $\operatorname{lmc-fication}$.
        \item There exists a left adjoint to the inclusion functor $G\text{-}\mathbb{A} \rightarrow G\text{-}\!\operatorname{lmc}\!C^*\operatorname{-alg} $, given by the Hausdorff quotient functor.
        \item There exists a left adjoint to the inclusion functor $G\text{-}\mathbb{A} \rightarrow \GA$, given by the $\nu$-completion functor.
    \end{enumerate}
\end{prop}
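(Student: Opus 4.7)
The plan is to adapt the non-equivariant proof of \cref{prop: non-eq cat}, carefully tracking the $G$-action at every step. For (1), let $U: G\text{-}\!\operatorname{Top}\!*\operatorname{-alg} \to \operatorname{Top}\!*\operatorname{-alg}$ be the forgetful functor and let $D: I \to G\text{-}\!\operatorname{Top}\!*\operatorname{-alg}$ be a diagram. For limits, I would form the limit of $U \circ D$ in $\operatorname{Top}\!*\operatorname{-alg}$ and equip it with the diagonal $G$-action; continuity of the resulting action follows from \cref{rem: lim top non-eq}, since for each tuple $a = (a_i)$ the orbit map $g \mapsto g \cdot a$ is componentwise continuous, hence continuous into the limit topology. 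For colimits, I would dually form the colimit $C$ of $U \circ D$ in $\operatorname{Top}\!*\operatorname{-alg}$ and transport the $G$-action via functoriality of the colimit: each $g \in G$ induces a continuous $*$-endomorphism of $C$ by the universal property. To verify that the resulting action is jointly continuous, I would observe that each element of $C$ is built algebraically from images of the $D(i)$'s under the canonical maps, and the orbit map is continuous on these generating images; joint continuity on all of $C$ then follows from continuity of the algebra operations on $C$.

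For (2), given $A \in G\text{-}\!\operatorname{Top}\!*\operatorname{-alg}$, I would let $A^{\rm lmc}$ denote the underlying $*$-algebra retopologized by the initial topology generated by all continuous $G$-invariant $C^*$-seminorms on $A$; this is clearly a lmc $G$-$C^*$-algebra. Any continuous $G$-equivariant $*$-homomorphism $\varphi: A \to B$ with $B$ a lmc $G$-$C^*$-algebra pulls back the defining seminorms of $B$ to continuous $G$-invariant $C^*$-seminorms on $A$, hence remains continuous with respect to the coarser topology of $A^{\rm lmc}$, yielding the required universal property. For (3), I would take the Hausdorff quotient of an lmc $G$-$C^*$-algebra $A$ to be $A/\overline{\{0\}}$; the ideal $\overline{\{0\}}$ is automatically $G$-invariant because the defining $G$-invariant seminorms detect it, and the quotient inherits a continuous $G$-action with the obvious universal property.

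For (4), I would mirror the construction used in the proof of \cref{lem: nu completion construction}, carried out inside the Cauchy completion $\overline{A}$ of $A \in G\text{-}\mathbb{A}$. The extra checks needed are: the $G$-action on $A$ extends continuously to $\overline{A}$ (the action on each seminorm quotient $A_p$ extends by isometry, and joint continuity propagates to the inverse limit by density of $A$), and the $\nu$-sequential closure of a $G$-invariant subset inside $\overline{A}$ is again $G$-invariant, since if $(x_\alpha)$ is a Cauchy $\nu$-sequence with limit $x$ then $(g \cdot x_\alpha)$ is Cauchy with limit $g \cdot x$ by continuity of the action of $g$. The adjunction property in each case is then a direct verification from the universal property of the construction. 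In my view, the principal obstacle is verifying that the $G$-action on the colimit in (1) and on $\overline{A}$ in (4) is genuinely jointly continuous rather than only separately so; the former requires leveraging the explicit algebraic description of colimits, while the latter relies on density and uniform-continuity arguments that are standard but need to be executed transfinitely across the stages of the $\nu$-sequential closure.
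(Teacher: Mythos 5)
Your proposal is correct and follows essentially the same route as the paper: limits and colimits are computed in $\operatorname{Top}\!*\operatorname{-alg}$ with the $G$-action given coordinatewise (for limits) resp. induced by the universal property with continuity of orbit maps on algebraically generated elements coming from joint continuity of multiplication (for colimits), the lmc-fication and Hausdorff quotient are the evident retopologization and the quotient by $\overline{\{0\}}$, and the $\nu$-completion is the $\nu$-sequential closure inside $\overline{A}$ with the action inherited from the Cauchy completion via the Arens--Michael picture. One small caveat: \cref{defi: G-action} requires only continuity of each orbit map $g \mapsto g.x$, not joint continuity of $G \times C \rightarrow C$, and your colimit argument in fact establishes exactly the former, which is all that is needed (joint continuity is derived only later, and only for Hausdorff lmc $G$-$C^*$-algebras).
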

\begin{proof}
    \hfill
    \begin{enumerate}
        \item Limits are computed in $\operatorname{Top*-alg}$, and the group action is given coordinatewise. The continuity of the group action follows from the universal property of limits in $\operatorname{Top}$, and \cref{rem: lim top non-eq}.

        Colimits are also computed in $\operatorname{Top*-alg}$, though the argument is not as straightforward. By basic category theory, it suffices to show that finite coproducts, coequalizers, and directed colimits exist in $G\text{-}\!\operatorname{Top*-alg}$. In fact, we show that the canonical group action on the respective construction in $\operatorname{Top-*-alg}$ is continuous. We just discuss the case of finite coproducts here; the other cases are very straightforward.
        
        Let $\{A_i\}_{1\leq i \leq n}$ be a finite collection of objects in $G\text{-}\!\operatorname{Top*-alg}$. Recall that a (finite) coproduct in $\operatorname{Top}\!*\operatorname{-alg}$ is obtained by equipping the $*$-algebraic free product with a certain topology; see \cite[Lemma 2.5]{MJ}. The group action on $A_1*A_2*...*A_n$ is given by setting \[g.(a_1*a_2*...*a_n):=(ga_1)*(ga_2)*...*(ga_n),\] and extending linearly. The continuity of the group action is immediate from the fact that multiplication is jointly continuous in topological $*$-algebras.
        \item We just define the functor here; the adjunction property will then follow easily. Given $A \in G\text{-}\!\operatorname{Top}\!*\operatorname{-alg} $, let $(A,\tau_{\operatorname{lmc}})$ denote the $*$-algebra $A$ with the action of (the underlying group of) $G$, equipped with the topology given by 
        \[
        a_i \rightarrow a ~\iff~ p(a_i - a) \rightarrow 0 ~\forall~ p \in \mathcal{S}(A).
        \]
        It is easy to check that $(A,\tau_{\operatorname{lmc}})$ is a lmc $G\text{-}C^*$-algebra, and a ($G$-equivariant) continuous $*$-homomorphism $A \rightarrow B$ induces a ($G$-equivariant) continuous $*$-homomorphism $(A,\tau_{\operatorname{lmc}}) \rightarrow (B,\tau_{\operatorname{lmc}})$, which yields the lmc-fication functor.
        \item Let $A \in G\text{-}\!\operatorname{lmc}\!C^*\operatorname{-alg}$, and let $\{p_\lambda\}_{\lambda \in \Lambda}$ be a family of $G$-invariant C*-seminorms inducing the topology on $A$.  We have \[A^{\operatorname{Haus}}=A/\overline{\{0\}}.\] Clearly, each $p_\lambda$ descends to a continuous $G$-invariant C*-seminorm on $A^{\operatorname{Haus}}$. These $\{p_\lambda\}_{\lambda \in \Lambda}$  in fact also generate the topology on $A^{\operatorname{Haus}}$, as can be seen from \cref{rem: colimit seminorm}, for example.
        \item The construction of \cref{lem: nu completion construction} works verbatim in the equivariant context. The (continuous) group action on the $\nu$-completion will be inherited from the (continuous) group action on the Cauchy completion; see \cref{lem: G-AM light}.
    \end{enumerate} 
\end{proof}
We also mention the equivariant analogue of \cref{cor: non-eq cats}.
\begin{cor}\label{cor: eq bicom}
    We have a sequence of adjunctions, where the left adjoints go from left to right, and all the right adjoints are fully faithful:
        \[
\begin{tikzcd}
	{G\text{-}\!\operatorname{Top*-Alg}} && {G\text{-}\!\operatorname{lmc C*-alg}} && {G\text{-}\mathbb{A}} && {G\text{-}\mathbb{A}^{\nu}}
	\arrow["{\operatorname{lmc-fication}}", from=1-1, to=1-3]
	\arrow[bend left, hook, from=1-3, to=1-1]
	\arrow["{\operatorname{Haus. quotient}}", from=1-3, to=1-5]
	\arrow[bend left, hook, from=1-5, to=1-3]
	\arrow["{\nu-\operatorname{completion}}", from=1-5, to=1-7]
	\arrow[bend left, hook, from=1-7, to=1-5]
\end{tikzcd}
        \]
    Thus, each of the categories mentioned above are bicomplete.
\end{cor}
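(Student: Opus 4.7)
The proof proposal is essentially to transport the argument given for \cref{cor: non-eq cats} into the equivariant setting, since all the ingredients are now in place from \cref{prop: eq cat}. The plan is as follows.

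First, I would assemble the adjunction chain directly from \cref{prop: eq cat}: parts (2), (3) and (4) provide the left adjoints $\operatorname{lmc-fication}$, Hausdorff quotient, and $\nu$-completion respectively, and in each case the right adjoint is by definition the inclusion of a full subcategory of the larger one. Hence the right adjoints are fully faithful essentially by construction, which is the first assertion of the corollary. This is purely formal and requires no further computation.

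Next, for bicompleteness, I would invoke the same general categorical principle cited in the non-equivariant proof (see \cite[Proposition~7.21]{BUNKE3}): a reflective subcategory of a bicomplete category is bicomplete, with limits preserved and reflected by the inclusion, and colimits computed by applying the left adjoint to the colimit of the composite in the ambient category. The base of the tower, $G\text{-}\!\operatorname{Top*-alg}$, is bicomplete by \cref{prop: eq cat}(1). Applying the principle inductively along the chain of reflective inclusions yields bicompleteness of $G\text{-}\!\operatorname{lmc}\!C^*\operatorname{-alg}$, $G\text{-}\mathbb{A}$, and finally $G\text{-}\mathbb{A}^{\nu}$. For the later sections where explicit limit and colimit formulas are needed, I would also record the recipe: if
\[
\begin{tikzcd}
    \C & \mathcal{D}
    \arrow["L", from=1-1, to=1-2]
    \arrow["R", shift left=3, hook, from=1-2, to=1-1]
\end{tikzcd}
\]
is one of the adjunctions above, then for a diagram $D:I\rightarrow \mathcal{D}$ one has $\operatorname{colim}_I D \cong L(\operatorname{colim}_I R(D))$, while limits in $\mathcal{D}$ coincide with limits in $\C$ under the inclusion $R$.

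There is essentially no obstacle here once \cref{prop: eq cat} is granted, since the argument is formal; the mild subtlety is the claim that the right adjoints preserve \emph{and reflect} limits rather than merely preserve them. This strengthening is standard (it follows from the full faithfulness of $R$ and the unit being an isomorphism on objects of the subcategory), and I would simply refer the reader to \cite[Section~3]{CONS} and the footnote already included after \cref{cor: non-eq cats}, rather than reprove it. No additional verification is required, so the proof can be given in a few lines referring back to \cref{prop: eq cat} and the proof of \cref{cor: non-eq cats}.
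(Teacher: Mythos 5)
Your proposal is correct and matches the paper's (implicit) argument exactly: the paper also derives the adjunction chain directly from \cref{prop: eq cat}, notes that the right adjoints are full subcategory inclusions, and appeals to the same general categorical fact used in the proof of \cref{cor: non-eq cats} (reflective subcategories of bicomplete categories are bicomplete, with the colimit recipe $\operatorname{colim}_I D \cong L(\operatorname{colim}_I R(D))$). No gaps.
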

Finally, we note that limits and colimits in the equivariant categories mentioned before compare with those in the underlying non-equivariant categories. The proof is evident from \cref{prop: eq cat}, and from the  description of limits and colimits as mentioned in the proof of \cref{cor: non-eq cats}.
\begin{cor}\label{cor: forget preserve lim/colim}
    In the following diagram, every square that should commute does commute:
    \[
\begin{tikzcd}
	{G\text{-}\!\operatorname{Top*-Alg}} && {G\text{-}\!\operatorname{lmc C*-alg}} && {G\text{-}\mathbb{A}} && {G\text{-}\mathbb{A}^{\nu}} \\
	\\
	{\operatorname{Top*-Alg}} && {\operatorname{lmc C*-alg}} && {\mathbb{A}} && {\mathbb{A}^{\nu}}
	\arrow["{\operatorname{lmc-fication}}", from=1-1, to=1-3]
	\arrow["{\operatorname{forget}}", from=1-1, to=3-1]
	\arrow[bend left, hook, from=1-3, to=1-1]
	\arrow["{\operatorname{Haus. quotient}}", from=1-3, to=1-5]
	\arrow["{\operatorname{forget}}", from=1-3, to=3-3]
	\arrow[bend left, hook, from=1-5, to=1-3]
	\arrow["{\nu-\operatorname{completion}}", from=1-5, to=1-7]
	\arrow["{\operatorname{forget}}", from=1-5, to=3-5]
	\arrow[bend left, hook, from=1-7, to=1-5]
	\arrow["{\operatorname{forget}}", from=1-7, to=3-7]
	\arrow["{\operatorname{lmc-fication}}", from=3-1, to=3-3]
	\arrow[bend left, hook, from=3-3, to=3-1]
	\arrow["{\operatorname{Haus. quotient}}", from=3-3, to=3-5]
	\arrow[bend left, hook, from=3-5, to=3-3]
	\arrow["{\nu-\operatorname{completion}}", from=3-5, to=3-7]
	\arrow[bend left, hook, from=3-7, to=3-5]
\end{tikzcd}\]
Thus, each of the forgetful functors above preserve and reflect limits and colimits.
\end{cor}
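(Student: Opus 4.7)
The plan is to tackle the commutativity of the three squares first, and then deduce preservation and reflection of (co)limits as a formal consequence together with the explicit descriptions already recorded.

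For commutativity, I would verify each square on objects, square by square. For the Hausdorff quotient, the underlying construction is $A / \overline{\{0\}}$; the closure of $\{0\}$ depends only on the topology and the induced $G$-action on the quotient is automatically continuous whenever the action on $A$ was, so the non-equivariant and equivariant constructions literally agree on underlying objects. For the $\nu$-completion, the proof of \cref{prop: eq cat} (part 4) defines the equivariant $\nu$-completion by taking the $\nu$-sequential closure inside the Cauchy completion (constructed exactly as in \cref{lem: nu completion construction}) and then noting that the continuous $G$-action extends; hence the underlying $*$-algebra and topology match the non-equivariant $\nu$-completion. For the lmc-fication, the topology on $(A,\tau_{\operatorname{lmc}})$ is generated by the $G$-invariant continuous $C^*$-seminorms on $A$, whereas the non-equivariant lmc-fication uses all continuous $C^*$-seminorms; one checks that on a $G$-topological $*$-algebra these generate the same topology, which is the only nontrivial input needed (so after inspection the three squares commute).

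Once commutativity is in hand, preservation of limits is immediate: the proof of \cref{prop: eq cat}(1) computes limits in $G\text{-}\!\operatorname{Top}\!*\operatorname{-alg}$ in the underlying category with coordinatewise $G$-action, so the leftmost forgetful functor preserves limits. For the three subcategories, \cref{cor: forget preserve lim/colim}'s chain of right-adjoint inclusions shows that limits there are created from the ambient category, so preservation propagates rightward. Preservation of colimits uses the colimit recipe from the proof of \cref{cor: non-eq cats}: the colimit of $D$ in the subcategory is $L(\operatorname{colim} R\circ D)$ where $L$ is the relevant left adjoint. Since the forgetful commutes with each $L$ (the squares) and preserves colimits in $G\text{-}\!\operatorname{Top}\!*\operatorname{-alg}$ (again from the construction in the proof of \cref{prop: eq cat}(1), where free products, coequalizers, and directed colimits are computed in the non-equivariant category), induction along the chain gives preservation of colimits in each of the four categories.

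For reflection, I would note that each forgetful functor is conservative: the inverse of a $G$-equivariant isomorphism of underlying topological $*$-algebras is automatically $G$-equivariant. A conservative functor between bicomplete categories which preserves limits (respectively colimits) also reflects them, so \cref{cor: eq bicom} combined with the preservation established above yields reflection. The main obstacle I anticipate is the subtle point inside the lmc-fication square: ruling out that enlarging the collection of $C^*$-seminorms to non-invariant ones strictly refines the topology. This needs a genuine invariantization argument (using continuity of the $G$-action to show each continuous $C^*$-seminorm is dominated by a $G$-invariant one in the generated topology); all other verifications are direct from the constructions referenced above.
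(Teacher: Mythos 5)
Your overall strategy matches the paper's (whose own proof is essentially a one-line reference to \cref{prop: eq cat} and the colimit formula $L(\operatorname{colim}R(D))$ from \cref{cor: non-eq cats}), and your treatment of the Hausdorff-quotient and $\nu$-completion squares, of limits, and of reflection via conservativity is fine. The gap is exactly at the point you flag and then defer: the lmc-fication square. The invariantization you ask for --- that on a $G$-topological $*$-algebra every continuous $C^*$-seminorm is dominated by a $G$-invariant one in the generated topology --- is false. Take $G=\mathbb{Z}$ and $A=\prod_{n\in\mathbb{Z}}\mathbb{C}$ with the product topology and the shift action; this is a legitimate object of $G\text{-}\!\operatorname{Top}\!*\operatorname{-alg}$ whose underlying topological $*$-algebra is already lmc. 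Any $G$-invariant continuous $C^*$-seminorm $q$ satisfies $q\leq\max(p_{n_1},\dots,p_{n_k})$ for finitely many coordinate seminorms $p_n(a)=\lvert a_n\rvert$, hence by invariance $q(a)\leq\max_i\lvert a_{n_i+m}\rvert$ for every $m$, so $q$ vanishes on all finitely supported elements and therefore, by continuity, identically. Thus the equivariant lmc-fication of $A$ (built from $G$-invariant seminorms, as in the proof of \cref{prop: eq cat}) carries the indiscrete topology while the non-equivariant one returns $A$ itself: the square does not commute on general objects, and your assertion that ``after inspection the three squares commute'' cannot stand as written.

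What is true, and what the conclusion of the corollary actually needs, is commutativity of the lmc-fication square on objects of the form $\operatorname{colim}_{\operatorname{Top}*\operatorname{-alg}}UF$ for a diagram $F$ valued in $G\text{-}\!\operatorname{lmc}\!C^*\operatorname{-alg}$. There the invariantization does work: given a $C^*$-seminorm $p$ on the underlying $*$-algebra whose pullbacks $p\circ f_i$ are continuous, dominate each $p\circ f_i$ by a finite maximum $r_i$ of $G$-invariant seminorms on $F(i)$ (possible since each $F(i)$ is $G$-lmc and domination constants for $C^*$-seminorms may be taken $\leq 1$), and let $P$ be the supremum of all $C^*$-seminorms $s$ on the colimit with $s\circ f_i\leq r_i$ for all $i$. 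This supremum is finite, is a $C^*$-seminorm, dominates $p$, restricts continuously to each $F(i)$, and is $G$-invariant because precomposition with $g\in G$ permutes the defining set of seminorms (the $f_i$ are equivariant and the $r_i$ invariant). Replacing your objectwise commutativity claim for lmc-fication with this colimit-level statement repairs the argument; the rest of your deductions then go through.
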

\begin{rem} \label{rem: Hausdorff nu completion}
    In the sequel, we will refer to the functor
    \[
    (\nu-\operatorname{completion}) \circ (\operatorname{Haus. quotient}): G\text{-}\!\operatorname{lmc}\!C^*\operatorname{-alg} \rightarrow \GA
    \]
    as the Hausdorff $\nu$-completion.
\end{rem}
The following result is the equivariant analogue of the Arens-Michael decomposition theorem, compare \cite[Theorem 2.6]{MJ} and the references therein. The proof in the non-equivariant case carries over verbatim here:
\begin{lem} \label{lem: G-AM light}
    Let $A$ be a Hausdorff lmc. $G\text{-}C^*$-algebra. Then, there is a natural inclusion 
    \[
    A \subseteq \varprojlim\limits_{p \in \mathcal{S}(A)} A_p.
    \]
    Conversely, any $G$-invariant subalgebra of an inverse limit (in $G\text{-}\!\operatorname{Top*-Alg}$) of $G\text{-}C^*$-algebras is a Hausdorff lmc $G\text{-}C^*$-algebra
\end{lem}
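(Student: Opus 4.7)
The plan is to imitate the classical Arens--Michael argument and then verify at each step that continuity of the $G$-action is preserved, using that we are working with $G$-invariant $C^*$-seminorms throughout. First I would build the inverse system. For each $p\in\mathcal{S}(A)$, i.e.\ each continuous $G$-invariant $C^*$-seminorm on $A$, the ideal $\ker p$ is a $G$-invariant closed $*$-ideal, so $G$ acts by $*$-automorphisms on $A/\ker p$, and because $p$ descends to a $C^*$-norm on this quotient on which $G$ acts isometrically, the action extends uniquely to an isometric action on the Banach-space completion $A_p$. Continuity of the orbit maps $g\mapsto g.a$ for $a\in A_p$ follows from an $\varepsilon/3$ argument from the continuity of orbit maps in $A$, so $A_p$ is a $G$-$C^*$-algebra. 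Ordering $\mathcal{S}(A)$ by $p\le q$ iff $p\le q$ pointwise, whenever $p\le q$ one has $\ker q\subseteq \ker p$, and the induced continuous $*$-homomorphism $A_q\to A_p$ is automatically $G$-equivariant, giving an inverse system in $G\text{-}C^*\text{-alg}$.

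Next I would produce the map $A\to \varprojlim_{p} A_p$ from the universal property of the limit, which by \cref{cor: forget preserve lim/colim} is computed on underlying spaces as in $\operatorname{Top}*\operatorname{-alg}$, and the canonical maps $A\to A_p$ are $G$-equivariant continuous $*$-homomorphisms. For injectivity I would use that $A$ is Hausdorff: the family $\mathcal{S}(A)$ separates points (this is built into the definition of being an lmc $C^*$-algebra together with Hausdorffness), so $\bigcap_{p}\ker p=\{0\}$. For the fact that this is actually an \emph{inclusion}, i.e.\ a topological embedding, I would note that the subspace topology on $A\subseteq \varprojlim_p A_p$ is generated by the pullbacks of the norms of the factors $A_p$, but these pullbacks are precisely the seminorms $p\in\mathcal{S}(A)$, which by definition generate the topology on $A$.

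For the converse, I would take a $G$-invariant $*$-subalgebra $B\subseteq \varprojlim_i A_i$, where the $A_i$ are $G$-$C^*$-algebras and the limit is computed in $G\text{-}\!\operatorname{Top}*\operatorname{-alg}$ (again carried by the underlying $\operatorname{Top}*\operatorname{-alg}$ limit via \cref{cor: forget preserve lim/colim}). The topology on $\varprojlim_i A_i$ is generated by the pullbacks $q_i$ of the $C^*$-norms along the projections, each $q_i$ is a continuous $C^*$-seminorm, and $G$-invariance of each $q_i$ on the total limit follows from $G$-equivariance of the projections combined with the fact that the $C^*$-norms on the $A_i$ are automatically $G$-invariant. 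Restricting these $q_i$ to $B$ gives a family of continuous $G$-invariant $C^*$-seminorms generating the subspace topology, so $B$ is an lmc $G$-$C^*$-algebra. Hausdorffness of $B$ is immediate from Hausdorffness of $\varprojlim_i A_i$, which in turn follows from Hausdorffness of each $A_i$ together with the fact that limits are computed on underlying topological spaces (\cref{rem: lim top non-eq}).

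The only genuine subtlety, and the step I would treat most carefully, is the continuity of the extended $G$-action on each $A_p$ in the first paragraph; everything else is a straightforward transcription of the non-equivariant Arens--Michael theorem combined with the compatibility statements already established in \cref{prop: eq cat} and \cref{cor: forget preserve lim/colim}.
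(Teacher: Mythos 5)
Your proof is correct and follows exactly the route the paper intends: the paper simply asserts that the non-equivariant Arens--Michael argument "carries over verbatim," and your write-up is that argument with the equivariant checks (the $\varepsilon/3$ extension of the isometric action to $A_p$, $G$-invariance of the pullback seminorms, equivariance of the bonding maps) made explicit. No gaps.
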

\begin{cor}
    The group action on a Hausdorff lmc $G\text{-}C^*$-algebra $A$ is jointly continuous, that is, the map
    \begin{align*}
        G \times A &\rightarrow A\\
        (g,a) &\mapsto g.a
    \end{align*}
    is continuous.
\end{cor}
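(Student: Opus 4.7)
The plan is to reduce joint continuity on $A$ to joint continuity on ordinary $G\text{-}C^*$-algebras via the Arens-Michael type embedding provided by \cref{lem: G-AM light}, and then use that the subspace/limit topology inherits joint continuity coordinatewise.

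First, I would apply \cref{lem: G-AM light} to obtain an embedding
\[
A \hookrightarrow \varprojlim_{p \in \mathcal{S}(A)} A_p
\]
in $G\text{-}\!\operatorname{Top}\!*\operatorname{-alg}$, where each $A_p$ is a genuine $G\text{-}C^*$-algebra. By \cref{rem: lim top non-eq} and \cref{cor: forget preserve lim/colim}, the underlying topological space of this limit is the limit of the underlying spaces of the $A_p$, and the $G$-action on the limit is componentwise. Moreover, the inclusion $A \hookrightarrow \varprojlim_p A_p$ is a topological embedding, since the seminorms in $\mathcal{S}(A)$ generate the topology on $A$ by definition of a lmc $G\text{-}C^*$-algebra.

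Second, I would argue that for each $G\text{-}C^*$-algebra $A_p$ the action $G \times A_p \to A_p$ is jointly continuous. This is a classical fact: the action is by isometric $*$-automorphisms, so given $(g_0, a_0)$ one estimates
\[
\lVert g\cdot a - g_0 \cdot a_0 \rVert_p \;\leq\; \lVert g\cdot a - g\cdot a_0\rVert_p + \lVert g\cdot a_0 - g_0\cdot a_0\rVert_p \;=\; \lVert a - a_0\rVert_p + \lVert g\cdot a_0 - g_0\cdot a_0\rVert_p,
\]
where the first term is small when $a$ is close to $a_0$, and the second term is small when $g$ is close to $g_0$ by the point-norm continuity hypothesis on the $G$-action (\cref{defi: G-action}).

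Third, joint continuity is inherited by products with the product topology (a map into a product is continuous iff each projection is), so the componentwise action on $\prod_{p} A_p$ is jointly continuous; the inverse limit carries the subspace topology from the product, so the action on $\varprojlim_p A_p$ is jointly continuous. Restricting along the topological embedding $A \hookrightarrow \varprojlim_p A_p$ yields joint continuity of $G \times A \to A$. The only subtle point, which I expect to be the main (minor) obstacle, is verifying rigorously that the embedding in \cref{lem: G-AM light} is a \emph{topological} embedding rather than merely a continuous injection; this is where one uses that $\mathcal{S}(A)$ consists of the $G$-invariant continuous $C^*$-seminorms generating the topology of $A$, together with the explicit description of limits in \cref{rem: lim top non-eq}.
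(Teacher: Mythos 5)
Your proposal is correct and follows essentially the same route as the paper: reduce via the Arens--Michael embedding of \cref{lem: G-AM light} to the case of an inverse limit of $G$-$C^*$-algebras, use that limits of topological $*$-algebras are computed on underlying spaces (\cref{rem: lim top non-eq}), and invoke the classical fact that a point-norm continuous action by (contractive) $*$-automorphisms on a $C^*$-algebra is automatically jointly continuous. The only difference is that you spell out the $\varepsilon/2$ estimate and the topological-embedding check explicitly, which the paper delegates to citations.
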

\begin{proof}
    In light of \cref{lem: G-AM light}, it suffices to show the assertion for an inverse limit of $G\text{-}C^*$-algebras (in $G\text{-}\!\operatorname{Top*-alg}$). But this is clear from the universal properties of inverse limits in $\operatorname{Top}$ (cf. \cref{rem: lim top non-eq}), and the fact that pointwise continuous group action on a $C^*$-algebra is in fact jointly continuous (since $C^*$-algebra (auto)morphisms are contractive, one can use \cite[Lemma 2.19]{D}).
\end{proof}
\subsection{Tensor products and function algebras}
We now discuss tensor products in the setting of lmc $G$-$C^*$-algebras.
\begin{defi}\label{defi: equi tensor prod}
    Let $A,B$ be lmc $G$-$C^*$-algebras. Let $A \odot B$ denote the algebraic tensor product of $A$ and $B$, together with the diagonal $G$-action. We equip $A \odot B$ with the initial topology given by the map
    \[
    A\odot B  \longrightarrow \varprojlim_{p \in \mathcal{S}(A), q \in \mathcal{S}(B)} {A_p} \otimes_G {B_q},
    \]
    where $A_p \otimes_G B_q$ denotes the \textit{maximal}\footnote{the choice of the tensor product is not very crucial at this juncture. But we decided to work with the maximal tensor product throughout because it has better functorial properties.} tensor product of the $G$-$C^*$-algebras $A_p$ and $B_q$ equipped with the diagonal $G$-action.

    Then, $A \otimes_G B$ denotes $A \odot B$ with the aforeprescribed topology.
\end{defi}
\begin{rem}
    Clearly, \cref{defi: equi tensor prod} is well-defined upon restricting to cofinal subsets of $\mathcal{S}(A)$ and $\mathcal{S}(B)$.
\end{rem}
That the tensor product of two continuous $*$-homomorphisms is again a continuous $*$-homomorphism follows from \cref{cor: factor}, whence we conclude:
\begin{cor}\label{cor: ten pro}
    The tensor product as defined above is functorial, i.e., we have a functor
    \[
    G\text{-}\!\operatorname{lmc C*-alg} \times G\text{-}\!\operatorname{lmc C*-alg} \xrightarrow{-\otimes_G-} G\text{-}\!\operatorname{lmc C*-alg}.
    \]
\end{cor}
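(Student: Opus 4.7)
The plan is to check that the algebraic tensor product of equivariant continuous $*$-homomorphisms, which is manifestly a $G$-equivariant $*$-homomorphism on the underlying algebraic tensor products (with the diagonal $G$-action), is continuous with respect to the topologies prescribed in \cref{defi: equi tensor prod}, and that this assignment respects identities and composition.

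Fix morphisms $\varphi: A \rightarrow A'$ and $\psi: B \rightarrow B'$ in $G\text{-}\!\operatorname{lmc C*-alg}$. The algebraic $\varphi \odot \psi: A \odot B \to A' \odot B'$ is a well-defined $*$-homomorphism and is $G$-equivariant for the diagonal action. By construction, the topology on $A' \otimes_G B'$ is the initial topology with respect to the canonical map into $\varprojlim_{p' \in \mathcal{S}(A'),\, q' \in \mathcal{S}(B')} A'_{p'} \otimes_G B'_{q'}$, so continuity of $\varphi \odot \psi$ reduces to showing that for every $p' \in \mathcal{S}(A')$ and $q' \in \mathcal{S}(B')$, the composite
\[
A \odot B \xrightarrow{\varphi \odot \psi} A' \odot B' \longrightarrow A'_{p'} \otimes_G B'_{q'}
\]
is continuous.

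For this, I would apply \cref{cor: factor} to the continuous $G$-equivariant $*$-homomorphism $A \xrightarrow{\varphi} A' \rightarrow A'_{p'}$, obtaining a factorization through some $G\text{-}C^*$-quotient $A_p$ for $p \in \mathcal{S}(A)$; similarly for $B \rightarrow B'_{q'}$ through $B_q$ for some $q \in \mathcal{S}(B)$. Functoriality of the maximal $G$-$C^*$-tensor product then yields a continuous $G$-equivariant $*$-homomorphism $A_p \otimes_G B_q \rightarrow A'_{p'} \otimes_G B'_{q'}$. The desired composite now fits into the commutative diagram
\[
\begin{tikzcd}
A \odot B \ar[r, "\varphi \odot \psi"] \ar[d] & A' \odot B' \ar[d] \\
A_p \otimes_G B_q \ar[r] & A'_{p'} \otimes_G B'_{q'},
\end{tikzcd}
\]
and the left vertical arrow is continuous by the very definition of the topology on $A \otimes_G B$, while the bottom horizontal arrow is continuous as a $*$-homomorphism between $G$-$C^*$-algebras. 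Hence the composite is continuous, which is what was needed.

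Preservation of identities is immediate, and functoriality under composition follows from the corresponding property of the algebraic tensor product together with the observation that continuity has already been established in full generality, so no further check is required. I do not anticipate a genuine obstacle here; the only non-trivial input is the factorization supplied by \cref{cor: factor}, which is precisely what allows us to reduce the continuity question to the well-understood functoriality of the maximal tensor product of $G$-$C^*$-algebras.
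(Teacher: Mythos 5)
Your argument is correct and is exactly the reasoning the paper intends: the paper's proof of this corollary is the single remark that continuity of $\varphi\odot\psi$ follows from \cref{cor: factor}, and your write-up simply spells out that reduction (initial topology, factorization of $A\to A'_{p'}$ and $B\to B'_{q'}$ through $C^*$-quotients, functoriality of the maximal tensor product of $G$-$C^*$-algebras). No gaps; same approach, just with the details made explicit.
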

Since $G$-equivariant continuous seminorms are cofinal in the set of all continuous seminorms on a lmc $G\text{-}C^*$-algebra, we conclude
\begin{cor}\label{cor: forget tensor}
    The following square commutes\footnote{the non-equivariant tensor product can be defined by taking $G$ to be the trivial group in \cref{defi: equi tensor prod}}:
    \[
\begin{tikzcd}
	{G\text{-}\!\operatorname{lmc C*-alg} \times G\text{-}\!\operatorname{lmc C*-alg}} && {G\text{-}\!\operatorname{lmc C*-alg}} \\
	{\operatorname{lmc C*-alg} \times \operatorname{lmc C*-alg}} && {\operatorname{lmc C*-alg}}
	\arrow["{-\otimes_G-}", from=1-1, to=1-3]
	\arrow["{\operatorname{forget} \times \operatorname{forget}}"', from=1-1, to=2-1]
	\arrow["{\operatorname{forget}}", from=1-3, to=2-3]
	\arrow["{-\otimes-}", from=2-1, to=2-3]
\end{tikzcd}
    \]
    Thus, the equivariant tensor product of lmc $G\text{-}C^*$-algebras is (uniquely) determined by the tensor product of the underlying lmc $C^*$-algebras, with the diagonal $G$-action.
\end{cor}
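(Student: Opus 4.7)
The strategy is to show that both the equivariant and non-equivariant tensor-product topologies on the underlying $*$-algebra $A \odot B$ arise from the same cofiltered diagram of $C^*$-algebras, with the same transition $C^*$-algebras at each stage. I would split the argument into a cofinality statement for $G$-invariant continuous $C^*$-seminorms, and an identification of the underlying $C^*$-algebra of $A_p \otimes_G B_q$ with $A_p \otimes B_q$.

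For cofinality, take any continuous $C^*$-seminorm $p$ on a lmc $G$-$C^*$-algebra $A$. Because the topology is generated by $\mathcal{S}(A)$, continuity of $p$ at $0$ gives finitely many $q_1,\dots,q_n \in \mathcal{S}(A)$ and a constant $C > 0$ with $p \leq C \max(q_1,\dots,q_n)$. The pointwise maximum of $C^*$-seminorms is again a $C^*$-seminorm (the defining inequalities are preserved, using the $C^*$-identity $p(a^*a) = p(a)^2$ on both sides), and it is $G$-invariant since each $q_i$ is. So after replacing by the max I may assume $p \leq C q$ for a single $q \in \mathcal{S}(A)$. The usual $C^*$-iteration $p(a)^{2^n} = p((a^*a)^{2^{n-1}}) \leq C\, q(a)^{2^n}$ then upgrades the inequality to $p(a) \leq C^{1/2^n} q(a)$ for all $n$, whence $p \leq q$ pointwise. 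The same argument applies to $B$, so $\mathcal{S}(A) \times \mathcal{S}(B)$ is cofinal in the indexing set of the non-equivariant tensor-product diagram.

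For the second step, the maximal $C^*$-tensor product $A_p \otimes B_q$ is characterized by its universal property: $*$-homomorphisms out of it correspond bijectively to pairs of commuting $*$-homomorphisms from $A_p$ and $B_q$ into a common target $C^*$-algebra. When the target is a $G$-$C^*$-algebra and both components are $G$-equivariant, the induced product map is automatically $G$-equivariant for the diagonal action. Thus $A_p \otimes B_q$ equipped with the diagonal $G$-action satisfies the universal property of the maximal equivariant $C^*$-tensor product, so its underlying $C^*$-algebra is precisely $A_p \otimes_G B_q$.

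Combining these: by cofinality the non-equivariant topology on $A \odot B$ can be computed as the initial topology from the map into $\varprojlim_{p \in \mathcal{S}(A),\, q \in \mathcal{S}(B)} A_p \otimes B_q$, and by the universal-property identification each $A_p \otimes B_q$ coincides, as a topological $*$-algebra, with the underlying algebra of $A_p \otimes_G B_q$. This is exactly the diagram defining $A \otimes_G B$ in \cref{defi: equi tensor prod}, so the two initial topologies on $A \odot B$ agree and the square commutes. The only mildly delicate step is the universal-property identification in Step two; the cofinality argument and the final comparison are essentially formal once that is in hand.
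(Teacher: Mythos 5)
Your proof is correct and takes essentially the same approach as the paper, whose entire argument is the observation (stated just above the corollary) that $G$-invariant continuous $C^*$-seminorms are cofinal among all continuous $C^*$-seminorms, combined with the fact that $A_p \otimes_G B_q$ is by \cref{defi: equi tensor prod} literally the maximal $C^*$-tensor product with the diagonal action. You merely make explicit the two ingredients the paper leaves implicit --- the max-and-iterate cofinality argument and the stage-wise identification of the tensor factors --- and both are accurate.
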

In light of the preceeding corollary, we will henceforth denote the tensor product in the equivariant setting also by $-\otimes-$.
\begin{defi}
    Let $A,B \in \GA$. Then, the tensor product of $A$ and $B$ (in $\GA$) is defined to be the Hausdorff $\nu$-completion of $A \otimes B$ from \cref{defi: equi tensor prod}. We will abuse notation and also denote the tensor product in $\GA$ by $- \otimes -$.
\end{defi}
We end this section with a discussion on function algebras in $\GA$. These will be ubiquitous for us while discussing the relation of homotopy in the next section. We first recapitulate the definition of the topology of uniform convergence in the generality of locally convex spaces. See \cite[Section 3.3]{TVS2} for more details and proofs.
\begin{defi}
    Let $X$ be a compact Hausdorff space, and $A \in G\text{-}\mathbb{A}$. The topology of uniform convergence on $\mathcal{C}(X,A)$ is the unique translation invariant topology on the set of continuous functions from $X$ to $A$, such that the collection of sets 
    \begin{equation}\label{eqn: nbhd base}
    \mathcal{U}_{\varepsilon,p_1,p_2,...,p_k}:=\{g \in \mathcal{C}(X,A) \mid p_i^X(g)< \varepsilon ~\forall~1 \leq i \leq k\}
    \end{equation}
    forms a neighbourhood base of $0$, where $\varepsilon > 0, k \in \N$, and $p_1,p_2,...,p_k \in \mathcal{S}(A)$, and for any $p \in \mathcal{S}(A)$, we let
    \[
    p^X(f):=\underset{x \in X}{\operatorname{sup}}~p(f(x)).
    \] 
\end{defi}

\begin{prop} \label{prop: CXB}
    Let $X$ be a compact Hausdorff space. If $B \in G\text{-}\mathbb{A}$, then $\mathcal{C}(X,B) \in G\text{-}\mathbb{A}$. If $B \in \GA$, then $\mathcal{C}(X,B) \in G\text{-}\A$. When $X$ is furthermore metrizable, $\C(X) \otimes B \cong \C(X,B)$.
\end{prop}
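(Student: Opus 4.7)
I would treat the three assertions in order: the lmc $G$-$C^*$-algebra structure on $\C(X,B)$, its $\nu$-completeness, and the tensor-product identification in the metrizable case.

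To begin with, for each $p\in\s(B)$ I set $p^X(f):=\sup_{x\in X}p(f(x))$; this is finite because $f(X)$ is compact and $p$ is continuous, and is immediately a $G$-invariant $C^*$-seminorm. The family $\{p^X\}_{p\in\s(B)}$ induces the topology of uniform convergence, and $\C(X,B)$ is Hausdorff because $p^X(f)=0$ for all $p$ forces $f\equiv 0$ by Hausdorffness of $B$. Continuity of the $G$-action reduces to showing $g\mapsto g\cdot f$ is continuous at each $g_0\in G$ for fixed $f$; this follows by a tube-lemma argument from the joint continuity of $G\times B\to B$ (the corollary after \cref{lem: G-AM light}) together with the compactness of $f(X)\subseteq B$: given $p\in\s(B)$ and $\varepsilon>0$, one extracts a neighborhood $U$ of $g_0$ with $p(g\cdot f(x)-g_0\cdot f(x))<\varepsilon$ uniformly in $x$.

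Next, for $\nu$-completeness, I would take a $\nu$-Cauchy net $(f_\alpha)_{\alpha<\kappa}$ in $\C(X,B)$. The estimate $p(f_\alpha(x)-f_\beta(x))\leq p^X(f_\alpha-f_\beta)$ makes $(f_\alpha(x))$ a $\nu$-Cauchy net in $B$ for each $x$, which converges to some $f(x)\in B$ by $\nu$-completeness of $B$. Passing to the pointwise limit inside a fixed $\nu$-Cauchy estimate upgrades the convergence to uniform (i.e. $p^X(f_\alpha-f)\to 0$), and the uniform limit of continuous functions into a topological vector space is continuous by the standard $\varepsilon/3$-argument, so $f\in\C(X,B)$ and the net converges there.

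Finally, for the tensor-product identification, the canonical equivariant $*$-homomorphism $\C(X)\odot B\to\C(X,B)$, $\varphi\otimes b\mapsto(x\mapsto\varphi(x)b)$, factors through the Hausdorff $\nu$-completion by the universal property and the two preceding paragraphs. To identify the induced map $\C(X)\otimes B\to\C(X,B)$ as an isomorphism, I would match topologies via the classical $C^*$-algebraic identification $\C(X)\otimes_{\max}B_q\cong\C(X,B_q)$, which shows that the initial topology on $\C(X)\odot B$ agrees with the subspace topology from $\C(X,B)\subseteq\varprojlim_q\C(X,B_q)$. Metrizability of $X$ then provides a countable basis and hence partition-of-unity approximations $F_n=\sum_i\varphi_i^{(n)}\otimes f(x_i^{(n)})\in\C(X)\odot B$ whose $p^X$-distance to $f$ is controlled by the modulus of uniform continuity of $f\colon X\to B_p$.

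The main obstacle is this final density step: to conclude $\C(X,B)$ is the Hausdorff $\nu$-completion I must assemble these per-seminorm approximations into a single transfinite sequence of length $\leq\nu$ in $\C(X)\odot B$ converging to $f$ simultaneously in every $p^X$, and verify that the iterated $\nu$-sequential closure construction of \cref{lem: nu completion construction} stabilizes in time. This is precisely where the hypothesis $\nu\geq 2^{2^{|\N|}}$ enters, providing enough transfinite room to do the bookkeeping uniformly over $\s(B)$; once done, the $\nu$-completeness from the second paragraph closes the argument.
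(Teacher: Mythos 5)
Your first two paragraphs, and the partition-of-unity density argument, match the paper's proof in substance. The gap is in the final step, and it is a genuine one. You propose to assemble the per-seminorm approximations into a single transfinite sequence of length $\leq\nu$ by "bookkeeping uniformly over $\s(B)$", but $\s(B)$ carries no cardinality bound whatsoever ($B$ is an arbitrary object of $\GA$), so there is no reason a single $\nu$-sequence in $\C(X)\odot B$ can be made eventually small in every $p^X$ simultaneously by enumerating $\s(B)$. The hypothesis $\nu\geq 2^{2^{|\N|}}$ says nothing about $|\s(B)|$ and cannot rescue this.

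The missing idea is a reduction: given $f\in\C(X,B)$, pass to the $*$-subalgebra $\langle\operatorname{im}f\rangle$ generated by $\operatorname{im}f$ with the subspace topology, and produce the $\nu$-Cauchy sequence inside $\C(X)\odot\langle\operatorname{im}f\rangle\hookrightarrow\C(X,\langle\operatorname{im}f\rangle)$. Since $X$ is compact metrizable, $|X|\leq|\R|$, hence $|\langle\operatorname{im}f\rangle|\leq|\R|$ and therefore $|\s(\langle\operatorname{im}f\rangle)|\leq|\R|^{|\R|}=2^{2^{|\N|}}\leq\nu$; consequently $f$ has a neighborhood base in $\C(X,\langle\operatorname{im}f\rangle)$ of cardinality at most $\nu$, the per-seminorm approximations can be organized into a $\nu$-Cauchy sequence converging to $f$ there, and continuity of the inclusion gives convergence in $\C(X,B)$. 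Note this is where metrizability actually enters --- as a cardinality bound on $X$ --- not, as you suggest, to provide a countable basis or a modulus of uniform continuity: the density argument needs neither, since partitions of unity subordinate to a finite open cover exist for any compact Hausdorff space. Finally, your worry about the iterated $\nu$-sequential closure of \cref{lem: nu completion construction} "stabilizing in time" is moot: once every $f\in\C(X,B)$ is exhibited as a one-step $\nu$-sequential limit from $\C(X)\odot B$ and $\C(X,B)$ is known to be $\nu$-complete and Hausdorff, the closure stabilizes at the first stage.
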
  
\begin{proof}
    That $\mathcal{C}(X,B)$ is a locally multiplicative convex $C^*$-algebra with the collection $\{p^X \mid p \in \mathcal{S}(B)\}$ being a defining family of seminorms follows from \cite[Section 3.3]{TVS2}.  Note that $\C(X,B)$ carries a continuous $G$-action from \cref{lem: G-AM light}, and the fact that $\C(X,B)_{p^X} \cong \C(X,B_p)$ are $G$-$C^*$-algebras. The $\nu$-completeness of $\C(X,B)$ will follow from the $\nu$-completeness of $B$.

    To show the statement about tensor products, we will use some well known facts from general topology, for which we refer to \cite{MUN}. We first show that $\C(X) \odot B$ is dense in $\C(X,B)$. Let
    \[
    U_{g,\varepsilon,p_1,p_2,...,p_k}:=\{\tilde{g} \in \C(X,B) \mid p_i^X(g-\tilde{g})< \varepsilon ~\forall~1 \leq i \leq n \}.
    \]
    be an arbitrary (but fixed) basic open set in $\C(X,B)$. We want to show that it contains some element of $\C(X) \odot B$. Consider $g(X) \subseteq B$. From the compactness of $X$, we conclude that 
    \[
    g(X) \subseteq \bigcup_{i=1}^{n}U_{g(x_i),\varepsilon,p_1,p_2,...,p_k}, ~\text{where $x_1,x_2,...,x_n$ are points in X}.
    \]
    Set 
    \begin{equation}\label{eqn: open set}
        U_i:=g^{-1}U_{g(x_i),\varepsilon,p_1,p_2,...,p_k},
    \end{equation}
    and note that $X=\bigcup_{i=1}^nU_i$. Choose a partition of unity $\{h_i\}_{i=1}^{n}$ subordinate to this cover, such that $\operatorname{supp}(h_i) \subseteq U_i$. Then, for any $x \in X$, and for any $l \in \{1,...,k\}$, we calculate:
    \begin{align*}
        p_l(g(x)-\sum_{i=1}^{n}h_j(x)g(x_j))
        =p_l(\sum_{i=1}^{n}h_j(x)g(x)-\sum_{i=1}^{n}h_j(x)g(x_j)) 
        \leq \sum_{i=1}^{n}h_j(x)p_l(g(x)-g(x_j)) <\sum_{j=1}^{n}h_j(x)\varepsilon
        =\varepsilon,
    \end{align*}
where the last inequality follows from the definition of $U_i$ in \cref{eqn: open set}. Thus, the function
\begin{equation}\label{eqn: C(X).B}
    x \mapsto \sum_{i=1}^{n}h_j(x)g(x_j).
\end{equation}
lies in $U_{g,\varepsilon,p_1,p_2,...,p_k}$, showing that $\C(X) \odot B$ is dense in $\C(X,B)$.

    We now show that, when $X$ is metrizable, given $g \in \C(X,B),$ there exists a $\nu$-Cauchy sequence in $\C(X) \odot B$ converging to $g$. To that end, it suffices to show that there is a $\nu$-Cauchy sequence in $\C(X) \odot \langle \operatorname{im} g \rangle$ converging to $g$ in $\C(X,\langle \operatorname{im}g \rangle) \hookrightarrow \C(X,B)$, where $\langle \operatorname{im} g \rangle$ denotes the $*$-subalgebra of $B$ generated by $\operatorname{im}g$, equipped with the subspace topology. Note that, 
\begin{align*}
    \lvert \operatorname{im} g \rvert \leq \lvert X \rvert \leq \lvert \R \rvert 
    \implies \lvert \mathcal{S}(\langle \operatorname{im} g \rangle) \rvert \leq \lvert \R \rvert^{\lvert \R \rvert }=2^{2^{\lvert \N \rvert}},
\end{align*} 
which shows, in light of \cref{eqn: nbhd base}, that $\C(X,\langle \operatorname{im}g \rangle)$ has a neighborhood base of $g$ of cardinality less than or equal to $2^{2^{\lvert \N \rvert}}$. From the proof of the first part of this proposition, especially \cref{eqn: C(X).B}, we conclude that there exists a $\nu$-Cauchy sequence in $\C(X) \odot \langle \operatorname{im}g \rangle$ which converges to $g$.
\end{proof}
\begin{rem}
    We have an analogous result to \cref{prop: CXB} in the setting of pointed spaces, which we don't spell out here. That can be obtained by focusing on the ideal of functions vanishing at the base point.
\end{rem}
We end this section by a technical lemma, which will come handy in the next section.
\begin{lem}\label{lem: pushout to pullback}
    For each $B$ in $\GA$, the functor $\C(-,B)$ sends a pushout square of closed inclusions of compact Hausdorff spaces to a pullback square in $\GA$.
\end{lem}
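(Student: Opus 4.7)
The plan is to use the universal property of the pushout in compact Hausdorff spaces, combined with the fact (from \cref{cor: forget preserve lim/colim} and \cref{cor: eq bicom}) that limits in $\GA$ agree with limits in the underlying categories of topological $*$-algebras. Concretely, given a pushout square of closed inclusions
\[
\begin{tikzcd}
Z \ar[r, hook] \ar[d, hook] & X \ar[d, hook] \\
Y \ar[r, hook] & X \cup_Z Y
\end{tikzcd}
\]
in compact Hausdorff spaces, I first note that $X \cup_Z Y$ is itself compact Hausdorff (compactness is immediate from the quotient presentation $X \sqcup Y / \sim$, while Hausdorffness uses that $Z \hookrightarrow X$ and $Z \hookrightarrow Y$ are closed embeddings). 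Then, by \cref{prop: CXB}, $\C(X \cup_Z Y, B) \in \GA$, and there is a canonical restriction map $\C(X \cup_Z Y, B) \to \C(X, B) \times_{\C(Z, B)} \C(Y, B)$ which is plainly an equivariant $*$-homomorphism.

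The first step is bijectivity at the level of underlying sets. Since $X \cup_Z Y$ carries the quotient topology from $X \sqcup Y$, a continuous map $f: X \cup_Z Y \to B$ is the same data as a pair $(f_X, f_Y) \in \C(X, B) \times \C(Y, B)$ with $f_X|_Z = f_Y|_Z$, which is exactly an element of the set-theoretic pullback. The algebra and $G$-action on both sides are defined pointwise, so the bijection is automatically an equivariant $*$-homomorphism.

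The main step is the topology comparison. For each $p \in \mathcal{S}(B)$, the seminorm $p^{X \cup_Z Y}$ on $\C(X \cup_Z Y, B)$ satisfies
\[
p^{X \cup_Z Y}(f) = \sup_{w \in X \cup_Z Y} p(f(w)) = \max\bigl(p^X(f|_X),\, p^Y(f|_Y)\bigr),
\]
using that the images of $X$ and $Y$ cover $X \cup_Z Y$. Consequently, a basic neighborhood $\mathcal{U}_{\varepsilon, p_1, \ldots, p_k}$ on the left pulls back to the intersection $\{(g, h) : p_i^X(g) < \varepsilon,\, p_i^Y(h) < \varepsilon\}$, which is a basic neighborhood of $0$ in the subspace topology inherited from $\C(X, B) \times \C(Y, B)$. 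Conversely, any basic neighborhood in the pullback topology, of the form $\{(g, h) : p_i^X(g) < \varepsilon,\, q_j^Y(h) < \varepsilon\}$, contains a neighborhood of the form $\mathcal{U}_{\varepsilon, p_1, \ldots, p_k, q_1, \ldots, q_m}$ under the bijection. Hence the topologies agree.

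Finally, to conclude this is a pullback in $\GA$ rather than merely in some intermediate category, I invoke \cref{cor: forget preserve lim/colim}: the forgetful functors down to $\operatorname{Top*-alg}$ preserve and reflect limits, and we have already verified the relevant universal property at that level, while the object $\C(X \cup_Z Y, B)$ itself lies in $\GA$ by \cref{prop: CXB}. The only subtle point worth flagging is verifying that $X \cup_Z Y$ is Hausdorff; all other steps are essentially bookkeeping once one has the seminorm identity $p^{X \cup_Z Y}(f) = \max(p^X(f|_X), p^Y(f|_Y))$ in hand.
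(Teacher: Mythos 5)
Your proof is correct and follows essentially the same route as the paper's: establish the set-level pullback from the universal property of the pushout, observe the $*$-algebra structure is pointwise, verify the topologies agree, and then lift to $\GA$ via the fact that the forgetful functors preserve and reflect limits. The only cosmetic difference is that you make the topology comparison explicit through the seminorm identity $p^{X\cup_Z Y}(f)=\max\bigl(p^X(f|_X),p^Y(f|_Y)\bigr)$, whereas the paper argues via convergence of nets in the uniform topology; both are fine.
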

\begin{proof}
    Let $X_{11}=X_{01}\coprod_{X_{00}}X_{10}$ be a pushout in compact Hausdorff spaces as mentioned in the statement. Then, by the universal property of colimits, we get a pullback of \textit{sets}
    \begin{equation}\label{eqn: pullback}
         \C(X_{11},B) \cong \C(X_{01},B) \times_{\C(X_{00},B)}\C(X_{10},B).
    \end{equation}
    By the explicit description of pullbacks in sets, we conclude that the pullback in \cref{eqn: pullback} is a pullback of \textit{$*$-algebras}. We next note that it is also a pullback in \textit{topological spaces}. Indeed, we have a natural continuous bijection 
    \[
    \C(X_{11},B) \xrightarrow{} \C(X_{01},B) \times^{\operatorname{Top}}_{\C(X_{00},B)}\C(X_{10},B).
    \]
    That its a homeomorphism follows by the explicit definition of convergence of a net in the uniform topology. This shows that \cref{eqn: pullback} is a pullback in $\operatorname{Top}\!*\operatorname{-alg}$, see \cite[Lemma III.2.1]{MAL}. That its in fact a pullback in $\GA$ follows from \cref{cor: forget preserve lim/colim} and \cref{cor: eq bicom}.   
\end{proof}
\section{Hom sets as Kan complexes}
In this section, we deal with the problem of ``topologizing" the hom sets between objects of $\GA$. Some of the obvious candidates like the compact-open topology and the point-norm topology (see \cite{TVS2} for the definition of these topologies, as well as other natural topologies on spaces of continuous linear maps between locally convex spaces) do not work for us at a technical level. \footnote{The compact open topology was used in \cite{MJ}, but it does not render all the maps we want to be continuous as continuous. In particular, the tensor product functor of \cref{cor: ten pro} isn't continuous, and there is a mistake in loc. cit. wherein the authors assumed it!}

We instead want to make $\GA$ enriched in Kan complexes, and that resolves all the technical issues. This approach has been taken for $C^*$-algebras before; see \cite[Section 3]{MAHANTA}.
\begin{defi}
    Let $A,B \in \GA$. We upgrade the set of $G$-equivariant $*$-homomorphisms between $A$ and $B$ into a simplicial set $\operatorname{Hom}(A,B)_\bullet$, whose $n$-simplices are defined by
    \[
    \operatorname{Hom}(A,B)_n:=\operatorname{Hom}(A,\C(\lvert \Delta^n \rvert,B)),
    \]
    with the obvious face and degeneracy maps.
\end{defi}
The following result is the key to the rest of the work.
\begin{prop}
    For $A,B \in \GA$, $\operatorname{Hom}(A,B)_\bullet$ is a Kan complex.
\end{prop}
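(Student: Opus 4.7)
The plan is to reduce the Kan extension property to the following two ingredients: (i) a purely topological retraction of the horn inclusion, and (ii) the fact that $\C(-,B)$ converts iterated pushouts of closed inclusions of compact Hausdorff spaces into iterated pullbacks in $\GA$, which is precisely \cref{lem: pushout to pullback}. Given a horn $\Lambda^n_k \to \operatorname{Hom}(A,B)_\bullet$, we must produce a filler $\Delta^n \to \operatorname{Hom}(A,B)_\bullet$, i.e.\ an $n$-simplex whose faces (other than the $k$-th) match the prescribed data.

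First I would recall the classical topological fact that the horn inclusion $|\Lambda^n_k|\hookrightarrow |\Delta^n|$ is a closed inclusion of compact Hausdorff spaces which admits a continuous retraction $r\colon |\Delta^n|\to |\Lambda^n_k|$ (one constructs $r$ by a radial projection from the barycenter of the missing $k$-th face). Consequently, precomposition with $r$ gives a $G$-equivariant continuous $*$-homomorphism
\[
r^*\colon \C(|\Lambda^n_k|,B)\longrightarrow \C(|\Delta^n|,B)
\]
which is a section of the restriction map $\C(|\Delta^n|,B)\to \C(|\Lambda^n_k|,B)$.

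Next I would translate the horn datum into a single $*$-homomorphism with target $\C(|\Lambda^n_k|,B)$. The space $|\Lambda^n_k|$ is the union of its $(n-1)$-dimensional faces $\Delta_i$ (for $i\neq k$); this union is built from the individual faces by an iterated sequence of pushouts along closed inclusions of compact Hausdorff spaces, the glueings being the lower-dimensional faces $\Delta_i\cap \Delta_j$. Applying \cref{lem: pushout to pullback} iteratively, we obtain
\[
\C(|\Lambda^n_k|,B)\cong \lim_{i\neq k}\C(\Delta_i,B)
\]
as an iterated pullback in $\GA$. Under this identification, a simplicial map $\Lambda^n_k\to \operatorname{Hom}(A,B)_\bullet$, which by definition consists of a compatible family of $(n-1)$-simplices $\varphi_i\colon A\to \C(|\Delta^{n-1}|,B)$ for $i\neq k$ agreeing on common faces, corresponds to a single morphism $\varphi\colon A\to \C(|\Lambda^n_k|,B)$ in $\GA$.

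Finally I would define the filler as the composite
\[
\widetilde{\varphi}\;:=\;r^*\circ \varphi\;\colon\; A\longrightarrow \C(|\Lambda^n_k|,B)\longrightarrow \C(|\Delta^n|,B),
\]
which is an $n$-simplex of $\operatorname{Hom}(A,B)_\bullet$. Since $r$ restricts to the identity on $|\Lambda^n_k|$, the faces of $\widetilde{\varphi}$ at indices $i\neq k$ recover the original $\varphi_i$'s, so $\widetilde{\varphi}$ genuinely fills the horn. The only non-routine step is the iterated pushout identification of $|\Lambda^n_k|$ and the corresponding iterated pullback in $\GA$; everything else is a formal consequence of the retraction and the Yoneda-style correspondence between simplices and homomorphisms out of $A$.
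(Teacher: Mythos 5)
Your proof is correct and takes essentially the same route as the paper's: both reduce horn-filling to the topological retraction $|\Delta^n|\to|\Lambda^n_k|$ combined with the identification of simplicial maps $\Lambda^n_k\to\operatorname{Hom}(A,B)_\bullet$ with morphisms $A\to\C(|\Lambda^n_k|,B)$, the latter supplied by \cref{lem: pushout to pullback}. The only cosmetic difference is that you realize $|\Lambda^n_k|$ as an iterated pushout of faces while the paper uses the coequalizer presentation of the horn; these are equivalent decompositions of the same colimit.
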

\begin{proof}
    Since $\lvert \Lambda^n_k\rvert$ is a retract of $\lvert \Delta^n \rvert$ for all $n \in \N, 0 \leq i \leq n$, it suffices to show that there is a natural bijection
    \begin{equation}\label{eqn: horn}
        \operatorname{Fun}(\Lambda^n_i,\operatorname{Hom}(A,B)_\bullet) \cong \operatorname{Hom}(A,\C(\lvert \Lambda^n_i\rvert,B)).
    \end{equation}
    Recall that every horn $\Lambda^n_i$ can be expressed as a coequalizer (see \cite[I.3.1]{GJ} for the details) as follows: \[
    \coprod_{0\leq i <j \leq n} \Delta^{n-2} \rightrightarrows \coprod_{i \neq k} \Delta^{n-1} \rightarrow \Lambda^n_k.
    \]
    Thus, by the universal property of (co)limits and the Yoneda lemma, we have a natural isomorphism
    \[
    \operatorname{Fun}(\Lambda^n_i,\operatorname{Hom}(A,B)_\bullet) \cong \operatorname{Hom}(A, \varprojlim( \coprod_{i \neq k} \C(\lvert \Delta^{n-1} \rvert,B) \rightrightarrows \coprod_{0\leq i < j \leq n} \C(\lvert \Delta^{n-2} \rvert,B))).
    \]
    By \cref{lem: pushout to pullback} and \cite[Proposition III.13.3]{JOY}, we conclude that \cref{eqn: horn} holds, as desired. 
\end{proof}
\begin{rem}\label{rem: cpct open}
    If $A$ is a $G$-$C^*$-algebra, then for all $B \in \GA$, $\operatorname{Hom}(A,B)_\bullet$ is naturally isomorphic to $\operatorname{Sing}(\underline{\operatorname{Hom}}(A,B))$, where $\underline{\operatorname{Hom}}(A,B)$ denotes the set of $G$-equivariant continuous $*$-homomorphisms from $A$ to $B$, equipped with the compact open topology. This follows from \cite[Proposition 3.4]{MJ}, for example.
\end{rem}
The key to working with the Kan-enriched setup is that it makes the following result trivial.
\begin{prop}\label{prop: Kan enriched}\label{prop: ten pro simp enrichment}
    Let $A,B,C \in \GA$. Then, we have a natural morphism of simplicial sets
    \[
    \operatorname{Hom}(A,B)_\bullet \times \operatorname{Hom}(B,C)_\bullet \rightarrow \operatorname{Hom}(A,C)_\bullet,
    \]
    whereby making $\GA$ into a Kan enriched category. Furthermore, the tensor product functor described in \cref{cor: ten pro} upgrades to a simplicially enriched functor.
\end{prop}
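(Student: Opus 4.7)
The plan is to define the enrichment through the standard mechanism built from the cosimplicial topological space $|\Delta^\bullet|$, using currying and the diagonal. In simplicial degree $n$, given $f \in \operatorname{Hom}(A,B)_n$ and $g \in \operatorname{Hom}(B,C)_n$, viewed as $*$-homomorphisms $f \colon A \to \C(|\Delta^n|, B)$ and $g \colon B \to \C(|\Delta^n|, C)$, I form the composite
\[
A \xrightarrow{f} \C(|\Delta^n|, B) \xrightarrow{\C(|\Delta^n|, g)} \C(|\Delta^n|, \C(|\Delta^n|, C)) \cong \C(|\Delta^n| \times |\Delta^n|, C) \xrightarrow{\Delta^*} \C(|\Delta^n|, C),
\]
where the penultimate arrow is the exponential isomorphism and $\Delta^*$ is pullback along the diagonal $|\Delta^n| \to |\Delta^n| \times |\Delta^n|$. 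To justify the exponential step, I note that both sides lie in $\GA$ by iterating \cref{prop: CXB}; the underlying bijection is standard currying, it is a $*$-algebra isomorphism because all operations are pointwise, preserves the $G$-action for the same reason, and is a homeomorphism because $(p^{|\Delta^n|})^{|\Delta^n|}(h) = \sup_{x,y} p(h(x)(y)) = p^{|\Delta^n| \times |\Delta^n|}(\tilde h)$ for every $p \in \mathcal{S}(C)$, where $\tilde h$ denotes the uncurried version of $h$.

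Simpliciality of this composition amounts to the observation that the three ingredients above, namely pullback on the space slot of $\C$, currying, and the diagonal, are each natural in $|\Delta^n|$ with respect to simplicial operators; associativity and unitality of the resulting composition reduce to cocommutativity of the diagonal and the functoriality of $\C(-, -)$ in both arguments. Combined with the Kan complex property from the preceding proposition, this makes $\GA$ into a Kan-enriched category.

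For the tensor product upgrade, I define in each degree a map $\operatorname{Hom}(A, A')_n \times \operatorname{Hom}(B, B')_n \to \operatorname{Hom}(A \otimes B, A' \otimes B')_n$ sending $(f, g)$ to the composite
\[
A \otimes B \xrightarrow{f \otimes g} \C(|\Delta^n|, A') \otimes \C(|\Delta^n|, B') \xrightarrow{\mu} \C(|\Delta^n| \times |\Delta^n|, A' \otimes B') \xrightarrow{\Delta^*} \C(|\Delta^n|, A' \otimes B'),
\]
where $\mu$ is induced by the bilinear pairing $(F, G) \mapsto ((x, y) \mapsto F(x) \otimes G(y))$. Compatibility with the composition constructed above and with the simplicial operators is then a formal diagram chase using naturality of currying, the diagonal, and the associator for $\otimes$. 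The main technical obstacle is establishing continuity of $\mu$ on the tensor-product topology of \cref{defi: equi tensor prod}: after reducing via the defining projections to $\C(|\Delta^n| \times |\Delta^n|, A'_p \otimes_G B'_q)$, this comes down to the sub-multiplicativity $r(F(x) \otimes G(y)) \leq p(F(x)) \cdot q(G(y))$ for $r$ the maximal $C^*$-tensor seminorm, which yields $r^{|\Delta^n| \times |\Delta^n|}(\mu(F \otimes G)) \leq p^{|\Delta^n|}(F) \cdot q^{|\Delta^n|}(G)$ on the algebraic tensor product and hence extends to the Hausdorff $\nu$-completion by the universal property recorded in \cref{cor: eq bicom}.
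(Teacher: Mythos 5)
Your construction is correct and is exactly the standard pointwise-over-$|\Delta^n|$ composition (curry, then restrict along the diagonal) that the paper treats as immediate — it states this proposition without proof, calling it trivial in the Kan-enriched setup. The only point worth making explicit is that your seminorm estimate for $\mu$ is stated on elementary tensors; to get continuity on the full algebraic tensor product with the topology of \cref{defi: equi tensor prod} one invokes the fact that the maximal tensor norm on $\C(|\Delta^n|,A'_p)\odot\C(|\Delta^n|,B'_q)$ dominates every $C^*$-seminorm there, in particular $r^{|\Delta^n|\times|\Delta^n|}\circ\mu$, which is the intended use of maximality and closes the argument.
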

We now show a fundamental property of our Kan enriched mapping spaces, to the effect that they are cotensored over simplicial sets.
\begin{prop}\label{prop: tensored and cotensored}
    Let $A,B \in \GA$, and $K$ a finite simplicial set. Then, we have a natural isomorphism
    \[
    \operatorname{Hom}(A,C(|K|,B))_\bullet \cong \operatorname{Map}_{\operatorname{sSet}}(K,\operatorname{Hom}(A,B)_\bullet)
    \]
\end{prop}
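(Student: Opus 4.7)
The plan is to reduce to a set-level statement valid for all finite simplicial sets, which will then give the simplicial isomorphism by varying $n$. Specifically, I would first establish that for every finite simplicial set $L$ there is a natural bijection
\[
\alpha_L : \operatorname{Hom}_{\GA}(A, \C(|L|, B)) \xrightarrow{\cong} \operatorname{Hom}_{\operatorname{sSet}}(L, \operatorname{Hom}(A, B)_\bullet).
\]
Granted this, I apply it with $L = K \times \Delta^n$. The right-hand side becomes the $n$-simplices of $\operatorname{Map}_{\operatorname{sSet}}(K, \operatorname{Hom}(A,B)_\bullet)$ by definition. The left-hand side unravels via the identification $|K \times \Delta^n| \cong |K| \times |\Delta^n|$ (which holds since $K$ is finite, so all spaces in sight are compact Hausdorff) together with the exponential law $\C(|\Delta^n| \times |K|, B) \cong \C(|\Delta^n|, \C(|K|, B))$ in $\GA$ (the underlying sets agree by the classical exponential law for compact Hausdorff domains, and the defining $G$-invariant $C^*$-seminorms match by inspection of the uniform topology). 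This gives the $n$-simplices of $\operatorname{Hom}(A, \C(|K|, B))_\bullet$. Compatibility with face and degeneracy maps is immediate from naturality of $\alpha_{K \times \Delta^n}$ in $\Delta^n$.

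To construct $\alpha_L$, I use the Yoneda-style prescription: an $m$-simplex $\sigma : \Delta^m \to L$ induces the restriction $|\sigma|^* : \C(|L|, B) \to \C(|\Delta^m|, B)$, so given $\varphi : A \to \C(|L|, B)$ the composite $|\sigma|^* \circ \varphi$ is an $m$-simplex of $\operatorname{Hom}(A, B)_\bullet$; these assemble into a simplicial map $L \to \operatorname{Hom}(A, B)_\bullet$. The assignment $\varphi \mapsto \alpha_L(\varphi)$ is natural in $L$. On $L = \Delta^n$ both sides are tautologically $\operatorname{Hom}(A, \C(|\Delta^n|, B))$ (the right side by the Yoneda lemma, the left side by definition), and $\alpha_{\Delta^n}$ is visibly the identity.

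To conclude that $\alpha_L$ is a bijection for all finite $L$, I show both sides, as contravariant functors of $L$, convert finite colimits built from the cell-attachment pushouts $\partial \Delta^n \hookrightarrow \Delta^n$ (and from the initial object $\emptyset$) into limits. For the right-hand side this is automatic because $\operatorname{Hom}_{\operatorname{sSet}}(-, X)$ is a hom-functor. For the left-hand side, geometric realization sends $\partial \Delta^n \hookrightarrow \Delta^n$ to the closed inclusion $S^{n-1} \hookrightarrow D^n$ of compact Hausdorff spaces; \cref{lem: pushout to pullback} then converts such pushouts into pullbacks in $\GA$; and $\operatorname{Hom}_{\GA}(A, -)$ preserves pullbacks into $\operatorname{Set}$. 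The base case $L = \emptyset$ is handled directly ($\C(\emptyset, B)$ is the zero object, and both sides reduce to a singleton). Since every finite simplicial set admits such a finite cell decomposition via its skeletal filtration, an induction on the number of non-degenerate simplices of $L$, combined with the five lemma for pullbacks of sets, upgrades the isomorphism at $\Delta^n$ to an isomorphism $\alpha_L$ for all finite $L$.

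The main obstacle is the reduction to cell-attachment pushouts: one must check that the restriction of $\C(|-|, B)$ to finite simplicial sets really turns each pushout along $\partial \Delta^n \hookrightarrow \Delta^n$ into a pullback in $\GA$, which is exactly where \cref{lem: pushout to pullback} is used, and then assemble this into preservation of \emph{all} finite colimits of finite simplicial sets (not just pushouts). The side technicality is confirming the exponential law $\C(|\Delta^n| \times |K|, B) \cong \C(|\Delta^n|, \C(|K|, B))$ in $\GA$; the natural bijection of underlying sets is standard, and the identification of the lmc topologies (equivalently, of the defining families of $G$-invariant $C^*$-seminorms) follows from the description of the uniform topology in \cref{prop: CXB}.
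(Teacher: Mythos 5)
Your proof is correct, but it takes a genuinely different route from the paper's. The paper argues by an Arens--Michael reduction in the algebra variables: it first quotes the known case where both $A$ and $B$ are $G$-$C^*$-algebras, then treats general $A$ by writing $\operatorname{Hom}(A,B)_\bullet\cong\operatorname{colim}_{p\in\mathcal{S}(A)}\operatorname{Hom}(A_p,B)_\bullet$ (every continuous $*$-homomorphism into a $C^*$-algebra factors through some quotient $A_p$) and commuting $\operatorname{Map}_{\operatorname{sSet}}(K,-)$ past this filtered colimit using that finite simplicial sets are compact objects of $\operatorname{sSet}$, and finally treats general $B$ by embedding $\C(|K|,B)$ into $\varprojlim_{p}\C(|K|,B_p)$ and chasing monomorphisms. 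You instead induct on the nondegenerate simplices of $K$: the case $K=\Delta^n$ is tautological, cell attachments are converted into pullbacks in $\GA$ by \cref{lem: pushout to pullback}, and the exponential law applied to $K\times\Delta^n$ upgrades the resulting set-level bijection to the simplicial statement. Your route is more self-contained --- it needs neither the $C^*$-algebra case as a black box nor the factorization through $C^*$-quotients, and it concentrates all the analysis in \cref{lem: pushout to pullback} together with the identification $\C(|K|\times|\Delta^n|,B)\cong\C(|\Delta^n|,\C(|K|,B))$, which does hold in $\GA$ since the sup-seminorms satisfy $p^{X\times Y}=(p^Y)^X$. The paper's route, by contrast, reuses the existing enrichment results for $G$-$C^*$-algebras and is the same template it later applies for \cref{cor: cotensor preserved}. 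Two points worth tightening in your write-up: (i) in a cell attachment $|L|\cup_{S^{n-1}}D^n$ the attaching map $S^{n-1}\to|L|$ need not be injective, so the square is not literally ``a pushout square of closed inclusions'' as in the statement of \cref{lem: pushout to pullback}; the proof of that lemma only uses that the two pieces jointly cover the pushout and that the pushout carries the quotient topology, so it does apply, but you should say so explicitly. (ii) For the induction you only need the empty simplicial set and the cell-attachment pushouts along $\partial\Delta^n\hookrightarrow\Delta^n$, so the claim about preservation of \emph{all} finite colimits is unnecessary.
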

\begin{proof}
    When $A$ and $B$ are both $G$-$C^*$-algebras, the statement is well known (see \cite{MAHANTA}, \cite{MJ}, \cite{D}). When $A \in \GA$ and $B$ is a $G$-$C^*$-algebra, the assertion follows from the following series of isomorphisms:
    \begin{align*}
        &\operatorname{Map}_{\operatorname{sSet}}(K,\operatorname{Hom}(A,B)_\bullet) \\
        \cong & \operatorname{Map}_{\operatorname{sSet}}(K,\underset{p \in \mathcal{S}(A)}{\operatorname{colim}} \operatorname{Hom}(A_p,B)_\bullet) \\
        \cong & \underset{p \in \mathcal{S}(A)}{\operatorname{colim}} \operatorname{Map}_{\operatorname{sSet}}(K,\operatorname{Hom}(A_p,B)_\bullet) \\
        \cong & \underset{p \in \mathcal{S}(A)}{\operatorname{colim}} \operatorname{Hom}(A_p,C(|K|,B))_\bullet \\
        \cong & \operatorname{Hom}(A,C(|K|,B))_\bullet
    \end{align*}
    Here, for the first and last isomorphisms, we use \cite[Lemma 1.1.5]{Phi}, for the second one, we use that finite simplicial sets are compact objects in $\operatorname{sSet}$ (see \cite[Proposition 3.6.1.9]{kerodon}), and for the third one, we needed the result in the $G$-$C^*$-algebra case as mentioned in the beginning of the proof.

    When both $A$ and $B$ are arbitrary objects of $\GA$, we can consider the commutative diagram
    \[
\begin{tikzcd}
	{\operatorname{Hom}(A,C(|K|,B))_\bullet} && {\operatorname{Map}_{\operatorname{sSet}}(K,\operatorname{Hom}(A,B)_\bullet)} \\
	{\underset{p \in \mathcal{S}(B)}{\operatorname{lim}}\operatorname{Hom}(A,C(|K|,B_p))_\bullet} && {\operatorname{Map}_{\operatorname{sSet}}(K,\operatorname{Hom}(A,\overline{B})_\bullet)}
	\arrow[from=1-1, to=1-3]
	\arrow[hook, from=1-1, to=2-1]
	\arrow[hook, from=1-3, to=2-3]
	\arrow["\cong", from=2-1, to=2-3]
\end{tikzcd}
    ,\]
where the horizontal maps are inclusions of simplicial sets. The bottom map is an isomorphism follows from an ``isomorphism chase" analogous to the one done above, together with the universal property of limits. This already shows that the upper horizontal map is a monomorphism of simplicial sets. The assertion that its an isomorphism now follows by explicitly identifying the simplicial sets upstairs with their images downstairs.
\end{proof}
The proof of the following result is now immediate from the formal properties of limits and colimits, and the fact that every simplicial set is a colimit of its finite simplicial subsets (see \cite[Remark 3.6.1.8]{kerodon}).
\begin{cor}\label{cor: cotensor preserved}
    Let $A,B \in \GA$, and $K$ a simplicial set. Then, we have a natural isomorphism
    \[
    \operatorname{Hom}(A,\C(\lvert K|,B))_\bullet \cong \operatorname{Map}_{\operatorname{sSet}}(K,\operatorname{Hom}(A,B)_\bullet),
    \]
    where \[
    \C(|K|,B)=\underset{L \subseteq K \operatorname{finite}}{\operatorname{lim}}\C(|L|,B).
    \]
    is given the inverse limit topology.
\end{cor}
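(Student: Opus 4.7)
The plan is to reduce to the finite case handled in \cref{prop: tensored and cotensored} by expressing $K$ as a filtered colimit of its finite simplicial subsets and showing that both sides of the desired isomorphism convert this colimit into compatible limits indexed by the same category.

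First, I would write $K \cong \operatorname{colim}_{L \subseteq K \text{ finite}} L$ via the canonical presentation referenced in the statement. On the mapping-space side, the internal hom $\operatorname{Map}_{\operatorname{sSet}}(-, X)$ is a right adjoint in the first variable (since $\operatorname{sSet}$ is cartesian closed), so it carries this colimit to a limit; applying \cref{prop: tensored and cotensored} to each finite $L$ then gives
\[
\operatorname{Map}_{\operatorname{sSet}}(K, \operatorname{Hom}(A,B)_\bullet) \cong \varprojlim_{L} \operatorname{Map}_{\operatorname{sSet}}(L, \operatorname{Hom}(A,B)_\bullet) \cong \varprojlim_{L} \operatorname{Hom}(A, \C(|L|, B))_\bullet.
\]

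On the algebra side, I next want to show that $\operatorname{Hom}(A, -)_\bullet : \GA \to \operatorname{sSet}$ preserves limits. Since limits in $\operatorname{sSet}$ are computed levelwise, this reduces to showing that $\operatorname{Hom}_{\GA}(A, \C(|\Delta^n|, -))$ preserves limits for every $n$. The representable $\operatorname{Hom}_{\GA}(A, -)$ does automatically, and $\C(|\Delta^n|, -)$ does as well: by \cref{cor: forget preserve lim/colim} limits in $\GA$ are lifted from the underlying topological spaces, and continuous functions from a fixed compact space into a limit of spaces assemble into the expected limit, with the topology of uniform convergence matching the initial topology from the projections. Combining this observation with the definition $\C(|K|, B) = \varprojlim_L \C(|L|, B)$ yields
\[
\operatorname{Hom}(A, \C(|K|, B))_\bullet \cong \varprojlim_L \operatorname{Hom}(A, \C(|L|, B))_\bullet,
\]
which coincides with the expression displayed above. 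Naturality of each identification in $A$, $B$, and $K$ is routine from the construction, and assembling everything produces the claimed natural isomorphism.

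There is no real obstacle once \cref{prop: tensored and cotensored} is in hand; the proof is essentially bookkeeping. The only mild check I would want to spell out is that $\C(|\Delta^n|, -)$ preserves limits in $\GA$, which follows from \cref{cor: forget preserve lim/colim} together with the pointwise description of continuous functions into a topological limit.
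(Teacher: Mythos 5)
Your proof is correct and follows essentially the same route as the paper, which simply observes that the statement is immediate from the formal properties of limits and colimits together with the presentation of $K$ as the colimit of its finite simplicial subsets, combined with \cref{prop: tensored and cotensored}. The one point you spell out — that $\operatorname{Hom}(A,-)_\bullet$ carries the defining limit $\C(|K|,B)=\varprojlim_L\C(|L|,B)$ to a limit of simplicial sets, via \cref{cor: forget preserve lim/colim} and the levelwise description — is exactly the "formal property" the paper leaves implicit.
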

\begin{rem}
    The topology on $\C(|K|,B)$ in the above result is same as the topology of uniform convergence on compact sets, since every compact subset of $|K|$ is contained in $|L|$ for some finite simplicial subset of $|K|$ (cf. \cite[Corollary 2.9]{Phi2}).
\end{rem}
\subsection{Localization at homotopy equivalences}
We now define the relation of homotopy of continuous $*$-homomorphisms between lmc $G$-C*-algebras.
\begin{defi}\label{defi: homotopy}
    Let $A,B \in \GA$. Then, two $G$-equivariant continuous $*$-homomorphisms $\varphi_0, \varphi_1 : A \rightarrow B$ are said to be homotopic if they are homotopic in the Kan complex $\operatorname{Hom}(A,B)_\bullet$. That is, if there exists a $G$-equivariant continuous $*$-homomorphism $\Phi: A \rightarrow \C([0,1],B)$ such that $\varphi_i=\operatorname{ev}_i \circ \Phi$.
\end{defi}
The following remarks are in order:
\begin{rem}
    \hfill
    \begin{itemize}
        \item \cref{defi: homotopy} is equivalent to asserting that there exists a morphism of simplicial sets $\Delta[1] \rightarrow \operatorname{Hom}(A,B)_\bullet$ such that the restriction at $\{0\}$ and $\{1\}$ yield $\varphi_0$ and $\varphi_1$ respectively; see \cite[Section 3.4]{HOV}.
        \item From the general theory of Kan complexes (see \cite[Section 3.1]{kerodon}), we conclude that homotopy is an equivalence relation, and is well-behaved with respect to compositions. Furthermore, tensor product of homotopic morphisms are also homotopic.
        \end{itemize}
\end{rem}
\begin{defi}
    We denote the $\infty$-category presented by the Kan enriched category $\GA$ as $\GA_h$, i.e., by taking the homotopy coherent nerve of $\GA$ (see \cite[Proposition 1.1.5.10]{HTT})
\end{defi}
\begin{lem}\label{lem: homotopy localization}
    $\GA_h$ is equivalent to the Dwyer-Kan localization of $\GA$ at the collection of homotopy equivalences
\end{lem}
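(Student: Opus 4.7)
The plan is to construct a comparison functor $\Phi: N(\GA)[W^{-1}] \to \GA_h$ and show it is an equivalence of $\infty$-categories, where $W$ denotes the class of homotopy equivalences and $N(\GA)$ is the ordinary nerve of the underlying $1$-category of $\GA$.

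First, I would produce $\Phi$. The canonical map $N(\GA) \to N^{\operatorname{hc}}(\GA) = \GA_h$ coming from inclusion of $0$-simplices of the mapping spaces sends a morphism $f: A \to B$ to a $1$-simplex that is invertible in $\GA_h$ precisely when its class $[f] \in \pi_0 \operatorname{Hom}(A,B)_\bullet$ is invertible in the homotopy category, i.e.\ precisely when $f$ admits a homotopy inverse. Thus this map sends $W$ to equivalences, and by the universal property of Dwyer--Kan localization it descends uniquely to the desired $\Phi$. Essential surjectivity is immediate because $\Phi$ is the identity on objects (both sides are obtained from $\GA$ without altering the object set).

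For full faithfulness, I would verify that for each $A,B \in \GA$ the induced map on mapping anima
\[
\operatorname{Map}_{N(\GA)[W^{-1}]}(A,B) \to \operatorname{Map}_{\GA_h}(A,B) \simeq \operatorname{Hom}(A,B)_\bullet
\]
is an equivalence of Kan complexes. The key input is the cotensoring $B \mapsto \C(|\Delta^n|,B)$ established in \cref{prop: tensored and cotensored}. The resulting assignment $[n] \mapsto \C(|\Delta^n|,B)$ is a functorial simplicial resolution of $B$ in $\GA$: the structural face and degeneracy maps are homotopy equivalences, since the simplicially enriched functor $\C(-,B)$ sends homotopy equivalences between contractible simplicial complexes (namely the inclusions and degeneracies of simplices) to homotopy equivalences in $\GA$, and the augmentation $\C(|\Delta^0|,B) = B$ is the identity. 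By the standard computation of mapping spaces in the Dwyer--Kan localization in the presence of such a functorial resolution, the left-hand side is represented by $[n] \mapsto \operatorname{Hom}_\GA(A, \C(|\Delta^n|, B))$, which by definition is precisely $\operatorname{Hom}(A,B)_\bullet$. Naturality shows this identification is the one induced by $\Phi$.

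The main obstacle is to make this mapping-space computation rigorous without invoking a model structure on $\GA$, since that structure is exactly what is being constructed in later sections. This is resolved by the fact that one does not need a full model structure: the framework of Kan-enriched categories tensored and cotensored over finite simplicial sets (as supplied by \cref{prop: Kan enriched} and \cref{prop: tensored and cotensored}) is sufficient to invoke the general principle that the homotopy coherent nerve of such an enriched category presents its underlying $\infty$-category as the Dwyer--Kan localization of the underlying $1$-category at the homotopy equivalences. Once this general principle is cited, the identification above closes the argument.
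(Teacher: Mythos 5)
Your proof is correct and follows essentially the same route as the paper: the paper's entire proof is a one-line citation of \cite[Proposition 3.5]{BD}, whose internal mechanism is precisely the cotensoring-based simplicial resolution $[n]\mapsto \C(\lvert\Delta^n\rvert,B)$ that you spell out, made available in $\GA$ by \cref{prop: tensored and cotensored}. The general principle you invoke at the end (that a Kan-enriched category cotensored over finite simplicial sets has its homotopy coherent nerve as the Dwyer--Kan localization at homotopy equivalences) is exactly the content of that cited proposition, so there is no substantive difference.
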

\begin{proof}
    The proof of Proposition 3.5 of \cite{BD} works verbatim in $\GA$, in light of \cref{prop: tensored and cotensored}.
\end{proof}
\begin{cor}\label{cor: homotopy (co)products}
    $\GA_h$ has all products and coproducts, and the localization functor $\GA \rightarrow \GA_h$ preserves them.
\end{cor}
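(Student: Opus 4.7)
The idea is to transport the strict (co)products of $\GA$ (which exist by \cref{cor: eq bicom}) along the simplicial enrichment and verify their $\infty$-categorical universal property in $\GA_h$. Since $\GA$ is a Kan-enriched category and $\GA_h$ is its homotopy coherent nerve, the mapping anima in $\GA_h$ from $B$ to $X$ is (equivalent to) the Kan complex $\operatorname{Hom}(B,X)_\bullet$. Consequently an object $P$ with morphisms $P\to X_i$ represents the product of $\{X_i\}$ in $\GA_h$ precisely when the canonical map of Kan complexes $\operatorname{Hom}(B,P)_\bullet \to \prod_i \operatorname{Hom}(B,X_i)_\bullet$ is an equivalence for every $B$, and dually for coproducts. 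Since a strict product of Kan complexes is already the product in the $\infty$-category of spaces, it will suffice to establish these isomorphisms on the nose.

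First I would handle products. By \cref{prop: tensored and cotensored}, the cotensor functor $\C(|\Delta^n|,-)$ is a right adjoint (to the tensoring $-\otimes \Delta^n$) and therefore commutes with products. Hence $\C(|\Delta^n|,\prod_i A_i)\cong \prod_i \C(|\Delta^n|,A_i)$ in $\GA$, and using the universal property of the product in $\GA$ one obtains a natural isomorphism of simplicial sets
\[
\operatorname{Hom}(B,\prod_i A_i)_\bullet \;\cong\; \prod_i \operatorname{Hom}(B,A_i)_\bullet.
\]
For coproducts the argument is even more direct: the universal property of $\coprod_i A_i$ in $\GA$ applied levelwise with target $\C(|\Delta^n|,B)$ yields
\[
\operatorname{Hom}(\coprod_i A_i,B)_\bullet \;\cong\; \prod_i \operatorname{Hom}(A_i,B)_\bullet.
\]
In both cases, the right-hand sides are products of Kan complexes and hence compute the product in the $\infty$-category of spaces, so the corresponding object of $\GA$ represents the (co)product in $\GA_h$, and the localization functor $\GA \to \GA_h$ sends the strict (co)product to it by construction.

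The main obstacle is really just a bookkeeping one: making sure that the mapping anima in $\GA_h$ is computed by $\operatorname{Hom}(-,-)_\bullet$ (which holds because every $\operatorname{Hom}(A,B)_\bullet$ is a Kan complex, so the Kan-enriched category $\GA$ is already ``fibrant'' for the purpose of forming the homotopy coherent nerve), and that the strict product of a family of Kan complexes agrees with the $\infty$-categorical product of the same family viewed as anima. Both are standard, the latter because $\prod_i$ on Kan complexes is a right Quillen functor and in particular sends fibrant objects to the homotopy product. No delicate analytic input is needed beyond \cref{cor: eq bicom} and \cref{prop: tensored and cotensored}.
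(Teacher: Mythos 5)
Your proof is correct and follows essentially the same route as the paper: reduce to showing that $\operatorname{Hom}(B,\prod_i A_i)_\bullet \to \prod_i\operatorname{Hom}(B,A_i)_\bullet$ (resp.\ $\operatorname{Hom}(\coprod_i A_i,B)_\bullet \to \prod_i\operatorname{Hom}(A_i,B)_\bullet$) is an isomorphism of Kan complexes, which the paper also does. The only cosmetic difference is that you obtain $\C(|\Delta^n|,\prod_i A_i)\cong\prod_i\C(|\Delta^n|,A_i)$ from right-adjointness of $\C(|\Delta^n|,-)$ (really \cref{prop: adj unpointed} rather than \cref{prop: tensored and cotensored}), whereas the paper cites the point-set fact that $\C(X,\prod_iB_i)\to\prod_i\C(X,B_i)$ is a homeomorphism; both are valid.
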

\begin{proof}
    We just discuss the case of products, the case of coproducts follows directly from the definition of a colimit. 
    
    Let $B_i$ be any small family of elements in $\GA$, and $A \in \GA$ be arbitrary. It suffices to show that the map
    \[
    \operatorname{Hom}(A,\prod_i B_i)_\bullet \rightarrow \prod_i\operatorname{Hom}(A,B_i)_\bullet,
    \]
    induced by the family of projections $(\prod_iB_i \rightarrow B_j)_{j \in I}$ is an isomorphism of Kan complexes. But this follows since the natural map $\C(X,\prod_iB_i) \rightarrow \prod_i\C(X,B_i)$ is a homeomorphism for any compact space $X$, see \cite[Section XII.5]{DUG}. Note that we implicitly use (the equivariant version of) \cref{rem: lim top non-eq}. 
\end{proof}
\begin{defi}
    $A \in \GA$ is said to be contractible if $A$ is a zero object in $\GA_h$.
\end{defi}
\begin{lem}\label{lem: zero object path algebra}
    For any $A \in \GA$, $\mathcal{C}_*(I,A)$ is contractible.
\end{lem}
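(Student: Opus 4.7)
The plan is to exhibit a homotopy from the identity morphism on $\mathcal{C}_*(I, A)$ to the zero morphism, and then invoke \cref{lem: homotopy localization} together with the fact that the zero algebra is a zero object of $\GA$. Explicitly, I would define
\[
\Phi : \mathcal{C}_*(I, A) \longrightarrow \mathcal{C}([0,1], \mathcal{C}_*(I, A)), \qquad \Phi(f)(t)(x) := f(tx),
\]
taking $I = [0,1]$ with basepoint $0$. The first step is to check that $\Phi$ is well defined: the adjoint $(t,x) \mapsto f(tx)$ on $[0,1]^2$ is continuous since composition $(t,x) \mapsto tx$ is continuous and $f$ is; uniform continuity of $f$ on the compact $[0,1]$ in each $p \in \mathcal{S}(A)$ upgrades this to continuity of $t \mapsto \Phi(f)(t)$ in the topology of uniform convergence on $\mathcal{C}_*(I,A)$; and $\Phi(f)(t)(0) = f(0) = 0$, so $\Phi(f)(t)$ indeed lies in the pointed subalgebra. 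By \cref{prop: CXB}, the codomain is an object of $\GA$.

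The second step is to verify that $\Phi$ is a $G$-equivariant continuous $*$-homomorphism. Multiplicativity, $*$-compatibility and equivariance are immediate because all algebra and group operations on $\mathcal{C}_*(I, A)$ are pointwise in the $A$-variable. Continuity—indeed contractivity—follows from the identity
\[
(p^{I})^{[0,1]}\bigl(\Phi(f)\bigr) = \sup_{t,x \in [0,1]} p(f(tx)) = \sup_{y \in [0,1]} p(f(y)) = p^{I}(f),
\]
for every $p \in \mathcal{S}(A)$, so that $\Phi$ intertwines the defining families of seminorms. Finally, $\mathrm{ev}_1 \circ \Phi = \mathrm{id}_{\mathcal{C}_*(I,A)}$ while $\mathrm{ev}_0 \circ \Phi = 0$, showing that $\Phi$ is a homotopy in the sense of \cref{defi: homotopy} from the zero morphism to the identity.

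The third step is to pass to $\GA_h$. The unique maps $\mathcal{C}_*(I, A) \to 0$ and $0 \to \mathcal{C}_*(I, A)$ are mutually inverse homotopy equivalences: one composite is tautologically the identity on $0$, while the other is the zero endomorphism of $\mathcal{C}_*(I, A)$, which we have just shown is homotopic to the identity. By \cref{lem: homotopy localization}, $\GA_h$ is the Dwyer--Kan localization at homotopy equivalences, so $\mathcal{C}_*(I, A) \simeq 0$ in $\GA_h$, whence $\mathcal{C}_*(I, A)$ is itself a zero object, i.e.\ contractible. I do not anticipate any real obstacle; the only point requiring a little care is the continuity of $\Phi$ in the correct locally convex (rather than normed) topology, which is handled uniformly by the seminorm identity above.
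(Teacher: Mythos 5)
Your proof is correct, but it takes a different route from the paper's. You build an explicit contracting homotopy $\Phi(f)(t)(x)=f(tx)$ witnessing $\operatorname{id}_{\mathcal{C}_*(I,A)}\simeq 0$, verify it is a continuous equivariant $*$-homomorphism via the seminorm identity $(p^I)^{[0,1]}(\Phi(f))=p^I(f)$, and then conclude in $\GA_h$ that $\mathcal{C}_*(I,A)\to 0$ is a homotopy equivalence. The paper instead argues representably: by the pointed version of \cref{prop: tensored and cotensored}, $\operatorname{Hom}(A',\mathcal{C}_*(I,A))_\bullet\cong\operatorname{Map}_{\operatorname{sSet},*}(\Delta^1,\operatorname{Hom}(A',A)_\bullet)$ for every $A'$, and the pointed mapping space out of $\Delta^1$ into a Kan complex is weakly contractible, so $\mathcal{C}_*(I,A)$ is terminal (hence zero, by pointedness) in $\GA_h$. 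Your argument is more elementary and self-contained — it only needs the definition of homotopy and a direct continuity check, and it produces a concrete null-homotopy that could be reused elsewhere — whereas the paper's argument is shorter once the cotensoring machinery is in place and avoids any pointwise analysis. The only points in your write-up deserving a word of justification are that $\mathcal{C}_*(I,A)$ itself lies in $\GA$ (it is the kernel of $\operatorname{ev}_0$ on $\mathcal{C}(I,A)$, covered by the pointed remark after \cref{prop: CXB}) and the uniform-continuity step, which you correctly reduce to each quotient $A_p$; both are fine.
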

\begin{proof}
    In light of \cref{prop: tensored and cotensored}, we have, for any $A' \in \GA$,
    \[
    \operatorname{Hom}(A',\C_*(I,A))_\bullet \cong \operatorname{Map_{sSet}}_*(\Delta^1,\operatorname{Hom}(A',A)),
    \]
    and the latter anima is (weakly) contractible by \cite[Lemma I.7.5]{GJ}.
\end{proof}
Before proceeding, we note that the tensor product constructed in \cref{cor: ten pro} descends to the homotopy localizations.
\begin{cor}\label{cor: fully faithful oo cat.}
    The localizations $L_h:G\text{-}C^*\text{-alg}\rightarrow G\text{-}C^*\text{-alg}_h$ and $L_h:\GA \rightarrow \GA_h$ admit symmetric monoidal refinements, and we have a fully faithful symmetric monoidal functor $G\text{-}C^*\operatorname{-alg}_h \hookrightarrow \GA_h$.
\end{cor}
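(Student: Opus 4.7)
The key input is \cref{prop: ten pro simp enrichment}, which gives that the tensor product is a simplicially enriched bifunctor on $\GA$ (and analogously on $G\text{-}C^*\text{-alg}$). Combined with the strict symmetric monoidal structure on the underlying ordinary categories — with associator and symmetry isomorphisms inherited from the algebraic tensor product — this upgrades both categories to symmetric monoidal Kan-enriched categories in the strict sense.

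For the symmetric monoidal refinement of the localizations, my plan is to invoke the standard fact that the homotopy coherent nerve carries a symmetric monoidal simplicial category to a symmetric monoidal $\infty$-category, with the associated localization functor $L_h$ being symmetric monoidal. Equivalently, using the Dwyer-Kan perspective of \cref{lem: homotopy localization}, the simplicial enrichment of the tensor product immediately implies that tensoring with any morphism preserves homotopies (as remarked after \cref{defi: homotopy}), so the universal property of the Dwyer-Kan localization of symmetric monoidal $\infty$-categories at a class of morphisms closed under tensoring yields a symmetric monoidal refinement of $L_h$. The argument is parallel in the $G\text{-}C^*\text{-alg}$ case.

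For full faithfulness and symmetric monoidality of the inclusion, I first observe that $G\text{-}C^*\text{-alg} \hookrightarrow \GA$ is a fully faithful Kan-enriched functor: for $A,B \in G\text{-}C^*\text{-alg}$, the Kan complex $\operatorname{Hom}_\GA(A,B)_\bullet$ coincides on the nose with the corresponding Kan complex in $G\text{-}C^*\text{-alg}$, since $\C(|\Delta^n|,B)$ is a $G$-$C^*$-algebra whenever $B$ is. Full faithfulness of the induced $\infty$-functor $G\text{-}C^*\text{-alg}_h \hookrightarrow \GA_h$ then follows from the fact that the homotopy coherent nerve preserves fully faithful simplicial functors. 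For symmetric monoidality, one verifies that when $A,B$ are $G$-$C^*$-algebras, the tensor product computed in $\GA$ agrees with their maximal tensor product: since $\mathcal{S}(A)$ and $\mathcal{S}(B)$ each have the $C^*$-norm as a top element, the initial topology in \cref{defi: equi tensor prod} on $A \odot B$ is precisely the maximal $C^*$-norm topology, and the subsequent Hausdorff $\nu$-completion recovers the maximal $C^*$-algebraic tensor product.

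The main obstacle I anticipate is cleanly handling the symmetric monoidal refinement at the $\infty$-categorical level; while the strict monoidal structure at the simplicial level is easy to describe, promoting it coherently through the homotopy coherent nerve requires appealing to established $\infty$-operadic machinery rather than a hands-on construction.
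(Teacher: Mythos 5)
Your proposal is correct and follows essentially the same route as the paper: the symmetric monoidal refinement comes from the fact that tensoring preserves homotopy equivalences (the paper invokes Hinich's symmetric monoidal localization result, which is exactly your Dwyer--Kan alternative), and full faithfulness comes from the observation that the Kan-enriched mapping spaces between $G$-$C^*$-algebras agree in both categories together with \cref{lem: homotopy localization}. Your explicit check that the tensor product computed in $\GA$ restricts to the maximal tensor product on $G$-$C^*$-algebras is a useful detail the paper leaves implicit.
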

\begin{proof}
    In light of \cite[Proposition 3.3.2]{HIN}, the localization $\GA_h$ has a symmetric monoidal refinement since for all $A \in \GA, A \otimes -: \GA \rightarrow \GA$ preserves homotopy equivalences from \cref{cor: ten pro} (one could also use \cref{prop: ten pro simp enrichment} instead). The case of $G\text{-}C^*\operatorname{-alg}_h$ follows similarly, see \cite[Lemma 3.3]{BD} for the details. The rest follows from \cref{rem: cpct open} and \cref{lem: homotopy localization}.
\end{proof}
\subsection{Enforcing stability}
We now discuss how the categories $\GA$ and $\GA_h$ behave under tensoring with the algebra of compact operators on a separable $G$-Hilbert space. We begin by fixing some notation, which will be used throughout the remainder of the article.
\begin{notn}
    Throughout this article, $l^2\N$ denotes the standard $\ell^2$ space, equipped with the trivial $G$-action. $L^2G_\N$ denotes the $G$-Hilbert space $L^2G \otimes l^2\N$ (with the diagonal $G$-action). We also need to work with the following algebras and morphisms in this article:
    \[\kg:=\mathcal{K}(L^2G_\N),~~~~ \hatkg:=\mathcal{K}((L^2G_\N) \oplus l^2\N),~~~ \alpha:\kg \hookrightarrow \hatkg~\text{the natural inclusion.}\]
\end{notn}
Before proceeding, we record a Dixmier-Douady type result about compact operators on a $G$-Hilbert space. In what follows, $\h,\h'$ are $G$-Hilbert spaces, and $\operatorname{Iso}(\h,\h')$ denotes the space of $G$-equivariant isometries from $\h$ to $\h'$, equipped with the strong-$*$ topology. It follows from \cite[Lemma 4.3]{Mey} (cf. \cite[Lemma 8.8]{D}) that $\operatorname{Iso}(\h,l^2\N \otimes \h')$ is weakly contractible\footnote{the argument in \cite[Lemma 4.3]{Mey} shows that the space is path-connected. However, redoing the argument for Hilbert $\C(X,B),G$-modules for every compact Hausdorff space $X$ would yield weak contractibility too.}. As a consequence, each isometry $\iota: \h \rightarrow L^2G_\N$ is homotopic to a unitary, and each isometry $L^2G_\N \rightarrow L^2G_\N$ is homotopic to the identity. As an immediate corollary to this, we obtain:
\begin{cor}\label{cor: dix-doua for kg}
    Any two $G$-equivariant $*$-homomorphisms from  $\mathcal{K}(\h)$ to $\mathcal{K}(L^2G_\N)$ which are induced by $G$-equivariant isometries from $\h$ to $L^2G_\N$ are homotopic.
\end{cor}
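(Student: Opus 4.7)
The plan is to reduce the statement to the path-connectedness of $\operatorname{Iso}(\h, L^2G_\N)$ that the excerpt extracted from \cite[Lemma 4.3]{Mey}. Given two $G$-equivariant isometries $\iota_0,\iota_1 : \h \to L^2G_\N$, write $L^2G_\N = l^2\N \otimes L^2G$ so that weak contractibility (in particular path-connectedness) of $\operatorname{Iso}(\h, l^2\N \otimes L^2G)$ produces a strong-$*$ continuous path $t \mapsto \iota_t$ in $\operatorname{Iso}(\h, L^2G_\N)$ joining $\iota_0$ to $\iota_1$. The candidate homotopy is
\[
\Phi : \mathcal{K}(\h) \longrightarrow \C([0,1], \mathcal{K}(L^2G_\N)), \qquad \Phi(T)(t) := \iota_t\, T\, \iota_t^{\,*},
\]
and we must check that $\Phi$ lands in $\C([0,1], \mathcal{K}(L^2G_\N))$, is a $G$-equivariant $*$-homomorphism, and satisfies $\operatorname{ev}_i \circ \Phi = \iota_i(-)\iota_i^{\,*}$ for $i = 0,1$.

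The only nontrivial point, and thus the main obstacle, is the norm continuity of $t \mapsto \iota_t T \iota_t^{\,*}$ for each compact $T$; the rest is purely algebraic. For a rank-one operator $T = \ket{\xi}\bra{\eta}$ one has $\iota_t T \iota_t^{\,*} = \ket{\iota_t\xi}\bra{\iota_t\eta}$, and the bound
\[
\norm{\ket{\iota_t\xi}\bra{\iota_t\eta} - \ket{\iota_{t_0}\xi}\bra{\iota_{t_0}\eta}} \;\leq\; \norm{\iota_t\xi - \iota_{t_0}\xi}\cdot \norm{\eta} + \norm{\xi}\cdot \norm{\iota_t\eta - \iota_{t_0}\eta}
\]
together with the strong convergence $\iota_t\xi \to \iota_{t_0}\xi$ gives norm continuity on rank-one operators, hence on the finite-rank algebra; an $\varepsilon/3$-argument using the norm density of finite-rank operators in $\mathcal{K}(\h)$ and the fact that each $\iota_t(-)\iota_t^{\,*}$ is a contractive $*$-homomorphism extends this to all of $\mathcal{K}(\h)$. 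This shows $\Phi(T) \in \C([0,1], \mathcal{K}(L^2G_\N))$.

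That $\Phi$ is a $*$-homomorphism is immediate because each $\iota_t$ is an isometry and $\iota_t^{\,*}\iota_t = \operatorname{id}_{\h}$, and $G$-equivariance is automatic from the $G$-equivariance of every $\iota_t$: for $g\in G$ and $T\in \mathcal{K}(\h)$ one has $g\cdot(\iota_t T \iota_t^{\,*}) = \iota_t (g\cdot T)\iota_t^{\,*}$. Finally, $\operatorname{ev}_0\circ \Phi$ and $\operatorname{ev}_1\circ \Phi$ recover the two prescribed $*$-homomorphisms, so $\Phi$ witnesses the required homotopy in the sense of \cref{defi: homotopy}, completing the proof.
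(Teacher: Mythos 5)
Your proof is correct and takes essentially the same route as the paper: the corollary is deduced there directly from the weak contractibility (in particular path-connectedness) of $\operatorname{Iso}(\h,l^2\N\otimes L^2G)\cong\operatorname{Iso}(\h,L^2G_\N)$ in the strong-$*$ topology, which is exactly the reduction you make. The conjugation homotopy $\Phi(T)(t)=\iota_t T\iota_t^{\,*}$ and the rank-one/density continuity check are precisely the details the paper leaves implicit in calling this an ``immediate'' corollary.
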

The following result justifies why we would like to work with the algebra $\hatkg$ at a technical level.
\begin{cor}\label{cor: hatkg projection}
    There exists a preferred homotopy class of $*$-homomorphism
    \[
    A \xrightarrow{\varepsilon} A \otimes \hatkg
    \]
    for any $A \in \GA$.
\end{cor}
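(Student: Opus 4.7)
The plan is to construct $\varepsilon$ by tensoring $A$ with a rank-one projection placed in the $G$-trivial summand $l^2\N$ of the ambient Hilbert space $(L^2G_\N) \oplus l^2\N$, and then exploit connectedness of the unit sphere of $l^2\N$ to show that the resulting $*$-homomorphism is independent of the choice of projection up to homotopy.

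First, I would fix any unit vector $\xi \in l^2\N$ and set $p_\xi := \ket{\xi}\bra{\xi} \in \mathcal{K}(l^2\N) \subseteq \hatkg$. Since the $G$-action on $l^2\N$ is trivial, $p_\xi$ is $G$-invariant, so the assignment $1 \mapsto p_\xi$ defines a $G$-equivariant continuous $*$-homomorphism $\mathbb{C} \to \hatkg$. Tensoring with $\operatorname{id}_A$ and using the natural isomorphism $A \otimes \mathbb{C} \cong A$, one obtains a $G$-equivariant continuous $*$-homomorphism $\varepsilon_\xi : A \to A \otimes \hatkg$, $a \mapsto a \otimes p_\xi$.

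To see that the homotopy class does not depend on $\xi$, let $\xi'$ be another unit vector in $l^2\N$. Since the unit sphere of $l^2\N$ is norm path-connected, pick a norm-continuous path $\gamma : [0,1] \to l^2\N$ of unit vectors with $\gamma(0) = \xi$ and $\gamma(1) = \xi'$. The map $\eta \mapsto \ket{\eta}\bra{\eta}$ is norm continuous on unit vectors, so $t \mapsto p_{\gamma(t)}$ is a norm-continuous path of $G$-invariant rank-one projections in $\hatkg$, which assembles into a $G$-equivariant continuous $*$-homomorphism $\mathbb{C} \to \C([0,1], \hatkg)$.

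Finally, tensoring this map with $\operatorname{id}_A$ and using \cref{prop: CXB} (applicable because $[0,1]$ is compact metrizable) together with the associativity and symmetry of $-\otimes-$ on $\GA$, one obtains a chain of natural isomorphisms
\[
A \otimes \C([0,1], \hatkg) \cong A \otimes \C([0,1]) \otimes \hatkg \cong \C([0,1]) \otimes (A \otimes \hatkg) \cong \C([0,1], A \otimes \hatkg),
\]
producing a $G$-equivariant continuous $*$-homomorphism $A \to \C([0,1], A \otimes \hatkg)$ whose evaluations at $t = 0$ and $t = 1$ recover $\varepsilon_\xi$ and $\varepsilon_{\xi'}$, exhibiting the desired homotopy. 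I do not anticipate any serious obstacle: everything is arranged so that $G$-equivariance is automatic by working in the trivially acted summand $l^2\N$, and the only non-formal input is the compatibility of $-\otimes-$ with continuous function algebras on compact metrizable spaces, which is exactly \cref{prop: CXB}.
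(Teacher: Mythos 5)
Your proposal is correct and is essentially the paper's own argument: a unit vector $\xi \in l^2\N$ is exactly a $G$-equivariant isometry of $\mathbb{C}$ onto the trivially-acted summand, and the paper likewise defines $\varepsilon$ from such an isometry and notes that any two are homotopic. You simply make the one-line proof explicit, using norm path-connectedness of the unit sphere of $l^2\N$ and \cref{prop: CXB} to package the path of projections as a homotopy, where the paper instead points to the weak contractibility of the relevant isometry space; both yield the same conclusion.
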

\begin{proof}
    The map can be obtained by choosing an isometry from $\mathbb{C}$ onto the $l^2\N$ summand of $L^2G_\N \oplus l^2\N$, and any two such isometries are homotopic from the discussion above..
\end{proof}
We record here another crucial technical result, which will be very useful in Section 5.
\begin{lem}\label{lem: cpt preserve lim}
   Given an inverse system $\{B_\beta\}_{\beta}$ in $\GA$, the natural map
    \[
    \Phi:(\lim\limits_{\beta} B_\beta) \otimes \mathcal{K}(\h) \xrightarrow{\cong} \lim\limits_{\beta}(B_\beta \otimes \mathcal{K}(\h))
    \]
    is an isomorphism in $\GA$.
\end{lem}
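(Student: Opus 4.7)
The natural map $\Phi$ is constructed via the universal property of limits: for each $\beta$, the projection $\pi_\beta\colon \lim_\beta B_\beta \to B_\beta$ induces $\pi_\beta \otimes \operatorname{id}_{\mathcal{K}(\h)}$ by functoriality of the tensor product (\cref{cor: ten pro}), and these assemble into $\Phi$. The plan is to exhibit both sides as Hausdorff $\nu$-completions of the same algebraic tensor product $A \odot \mathcal{K}(\h)$ sitting inside a common Cauchy-complete ambient space, where $A := \lim_\beta B_\beta$.

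The first reduction is to the case where each $B_\beta$ is a $G$-$C^*$-algebra. Using \cref{lem: G-AM light}, each $B_\beta$ is contained in its Arens-Michael representation $\lim_{p \in \mathcal{S}(B_\beta)} (B_\beta)_p$, and by the commutation of limits with limits (\cref{cor: eq bicom}) the inverse system $\{B_\beta\}_\beta$ is naturally refined to an inverse system of $G$-$C^*$-algebras. Both sides of $\Phi$ transform compatibly under this refinement, so I may assume each $B_\beta$ is a $G$-$C^*$-algebra.

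In this $C^*$-case, a standard fact about pro-$C^*$-algebras gives that $\mathcal{S}(A)$ is cofinally generated by the pullback seminorms $\pi_\beta^* \|\cdot\|_{B_\beta}$, and $A_{\pi_\beta^* \|\cdot\|}$ is a $C^*$-subalgebra of $B_\beta$ (the closed image of $\pi_\beta$). Substituting this cofinality into \cref{defi: equi tensor prod}, the algebraic tensor product $A \odot \mathcal{K}(\h)$ inherits the initial topology from the embedding
\[
A \odot \mathcal{K}(\h) \hookrightarrow \lim_\beta \bigl( A_{\pi_\beta^* \|\cdot\|} \otimes \mathcal{K}(\h) \bigr) \hookrightarrow \lim_\beta \bigl( B_\beta \otimes \mathcal{K}(\h) \bigr).
\]
The rightmost object lies in $\GA$ since each $B_\beta \otimes \mathcal{K}(\h)$ does and $\GA$ is closed under limits. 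Topological density of $A \odot \mathcal{K}(\h)$ in this target is straightforward: each $B_\beta \odot \mathcal{K}(\h)$ is dense in $B_\beta \otimes \mathcal{K}(\h)$, and since a neighborhood of a point in the inverse limit is specified by finitely many pullback seminorms, the components of an element of $A \odot \mathcal{K}(\h)$ in finitely many factors can be prescribed independently (by taking the elements $(b_\beta)_\beta \in A$ with the required components and arbitrary completion elsewhere, then tensoring with finitely many elements of $\mathcal{K}(\h)$). Consequently, both $(\lim_\beta B_\beta) \otimes \mathcal{K}(\h)$ and $\lim_\beta (B_\beta \otimes \mathcal{K}(\h))$ arise as the Hausdorff $\nu$-completion of $A \odot \mathcal{K}(\h)$ in this common ambient, which identifies them via $\Phi$.

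The main obstacle I anticipate is upgrading the topological density to genuine $\nu$-density, i.e., showing that every element of $\lim_\beta (B_\beta \otimes \mathcal{K}(\h))$ is realized as the limit of a $\nu$-Cauchy net drawn from $A \odot \mathcal{K}(\h)$, rather than merely a general Cauchy net. This should be handled by exploiting that $\mathcal{K}(\h)$ is a direct limit of the finite-dimensional $M_n(\C)$ — for which $M_n(\lim_\beta B_\beta) \cong \lim_\beta M_n(B_\beta)$ holds trivially — and constructing the required $\nu$-Cauchy approximation by a cofinal choice of finite-matrix approximants with controlled cardinality, mirroring the bookkeeping in the proof of \cref{prop: CXB}.
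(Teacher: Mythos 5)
Your route is genuinely different from the paper's, which is a two-step reduction: since limits and tensor products in $\GA$ are computed in the underlying non-equivariant category $\mathbb{A}^\nu$ (\cref{cor: forget preserve lim/colim}, \cref{cor: forget tensor}), one may forget the $G$-action, trivialize $\mathcal{K}(\h)\cong\mathcal{K}(l^2)$ by a non-equivariant unitary, and quote the non-equivariant statement \cite[Lemma 6.1]{MJ}. You instead attempt to reprove the analytic core by exhibiting both sides as Hausdorff $\nu$-completions of $A\odot\mathcal{K}(\h)$ inside $\lim_\beta(B_\beta\otimes\mathcal{K}(\h))$. That reformulation of what must be shown is correct, but the execution has genuine gaps.

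First, the reduction to $G$-$C^*$-algebras via Arens--Michael changes the limit: since $B_\beta$ is only $\nu$-complete, it is in general a proper subalgebra of $\varprojlim_{p}(B_\beta)_p=\overline{B_\beta}$, so refining the system to $\{(B_\beta)_p\}$ replaces $\lim_\beta B_\beta$ by $\lim_\beta\overline{B_\beta}$, which can be strictly larger; ``both sides transform compatibly'' is precisely what fails. (This reduction is also unnecessary: the cofinality of the pullback seminorms $\pi_\beta^*q$ in $\mathcal{S}(A)$ holds without it.) Second, your density argument treats the inverse limit as a product: in $\lim_\beta B_\beta$ one cannot prescribe finitely many components independently and ``complete arbitrarily elsewhere,'' because the components are constrained by the bonding maps. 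For a cofiltered system one only needs to control the component at a dominating index $\beta_0$, but then the question becomes whether an element in the image of $\lim_\beta(B_\beta\otimes\mathcal{K}(\h))\to B_{\beta_0}\otimes\mathcal{K}(\h)$ can be approximated by $\pi_{\beta_0}(A)\odot\mathcal{K}(\h)$, and $\pi_{\beta_0}(A)$ need not be dense in $B_{\beta_0}$ --- this is exactly the delicate point (compare the footnote in the Introduction on $(\prod_n\mathbb{C})\otimes\mathcal{K}(l^2)$) and it is not addressed. Third, the upgrade from topological density to $\nu$-sequential density, which you yourself flag as the main obstacle, is where the whole content of the lemma lives (it is the reason for working in $\A$ at all), and it is only announced, not carried out. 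As written, the proposal is a plan rather than a proof.
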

\begin{proof}
    Using the fact that the forgetful functor $\GA \rightarrow \mathbb{A}^\nu$ preserves limits and tensor products (see \cref{prop: eq cat} and \cref{cor: forget tensor}), the statement boils down to proving bijectivity of the map 
    \[
    \mathcal{K}(\iota^*)\Phi\mathcal{K}(\iota):(\lim\limits_{\beta} B_\beta) \otimes \mathcal{K}(l^2) \xrightarrow{\cong} \lim\limits_{\beta}(B_\beta \otimes \mathcal{K}(l^2)),
    \]
    where $\iota: \h \rightarrow l^2$ is a (non-equivariant) unitary. But this isomorphism is the content of Lemma 6.1 of \cite{MJ}.
\end{proof}
We next study the $\infty$-category obtained by inverting the $\kg$-equivalences. 
\begin{defi}
    A map $f: A \rightarrow B$ in $\GA$ is said to be a $\kg$-equivalence if $f \otimes 1_\kg: A \otimes \kg \rightarrow B \otimes \kg$ is a homotopy equivalence. Denote the $\GA_{h,\kg}$ the Dwyer-Kan localization of $\GA$ at homotopy equivalences and the $\kg$-equivalences, and $S_\kg:\GA_h \rightarrow \GA_{h,\kg}$ the localization functor. 
\end{defi}
We omit some details in the proof of the following lemma, as they are discussed in \cite{BUNBEN}. 
\begin{lem}\label{lem: kg loc functor}
\hfill
\begin{enumerate}
    \item The functor $S_\kg$ is the composite of the left Bousfield localization $\GA_h \rightarrow L_{\hatkg}\GA_h$, followed by the right Bousfield localization $L_{\hatkg}\GA_h \rightarrow R_\kg L_{\hatkg}\GA_h$, where $L_{\hatkg}:=-\otimes \hatkg, R_{\kg}:=-\otimes \kg$.
    \item The localization functor
    $S_{\kg}:\GA_{h} \rightarrow 
    \GA_{h,\kg}$ has a symmetric monoidal refinement. 
    \item $\GA_{h,\kg}$ admits finite coproducts and arbitrary small products, and $S_{\kg}$ preserves them.
\end{enumerate}
\end{lem}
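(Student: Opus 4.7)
The plan is to establish the three parts sequentially, with part 1 providing a structural factorization of $S_\kg$ into a composite of well-understood Bousfield localizations, from which parts 2 and 3 follow. We mirror \cite{BUNBEN}, where the analogous statements are worked out for $C^*$-algebras; the arguments there rely only on formal properties that remain available here by Sections 2 and 3.

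For part 1, the key algebraic input is the pair of $G$-equivariant $*$-isomorphisms $\hatkg \otimes \hatkg \cong \hatkg$ and $\hatkg \otimes \kg \cong \kg$, coming from the $G$-equivariant Hilbert space isomorphisms $(L^2G_\N \oplus l^2\N) \otimes (L^2G_\N \oplus l^2\N) \cong L^2G_\N \oplus l^2\N$ and $(L^2G_\N \oplus l^2\N) \otimes L^2G_\N \cong L^2G_\N$. Combined with the unit map $\varepsilon \colon A \to A \otimes \hatkg$ of \cref{cor: hatkg projection}, these identities make $L_{\hatkg} = -\otimes \hatkg$ an idempotent endofunctor on $\GA_h$ with unit $\varepsilon$, exhibiting it as a left Bousfield localization onto the full subcategory of $\hatkg$-stable objects. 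Within this subcategory, $-\otimes \kg$ is again idempotent (using $\kg \otimes \kg \cong \kg$ and $\hatkg \otimes \kg \cong \kg$), with a canonical counit induced by $\kg \hookrightarrow \hatkg$; it thus defines a right Bousfield localization onto the $\kg$-stable objects. A direct computation shows that the composite inverts precisely the $\kg$-equivalences.

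For part 2, we apply \cite[Proposition 3.3.2]{HIN} to the symmetric monoidal $\GA_h$ of \cref{cor: fully faithful oo cat.}: we need only verify that the class of $\kg$-equivalences is closed under tensoring with an arbitrary $C \in \GA$. If $f$ is a $\kg$-equivalence, then $(C \otimes f) \otimes \kg \cong C \otimes (f \otimes \kg)$ by associativity and symmetry of the tensor product, and tensoring with $C$ preserves homotopy equivalences by the simplicial enrichment of \cref{prop: ten pro simp enrichment}.

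For part 3, finite coproducts are preserved via a mapping-space argument: for any $\kg$-stable $B$, the identification $\operatorname{Map}_{\GA_{h,\kg}}(S_\kg A, B) \simeq \operatorname{Map}_{\GA_h}(A, B)$, together with the universal property of $A_1 \sqcup A_2$ in $\GA_h$ (available by \cref{cor: homotopy (co)products}), shows that $S_\kg(A_1 \sqcup A_2)$ enjoys the universal property of $S_\kg A_1 \sqcup S_\kg A_2$ in $\GA_{h,\kg}$. For small products, we use that $L_{\hatkg}$ preserves products, since the $\hatkg$-stable objects are closed under products in $\GA_h$ by the $\hatkg$-analogue of \cref{lem: cpt preserve lim}, and that $R_\kg$, being a coreflection, preserves all limits. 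The main obstacle is verifying the $\hatkg$-analogue of \cref{lem: cpt preserve lim}; this however goes through by the same argument, ultimately reducing to the non-equivariant statement of \cite[Lemma 6.1]{MJ} applied to $(L^2G_\N \oplus l^2\N)$ in place of $L^2G_\N$.
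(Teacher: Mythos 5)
Your treatment of parts 1 and 2, and of small products in part 3, follows essentially the same route as the paper: $\hatkg$ is an idempotent algebra in $\GA_h$ via $\varepsilon$ (the substantive point being that $\operatorname{id}\otimes\varepsilon$ is a homotopy equivalence, which rests on the contractibility of the relevant isometry spaces, cf. \cref{cor: dix-doua for kg}, and not merely on the abstract isomorphism $\hatkg\otimes\hatkg\cong\hatkg$), $\kg$ becomes an idempotent coalgebra in $L_{\hatkg}\GA_h$ via $\alpha$, part 2 is \cite[Proposition 3.3.2]{HIN} together with \cref{prop: ten pro simp enrichment}, and products are handled through \cref{lem: cpt preserve lim}. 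Note that no separate ``$\hatkg$-analogue'' of that lemma is needed: $\hatkg=\mathcal{K}(L^2G_\N\oplus l^2\N)$ is already of the form $\mathcal{K}(\h)$ for a separable $G$-Hilbert space $\h$, so the lemma as stated applies.

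The finite-coproduct step, however, contains a genuine error. You assert an identification $\operatorname{Map}_{\GA_{h,\kg}}(S_\kg A,B)\simeq\operatorname{Map}_{\GA_h}(A,B)$ for $\kg$-stable $B$; this would exhibit $S_\kg$ as a left adjoint to the inclusion of the bilocal objects, which it is not --- by your own part 1 it is a left Bousfield localization followed by a \emph{right} one, and the correct mapping-space formula is $\operatorname{Map}_{\GA_{h,\kg}}(A,B)\simeq\operatorname{Map}_{\GA_h}(A\otimes\hatkg\otimes\kg,B\otimes\hatkg\otimes\kg)$ (see \cref{rem: map space in skg}). Concretely, for $A=B=\mathbb{C}$ the left-hand side of your identification has $\pi_0\cong KK^G_0(\mathbb{C},\mathbb{C})$, a group, whereas $\pi_0\operatorname{Map}_{\GA_h}(\mathbb{C},\hatkg\otimes\kg)$ is the monoid of homotopy classes of projections in $\kg$; these do not agree, so the claimed universal property fails and your deduction that $S_\kg(A_1\sqcup A_2)$ is a coproduct collapses. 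The argument the paper uses instead is that $S_\kg$ inverts the corner inclusions $A\to A\otimes\mathcal{K}(H)$ for $H$ a trivial separable $G$-Hilbert space and hence, as in \cite[Lemma 4.5]{BD}, sends the canonical map $A*B\to A\times B$ to an equivalence; combined with the fact that $L_{\hatkg}$ (an honest left adjoint) preserves the coproducts supplied by \cref{cor: homotopy (co)products} and that colocal objects are closed under colimits, this yields existence and preservation of finite coproducts. You should replace your universal-property argument with this one.
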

\begin{proof}
    Assertion 1 follows directly from the discussion in \cite[Remark 3.34]{BUNBEN}\footnote{see the proof of \cref{lem: uli} for some elaborations.}. Assertion 2 follows from \cite[Proposition 3.3.2]{HIN}, in light of \cref{prop: ten pro simp enrichment}.
    
    For Assertion 3, the case of finite coproducts follows as in \cite[Section 3.4]{BUNBEN}. We briefly recall the idea for the convenience of the reader. Since $\GA_{h,\kg}$ is the Dwyer-Kan localization of $\GA_h$ at $\kg$-equivalences, it inverts, in particular, the left upper-corner inclusions $A \rightarrow A \otimes \mathcal{K}(H)$, where $H$ is any separable, Hilbert space with the trivial $G$-action. This implies, as in the proof of \cite[Lemma 4.5]{BD}, that for any algebras $A,B \in \GA$, the canonical map $A*B \rightarrow A \times B$ is mapped under $S_\kg$ to an equivalence in $\GA_{h,\kg}$. The case of arbitrary small products follows from \cref{lem: cpt preserve lim} and \cref{cor: homotopy (co)products}.
\end{proof}
\begin{rem}\label{rem: map space in skg}
    From \cref{lem: kg loc functor}.1 and general properties of Bousfield localizations, we deduce that 
    \[
    \operatorname{Map}_{L_{\hatkg}\GA_h}(A,B) \simeq \operatorname{Map}_{\GA_h}(A \otimes \hatkg,B \otimes \hatkg) \simeq \operatorname{Map}_{\GA_h}(A,B \otimes \hatkg) 
    \]
    and 
    \[
    \operatorname{Map}_{R_\kg L_{\hatkg}\GA_h}(A,B) \simeq \operatorname{Map}_{\GA_h}(A \otimes \hatkg \otimes \kg,B \otimes \hatkg \otimes \kg).
    \]
\end{rem}
\begin{cor}\label{cor: loc kg semi additive}
    $\GA_{h,\kg}$ is a semi-additive category. In particular, the mapping spaces in $\GA_{h,\kg}$ have canonical refinements to commutative monoids in anima.
\end{cor}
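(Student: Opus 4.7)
The plan is to reduce the claim to the standard characterization: an $\infty$-category with finite products and finite coproducts is semi-additive precisely when it has a zero object and the canonical map from the coproduct to the product of any two objects is an equivalence. Once we know this, the refinement of mapping spaces to commutative monoids in anima is automatic by the classical theorem of Gepner--Groth--Nikolaus (cf. \cite[Corollary 2.4]{GGN}, or \cite[\S 2]{BUNBEN} for the formulation used here).

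First, I would check that $\GA_{h,\kg}$ has a zero object. The trivial algebra $0$ is both initial and terminal in $\GA$, hence in $\GA_h$; since $S_\kg$ preserves finite coproducts (it preserves all small coproducts, as it is a localization) and all small products (by \cref{lem: kg loc functor}.3), the image of $0$ remains both initial and terminal in $\GA_{h,\kg}$, so $0$ is a zero object.

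Next, I would show that for any $A,B \in \GA$ the canonical morphism $A \sqcup B \to A \times B$ in $\GA_{h,\kg}$ is an equivalence. This is essentially already done inside the proof of \cref{lem: kg loc functor}.3: the left upper-corner embedding $C \to C \otimes \mathcal{K}(H)$ is inverted by $S_\kg$ for any separable Hilbert space $H$ with the trivial $G$-action, and feeding this into the Cuntz-style argument of \cite[Lemma 4.5]{BD} shows that the fold-type map $A * B \to A \times B$ (with $*$ the free product, i.e., the coproduct in $\GA$) becomes an equivalence in $\GA_{h,\kg}$. Since $S_\kg$ sends the coproduct $A*B$ in $\GA$ to the coproduct of $S_\kg A$ and $S_\kg B$ in $\GA_{h,\kg}$, and similarly for products, we obtain that the canonical comparison map in $\GA_{h,\kg}$ is an equivalence, as needed.

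Combining these two points gives that $\GA_{h,\kg}$ is semi-additive. The second assertion then follows formally: in a semi-additive $\infty$-category the fold map $A \sqcup A \to A$ together with the diagonal $A \to A \times A \simeq A \sqcup A$ endow every mapping anima with a canonical $E_\infty$-monoid structure, functorially in both variables. The main (and essentially only) obstacle is the biproduct comparison, but since the relevant Cuntz-type manipulation has already been executed in \cref{lem: kg loc functor}, no new analytic input is required and the argument is purely categorical.
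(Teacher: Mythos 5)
Your proposal is correct and follows essentially the same route as the paper: both arguments reduce semi-additivity to the fact that the canonical map $A*B \to A\times B$ is inverted by $S_\kg$ (extracted from the proof of \cref{lem: kg loc functor}.3) together with the preservation of finite coproducts and small products, and then invoke the general fact that mapping spaces in a semi-additive $\infty$-category are canonically commutative monoids. The only cosmetic difference is that you justify coproduct-preservation by a general (and not quite accurate) claim that localization functors preserve coproducts, whereas the needed statement is exactly what \cref{lem: kg loc functor}.3 already provides.
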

\begin{proof}
     It follows from the series of equivalences
    \[
    S_{h,\kg}(A) \coprod S_{h,\kg}(B) {~\simeq~} S_{h,\kg}(A*B) {~\simeq~} S_{h,\kg}(A \times B) {~\simeq~} S_\kg(L_h(A) \times L_h(B)) {~\simeq~} S_{h,\kg}(A) \prod S_{h,\kg}(B),
    \]
    where the third equivalence follows from \cref{cor: homotopy (co)products}, and the rest follow from (the proof of) \cref{lem: kg loc functor}.3. The last assertion is a general fact about semi-additive categories, see \cite[Remark 6.1.6.14]{HA}.
\end{proof}
\begin{rem}\label{rem: monoid}
    The monoid structure on the mapping spaces in $\GA_{h,\kg}$ is an upgradement of the usual block-sum addition on compact operators; see for instance \cite[Example 4.8]{BD}. The monoid structure can also be constructed independently; see \cite[Section 5]{D} for an approach, which can be made homotopy coherent by observing that the symmetric monoidal category of separable $G$-Hilbert spaces and $G$-equivariant isometries under direct sum can be used to model a linear isometries operad (see \cite[Section 2.7]{Operad}). We can use the latter approach to refine $\operatorname{Map}_{\GA_{h}}(A,B \otimes \hatkg)$ to a commutative monoid in anima too.
\end{rem}
At this juncture, we prove a technical lemma on mapping anima in $\GA_h$, which will be ubiquitous for us for the remainder of this article.
\begin{lem}\label{lem: uli}
    For any $A,B \in \GA$, we have a natural equivalence of commutative monoids
    \[
    \operatorname{Map}_{\GA_h}(A \otimes \kg,B \otimes \kg) \xrightarrow[\simeq]{\alpha_*} \operatorname{Map}_{\GA_h}(A \otimes \kg, B \otimes \hatkg)
    \]
\end{lem}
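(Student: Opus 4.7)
The plan is to prove the stronger statement that $\alpha \colon \kg \to \hatkg$ is already a homotopy equivalence in the Kan-enriched category $G$-$C^*$-alg, from which the lemma is immediate: tensoring with $B$ preserves homotopy equivalences by the simplicial enrichment of $\otimes$ (cf.\ \cref{prop: ten pro simp enrichment}), postcomposition with a homotopy equivalence induces an equivalence on mapping anima in $\GA_h$, and the resulting equivalence is automatically compatible with the commutative monoid structure from \cref{rem: monoid} by naturality of the linear isometries operad action on the target.

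To produce a homotopy inverse $\beta \colon \hatkg \to \kg$, I will invoke the weak contractibility of $\operatorname{Iso}(\h, l^2\N \otimes \h')$ recorded just before \cref{cor: dix-doua for kg}. Specializing with $\h = L^2G_\N \oplus l^2\N$ and $\h' = L^2G$ shows $\operatorname{Iso}(L^2G_\N \oplus l^2\N, L^2G_\N)$ is nonempty, so I may pick a $G$-equivariant isometry $j \colon L^2G_\N \oplus l^2\N \to L^2G_\N$ and set $\beta(T) := j T j^*$.

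It then remains to verify both composites are homotopic to the identity. For $\beta \circ \alpha$, the induced map $\kg \to \kg$ is the one coming from the $G$-equivariant isometry $j \circ \iota \colon L^2G_\N \to L^2G_\N$ (where $\iota$ is the defining isometry of $\alpha$), and \cref{cor: dix-doua for kg} applies verbatim to show it is homotopic to $\operatorname{id}_\kg$. For $\alpha \circ \beta \colon \hatkg \to \hatkg$, the key observation is that the underlying Hilbert space of $\hatkg$ itself decomposes as
\[
L^2G_\N \oplus l^2\N \;\cong\; l^2\N \otimes (L^2G \oplus \mathbb{C}),
\]
so the same [Mey, Lemma 4.3] yields weak contractibility of $\operatorname{Iso}(L^2G_\N \oplus l^2\N, L^2G_\N \oplus l^2\N)$. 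Consequently, the exact analogue of \cref{cor: dix-doua for kg} holds with $\hatkg$ as target: any two $G$-equivariant $*$-homomorphisms $\mathcal{K}(\h) \to \hatkg$ induced by isometries are homotopic. Applied to $\operatorname{id}_{\hatkg}$ and $\alpha \circ \beta$, this gives $\alpha \circ \beta \simeq \operatorname{id}_{\hatkg}$.

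The only genuine subtlety is this last observation that the weak contractibility input applies just as well when the target Hilbert space is $L^2G_\N \oplus l^2\N$ rather than $L^2G_\N$; once this is acknowledged, the proof is essentially formal. I expect no further obstacles.
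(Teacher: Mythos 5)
Your proposed homotopy inverse $\beta$ does not exist in general, and the stronger statement you reduce to is false. A $G$-equivariant isometry $j\colon L^2G_\N \oplus l^2\N \to L^2G_\N$ must carry $G$-invariant vectors to $G$-invariant vectors injectively; the summand $l^2\N$ carries the trivial action, whereas $L^2G_\N = L^2G \otimes l^2\N$ (with $G$ acting only through the regular representation) has no nonzero invariant vectors unless $G$ is compact. So for non-compact $G$ (e.g.\ $G=\Z$ or $\R$), $\operatorname{Iso}(L^2G_\N \oplus l^2\N, L^2G_\N)$ is empty, and your appeal to the weak-contractibility statement before \cref{cor: dix-doua for kg} is misapplied: that input is only available when the target absorbs the source (which is exactly what the trivial summand $l^2\N$ obstructs here). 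In fact, if $\alpha\colon \kg \to \hatkg$ were a homotopy equivalence of $G$-$C^*$-algebras, the two-step localization of \cref{lem: kg loc functor}.1 would collapse; the asymmetry between $\hatkg$ (idempotent algebra, receiving the unit $\varepsilon\colon\mathbb{C}\to\hatkg$) and $\kg$ (idempotent coalgebra, into which $\mathbb{C}$ does \emph{not} map equivariantly for non-compact $G$) is precisely the point of introducing $\hatkg$.

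What is true, and what the lemma actually rests on, is that $\alpha$ becomes a homotopy equivalence \emph{after tensoring with another copy of} $\kg$: by Fell absorption one has $G$-equivariant unitaries $L^2G_\N \otimes (L^2G_\N \oplus l^2\N) \cong L^2G_\N \otimes L^2G_\N$, so $\alpha\otimes\operatorname{id}_{\kg}\colon \kg\otimes\kg \to \hatkg\otimes\kg$ is an equivalence (this is the statement used just above \cref{cor: dix-doua for kg}). The hypothesis that the source of the mapping space is of the form $A\otimes\kg$ is therefore essential and cannot be discarded as you do: one exploits the extra $\kg$ factor, either by the explicit diagram in the paper's proof or by packaging it through the left/right Bousfield localizations and the mapping-space formulas of \cref{rem: map space in skg}, to transport the problem to a situation where only $\alpha\otimes\operatorname{id}_{\kg}$ (not $\alpha$ itself) needs to be inverted. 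Your argument would be salvageable only under the additional assumption that $G$ is compact, which is not the setting of the paper.
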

\begin{proof}
    Before proceeding, we mention how the left and right Bousfield localizations mentioned in \cref{lem: kg loc functor}.1 come by. All the details can be found in \cite[Remark 3.34]{BUNBEN}. The map $\varepsilon:\mathbb{C} \rightarrow \hatkg$ realizes $\hatkg$ as an idempotent algebra in $\GA_h$, whence we have the left Bousfield localization $\GA_h \rightarrow L_{\hatkg} \GA_h$ from \cite[Proposition 4.8.2.4]{HA}. Next, $\kg$ becomes an idempotent co-algebra in $L_{\hatkg}\GA_h$ via the map $\alpha: \kg \rightarrow \hatkg$ (note that $\hatkg$ represents the tensor unit in $L_\kg\GA_h$), since we have an equivalence $\kg \otimes \kg \xrightarrow{\alpha \otimes \operatorname{id}} \hatkg \otimes \kg$ from the discussion above \cref{cor: dix-doua for kg}. Therefore, we have the right Bousfield localization
    $
    L_{\hatkg}\GA_h\rightarrow R_{\kg}L_{\hatkg}\GA_h.
    $
    By the formula for mapping spaces in Bousfield localizations and \cref{rem: map space in skg}, we conclude that the map
    induced by $\alpha$
    \[
    \operatorname{Map}_{\GA_h}(A \otimes \hatkg \otimes \kg,B \otimes \hatkg \otimes \kg) \xrightarrow{\alpha_*} \operatorname{Map}_{\GA_h}(A \otimes \hatkg \otimes \kg,B \otimes \hatkg \otimes \hatkg)
    \]
    is an equivalence of commutative monoids. 

     Now, the commutative diagram
    \[
\begin{tikzcd}
	{A \otimes \mathcal{K}_G} && {B \otimes \mathcal{K}_G} \\
	{A \otimes \widehat{\mathcal{K}_G} \otimes \mathcal{K}_G} && {B \otimes \widehat{\mathcal{K}_G} \otimes \mathcal{K}_G} & {B \otimes \widehat{\mathcal{K}_G} \otimes \widehat{\mathcal{K}_G}} & {B \otimes \widehat{\mathcal{K}_G}}
	\arrow["{-\otimes 1_{\mathcal{K}_G}}", from=1-1, to=1-3]
	\arrow["\simeq", from=1-1, to=2-1]
	\arrow["\simeq", from=1-3, to=2-3]
	\arrow["{-\otimes 1_{\widehat{\mathcal{K}_G} \otimes \mathcal{K}_G}}", from=2-1, to=2-3]
	\arrow["{\alpha_*}", from=2-3, to=2-4]
	\arrow["\simeq"', from=2-5, to=2-4]
\end{tikzcd}\]
    yields a natural equivalence of commutative monoids
    \[
    \operatorname{Map}_{\GA_h}(A \otimes \kg,B \otimes \kg) \xrightarrow{\simeq} \operatorname{Map}_{\GA_h}(A \otimes \kg, B \otimes \hatkg),
    \]
    which can be directly checked to be induced by $\alpha$.
\end{proof}
\section{Universal equivariant algebras}
In this section, we describe a framework to construct universal $G$-$C^*$-algebras\footnote{the name might be a misnomer, and is borrowed from \cite{Phi}. In general, the universal algebras would be a typical $\nu$-complete lmc $G$-$C^*$-algebra, which very often would not even be Frechet!} on a $G$-set of generators and relations. As a corollary to the general construction, we will show that the category $\GA$ can be tensored over $\operatorname{sSet}$, which in turn will be necessary for certain constructions in the later sections.

Recall that, in commutative algebra, obtaining a universal $\Z$-algebra on a set of generators and relations is an easy construction: one simply takes the polynomial ring on the set of generators, and quotients out the ideal generated by the relations.
The situation gets more subtle if one asks the same question for $C^*$-algebras. Since $C^*$-algebra homomorphisms are contractive, a necessary condition to have (implicitly) on the relations is a global bound on the norm of every element. For example, this rules out the existence of a ``free" $C^*$-algebra. The study of universal $C^*$-algebras was first introduced in \cite{Shape}.

One related topic that has been missing from the literature is a systematic study of ``universal equivariant $C^*$-algebras". When the group is discrete, a minor modification of the construction in \cite{Shape} would work; see \cite{BUNBEN}. However, for a topological group, the main obstacle is to enforce continuity of the group action in the sense of \cref{defi: G-action}. To enforce such continuity relations, we follow the treatment of universal algebras as given in \cite{Phi}, which allows a much broader class of relations than \cite{Shape}.

As mentioned in the Introduction, the construction here appears to be new. We also hope the results of this section might be of independent interest.
\begin{defi}\label{defi: g set and g rel}
    Let $S$ be a $G$-set, and $R$ a collection of relations on $S$, which make sense for elements of a $G$-$C^*$-algebra, and includes the relations that the maps 
    \[
            G \ni g \mapsto g.s \in S
    \]
    are continuous, for all $s \in S$. We call $(S,R)$ a set of $G$-generators and relations. A $G$-representation of $(S,R)$ in a $G$-$C^*$-algebra $A$ is a $G$-equivariant function $\rho:S \rightarrow A$ such that the elements $\rho(s)$ for $s \in S$ satisfy the relations $R$ in $A$.
\end{defi}
\begin{defi}\label{defn: weak adm}
    A set $(S,R)$ of $G$-generators and $G$-relations is called $G$-weakly admissible if the following conditions are satisfied:
    \begin{enumerate}
        \item The zero map from $S$ to the zero $C^*$-algebra is a $G$-representation of $(S,R)$.
        \item If $\rho$ is a $G$-representation of $(S,R)$ in a $\nu$-complete lmc $G$-$C^*$-algebra $A$, and $B$ is a $\nu$-complete lmc $G$-$C^*$-subalgebra of $A$ containing $\rho(S)$, then $\rho$ is a $G$-representation of $(S,R)$ in $B$.
        \item If $\rho$ is a $G$-representation of $(S,R)$ in a $\nu$-complete lmc-$G$-$C^*$-algebra $A$, and $\varphi: A \rightarrow B$ is a continuous homomorphism to a $G$-$C^*$-algebra $B$, then $\varphi \circ \rho$ is a $G$-rep of $(S,R)$ in $B$.
        \item If $A$ is a $\nu$-complete lmc $G$-$C^*$-algebra, and $\rho:S \rightarrow A$ is a function such that, for every $p \in \mathcal{S}(A)$, the composition of $\rho$ with the canonical map $A \rightarrow A_p$ is a $G$-rep of $(S,R)$ in $A_p$, then $\rho$ is a $G$-representation of $(S,R)$.
        \item If $\rho_1,\rho_2,...,\rho_n$ are $G$-reps of $(S,R)$ in $G$-$C^*$-algebras $A_1,A_2,...,A_n$, then the map \[
        s \mapsto (\rho_1(s),\rho_2(s),...,\rho_n(s))
        \]
        is a $G$-rep of $(S,R)$ in $A_1 \oplus A_2 \oplus ... \oplus A_n$.
    \end{enumerate}
    \end{defi}
    \begin{rem}
        Point 3. in \cref{defn: weak adm} differs from the corresponding definition in \cite{Phi}. We demand post-composition of a $G$-representation by any continuous homomorphism is again a $G$-representation, whereas in \cite[Definition 1.3.4]{Phi}, it's only required for surjective homomorphisms. Our framework is more related to the one discussed in \cite{LOR}.

        However, since we are working with $\nu$-complete algebras, at a technical level, we cannot work under the surjective homomorphism assumption unlike Phillips. The difficulty arises from the fact that it's apriori not clear that the canonical map $A \rightarrow A_p$ is surjective in our case. However, all the relations we care about are indeed allowed in our framework, so we don't lose much.
    \end{rem}
    We are now in a position to state and prove the main result of this section:
\begin{thm}\label{thm: uni alg}
    Let $(S,R)$ be a $G$-weakly admissible set of $G$-generators and relations. Then, there exists a $\nu$-complete lmc-$G$-C*-algebra $C_{G,\nu}^*(S,R)$, equipped with a $G$-representation $\rho:S \rightarrow C_{G,\nu}^*(S,R)$, such that, for any $G$-rep $\sigma$ of $(S,R)$ in a $\nu$-complete lmc-$G$-C*-algebra $B$, there exists a unique $G$-equivariant homomorphism $\varphi: C_{G,\nu}^*(S,R) \rightarrow B$ making the following diagram commute:
    \[
\begin{tikzcd}
	{(S,R)} && B \\
	{C_{G,\nu}^*(S,R)}
	\arrow["\sigma", from=1-1, to=1-3]
	\arrow["\rho"', from=1-1, to=2-1]
	\arrow["{\exists!\varphi}"', dashed, from=2-1, to=1-3]
\end{tikzcd}\]
Moreover, equivariant homomorphisms of this universal algebra are in 1-1 correspondence with the weakly admissible relations.
\end{thm}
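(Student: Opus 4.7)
The plan is to adapt the construction of universal pro-$C^*$-algebras from \cite{Phi} to the equivariant, $\nu$-complete setting, proceeding in three stages: first build a raw lmc $*$-algebra from a free $*$-algebra equipped with an initial topology coming from all ``small'' $G$-representations of $(S,R)$, then apply the Hausdorff $\nu$-completion functor of \cref{rem: Hausdorff nu completion}, and finally verify the universal property.

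For the construction, I would take $F_*(S)$ to be the free complex $*$-algebra on the underlying set of $S$; the $G$-action on $S$ extends to an algebraic (and not a priori continuous) $G$-action on $F_*(S)$ by $*$-algebra automorphisms. Let $\mathcal R$ denote the collection of pairs $(A,\sigma)$ in which $A$ is a $G$-$C^*$-algebra and $\sigma\colon S\to A$ is a $G$-representation of $(S,R)$. Using Axiom (2) of \cref{defn: weak adm}, I may replace each $A$ by the $G$-$C^*$-subalgebra generated by $\sigma(S)$, whose cardinality is bounded in terms of $|S|$ and $|G|$; this makes $\mathcal R$ a set. Each $(A,\sigma)$ induces a $G$-equivariant $*$-algebra map $\tilde\sigma\colon F_*(S)\to A$ and hence, by pulling back the $C^*$-norm of $A$, a $G$-invariant $C^*$-seminorm $p_\sigma$ on $F_*(S)$. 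I would then equip $F_*(S)$ with the initial topology generated by $\{p_\sigma\}_{\sigma\in\mathcal R}$, making it a (possibly non-Hausdorff) lmc $*$-algebra carrying an algebraic $G$-action, and define $C^*_{G,\nu}(S,R)$ to be its Hausdorff $\nu$-completion in the sense of \cref{rem: Hausdorff nu completion}, with $\rho\colon S \to C^*_{G,\nu}(S,R)$ the obvious composition.

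The main technical obstacle, as I see it, is to verify that the algebraic $G$-action on $F_*(S)$ is continuous for the topology just described, so that \cref{prop: eq cat} actually produces a $\nu$-complete Hausdorff lmc $G$-$C^*$-algebra at the end. This amounts to showing that $g \mapsto p_\sigma(g\cdot x - x)$ is continuous at $e \in G$ for every $x \in F_*(S)$ and every $\sigma\in\mathcal R$; writing $x$ as a polynomial in finitely many generators $s_1,\ldots,s_n \in S$ and using the joint continuity of the algebraic operations in the $G$-$C^*$-algebra target of $\sigma$, this reduces to the continuity of $g \mapsto g\cdot\tilde\sigma(s_i)=\tilde\sigma(g\cdot s_i)$, which is precisely the continuity relation built into $(S,R)$ by \cref{defi: g set and g rel}. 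Once this is in hand, the continuous $G$-action descends to the Hausdorff quotient and lifts to the $\nu$-completion by \cref{prop: eq cat}, and Axiom (4) of \cref{defn: weak adm} shows that $\rho$ is itself a $G$-representation of $(S,R)$ in $C^*_{G,\nu}(S,R)$.

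To verify the universal property, given a $G$-representation $\sigma\colon S\to B$ with $B\in\GA$, I would use \cref{lem: G-AM light} to embed $B \hookrightarrow \varprojlim_{q\in\mathcal S(B)} B_q$ and invoke Axiom (3) of \cref{defn: weak adm}: each composite $\sigma_q\colon S\to B_q$ is a $G$-representation of $(S,R)$ in the $G$-$C^*$-algebra $B_q$, hence (after restricting to the subalgebra generated by its image by Axiom (2)) lies in $\mathcal R$. Consequently $q\circ\tilde\sigma = p_{\sigma_q}$ appears in the defining family of seminorms on $F_*(S)$, so the induced $*$-algebra map $\tilde\sigma\colon F_*(S)\to B$ is continuous and factors uniquely through the Hausdorff $\nu$-completion to produce the desired $G$-equivariant continuous $*$-homomorphism $\varphi\colon C^*_{G,\nu}(S,R)\to B$ with $\varphi\circ\rho = \sigma$. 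Uniqueness, together with the $1$-$1$ correspondence between equivariant homomorphisms out of $C^*_{G,\nu}(S,R)$ and $G$-representations of $(S,R)$, will then follow from the fact that the image of $\rho$ generates $C^*_{G,\nu}(S,R)$ as a Hausdorff $\nu$-complete lmc $G$-$C^*$-algebra.
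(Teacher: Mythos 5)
Your construction is essentially the one the paper uses: free $*$-algebra on $S$, the family of $G$-invariant $C^*$-seminorms pulled back from $G$-representations in $G$-$C^*$-algebras, Hausdorff $\nu$-completion, and the same reductions (continuity of the $G$-action via continuity of $g\mapsto g\cdot s$; the universal property via \cref{lem: G-AM light} and Axioms (2),(3),(4)).

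There is, however, one genuine gap: the claim that ``Axiom (4) of \cref{defn: weak adm} shows that $\rho$ is itself a $G$-representation.'' Axiom (4) only applies once you know that for \emph{every} $q\in\mathcal S(C^*_{G,\nu}(S,R))$ the composite $S\to C^*_{G,\nu}(S,R)_q$ is a $G$-representation, and $\mathcal S(C^*_{G,\nu}(S,R))$ is strictly larger than your defining family $\{p_\sigma\}$: a general continuous $G$-invariant $C^*$-seminorm $q$ is only dominated by $c\cdot\max(p_{\sigma_1},\dots,p_{\sigma_n})$ for some $c>0$ and finitely many defining seminorms. To close this you need two further observations, both of which the paper supplies: first, that for $C^*$-seminorms the constant $c$ can be taken to be $1$ (by a repeated-squaring argument using $q(x^*x)=q(x)^2$); and second, that Axiom (5) lets you replace the finite family $\sigma_1,\dots,\sigma_n$ by a single $G$-representation $\sigma$ into $A_1\oplus\cdots\oplus A_n$ with $q\le\|\sigma(\cdot)\|$, after which one factors $B\to C^*_{G,\nu}(S,R)_q$ (using Axiom (2) to shrink $B$ to the subalgebra generated by $\sigma(S)$) and invokes Axiom (3) to see that $\kappa_q\circ\rho$ is a $G$-representation. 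Note that Axiom (5) plays no role anywhere in your write-up, which is a sign that this step is missing. The rest of your argument, including the set-theoretic bound making $\mathcal R$ a set and the verification of the universal property, matches the paper's.
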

\begin{proof}
    Let $F(S)$ denote the free $*$-algebra on the set $S$; note that it carries naturally an action of $G$. Let $D$ be the set of all $G$-invariant $C^*$-seminorms on $F(S)$ of the form 
    \[
    q(x)=\lVert \sigma(x) \rVert ~\forall~x \in F(S)
    \]
    for some $G$-rep $\sigma$ of $S$ in a $G$-C*-algebra. Then take $C_{G,\nu}^*(S,R)$ to be the Hausdorff $\nu$-completion of $F(S)$ in the family of $G$-invariant $C^*$-seminorms $D$, with the obvious map $\rho:S \rightarrow C_{G,\nu}^*(S,R)$. 
    We need to prove the following three statements in bullet points to prove our result:
    \begin{itemize}
        \item \textit{$C_{G,\nu}^*(S,R)$ is a $\nu$-complete lmc.-$G$-C*-algebra.}
    \end{itemize}
    \begin{proof} In view of \cref{prop: eq cat}, the only assertion which needs an argument is that $F(S)$ is a $G$-algebra. That $G$ acts by automorphisms on $C^*_{G,\nu}(S,R)$ is clear since the same is true for $F(S)$. In light of \cref{prop: eq cat}.4, it remains to show that $\forall~p \in F(S)$, the map $g \mapsto \varphi_g(p)$ is continuous. But since the addition, (scalar) multiplication, and adjoint operations on $F(S)$ are continuous, it suffices to verify that $g \mapsto \varphi_g(s)$ is continuous for all $s \in S$, which is evidently true.
    \end{proof}
    \begin{itemize}
        \item \textit{$\rho$ is a $G$-rep.} 
    \end{itemize}
    \begin{proof}
        To see that $\rho$ is a $G$-representation, take $q \in \mathcal{S}(C_{G,\nu}^*(S,R))$. Then, the elementary theory of locally convex spaces yields that $q \leq c\operatorname{max}(q_1,q_2,...,q_n)$ for some $c>0$, and $q_1,q_2,...,q_n \in D$. In fact, since we are dealing with $C^*$-seminorms, $c$ can be taken to be less than or equal to $1$. Indeed, for each $x \in C_{G,\nu}^*(S,R)$, write
        \begin{equation} \label{eqn: 1}
            q(x)=c_x\operatorname{max}(q_1(x),q_2(x),...,q_n(x)).
        \end{equation} It suffices to show that $c_x \leq 1 ~\forall~ x \in C_{G,\nu}^*(S,R)$. Squaring both sides of \cref{eqn: 1} yields
        \begin{align*}
            &q(x^*x)=c_x^2\operatorname{max}(q_1(x^*x),q_2(x^*x),...,q_n(x^*x))\\
            \implies & c_{x^*x}=c_x^2.
        \end{align*}
        If $c_x>1$, we can now get a contradiction by repeated squaring of \cref{eqn: 1}; on the one hand, the constant blows up to infinity, but on the other hand, it is bounded by $c$. 

        Hence, we have $q \leq \operatorname{max}(q_1,q_2,...,q_n)$ for some $q_1,q_2,...,q_n \in D$. By (5) of \cref{defn: weak adm}, there exists a $G$-representation $\sigma$ of $(S,R)$ in a $G$-$C^*$-algebra $B$ such that \begin{equation}\label{eqn: 2}
            q(x) \leq \lVert \sigma(x) \rVert ~\forall~ x \in F(S).
        \end{equation} By (2) of \cref{defn: weak adm}, we may assume that $B$ is generated by $\sigma(S)$. 

        We claim that there is a unique $*$-homomorphism $\varphi:B \rightarrow C^*_{G,\nu}(S,R)_q$ making the following diagram commute
        \[
        \begin{tikzcd}
	{(S,R)} && B \\
	{C_G^*(S,R)} && {C_G^*(S,R)_q = \overline{F(S)/{\ker q}}}
	\arrow["\sigma", from=1-1, to=1-3]
	\arrow["\rho", from=1-1, to=2-1]
	\arrow["\varphi", from=1-3, to=2-3]
	\arrow["{\kappa_q}", from=2-1, to=2-3]
\end{tikzcd}.
        \]
        Indeed, the assignment $\varphi(\sigma(x)) \mapsto \kappa_q\circ \rho(x),~x \in S$ is well-defined and contractive (by \cref{eqn: 2}), whence extends to $B$.
        
        Thus, $\kappa_q \circ \rho$ is a $G$-representation by (3) of \cref{defn: weak adm}, whence $\rho$ is a $G$-representation by (4) of \cref{defn: weak adm}.
    \end{proof}
    \begin{itemize}
        \item $C_{G,\nu}^*(S,R)$ satisfies the desired universal property.
    \end{itemize}
    \begin{proof}
        We first note that the uniqueness assertion is clear from the definition of $C^*_{G,\nu}(S,R)$, since it is a completion of the free $*$-algebra on the set $S$. 
        
        To show existence, we first note that if $B$ is a $G$-$C^*$-algebra, then we are done by the very definition of $C^*_{G,\nu}(S,R)$. For a general Hausdorff $\nu$-complete lmc. $G$-C*-algebra, we consider $B \subseteq \varprojlim\limits_{p \in \mathcal{S}(B)} B_p$, and we are done using (2), (3), (4) of \cref{defn: weak adm}.
    \end{proof}
    The assertion about the $1\text{-}1$ correspondence between the set of equivariant homomorphisms of this universal algebra and the set of weakly admissible relations follows immediately from the universal property of $C^*_{G,\nu}(S,R)$ and point 3. of \cref{defn: weak adm}.
\end{proof}
\begin{example}\label{example}
    \begin{enumerate}
        \item Every $A \in \GA$ can be expressed as a universal $G$-C*-algebra on a $G$-set of generators and relations. Indeed, we can take $S=A$ with the (discrete) action of $G$, and as relations take all $*$-algebraic relations in $A$, together with the relation that the identification map $A \rightarrow S$ is continuous (and the relations $g \mapsto g.s$ are continuous $\forall~s \in S$). Compare \cite[Remark 1.3.7]{Phi}.
        \item We can also obtain a model for coproducts in $\GA$ using a $G$-generators and relations description. Indeed, given a family $\{A_i\}$ in $\GA$, we take $S=\coprod_{i \in I} A_i$ with the obvious group action, and the relations $R$ are all $*$-algebraic relations in each of the individual $A_i$'s, together with the relations that the natural inclusions $A_i \rightarrow S$ are continuous. Compare \cite[Proposition 1.5.3(2)]{Phi}.
    \end{enumerate}
\end{example}
\subsection{Tensoring over simplicial sets}
We now discuss an important construction using these universal $G$-equivariant algebras. 

In this section $\operatorname{Cpt}$ and $\operatorname{Cpt}_*$ denote the categories of compact Hausdorff and pointed compact Hausdorff spaces, respectively.
\begin{const}
    Suppose $A \in \GA$, and $X \in \operatorname{Cpt}$. For $x \in X$, let $A_x$ denote a copy of $A$. We can obtain a model for the free product $*_{x \in X}A_x$ as in 2. of \cref{example}. We put a further relation on the free product asserting that the natural map $A \times X \rightarrow *_{x \in X}A_x$ is continuous. Denote the resulting universal $\nu$-complete lmc. $G$-C*-algebra as $A\otimes X$.

    If $(X,x_0) \in \operatorname{Cpt}_*$, then we can modify the previous construction by requiring the following map to be continuous instead:
    \begin{align*}
        A \times X &\longrightarrow *_{x \in X}A_x \\
        (a,x) &\mapsto \begin{cases}
            a \in A_x~\text{if}~x \neq x_0\\
            0 \in A_{x_0}~\text{if}~x=x_0
        \end{cases}
    \end{align*}
     We denote the resulting $\nu$-complete lmc. $G$-C*-algebra as $A \owedge X$.
\end{const}
\begin{prop}\label{prop: adj unpointed}
    Let $A,B \in \GA$ and $X \in \operatorname{Cpt}$. Then, there is a natural isomorphism of simplicial sets
    \[
        \operatorname{Hom}(A \otimes X,B)_\bullet \rightarrow \operatorname{Hom}(A,\C(X,B))_\bullet
    \]
    natural in $A,B,X$.
\end{prop}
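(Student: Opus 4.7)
The plan is to first establish the adjunction at the level of $0$-simplices, and then extend it to the full simplicial enrichment by naturality. The key point is that the universal algebra $A \otimes X$ was defined precisely by the set of relations that a $G$-equivariant continuous $*$-homomorphism from $A\otimes X$ to $B$ is supposed to represent.

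First, I would unfold the universal property of \cref{thm: uni alg}. A $G$-equivariant continuous $*$-homomorphism $A \otimes X \to B$ is the same as a $G$-representation of the set $(S,R)$ used to define $A \otimes X$: that is, a $G$-equivariant map $\rho:\coprod_{x \in X}A_x \to B$ whose restriction to each copy $A_x$ is a continuous $*$-homomorphism $f_x:A \to B$, and such that the assembled map
\[
\widetilde{\rho}:A \times X \longrightarrow B,\qquad (a,x)\longmapsto f_x(a),
\]
is (jointly) continuous. Thus $\operatorname{Hom}(A\otimes X,B)$ is in natural bijection with the set of such jointly continuous $G$-equivariant families $\{f_x\}_{x\in X}$ of continuous $*$-homomorphisms.

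Next, I would apply the exponential law for the compact Hausdorff space $X$. A map $\widetilde{\rho}:A\times X \to B$ is continuous if and only if its mate $\rho^{\vee}:A \to \mathcal{C}(X,B)$ (where the target carries the topology of uniform convergence, which agrees with the compact-open topology since $X$ is compact) is continuous; this is standard for locally convex-valued function spaces, cf. \cite[Section 3.3]{TVS2}. Since $X$ carries the trivial $G$-action, the $G$-action on $\mathcal{C}(X,B)$ is pointwise, and hence $\widetilde{\rho}$ is $G$-equivariant if and only if $\rho^{\vee}$ is. The $*$-algebraic structure is also pointwise, so $\rho^{\vee}$ is a $*$-homomorphism if and only if each $f_x$ is. Combining, we obtain a natural bijection
\[
\operatorname{Hom}(A\otimes X,B)\;\cong\;\operatorname{Hom}(A,\mathcal{C}(X,B)),
\]
which is the desired isomorphism at the level of $0$-simplices. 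Note that $\mathcal{C}(X,B)\in\GA$ by \cref{prop: CXB}, so the right hand side makes sense.

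To conclude, I would promote this bijection to an isomorphism of simplicial sets by plugging $\mathcal{C}(|\Delta^n|,B)$ in place of $B$. Indeed,
\[
\operatorname{Hom}(A\otimes X,B)_n = \operatorname{Hom}(A\otimes X,\mathcal{C}(|\Delta^n|,B))\cong \operatorname{Hom}(A,\mathcal{C}(X,\mathcal{C}(|\Delta^n|,B))),
\]
and the standard homeomorphism $\mathcal{C}(X,\mathcal{C}(|\Delta^n|,B))\cong \mathcal{C}(X\times|\Delta^n|,B)\cong \mathcal{C}(|\Delta^n|,\mathcal{C}(X,B))$, valid because $X$ and $|\Delta^n|$ are both compact Hausdorff, identifies the above with $\operatorname{Hom}(A,\mathcal{C}(X,B))_n$. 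The compatibility of these identifications with face and degeneracy maps is immediate from naturality in $B$. The main technical point to keep track of is that all relations used to define the universal algebra $A\otimes X$ precisely encode continuity of $\widetilde{\rho}$; everything else is the formal exponential law combined with the universal property of \cref{thm: uni alg}.
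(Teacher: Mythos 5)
Your proposal is correct and follows essentially the same route as the paper: both construct the map by restricting along $A\times X\to A\otimes X$ and currying, and both reduce the bijectivity to the universal property of \cref{thm: uni alg} together with the exponential law for compact Hausdorff $X$. You simply spell out the details (the unfolding of the relations defining $A\otimes X$ and the extension to higher simplices via $\C(X,\C(|\Delta^n|,B))\cong\C(|\Delta^n|,\C(X,B))$) that the paper leaves to the reader.
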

\begin{proof}
    Note that there is a natural map 
    \[
    i : A \times X \rightarrow A \otimes X
    \] by definition, which yields a map
    \[
    i^*:\operatorname{Hom}(A \otimes X, B) \rightarrow \operatorname{Map}_G(A \times X,B).
    \]
    Postcomposing $i^*$ with the map
    \[
    \iota_A: \operatorname{Map}(A \times X, B) \rightarrow 
    \operatorname{Map}(A,\C(X,B)).\]
    restricts to give the map
    \[
        \operatorname{Hom}(A \otimes X,B) \rightarrow \operatorname{Hom}(A,\C(X,B))
    \]
    in the statement. The result now follows directly from the universal properties of the universal algebra; cf. \cite[Proposition 5.3]{MJ}.
\end{proof}
We can deduce the analogous result in the pointed case too, upon noting that the zero homomorphism maps to the zero homomorphism under the natural map mentioned in \cref{prop: adj unpointed}.
\begin{prop} \label{prop: adj pointed}
    Let $A,B \in \GA$ and $(X,x_0) \in \operatorname{Cpt_*}$. Then, there is a natural isomorphism of pointed simplicial sets
    \[
        \operatorname{Hom}(A \owedge X,B)_\bullet \rightarrow \operatorname{Hom}(A,\C_*(X,B))_\bullet
    \]
    natural in $A,B,X$.
\end{prop}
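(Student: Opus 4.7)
The plan is to mimic the proof of \cref{prop: adj unpointed}, exploiting the fact that $A \owedge X$ is defined by the same generators and relations as $A \otimes X$ with one additional relation imposed: the generators at the basepoint are set to zero. Dually, $\C_*(X,B)$ sits inside $\C(X,B)$ as the kernel of $\operatorname{ev}_{x_0}$, so $G$-equivariant $*$-homomorphisms $A \to \C_*(X,B)$ are precisely those $A \to \C(X,B)$ whose composition with $\operatorname{ev}_{x_0}$ vanishes. It is this matching of the added relation with the kernel condition that drives the proof.

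Concretely, I would first unwind the universal property of $A \owedge X$ provided by \cref{thm: uni alg}: a $G$-equivariant $*$-homomorphism $A \owedge X \to C$ is the same datum as a $G$-representation of its defining generators and relations, namely a jointly continuous $G$-equivariant map $\tilde\sigma : A \times X \to C$ that is a $*$-homomorphism in the $A$-variable for each fixed $x$ and satisfies $\tilde\sigma(a, x_0) = 0$ for all $a \in A$. By the exponential adjunction for compact Hausdorff spaces together with \cref{prop: CXB}, such $\tilde\sigma$ are in bijection with $G$-equivariant $*$-homomorphisms $A \to \C(X,C)$ whose image lands in the ideal $\C_*(X,C)$, i.e.\ with elements of $\operatorname{Hom}(A, \C_*(X,C))$. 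Specializing $C = B$ yields the required bijection on $0$-simplices, and since it carries the zero homomorphism to the zero homomorphism it is a map of pointed sets.

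To upgrade to an isomorphism of pointed simplicial sets I would apply the same argument at each simplicial level $n$ with $C = \C(|\Delta^n|, B)$, using the natural identification $\C_*(X, \C(|\Delta^n|, B)) \cong \C(|\Delta^n|, \C_*(X, B))$; both sides consist of jointly continuous $G$-equivariant functions $X \times |\Delta^n| \to B$ vanishing on $\{x_0\} \times |\Delta^n|$, and the identification is compatible with the face and degeneracy maps. Naturality in $A$, $B$, and $X$ is inherited from the unpointed case. The only non-routine thing to check, and the main potential obstacle, is that the added vanishing relation defining $A \owedge X$ is $G$-weakly admissible in the sense of \cref{defn: weak adm}. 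This is straightforward: the condition $\tilde\sigma(a, x_0) = 0$ is preserved by passage to $\nu$-complete $G$-$C^*$-subalgebras, by post-composition with continuous $*$-homomorphisms, by Hausdorff completion along each continuous $G$-invariant $C^*$-seminorm, and by finite direct sums, which are precisely the five closure properties demanded by \cref{defn: weak adm}.
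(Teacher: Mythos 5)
Your proposal is correct and follows essentially the same route as the paper: the paper simply deduces the pointed case from \cref{prop: adj unpointed} by observing that the extra vanishing-at-$x_0$ relation on the universal algebra side matches the condition of landing in the ideal $\C_*(X,B)\subseteq\C(X,B)$, and that the zero homomorphism is carried to the zero homomorphism. Your additional verification of $G$-weak admissibility and the level-by-level simplicial check are details the paper leaves implicit, but they do not constitute a different argument.
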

The following remarks are in order:
\begin{rem}\label{rem: main rem}
\begin{enumerate}
    \hfill
    \item 
    The constructions $A \otimes X$ and $A \owedge X$ are clearly functorial in both variables. In particular, we have natural maps $A \rightarrow A \otimes X$, and $A \rightarrow A \owedge X$ for a (pointed) compact space $X$, each obtained by the choice of a point in $X$.
    
    \item In analogy with our experience from spaces, one might wonder whether the two natural maps mentioned above are topological inclusions.

    The map $A \rightarrow A\otimes X$ is indeed a topological inclusion. To see this, one can simply note that there is also a morphism $A \otimes X \rightarrow A$, induced by the map $X \rightarrow *$, which realizes $A$ as a retract of $A \otimes X$.

    The map $A \rightarrow A \owedge X$ however need not always be a topological inclusion. To see this, consider $A=\mathbb{C}$, $G=\{e\}$, $X=[0,1]$, with the basepoint $\{0\}$. We claim that $\mathbb{C} \owedge I$ is the zero algebra. Indeed, by \cref{prop: adj pointed}, we get 
    \begin{equation}\label{eqn: adj pointed}
         \operatorname{Hom}(\mathbb{C} \owedge I,B) \rightarrow \operatorname{Hom}(\mathbb{C},\mathcal{C}_*(I,B))~\forall~B \in \mathbb{A}^{\nu}.
    \end{equation}
    But, since $*$-homomorphisms out of the complex numbers classify projections, and the algebras $\C_*(I,B)$ have no projections (see II.5.5.9 in \cite{BLA}), it follows that the right-hand side of \cref{eqn: adj pointed} is zero for all $B \in \mathbb{A}^{\nu}$. The conclusion follows.

    In \cref{sec: sep}, we use the so-called stable cone seminorm extension property to circumvent this problem.

    \item
    More generally, for any $B \in \GA$, $B \owedge I$ is a zero object in $\GA$. Indeed, for any $A \in \GA$, we have
    \[
    \operatorname{Hom}(B \owedge I,A)_\bullet \cong \operatorname{Hom}(B,\C_*(I,A))_\bullet \simeq 0,
    \]
    where the last equivalence follows from \cref{lem: zero object path algebra}.
\end{enumerate}
\end{rem}
As an immediate consequence of our work so far, we have the following important corollaries, which show that $\GA$ is tensored over $\operatorname{sSet}$.
\begin{cor}\label{cor: adjunction}\label{prop: partial adjunction unpointed}
    For $K \in \operatorname{sSet}$, upon defining 
    \[
    A \otimes |K|:=\underset{L \subseteq K\operatorname{finite}}{\operatorname{colim}} A \otimes |L|,
    \]
    the functor
    \[
    A \otimes |-|: \operatorname{sSet} \rightarrow \GA
    \]
    is an enriched left adjoint to the functor
    \[
    \operatorname{Hom}(A,-)_\bullet:\GA \rightarrow \operatorname{sSet}
    \]
\end{cor}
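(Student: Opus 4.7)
The plan is to bootstrap from Proposition~\ref{prop: adj unpointed} (which handles compact Hausdorff spaces) and Corollary~\ref{cor: cotensor preserved} (which supplies the cotensor structure for arbitrary simplicial sets), using the fact that every simplicial set $K$ is the filtered colimit of its finite simplicial subsets and that for a finite simplicial set $L$ the realisation $|L|$ is a compact Hausdorff space, so $A\otimes|L|\in\GA$ has already been defined.

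First, for each finite simplicial subset $L\subseteq K$, Proposition~\ref{prop: adj unpointed} gives a natural isomorphism
\[
\operatorname{Hom}(A\otimes|L|,B)_\bullet \;\cong\; \operatorname{Hom}(A,\C(|L|,B))_\bullet.
\]
For general $K=\operatorname*{colim}_{L\subseteq K\,\mathrm{finite}} L$, the definition of $A\otimes|K|$ as a colimit in $\GA$ (which exists by Corollary~\ref{cor: eq bicom}) together with the formal behaviour of $\operatorname{Hom}(-,B)_\bullet$ on colimits (applied degreewise, noting that $\C(|\Delta^n|,B)\in\GA$ by Proposition~\ref{prop: CXB}) combine to yield the chain
\begin{align*}
\operatorname{Hom}(A\otimes|K|,B)_\bullet
&\cong \operatorname{Hom}\!\bigl(\operatorname*{colim}_L A\otimes|L|,\,B\bigr)_\bullet \\
&\cong \lim_L \operatorname{Hom}(A\otimes|L|,B)_\bullet \\
&\cong \lim_L \operatorname{Hom}(A,\C(|L|,B))_\bullet \\
&\cong \operatorname{Hom}\!\bigl(A,\lim_L \C(|L|,B)\bigr)_\bullet \\
&\cong \operatorname{Hom}(A,\C(|K|,B))_\bullet \\
&\cong \operatorname{Map}_{\operatorname{sSet}}(K,\operatorname{Hom}(A,B)_\bullet).
\end{align*}
Here the third line is the finite case above, the fourth uses that $\operatorname{Hom}(A,-)_\bullet$ sends limits in $\GA$ to limits of simplicial sets (by Corollary~\ref{cor: forget preserve lim/colim} together with the fact that $\operatorname{Hom}(A,-)$ commutes with limits degreewise), the fifth is the very definition of $\C(|K|,B)$ recalled in Corollary~\ref{cor: cotensor preserved}, and the last is Corollary~\ref{cor: cotensor preserved} itself.

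Naturality in $K$, $B$ (and $A$) is immediate at each step, which in particular upgrades $A\otimes|-|$ to a functor $\operatorname{sSet}\to\GA$, and the resulting adjunction is automatically simplicially enriched since we have exhibited an isomorphism of simplicial mapping objects, not merely of underlying hom sets. The only point requiring a little care is the compatibility of the isomorphisms of Proposition~\ref{prop: adj unpointed} with the bonding maps $L\hookrightarrow L'$ of finite subsimplicial sets, which is a direct consequence of naturality in $X$ there. I do not expect any deep obstacle: the argument is a formal manipulation layered on top of the finite case already handled in the preceding propositions.
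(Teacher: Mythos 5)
Your argument is correct and is essentially the paper's intended proof: the paper declares the corollary an "immediate consequence" of \cref{prop: adj unpointed} together with \cref{cor: cotensor preserved} and the formal behaviour of $\operatorname{Hom}$ under (co)limits, which is exactly the chain of isomorphisms you spell out. The only quibble is that the fourth isomorphism is better justified by noting that $\C(|\Delta^n|,-)$ commutes with limits in $\GA$ (limits being computed on underlying spaces), rather than by \cref{cor: forget preserve lim/colim}, but this is the same implicit step the paper itself uses in \cref{cor: cotensor preserved}.
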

\section{Adjoint of tensoring with compacts on a $G$-Hilbert space}
The goal of this section is to come up with a left adjoint of the functor
\[
- \otimes \mathcal{K}(\h): \GA \rightarrow \GA,
\]
where $\h$ is a separable $G$-Hilbert space. As mentioned in the Introduction, this will be ubiquitous for us to verify the axioms of a model category on $\GA$. We would obtain the existence of the left adjoint via the Freyd's adjoint functor theorem:

\begin{thm}[{\cite[Theorem V.6.2]{Mac}},{\cite[Theorem X.1.2]{Mac}}]\label{thm: aft}
Given a small-complete category $A$ with small hom-sets, a functor $G:A \rightarrow X$ has a left adjoint if and only if it preserves all small limits and satisfies the so-called \textit{solution set condition:}

For each object $x \in X$ there is a small set $I$ and an $I$-indexed family of arrows $f_i:x \rightarrow Ga_i$ such that every arrow $h:x \rightarrow Ga$ can be written as a composite $h=Gt \circ f_i$ for some index $i$ and some $t:a_i \rightarrow a$.  

Moreover, when this is the case, a left adjoint $F$ is given on each $x \in X$ as 
\begin{equation}\label{eqn: lim}
Fx=\varprojlim(Q:(x \downarrow G) \rightarrow A),
\end{equation}
where $x \downarrow G$ denotes the comma category over $G$, and $Q$ denotes the projection onto the second factor. The left adjunct of each arrow $g:x \rightarrow Ga$ is the component $\lambda_g:Fx \rightarrow Qg=a$ of the limiting cone $\lambda$ for the limit \cref{eqn: lim}.
\end{thm}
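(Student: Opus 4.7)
The plan is to prove both directions of the equivalence. The necessary direction is essentially formal: if $G$ has a left adjoint $F$, then $G$ preserves all limits because right adjoints do, and the solution set condition is satisfied by the singleton family consisting of the unit $\eta_x: x \to GFx$, since any $h: x \to Ga$ factors uniquely as $Gt \circ \eta_x$ where $t$ is the left adjunct of $h$. No obstacle here.

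For sufficiency, the strategy is to exhibit the left adjoint via the claimed formula $Fx = \varprojlim Q$, and the first step is to understand why the comma category $(x \downarrow G)$, whose objects are pairs $(a, f : x \to Ga)$ and whose morphisms $(a,f) \to (a', f')$ are arrows $t: a \to a'$ in $A$ with $Gt \circ f = f'$, even admits a limit of the projection functor $Q$. The main obstacle is a size issue: $(x \downarrow G)$ is generally a proper class, not a small category, so the limit is not automatic. This is precisely where the solution set condition intervenes.

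The key reduction I would carry out is the following classical lemma: \emph{a complete category with a (small) weakly initial set has an initial object}, obtained by first taking the product $P = \prod_{i \in I} (a_i, f_i)$ in $(x \downarrow G)$ (which exists because $A$ is complete and $G$ preserves limits, so $(x \downarrow G)$ inherits limits from $A$) and then taking the simultaneous equalizer of all endomorphisms of $P$. Apply this lemma inside $(x \downarrow G)$: the solution set condition is precisely the assertion that $\{(a_i, f_i)\}_{i \in I}$ is weakly initial, so there is an initial object, which I denote $(Fx, \eta_x)$. Equivalently, $Fx$ is the limit of $Q$ in $A$ and $\eta_x: x \to G(Fx) = \varprojlim GQ$ is the canonical cone.

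Having produced $Fx$ together with $\eta_x$, the verification of the adjunction is formal. For any $a \in A$ and any $h : x \to Ga$, the object $(a, h) \in (x \downarrow G)$ admits a unique morphism from the initial object $(Fx, \eta_x)$, namely an arrow $t : Fx \to a$ with $Gt \circ \eta_x = h$; this gives the required bijection $\operatorname{Hom}_A(Fx, a) \cong \operatorname{Hom}_X(x, Ga)$, and naturality in $a$ is immediate from the initial-object universal property. Functoriality of $F$ in $x$ follows because any arrow $k: x \to x'$ induces a functor $(x' \downarrow G) \to (x \downarrow G)$ by precomposition, hence an induced arrow $Fk: Fx \to Fx'$ on initial objects. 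The identification of $Fx$ with $\varprojlim Q$ and of the adjunct of $g : x \to Ga$ with the $(a,g)$-component of the limiting cone is then just a rereading of the construction.
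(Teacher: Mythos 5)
This statement is quoted verbatim from Mac Lane and the paper offers no proof of its own, so there is nothing to compare against except the cited source; your argument is precisely the standard proof from Mac Lane (necessity via the unit, sufficiency via completeness of the comma category $(x \downarrow G)$, the weakly-initial-set-to-initial-object lemma using the product followed by the joint equalizer of all endomorphisms, and the identification of the initial object with $\varprojlim Q$), and it is correct. The only point worth flagging is that the joint equalizer step silently uses the small-hom-sets hypothesis to ensure the endomorphisms of $P$ form a set, but that hypothesis is in the statement, so this is a non-issue.
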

\begin{rem}
    Note that the limit in \cref{eqn: lim} can in fact be taken to be a small limit in light of the solution set condition; see \cite{inifun}. Thus, we have 
    \[
    GFx=\varprojlim(G(Q:(x \downarrow G)\rightarrow A)),
    \]
    and the unit of the adjunction is the canonical morphism 
    \[
    x \xrightarrow{\varprojlim_{i} f_i} \varprojlim_{i} Ga_i.
    \]
\end{rem}
In light of \cref{lem: cpt preserve lim}, we only have to show that the functor
\[
- \otimes \mathcal{K}(H): \GA \rightarrow \GA.
\]
satisfies the solution set condition. Given any $X \in \GA$ and a morphism $\varphi: X \rightarrow A \otimes \mathcal{K}(H)$ in $\GA$, $\varphi$ factors through a $\nu$-complete $G$-lmc-$C^*$-algebra generated by a set of cardinalily $\leq |X|$. So, the required small set maybe taken to isomorphism classes of all $\nu$-complete $G$-lmc $C^*$-algebras on a generating set of cardinality $\leq |X|$. Thus, by \cref{thm: aft}, we get:
\begin{prop}\label{prop: adj}
    There exists a functor $- \boxtimes \mathcal{K}(\h) : \GA \rightarrow \GA$ \footnote{$- \boxtimes \mathcal{K}(\h)$ is just some notation, should not be confused with any version of tensor product} for which we have a natural bijection:
    \begin{equation}\label{eqn: adj}
        \operatorname{Hom}(A \boxtimes \mathcal{K}(\h),B) \rightarrow \operatorname{Hom}(A, B \otimes \mathcal{K}(\h))
    \end{equation}
    for all $A,B \in \GA$.
\end{prop}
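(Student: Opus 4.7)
The plan is to apply Freyd's adjoint functor theorem (\cref{thm: aft}) to the functor $G := -\otimes \mathcal{K}(\h) : \GA \to \GA$. The category $\GA$ is small-complete with small hom-sets by \cref{cor: eq bicom}, and preservation of small limits by $G$ is handled by \cref{lem: cpt preserve lim}, so everything hinges on verifying the solution set condition.

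For the solution set condition, fix $X \in \GA$ and any morphism $\varphi : X \to A \otimes \mathcal{K}(\h)$. The idea is to factor $\varphi$ as
\[
X \xrightarrow{\bar\varphi} B \otimes \mathcal{K}(\h) \xrightarrow{\iota \otimes 1_{\mathcal{K}(\h)}} A \otimes \mathcal{K}(\h),
\]
where $\iota : B \hookrightarrow A$ is the inclusion of a sufficiently ``small'' $\nu$-complete lmc $G$-$C^*$-subalgebra. Concretely, using the realization of $A \otimes \mathcal{K}(\h)$ as the Hausdorff $\nu$-completion of $A \odot \mathcal{K}(\h)$ together with the separability of $\mathcal{K}(\h)$, one should extract from $\varphi(X) \subseteq A \otimes \mathcal{K}(\h)$ a set $S \subseteq A$ of cardinality at most $|X|$ such that $\varphi(X) \subseteq \langle S \rangle \otimes \mathcal{K}(\h)$, where $\langle S \rangle$ denotes the $\nu$-complete lmc $G$-$C^*$-subalgebra of $A$ generated by $S$. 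Taking $I$ to consist of isomorphism classes of all $\nu$-complete lmc $G$-$C^*$-algebras admitting such a description, together with the family of factorizations $\bar\varphi$, supplies the desired solution set.

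The main obstacle is the set-theoretic bookkeeping needed to ensure that $I$ is indeed a set: the cardinality of $\langle S \rangle$ must be bounded by a function of $|S|$ (and $|\nu|$) alone. This is precisely where the standing assumption $|\nu| \geq 2^{2^{|\mathbb{N}|}}$ from the beginning of Section 2 enters, since the transfinite iteration of $\nu$-sequential closures from \cref{lem: nu completion construction} stabilizes at a bounded ordinal, yielding the required uniform control. Once the solution set condition is in place, \cref{thm: aft} produces the functor $-\boxtimes \mathcal{K}(\h)$ together with the claimed natural bijection, and the explicit formula given there expresses $A \boxtimes \mathcal{K}(\h)$ as a small limit indexed by the comma category $(A \downarrow G)$.
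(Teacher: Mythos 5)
Your proposal follows the paper's own argument essentially verbatim: apply Freyd's adjoint functor theorem (\cref{thm: aft}), with preservation of small limits supplied by \cref{lem: cpt preserve lim}, and verify the solution set condition by factoring an arbitrary $\varphi:X\to A\otimes\mathcal{K}(\h)$ through a $\nu$-complete lmc $G$-$C^*$-algebra generated by a set of bounded cardinality, so that isomorphism classes of such algebras form a set. The only quibble is your bound $\lvert S\rvert\leq\lvert X\rvert$: since elements of the Hausdorff $\nu$-completion of $A\odot\mathcal{K}(\h)$ are produced by transfinitely iterated $\nu$-sequences, capturing $\varphi(X)$ inside $\langle S\rangle\otimes\mathcal{K}(\h)$ may require a set $S$ of cardinality depending on $\nu$ as well as $\lvert X\rvert$ — but this does not affect the smallness of the solution set, and hence not the conclusion.
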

In light of \cref{prop: CXB}, we can upgrade the adjunction in \cref{prop: adj} to an enriched one, as stated in the following result. This yields a key desired property, which was not possible to attain in the framework discussed in \cite{MJ}.
\begin{cor}\label{prop: hom class}\label{cor: kk_g eq}
    We have a natural isomorphism of simplicial sets 
    \[
    \operatorname{Hom}_{G}(A \boxtimes \mathcal{K}(\h),B)_\bullet \rightarrow \operatorname{Hom}_{G}(A, B \otimes \mathcal{K}(\h))_\bullet.
    \]
    In particular, for a morphism $B \rightarrow B'$ in $\GA$, the induced map
    \[
    \operatorname{Hom}_{G}(A \boxtimes \widehat{\mathcal{K}_G},B)_\bullet \rightarrow \operatorname{Hom}(A \boxtimes \widehat{\mathcal{K}_G},B')_\bullet
    \] is a (weak) homotopy equivalence iff 
    the adjoint
    \[
    \operatorname{Hom}(A, B \otimes \widehat{\mathcal{K}_G})_\bullet \rightarrow \operatorname{Hom}(A, B' \otimes \widehat{\mathcal{K}_G})_\bullet 
    \]
    is a (weak) homotopy equivalence. 
\end{cor}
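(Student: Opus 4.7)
The plan is to upgrade the set-level adjunction of \cref{prop: adj} to a simplicial enrichment by applying it levelwise to $\C(|\Delta^n|, B)$, and then to derive the ``in particular'' statement formally from naturality. Recall the definition of the Kan enrichment: $\operatorname{Hom}(A',B)_n = \operatorname{Hom}(A', \C(|\Delta^n|, B))$ for $A',B \in \GA$.

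For the first step I would apply \cref{prop: adj} with the right-hand $\GA$-object replaced by $\C(|\Delta^n|, B)$ to obtain a natural bijection
\[
\operatorname{Hom}(A \boxtimes \mathcal{K}(\h),\, \C(|\Delta^n|, B)) \;\cong\; \operatorname{Hom}(A,\, \C(|\Delta^n|, B) \otimes \mathcal{K}(\h)).
\]
Since $|\Delta^n|$ is compact and metrizable, \cref{prop: CXB} provides an isomorphism $\C(|\Delta^n|, B) \cong \C(|\Delta^n|) \otimes B$, and hence
\[
\C(|\Delta^n|, B) \otimes \mathcal{K}(\h) \;\cong\; \C(|\Delta^n|) \otimes B \otimes \mathcal{K}(\h) \;\cong\; \C(|\Delta^n|,\, B \otimes \mathcal{K}(\h))
\]
using \cref{prop: CXB} once more. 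Chaining these identifications realizes the promised bijection $\operatorname{Hom}(A \boxtimes \mathcal{K}(\h), B)_n \cong \operatorname{Hom}(A, B \otimes \mathcal{K}(\h))_n$ in each degree. To promote it to an isomorphism of simplicial sets I would verify compatibility with face and degeneracy maps. The simplicial structure is induced by the cosimplicial object $|\Delta^\bullet|$ acting through $\C(-,-)$; since both the adjunction of \cref{prop: adj} and the identification $\C(X) \otimes B \cong \C(X,B)$ are natural in the algebra argument, and the latter is also natural in the compact Hausdorff space $X$, the induced bijections assemble to an isomorphism of simplicial sets.

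The second ``in particular'' statement is then a formal consequence: for a morphism $B \to B'$ in $\GA$, naturality of the simplicial isomorphism yields a commutative square
\[
\begin{tikzcd}
	{\operatorname{Hom}(A \boxtimes \hatkg, B)_\bullet} && {\operatorname{Hom}(A, B \otimes \hatkg)_\bullet} \\
	{\operatorname{Hom}(A \boxtimes \hatkg, B')_\bullet} && {\operatorname{Hom}(A, B' \otimes \hatkg)_\bullet}
	\arrow["\cong", from=1-1, to=1-3]
	\arrow[from=1-1, to=2-1]
	\arrow[from=1-3, to=2-3]
	\arrow["\cong"', from=2-1, to=2-3]
\end{tikzcd}
\]
whose horizontal arrows are isomorphisms, so the left vertical map is a (weak) homotopy equivalence if and only if the right one is.

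The one point that I expect to require a little care is the verification that the identification $\C(X,B) \otimes \mathcal{K}(\h) \cong \C(X, B \otimes \mathcal{K}(\h))$ afforded by \cref{prop: CXB} is genuinely natural in $X$ (and not only in $B$), so that precomposition with a continuous map induced by a simplicial operator commutes with the identifications. This should follow from unwinding the proof of \cref{prop: CXB}, where the isomorphism is induced by the map $\C(X) \odot B \to \C(X,B)$ sending $f \otimes b$ to $x \mapsto f(x)b$; this is manifestly natural in both variables, and so the rest of the verification is purely formal.
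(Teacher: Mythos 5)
Your proposal is correct and matches the paper's (essentially unstated) argument: the paper simply invokes \cref{prop: CXB} to commute $\C(|\Delta^n|,-)$ past $-\otimes\mathcal{K}(\h)$ and thereby upgrade the set-level adjunction of \cref{prop: adj} degreewise, exactly as you do, with the ``in particular'' clause following formally from naturality of the adjunction in $B$. Your additional care about naturality in $X$ (so that the degreewise bijections assemble into a map of simplicial sets) is a detail the paper leaves implicit, and your justification of it via the explicit map $f \otimes b \mapsto (x \mapsto f(x)b)$ is sound.
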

We end this section by drawing a few important corollaries to the above result.
\begin{cor}\label{cor: inf cat adjunction}
    We have adjunctions
    \[
    \begin{tikzcd}
\GA_h
\arrow[r, "- \boxtimes \mathcal{K}(\h)"{name=F}, bend left=25] &
\GA_h
\arrow[l, "- \otimes \mathcal{K}(\h)"{name=G}, bend left=25]
\arrow[phantom, from=F, to=G, "\dashv" rotate=-90]
\end{tikzcd}.
    \]
    In particular, we have adjunctions \[
    \begin{tikzcd}
\operatorname{ho}(\GA_h)
\arrow[r, "- \boxtimes \mathcal{K}(\h)"{name=F}, bend left=25] &
\operatorname{ho}(\GA_h)
\arrow[l, "- \otimes \mathcal{K}(\h)"{name=G}, bend left=25]
\arrow[phantom, from=F, to=G, "\dashv" rotate=-90]
\end{tikzcd}.
    \]
\end{cor}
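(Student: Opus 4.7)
The plan is to promote the enriched adjunction of \cref{cor: kk_g eq} to an adjunction between the $\infty$-categories $\GA_h$ obtained as homotopy coherent nerves, and then descend to the homotopy category. I would proceed in three steps.

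First, I observe that both $-\otimes\mathcal{K}(\h)$ and $-\boxtimes\mathcal{K}(\h)$ canonically upgrade to simplicially enriched endofunctors of $\GA$. For the right adjoint, this follows from \cref{prop: ten pro simp enrichment}. For the left adjoint, the natural isomorphism of simplicial sets in \cref{cor: kk_g eq}, together with enriched Yoneda, provides the required simplicial enrichment of $-\boxtimes\mathcal{K}(\h)$: one defines the induced map on mapping simplicial sets by transporting the enriched functoriality of $-\otimes\mathcal{K}(\h)$ across the adjunction, and the fact that this transport assembles into a simplicially enriched functor is forced by the naturality of the isomorphism in both variables.

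Second, since both functors are simplicially enriched and $\GA$ is Kan-enriched (each $\operatorname{Hom}(A,B)_\bullet$ being a Kan complex), they preserve homotopy equivalences and hence descend to functors between the homotopy coherent nerves $\GA_h$. The natural isomorphism of Kan complexes in \cref{cor: kk_g eq} is, a fortiori, a natural equivalence, so it yields a natural equivalence of mapping anima in $\GA_h$, and in particular the unit and counit of the enriched adjunction descend.

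Third, I would invoke the standard fact (for example Proposition 5.2.4.6 of \cite{HTT}) that a simplicial adjunction between simplicial categories enriched in Kan complexes induces an adjunction of the associated $\infty$-categories upon taking homotopy coherent nerves. All the hypotheses have been verified above, yielding the first adjunction. The second adjunction at the level of homotopy (ordinary) categories then follows by applying $\pi_0$ to the mapping anima in $\GA_h$, since passing to $\operatorname{ho}$ preserves adjunctions. The only subtle point is verifying in the first step that $-\boxtimes\mathcal{K}(\h)$ is genuinely simplicially enriched rather than merely set-theoretically functorial, but once one accepts \cref{cor: kk_g eq} at the simplicial level this amounts to routine bookkeeping via the enriched Yoneda lemma.
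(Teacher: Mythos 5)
Your proposal is correct and follows essentially the same route as the paper: both promote the enriched adjunction of \cref{cor: kk_g eq} to a simplicial adjunction between the Kan-enriched categories and then invoke Lurie's result on simplicial adjunctions (the paper cites \cite[Corollary 5.2.4.5]{HTT} together with \cref{lem: homotopy localization}) before descending to homotopy categories via \cite[Proposition 5.2.2.9]{HTT}. Your extra care in checking that $-\boxtimes\mathcal{K}(\h)$ is genuinely simplicially enriched via enriched Yoneda is exactly the point the paper leaves implicit, and is handled correctly.
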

\begin{proof}
    In light of \cref{prop: hom class}, the first assertion follows directly from \cref{lem: homotopy localization} and \cite[Corollary 5.2.4.5]{HTT}. The second assertion follows from \cite[Proposition 5.2.2.9]{HTT}.
\end{proof}
    In light of \cref{cor: hatkg projection} and \cref{cor: inf cat adjunction}, we conclude that there is a preferred homotopy class of $*$-homomorphism
    \[
    A \boxtimes \hatkg \xrightarrow{\varepsilon^{\operatorname{ad}}} A
    \]
    for each $A$ in $\GA$. At this juncture, we mention the following technical result, which is going to crucial for us in the later sections.
\begin{cor}\label{prop: B boxtimes K mapsto B is weak eq}
    The functor $- \boxtimes \hatkg:\GA_h \rightarrow \GA_h$ is fully faithful, i.e., $\GA_h \rightarrow L_{\hatkg}\GA_h$ is a right Bousfield localization. Thus, for all $A,B \in \GA$, we have a natural equivalence
     \[
    \operatorname{Map}_{\GA_h}(A \boxtimes \hatkg, B \boxtimes \hatkg) \xrightarrow{\varepsilon^{\operatorname{ad}}_*} \operatorname{Map}_{\GA_h}(A \boxtimes \hatkg,B).
    \]
\end{cor}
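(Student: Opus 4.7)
The plan is to prove the natural equivalence in the corollary first and to deduce the fully-faithfulness / right-Bousfield-localization statement as its consequence. The strategy reduces everything to checking that $\varepsilon^{\operatorname{ad}} \colon B \boxtimes \hatkg \to B$ becomes an equivalence after $\otimes \hatkg$, or equivalently, that it is an equivalence in the reflective subcategory $L_{\hatkg}\GA_h$ from \cref{lem: kg loc functor}.

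To verify this, I would show that for every $\hatkg$-local $Y \in \GA_h$, the map
\[
\varepsilon^{\operatorname{ad}}_* \colon \operatorname{Map}_{\GA_h}(B, Y) \longrightarrow \operatorname{Map}_{\GA_h}(B \boxtimes \hatkg, Y)
\]
is an equivalence. The adjunction of \cref{cor: kk_g eq} identifies the target with $\operatorname{Map}_{\GA_h}(B, Y \otimes \hatkg)$, and post-composition with the equivalence $\varepsilon_Y \colon Y \xrightarrow{\sim} Y \otimes \hatkg$ identifies the source with $\operatorname{Map}_{\GA_h}(B, Y \otimes \hatkg)$ as well. A short computation with the triangle identity $\varepsilon_B = (\varepsilon^{\operatorname{ad}} \otimes 1_{\hatkg}) \circ \alpha_B$, where $\alpha$ denotes the unit of the $\boxtimes / \otimes$-adjunction, together with naturality of $\varepsilon$, shows that the resulting triangle relating $\varepsilon^{\operatorname{ad}}_*$, the adjunction isomorphism, and $\varepsilon_{Y,*}$ commutes. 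Hence $\varepsilon^{\operatorname{ad}}_*$ is an equivalence for every $\hatkg$-local $Y$. Feeding this through the reflective adjunction $L_{\hatkg} \dashv \iota$ and applying Yoneda inside $L_{\hatkg}\GA_h$ then produces a canonical equivalence $(B \boxtimes \hatkg) \otimes \hatkg \simeq B \otimes \hatkg$, i.e., $\varepsilon^{\operatorname{ad}} \otimes 1_{\hatkg}$ is an equivalence in $\GA_h$.

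With this in hand, unwinding both sides of the asserted natural equivalence via the $\boxtimes / \otimes$-adjunction turns $\varepsilon^{\operatorname{ad}}_*$ into $\operatorname{Map}_{\GA_h}(A, -)$ applied to the equivalence $\varepsilon^{\operatorname{ad}} \otimes 1_{\hatkg}$, so the equivalence of the corollary follows by Yoneda. For the Bousfield-localization interpretation, the same adjunction tested against $\hatkg$-local targets shows that $-\boxtimes \hatkg$ sends $\hatkg$-equivalences to equivalences, so it factors through $L_{\hatkg}\GA_h$; combined with the natural equivalence just established, this exhibits $-\boxtimes \hatkg$ as a right adjoint to the (fully faithful) inclusion $\iota \colon L_{\hatkg}\GA_h \hookrightarrow \GA_h$, which is precisely the data of a right Bousfield localization.

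I expect the main technical hurdle to be the commutativity check relating $\varepsilon^{\operatorname{ad}}_*$ to $\varepsilon_{Y,*}$ on $\hatkg$-local targets, which requires carefully comparing the unit--counit coherences of the $\boxtimes / \otimes$-adjunction with those of the reflective adjunction $L_{\hatkg} \dashv \iota$; the remaining steps are formal consequences of chaining the two adjunctions together with Yoneda in the appropriate subcategory.
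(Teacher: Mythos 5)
Your core computation is correct, and it takes a genuinely more hands-on route than the paper. The paper's proof is purely formal: by \cref{lem: kg loc functor}.1 the functor $-\otimes\hatkg\colon\GA_h\to L_{\hatkg}\GA_h$ is a (left Bousfield) localization functor, it admits a left adjoint induced by $-\boxtimes\hatkg$ (\cref{cor: inf cat adjunction}), and \cite[Corollary 2.7]{RAMZI} says that a left adjoint to a localization functor is automatically fully faithful; the displayed equivalence is then a general property of colocalizations. You instead verify the key point directly: writing $\varepsilon_B=(\varepsilon^{\operatorname{ad}}_B\otimes 1_{\hatkg})\circ\alpha_B$ (note this is just the formula defining the adjunct, not a triangle identity, but the identity you invoke is the right one) and using naturality of $\varepsilon$, you identify precomposition with $\varepsilon^{\operatorname{ad}}_B$ on maps into an $\hatkg$-local $Y$ with postcomposition by the equivalence $\varepsilon_Y$, conclude that $\varepsilon^{\operatorname{ad}}_B\otimes 1_{\hatkg}$ is an equivalence in $\GA_h$, and then obtain the equivalence of mapping anima by transporting $\varepsilon^{\operatorname{ad}}_*$ through the $\boxtimes/\otimes$-adjunction in the target variable. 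This is essentially the ``more direct proof'' alluded to in the remark following the corollary, and it is sound.

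The one genuine slip is in your final assembly of the right-Bousfield statement. What is needed (and what your computation actually delivers) is that the functor $F'\colon L_{\hatkg}\GA_h\to\GA_h$ obtained by factoring $-\boxtimes\hatkg$ through the localization is a \emph{fully faithful left adjoint of the localization functor} $-\otimes\hatkg\colon\GA_h\to L_{\hatkg}\GA_h$; indeed $\operatorname{Map}_{\GA_h}(A\boxtimes\hatkg,B)\simeq\operatorname{Map}_{\GA_h}(A,B\otimes\hatkg)\simeq\operatorname{Map}_{L_{\hatkg}\GA_h}(A\otimes\hatkg,B\otimes\hatkg)$ exhibits the adjunction, and its unit is identified with the equivalence $A\otimes\hatkg\simeq(A\boxtimes\hatkg)\otimes\hatkg$ you have just produced, which is exactly full faithfulness of $F'$. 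It is not correct to say that $-\boxtimes\hatkg$ is a right adjoint to the inclusion $\iota\colon L_{\hatkg}\GA_h\hookrightarrow\GA_h$: a right adjoint to $\iota$ would be a functor $\GA_h\to L_{\hatkg}\GA_h$ and would make the subcategory of $\hatkg$-local objects coreflective, which is neither claimed nor proved; $F'$ goes in the opposite direction, and the coreflective subcategory produced by the corollary is the essential image of $-\boxtimes\hatkg$, not that of $\iota$. (For the same reason, the first assertion of the corollary should be read as full faithfulness of $F'$, not of the literal endofunctor $-\boxtimes\hatkg$.) With the adjunction identified correctly, your argument goes through.
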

\begin{proof}
    The first assertion follows immediately from \cite[Corollary 2.7]{RAMZI}, in light of \cref{lem: kg loc functor}.1. The second assertion follows from the general properties of right Bousfield localizations. 
\end{proof}
\begin{rem}
    A more direct proof of \cref{prop: B boxtimes K mapsto B is weak eq} can be deduced similar to \cite[Proposition 6.8]{MJ}.
\end{rem}
\section{$KK$-theory with $\nu$-complete lmc $G$-$C^*$-algebras}
In this section, we discuss the neccessary background on equivariant $KK$-theory. In particular, we come up with a modification of the Cuntz-Meyer picture of $KK^G$ (cf. \cref{thm: Intro}) which will be relevant for our purposes. 
\subsection{The $\mathfrak{q}$-construction and equivariant $KK$-theory}
We begin with a study of Cuntz's $q$-construction. Though the results are well known in the $C^*$-algebra case (see \cite[Chapter 5]{ELEMENTS}, \cite{CUN}), we also need to use some basic properties of it for general elements in $\GA$, so we include some details.
\begin{defi}
    For any $A \in \GA$, set $QA:=A *A \in \GA$. Denote by $\iota,\overline{\iota}$ the two inclusions of $A$ into $QA$. We set $qA$ to be the closed, two-sided, ideal of $QA$ generated by the set $\{\iota(x)-\overline{\iota}(x) \mid x \in A\}$.
\end{defi}
The following remarks are in order:
\begin{rem}\label{rem: q}
    \hfill
    \begin{enumerate}
        \item Note that $qA$ is automatically $G$-invariant, as each of the generators of $qA$ are.
        \item Given two morphisms $\alpha_1,\alpha_2:A \rightarrow B \in \GA$, we denote by $Q(\alpha_1,\alpha_2):QA\rightarrow B$ the induced map out of $A*A$. We denote by $q(\alpha_1,\alpha_2)$ the restriction of $Q(\alpha_1,\alpha_2)$ to $qA$.
        \item There exists a linear map $q:A \rightarrow qA$, mapping $a$ to $\iota(a)-\overline{\iota}(a)$. A direct calculation shows that $q(xy)+qxqy=\iota x qy+qx\iota y$.
    \end{enumerate}
\end{rem}
The following lemma is a direct generalization\footnote{at one point, we need to work with nets instead of sequences.} of \cite[Lemma 5.1.2]{ELEMENTS}, and the remarks following it, to general topological algebras.
\begin{lem}\label{lem: q const}
    For any $A \in \GA$, we have $qA=\ker Q(\operatorname{id}_A,\operatorname{id}_A)$. The $q$-construction is functorial. Moreover, the map $q(\operatorname{id}_A,0):qA \rightarrow A$ is natural in $G$-equivariant $*$-homomorphisms $A \rightarrow B$.
\end{lem}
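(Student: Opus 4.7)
The plan is to verify the three assertions in sequence, closely mimicking the classical $C^*$-algebraic argument of \cite[Lemma 5.1.2]{ELEMENTS}, with topological adjustments tailored to $\GA$.

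First, for $qA = \ker Q(\operatorname{id}_A, \operatorname{id}_A)$: the inclusion $qA \subseteq \ker Q(\operatorname{id}_A, \operatorname{id}_A)$ is immediate, since $Q(\operatorname{id}_A,\operatorname{id}_A)$ annihilates each generator $\iota(x)-\bar\iota(x)$ and the kernel is automatically closed ($A$ being Hausdorff). For the reverse inclusion I would exhibit an explicit inverse to the induced map $\bar{Q}: QA/qA \to A$. Let $\pi:QA \to QA/qA$ be the quotient map in $\GA$ (formed by taking the Hausdorff $\nu$-completion of the algebraic quotient as in \cref{rem: Hausdorff nu completion}). The composite $\pi \circ \iota: A \to QA/qA$ is a continuous equivariant $*$-homomorphism, and by construction $\pi \circ \iota = \pi \circ \bar\iota$. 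One checks that $\bar Q \circ (\pi\circ\iota) = \operatorname{id}_A$ directly, and that $(\pi\circ\iota) \circ \bar Q = \operatorname{id}_{QA/qA}$ by verifying equality on monomials in $\iota(A) \cup \bar\iota(A)$, then invoking continuity to extend across the dense image of the algebraic free product.

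Second, functoriality: for $\varphi:A\to B$, the universal property of the free product in $\GA$ (see \cref{example}.2) produces $Q\varphi := Q(\iota_B\varphi, \bar\iota_B \varphi): QA \to QB$, which sends each generator $\iota_A(a)-\bar\iota_A(a)$ of $qA$ to $\iota_B(\varphi a) - \bar\iota_B(\varphi a) \in qB$. Since $Q\varphi$ is continuous and a $*$-homomorphism, and $qB$ is a closed two-sided ideal, $Q\varphi$ carries all of $qA$ into $qB$; the restriction defines $q\varphi$, and the identities $q(\psi\varphi) = q\psi \circ q\varphi$ and $q(\operatorname{id}) = \operatorname{id}$ are verified on generators and propagated by continuity.

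Third, for naturality of $q(\operatorname{id}_A,0)$: I would compare the two composites in
\[
\begin{tikzcd}
qA \arrow[r, "q\varphi"] \arrow[d, "q(\operatorname{id}_A,0)"'] & qB \arrow[d, "q(\operatorname{id}_B,0)"] \\
A \arrow[r, "\varphi"'] & B
\end{tikzcd}
\]
on a generator $\iota_A(a)-\bar\iota_A(a)$: going right-then-down yields $\iota_B(\varphi a) - \bar\iota_B(\varphi a) \mapsto \varphi(a) - 0$, while going down-then-right yields $a - 0 \mapsto \varphi(a)$. Both composites are continuous equivariant $*$-homomorphisms which agree on the $*$-algebraic ideal generated by such differences, so they coincide everywhere on $qA$.

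The expected main obstacle, as flagged by the footnote, is the topological passage from generators to the entire closed ideal: because objects of $\GA$ are $\nu$-complete rather than first countable, one cannot always approximate elements of $qA$ by ordinary sequences and must instead work with $\nu$-Cauchy nets, as in the construction of $\nu$-completion in \cref{lem: nu completion construction}. Once one is comfortable replacing sequential density arguments by $\nu$-net arguments (legitimate because continuous maps between Hausdorff spaces are determined on dense subsets regardless of the cardinality of approximating nets), all three assertions reduce to routine $*$-algebraic identities verified on generators and then extended by continuity.
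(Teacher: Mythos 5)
Your proposal is correct and follows essentially the same route as the paper, which gives no written proof but simply declares the lemma a direct generalization of \cite[Lemma 5.1.2]{ELEMENTS} with the one caveat that sequences must be replaced by ($\nu$-)nets — exactly the point you flag at the end. The details you supply (the inverse $\pi\circ\iota$ to $\bar Q$, verification on generators followed by density and continuity) are the standard Cuntz argument the paper is implicitly invoking.
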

\begin{defi}
    Let $A,B \in \GA$. A prequasi-homomorphism from $A$ to $B$ consists of a $\nu$-complete lmc $G$-$C^*$-algebra $E$, two $G$-equivariant $*$-homomorphisms $\alpha,\overline{\alpha}:A \rightarrow E$, an ideal $J \subseteq E$, and a $G$-equivariant $*$-homomorphism $\mu:J\rightarrow B$ such that $\alpha(x)-\overline{\alpha}(x) \in J~\forall~x\in A$. We write it as
    \[
\begin{tikzcd}
	{A } & {E \triangleright J} & B
	\arrow["{\overline{\alpha}}"', from=1-1, to=1-2]
	\arrow["\alpha", shift left=3, from=1-1, to=1-2]
	\arrow["\mu", from=1-2, to=1-3]
\end{tikzcd}
    \]
\end{defi}
    By universal properties, a prequasihomomorphism $A \rightarrow B$ yields a map $\mu \circ q(\alpha,\overline{\alpha}):qA \rightarrow B$.
\begin{cor}
    For any $A,B \in \GA$, there exists a natural map $\kappa:q(A \otimes B) \rightarrow A \otimes qB$.
\end{cor}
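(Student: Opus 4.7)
The plan is to construct $\kappa$ by producing a suitable prequasihomomorphism from $A \otimes B$ to $A \otimes qB$ and then invoking the corollary following the definition of prequasihomomorphisms to obtain the induced map out of $q(A \otimes B)$.

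Concretely, take $E := A \otimes QB = A \otimes (B * B) \in \GA$ and let $J := A \otimes qB$. Let $\iota, \overline{\iota} : B \to QB$ be the two canonical inclusions, and set
\[
\alpha := \operatorname{id}_A \otimes \iota, \qquad \overline{\alpha} := \operatorname{id}_A \otimes \overline{\iota} : A \otimes B \longrightarrow A \otimes QB.
\]
For every elementary tensor $a \otimes b$ we have $\alpha(a \otimes b) - \overline{\alpha}(a \otimes b) = a \otimes (\iota b - \overline{\iota} b)$, which lies in $A \otimes qB$; by linearity and continuity (together with the fact that $A \otimes qB$ is closed in $A \otimes QB$), the same holds on all of $A \otimes B$. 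Taking $\mu : J \to A \otimes qB$ to be the identity then gives a prequasihomomorphism
\[
\begin{tikzcd}
A \otimes B & A \otimes QB \triangleright A \otimes qB & A \otimes qB.
\arrow["\overline{\alpha}"', from=1-1, to=1-2]
\arrow["\alpha", shift left=3, from=1-1, to=1-2]
\arrow["\mu", from=1-2, to=1-3]
\end{tikzcd}
\]
The desired map is then $\kappa := \mu \circ q(\alpha, \overline{\alpha}) : q(A \otimes B) \to A \otimes qB$.

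The step that requires most care is verifying that $A \otimes qB$ is indeed a (closed, $G$-invariant) two-sided ideal in $A \otimes QB$ within the category $\GA$. This reduces, via \cref{defi: equi tensor prod} and the description of the tensor product as the inverse limit of maximal tensor products $(A_p) \otimes_G (QB)_q$ together with Hausdorff $\nu$-completion, to the corresponding well-known statement for the maximal $C^*$-tensor product of $G$-$C^*$-algebras: if $qB_q \subseteq QB_q$ is an ideal, then $A_p \otimes_G qB_q$ is an ideal in $A_p \otimes_G QB_q$. Passing to the inverse limit and then applying the Hausdorff $\nu$-completion preserves the ideal property since both operations are left-exact in the appropriate sense (\cref{cor: forget preserve lim/colim}).

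Finally, naturality of $\kappa$ in both variables is automatic: given morphisms $f : A \to A'$ and $g : B \to B'$ in $\GA$, the tensor product $f \otimes g : A \otimes B \to A' \otimes B'$ is compatible with the respective inclusions $\iota, \overline{\iota}$ and with the $q$-construction (\cref{lem: q const}), so the prequasihomomorphism above is natural, and the induced map on the $q$-constructions inherits this naturality by the universal property used in its construction. The main (mild) obstacle, as noted, is the ideal verification at the level of tensor products in $\GA$; everything else is formal.
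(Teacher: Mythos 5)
Your proposal is correct and is essentially identical to the paper's own proof: both construct $\kappa$ from the prequasihomomorphism $(1_A\otimes\iota,\,1_A\otimes\overline{\iota}):A\otimes B\rightrightarrows A\otimes QB\,\triangleright\,A\otimes qB$ with $\mu=\operatorname{id}$. The extra verifications you supply (that $A\otimes qB$ is a closed $G$-invariant ideal, and naturality) are left implicit in the paper but are sound.
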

\begin{proof}
    Consider the map $q(A \otimes B) \rightarrow A \otimes qB$ induced by the prequasi-homomorphism 
    \[
\begin{tikzcd}
	{A \otimes B} & {A \otimes QB \triangleright A \otimes qB} & {A \otimes qB}
	\arrow["{1 \otimes \overline{\iota}}"', from=1-1, to=1-2]
	\arrow["{1 \otimes \iota}", shift left=3, from=1-1, to=1-2]
	\arrow["{\operatorname{id}}", from=1-2, to=1-3]
\end{tikzcd}.
    \]
\end{proof}
We now discuss an equivariant version of the Cuntz construction, which will be relevant for us.
\begin{defi}
    Let $A$ in $\GA$. We set
    \[
    \mathfrak{q}A:=q(A \otimes \kg) \otimes \kg.
    \]
\end{defi}
Clearly, the $\mathfrak{q}$-construction is also functorial. From \cref{lem: q const}, we have a map \[\mathfrak{q^2}A=q(q(A \otimes \kg)\otimes \kg \otimes \kg)\otimes \kg \rightarrow q(A \otimes \kg)\otimes \kg \otimes \kg \otimes \kg.\]
    In light of \cref{cor: dix-doua for kg}, this yields a map, unique and functorial upto homotopy,
    \[
    \pi_{\mathfrak{q}A}:q(q(A \otimes \kg)\otimes \kg)\otimes \kg \rightarrow q(A \otimes \kg) \otimes \kg.
    \]
    We will succinctly denote this map as $\pi_{\mathfrak{q}A}:\mathfrak{q}^2A\rightarrow\mathfrak{q}A$ in this article.
    
    We now present a brief discussion about classical equivariant $KK$-theory using the $\mathfrak{q}$-construction. The connection first appeared in the generality of locally compact groups in \cite{Mey}, and some generalizations were recently made in \cite{CG}.
\begin{defi}[\cite{CG},~Definition 6.1]\label{thm: Mey}
    Let $KK^{G}_{\operatorname{class}}$ denote the category whose objects are separable $G$-$C^*$-algebras, and morphisms are given by \[
    KK^{G}_{\operatorname{class}}(A,B)=[\mathfrak{q}A, B \otimes \kg],
    \]
    and the morphisms are given by the composition in \cref{eqn: product}.
\end{defi}
\begin{lem}\label{lem: class kk}
    $KK^{G}_{\operatorname{class}}$ is equivalent to each of the following categories
    \begin{enumerate}
        \item Kasparov's classical equivariant $KK$-theory category (\cite{KAS}).
        \item The category obtained by localizing the category of separable $G$-$C^*$-algebras at $KK^G$-equivalences.
    \end{enumerate}
\end{lem}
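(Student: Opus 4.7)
The plan is to define a canonical functor $\kappa : G\text{-}C^*_{\operatorname{sep}} \to KK^{G}_{\operatorname{class}}$ which is the identity on objects and sends a $G$-equivariant $*$-homomorphism $f:A\to B$ to the homotopy class of the composite
\[
\mathfrak{q}A = q(A\otimes\kg)\otimes\kg \xrightarrow{q(\operatorname{id},0)\otimes 1_\kg} A\otimes\kg\otimes\kg \xrightarrow{f\otimes \mu} B\otimes\kg,
\]
where $\mu:\kg\otimes\kg\to\kg$ is induced by any $G$-equivariant unitary $L^2G_\N\otimes L^2G_\N \to L^2G_\N$; any two such choices yield homotopic maps by \cref{cor: dix-doua for kg}. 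The first task is to verify that $\kappa$ is indeed functorial with respect to the composition formula in $KK^{G}_{\operatorname{class}}$ (which is built from the comparison map $\pi_{\mathfrak{q}A}:\mathfrak{q}^2A\to \mathfrak{q}A$ from \cref{lem: q const} and a choice of identification $\kg\otimes\kg\simeq\kg$).

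For assertion (1), I would invoke Meyer's theorem (\cref{thm: Intro}) which supplies a natural bijection $\Psi:KK^G(A,B)\xrightarrow{\cong}[\mathfrak{q}A,B\otimes\kg]$ for separable $A,B$. Upgrading this bijection to an isomorphism of categories $KK^G\simeq KK^{G}_{\operatorname{class}}$ amounts to checking that $\Psi$ intertwines the Kasparov product with the composition formula $\cref{eqn: product}$ in $KK^{G}_{\operatorname{class}}$. This is the Cuntz--Meyer reformulation of the Kasparov product; the verification may be borrowed from \cite{CG} (or from Meyer's original argument), since our $\mathfrak{q}$-construction restricts on separable $G$-$C^*$-algebras to the classical one.

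For assertion (2), denote by $KK^{G}_{\operatorname{loc}}$ the $1$-categorical localization of $G\text{-}C^*_{\operatorname{sep}}$ at $KK^G$-equivalences. Via (1), it is enough to show that the canonical functor $G\text{-}C^*_{\operatorname{sep}}\to KK^G$ exhibits $KK^G$ as this localization, which is \cite[Proposition~2.1]{BEL} (written there for countable discrete groups but extending verbatim to locally compact second countable groups, as noted in the introduction). Equivalently, one checks directly that $\kappa$ inverts $KK^G$-equivalences---immediate from (1) because $KK^G$-equivalences are by definition those $f$ for which the Kasparov class $[f]$ is invertible---and that any functor $F:G\text{-}C^*_{\operatorname{sep}}\to\mathcal{D}$ with this property factors uniquely through $\kappa$, using that homotopic $*$-homomorphisms have already been identified, that $q(\operatorname{id},0)$ is a $KK^G$-equivalence for every separable $A$ (\cite[Chapter~5]{ELEMENTS}), and that tensoring with $\kg$ is a $KK^G$-equivalence via the left upper-corner embedding.

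The main obstacle is the functoriality check for $\kappa$, i.e.\ matching the composition in $KK^{G}_{\operatorname{class}}$ with Kasparov composition; everything else is either definitional or reducible to previously established facts. This match is sensitive to the various choices of identifications $\kg\otimes\kg\simeq\kg$ and must be reconciled up to homotopy using the Dixmier--Douady type rigidity of \cref{cor: dix-doua for kg} together with the functoriality of $q$ recorded in \cref{lem: q const} and \cref{rem: q}. Once this is done, both equivalences follow formally.
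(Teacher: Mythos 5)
Your proposal is correct and follows essentially the same route as the paper, whose proof is simply a citation of \cite[Propositions 2.1, 2.2]{BEL}, \cite[Remark 6.2(b)]{CG}, and \cite[Theorems 5.5, 6.6]{Mey}; you have merely unpacked which external result is responsible for which assertion (Meyer/Cuntz--Gabe for the compatibility of the composition product in assertion (1), Bunke--Engel--Land for the localization statement in assertion (2)). The functoriality/product-matching step you flag as the main obstacle is exactly the content the paper outsources to \cite{CG}, so no gap remains.
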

\begin{proof}
    These follows from the results in \cite[Propositions 2.1,2.2]{BEL} and \cite[Remark 6.2(b)]{CG}, see also \cite[Theorem 5.5, Theorem 6.6]{Mey}.
\end{proof}
\begin{rem}\label{rem: cg kk functor}
    We have a natural functor from the category of separable $G$-$C^*$-algebras to $KK^{G}_{\operatorname{class}}$, mapping $\varphi:A \rightarrow B$ to the homotopy class of the composite
    \[
    \mathfrak{q}A \xrightarrow{\pi_A} A \otimes \kg \xrightarrow{\varphi \otimes 1_\kg} B \otimes \kg.
    \]
    Note that, it is same (upto homotopy) to the other potential composite
    \[
    \mathfrak{q}A \xrightarrow{q(\varphi \otimes 1_\kg) \otimes \kg} q(B \otimes \kg) \otimes \kg \xrightarrow{\pi_B} B \otimes \kg.
    \]
    This natural functor is the universal functor from separable $G$-$C^*$-algebras to any category which inverts $KK^G$-equivalences; see the proof of \cite[Proposition 2.1]{BEL}.
\end{rem}
In light of \cref{thm: Mey}, we work with the following definitions in the remainder of this article.
\begin{defi}
    Let $A$ be a separable $G$-$C^*$-algebra, and $B \in \GA$. We define
    \[
    KK^G(A,B)_\bullet:= \operatorname{Hom}(\mathfrak{q}A, B \otimes \kg)_\bullet,
    \]
    which naturally refines to a commutative monoid in anima by \cref{rem: map space in skg} and \cref{cor: loc kg semi additive}\footnote{see also \cref{rem: monoid}.}.
    A morphism $B \rightarrow B'$ in $\GA$ is said to be a left $KK^G$-equivalence if the induced map $KK^G(A,B)_\bullet \rightarrow KK^G(A,B')_\bullet$ is a (weak) homotopy equivalence for all separable $G$-$C^*$-algebras $A$. $B \in \GA$ is said to be left $KK^G$-contractible if $KK^G(A,B)_\bullet$ is (weakly) contractible for all separable $G$-$C^*$-algebras $A$.
\end{defi}
\begin{lem}
    Let $A$ be a separable $G$-$C^*$-algebra, and $B \in \GA$. Then, $KK^G(A,B)_\bullet$ naturally refines to a commutative group in anima.
\end{lem}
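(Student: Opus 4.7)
Since $KK^G(A,B)_\bullet$ has already been refined to a commutative monoid in anima, and a commutative monoid in anima is grouplike if and only if its $\pi_0$ is a group, it suffices to exhibit additive inverses in $\pi_0 KK^G(A,B)_\bullet = [\mathfrak{q}A,\,B\otimes\kg]$. The plan is to adapt the classical Cuntz swap argument to our $\nu$-complete lmc setting.

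I would first introduce the candidate inverse: the $G$-equivariant $*$-automorphism of $(A\otimes\kg)*(A\otimes\kg)$ interchanging the two canonical inclusions restricts to a $G$-equivariant involution $\sigma$ of $q(A\otimes\kg)$ with $\sigma(q(x))=-q(x)$ on the canonical generators of \cref{rem: q}, and tensoring with $\operatorname{id}_\kg$ gives an involution $\widetilde\sigma$ on $\mathfrak{q}A$; the claim is that $[\phi\circ\widetilde\sigma]=-[\phi]$. A useful preliminary observation is that $\mathfrak{q}A$ is itself a separable $G$-$C^*$-algebra: every $C^*$-seminorm on the algebraic free product of two separable $G$-$C^*$-algebras is contractively dominated by the maximal $C^*$-seminorm (it arises from a pair of $*$-homomorphisms into $C^*$-algebras), so the $\GA$-free product agrees with the $C^*$-algebraic free product, a property surviving the $\kg$-stabilization. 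This allows invocation of the classical multiplier-algebra machinery.

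To verify the claim, I would unpack the block-sum monoid structure of \cref{rem: monoid}: fix $G$-equivariant isometries $V_1,V_2:L^2G_\N\to L^2G_\N$ with orthogonal ranges summing to the identity (available from the discussion preceding \cref{cor: dix-doua for kg}), producing $G$-invariant matrix units $e_{ij}=V_iV_j^*\in\mathcal{M}(\kg)^G$. For each $G$-$C^*$-quotient $D=(B\otimes\kg)_p$, the composite $\phi_p:\mathfrak{q}A\to D$ fits into the classical Cuntz--Kasparov picture: it arises from a prequasi-homomorphism $(\phi_+,\phi_-):A\otimes\kg\rightrightarrows\mathcal{M}(D)$, and the sum $[\phi_p]+[\phi_p\circ\widetilde\sigma]$ is represented by the $2\times 2$ block pair $(\phi_+\oplus\phi_-,\phi_-\oplus\phi_+)$ valued in $\mathcal{M}(D\otimes M_2)$. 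The rotation $u_t=\cos(t)(e_{11}+e_{22})+\sin(t)(e_{12}-e_{21})\in\mathcal{M}(\kg)^G$, $t\in[0,\pi/2]$, provides by inner conjugation a prequasi-homotopy from this block pair at $t=0$ to the diagonal $(\phi_-\oplus\phi_+,\phi_-\oplus\phi_+)$ at $t=\pi/2$, the latter inducing the zero map on $\mathfrak{q}A$; a direct matrix calculation shows that $u_t(\phi_+\oplus\phi_-)u_t^*-(\phi_-\oplus\phi_+)$ lies in $D\otimes M_2$ for all $t$, so the interpolation is a bona fide prequasi-homotopy. Since $u_t$ is defined universally through the $\kg$-factor, independent of $p$, these per-quotient homotopies assemble into a single $*$-homomorphism $\Phi:\mathfrak{q}A\to\C([0,\pi/2],B\otimes\kg)$ realizing $[\phi]+[\phi\circ\widetilde\sigma]=0$.

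The main obstacle will be to confirm that $\Phi$ is a bona fide morphism in $\GA$, i.e., a $G$-equivariant continuous $*$-homomorphism. By \cref{lem: G-AM light} and the description of the function-algebra topology in \cref{prop: CXB}, continuity of $\Phi$ reduces to seminorm-wise norm-continuity of the conjugation $\operatorname{Ad}(u_t):D\to D$ on each $G$-$C^*$-quotient $D=(B\otimes\kg)_p$, which is immediate since $u_t$ is a norm-continuous path of $G$-invariant unitaries in $\mathcal{M}(\kg)^G$. With this in hand, $\Phi$ delivers the required null-homotopy, proving that $\pi_0 KK^G(A,B)_\bullet$ is an abelian group.
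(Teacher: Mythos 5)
Your reduction to showing that $\pi_0 KK^G(A,B)_\bullet=[\mathfrak{q}A,B\otimes\kg]$ is a group, via the fact that a commutative monoid in anima is grouplike iff its $\pi_0$ is a group, is exactly the paper's first step. Where you diverge is in how you establish that $\pi_0$ is a group: the paper simply cites \cref{lem: class kk} and ``classical Kasparov theory,'' whereas you unwind that citation and re-run Cuntz's flip-and-rotate argument (the swap involution $\sigma$ with $\sigma(q(x))=-q(x)$, the multiplier extension to a prequasi-homomorphism, and the rotation homotopy by $G$-invariant unitaries $u_t$) directly in $\GA$. Your route is longer but arguably buys something real: for general $B\in\GA$ the algebra $B\otimes\kg$ is not a $C^*$-algebra, so the classical statement does not literally apply, and your seminorm-wise argument over the quotients $(B\otimes\kg)_p$ is the kind of verification one would actually have to supply (it is also in the spirit of the paper's own hands-on proof of \cref{cor: stable cone seminorm on B(H)}). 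The one point you should make explicit is that the assembled homotopy lands in $\C([0,\pi/2],B\otimes\kg)$ and not merely in $\C([0,\pi/2],\varprojlim_p(B\otimes\kg)_p)$, since $B\otimes\kg$ is only a dense $\nu$-complete subalgebra of the full inverse limit: this holds because every element of $q(A\otimes\kg)$ is a sum of products of generators $q(x)$ with elements $\iota(y),\bar\iota(y)$, so the values of your interpolating homomorphisms are scalar--matrix-unit combinations of elements of the form $\phi(\iota(y)z)$, $\phi(\bar\iota(y)z)$, $\phi(z)$ with $z\in q(A\otimes\kg)\otimes\kg$, all of which already lie in the image of $\phi$ inside $B\otimes\kg$. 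With that sentence added, your argument is a correct, self-contained replacement for the paper's appeal to the classical theory.
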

\begin{proof}
    By \cref{lem: class kk} and classical Kasparov theory, we conclude that $\pi_0KK^G(A,B)_\bullet$ is a group. Since a commutative monoid $M$ in anima is a group if and only if $\pi_0(M)$ is a group (see \cite[Example 5.2.6.4]{HA}), the conclusion follows.
\end{proof}
 The following result is an immediate corollary of \cref{lem: uli}.
\begin{cor}\label{cor: datta picture}
    For a separable $G\text{-}C^*$-algebra $A$, and $B \in \GA$,  we have a natural equivalence of commutative groups
    \[
    KK^G(A,B)_\bullet \simeq \operatorname{Hom}(\mathfrak{q}A, B \otimes \widehat{\mathcal{K}_G})_\bullet.
    \]
\end{cor}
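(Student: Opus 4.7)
The plan is to deduce this corollary directly from \cref{lem: uli} by exhibiting $\mathfrak{q}A$ as an object of the form $X \otimes \kg$. Explicitly, I would set $X := q(A \otimes \kg)$, which is an object of $\GA$ (by functoriality of $q$, see \cref{lem: q const}), so that by definition $\mathfrak{q}A = X \otimes \kg$. With this substitution, \cref{lem: uli} applied to the pair $(X, B)$ yields a natural equivalence of commutative monoids
\[
\operatorname{Map}_{\GA_h}(X \otimes \kg, B \otimes \kg) \xrightarrow[\simeq]{\alpha_*} \operatorname{Map}_{\GA_h}(X \otimes \kg, B \otimes \hatkg),
\]
which is exactly the desired map after rewriting both sides in terms of $\mathfrak{q}A$.

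Next, I would translate this equivalence of mapping anima in $\GA_h$ into an equivalence of Kan complexes $\operatorname{Hom}(-,-)_\bullet$. Since $\GA_h$ is defined as the homotopy coherent nerve of the Kan-enriched category $\GA$ (whose mapping simplicial sets are Kan complexes by the result following \cref{defi: homotopy}), the mapping anima between two objects of $\GA$ in $\GA_h$ are canonically equivalent to the simplicial hom-sets. Plugging this in and using the definition $KK^G(A,B)_\bullet := \operatorname{Hom}(\mathfrak{q}A, B \otimes \kg)_\bullet$, the equivalence above reads
\[
KK^G(A,B)_\bullet \simeq \operatorname{Hom}(\mathfrak{q}A, B \otimes \hatkg)_\bullet
\]
as commutative monoids in anima.

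Finally, to upgrade this to an equivalence of commutative \emph{groups}, I would invoke the preceding lemma, which states that $KK^G(A,B)_\bullet$ is already a commutative group in anima when $A$ is separable. Since being a group is a property of a commutative monoid (detectable on $\pi_0$, as in \cite[Example 5.2.6.4]{HA}) and this property is transported along the equivalence $\alpha_*$, the right-hand side is automatically also a commutative group, and $\alpha_*$ is an equivalence of commutative groups. I do not anticipate any real obstacle here; the entire argument is a direct application of \cref{lem: uli} together with the definitions, and the main conceptual content has already been established in that lemma via the Bousfield localization machinery.
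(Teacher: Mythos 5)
Your proposal is correct and matches the paper's argument, which simply declares the corollary an immediate consequence of \cref{lem: uli}: you substitute $X = q(A\otimes\kg)$ so that $\mathfrak{q}A = X\otimes\kg$, identify the mapping anima in $\GA_h$ with the Kan complexes $\operatorname{Hom}(-,-)_\bullet$, and transport the group property along the monoid equivalence via $\pi_0$. Nothing further is needed.
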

\begin{cor}\label{cor: kk equi}
    Let $A$ be a separable $G$-C*-algebra, and $B$, $B'$ be $\sigma$-unital $G$-C*-algebras. Then the two equivalent assertions of \cref{cor: kk_g eq} are equivalent to the induced map
    \[
    KK^G_{*}(A,B) \rightarrow KK^G_{*}(A,B')
    \]
    being an isomorphism.
\end{cor}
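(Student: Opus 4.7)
\emph{Proof plan.} The plan is to show both (a) and (b) are equivalent to a single Kan-complex condition, namely that $\operatorname{Hom}(\mathfrak{q}A, B \otimes \hatkg)_\bullet \to \operatorname{Hom}(\mathfrak{q}A, B' \otimes \hatkg)_\bullet$ is a weak equivalence. For condition (b), I first use \cref{cor: datta picture} to identify $KK^G(A, D)_\bullet$ with $\operatorname{Hom}(\mathfrak{q}A, D \otimes \hatkg)_\bullet$ as commutative groups in anima. The cotensor identity of \cref{cor: cotensor preserved} applied to $D \mapsto C_0(\mathbb{R}^n, D)$ then gives $\Omega^n \operatorname{Hom}(\mathfrak{q}A, D\otimes\hatkg)_\bullet \simeq \operatorname{Hom}(\mathfrak{q}A, C_0(\mathbb{R}^n, D)\otimes\hatkg)_\bullet$, and taking $\pi_0$ identifies $\pi_n \operatorname{Hom}(\mathfrak{q}A, B\otimes\hatkg)_\bullet$ with $KK^G_n(A, B)$ via Meyer's theorem as recorded in \cref{lem: class kk}. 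Since a morphism of commutative groups in anima is a weak equivalence iff it is a $\pi_*$-isomorphism, (b) is equivalent to the stated Kan-complex weak equivalence.

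For condition (a), which by \cref{cor: kk_g eq} says $\operatorname{Hom}(A, B\otimes\hatkg)_\bullet \to \operatorname{Hom}(A, B'\otimes\hatkg)_\bullet$ is a weak equivalence, the task is to compare it with the analogous statement for $\mathfrak{q}A$. The natural map $\mathfrak{q}A = q(A\otimes\kg)\otimes\kg \to A\otimes\kg\otimes\kg$ induced by $q(\operatorname{id},0)\otimes 1_\kg$, composed with a fixed $G$-equivariant embedding $\kg\otimes\kg\hookrightarrow\hatkg$ (well-defined up to homotopy by \cref{cor: dix-doua for kg}), yields a comparison map $\mathfrak{q}A \to A\otimes\hatkg$, and thus a natural transformation $\operatorname{Hom}(A\otimes\hatkg, B\otimes\hatkg)_\bullet \to \operatorname{Hom}(\mathfrak{q}A, B\otimes\hatkg)_\bullet$ by precomposition. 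Since $B\otimes\hatkg$ is $\hatkg$-local in the sense of \cref{lem: kg loc functor}, the canonical map $\operatorname{Hom}(A, B\otimes\hatkg)_\bullet \simeq \operatorname{Hom}(A\otimes\hatkg, B\otimes\hatkg)_\bullet$ is already an equivalence, reducing (a) to an assertion about the $A\otimes\hatkg$-mapping anima.

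The main obstacle is to show that the above comparison $\operatorname{Hom}(A\otimes\hatkg, B\otimes\hatkg)_\bullet \to \operatorname{Hom}(\mathfrak{q}A, B\otimes\hatkg)_\bullet$ is itself an equivalence of Kan complexes, or equivalently that $\mathfrak{q}A \to A\otimes\hatkg$ becomes an equivalence after passing to $L_{\hatkg}\GA_h$. This ultimately rests on the classical Cuntz--Meyer fact that $qC\to C$ is a $KK^G$-equivalence for separable $C$, combined with equivariant Morita invariance (and hence the identifications of \cref{lem: uli}); transferring this to the left Bousfield localization $L_{\hatkg}\GA_h$ (rather than the full $\kg$-localization $\GA_{h,\kg}$) requires some care, and is where the $\sigma$-unitality hypothesis on $B$ and $B'$ becomes essential. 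Once this step is verified, (a) and (b) are manifestly equivalent, completing the proof.
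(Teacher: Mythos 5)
Your identification of $\pi_n\operatorname{Hom}(\mathfrak{q}A,B\otimes\hatkg)_\bullet$ with $KK^G_n(A,B)$ — via the cotensoring/loop-space identity, the commutation of $\hatkg$ past $\C_*(S^n,-)$, Meyer's theorem for the $\sigma$-unital algebras $\C_*(S^n,B)$, and grouplikeness to upgrade a $\pi_*$-isomorphism to a weak equivalence — is exactly the paper's argument, and under the intended reading of the statement this computation already finishes the proof. The point you have missed is that ``the two equivalent assertions of \cref{cor: kk_g eq}'' are meant with $\mathfrak{q}A$ substituted for the variable in that corollary: they concern $\operatorname{Hom}(\mathfrak{q}A\boxtimes\hatkg,-)_\bullet$ versus $\operatorname{Hom}(\mathfrak{q}A,-\otimes\hatkg)_\bullet$, as both the paper's proof and the subsequent application in \cref{cor: weak eq in mod cat} (where weak equivalences are detected by the corepresenting objects $\mathfrak{q}A'\boxtimes\hatkg$) make clear. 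No comparison between $A$- and $\mathfrak{q}A$-mapping spaces is required.

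Under your reading, where condition (a) concerns $\operatorname{Hom}(A,B\otimes\hatkg)_\bullet$, the statement you are trying to prove is false, and the step you defer as ``the main obstacle'' cannot be repaired: $\mathfrak{q}A\to A\otimes\hatkg$ does \emph{not} become an equivalence in $L_{\hatkg}\GA_h$. That localization only enforces Morita invariance; it does not invert $KK^G$-equivalences such as $qC\to C$, which is precisely why the $q$-construction is needed in the Cuntz picture at all. Concretely, for $G$ trivial and $A=\mathbb{C}$ one has $\pi_0\operatorname{Map}_{L_{\hatkg}\GA_h}(\mathbb{C},\mathbb{C})=[\mathbb{C},\hatkg]$, the monoid of homotopy classes of projections in the compacts (i.e.\ $\N$ graded by rank), whereas $\pi_0\operatorname{Map}_{L_{\hatkg}\GA_h}(q\mathbb{C},\mathbb{C})=[q\mathbb{C},\hatkg]\cong K_0(\mathbb{C})=\Z$; these are not isomorphic, so the comparison map you construct is not an equivalence even on $\pi_0$. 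Relatedly, the $\sigma$-unitality of $B$ and $B'$ plays no role in any localization argument — it enters only to invoke Meyer's identification $KK^G(A,B)\cong[\mathfrak{q}A,B\otimes\kg]$ in the last two isomorphisms of your (correct) chain.
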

\begin{proof}
    This essentially follows from the series of isomorphisms
    \begin{align*}
        \pi_n\operatorname{Hom}(\mathfrak{q}A,B \otimes \widehat{\mathcal{K}_G})_\bullet &\cong \pi_0\operatorname{Hom}(\mathfrak{q}A,\mathcal{C}_*(S^n,B \otimes \hatkg))_\bullet \\
        &\cong \pi_0\operatorname{Hom}(\mathfrak{q}A,\C_*(S^n,B)\otimes \hatkg)_\bullet\\
        &\cong KK^G_0(A,\C_*(S^n,B)) \\
        &\cong KK^G_n(A,B),
    \end{align*}
    all of which are standard; see \cite{ECH} for example. Note that, we didn't need to specify basepoints before taking the homotopy groups since a map of grouplike $H$-spaces is a weak equivalence if and only if it is a $\pi_*$ isomorphism at the basepoint $0$.
\end{proof}
\subsection{The Kasparov product in the Cuntz-Meyer picture}
We next discuss the construction of the Kasparov product using \cref{thm: Mey}. This has been discussed in \cite{CG}, but we take a slightly different approach which would be helpful for our applications. We don't give all the details; they can be deduced from the ideas in \cite{CG}; see also \cite{BD},\cite{CUN}.
The key to constructing Kasparov product in this framework is the following.
\begin{thm}\label{thm: the map}
    For any separable $G$-$C^*$-algebra $A$, there exists a map, well-defined and functorial upto homotopy
    \[
    \varphi_A: \mathfrak{q}A \rightarrow \mathfrak{q}^2A.
    \]
    Furthermore, $\varphi_A$ is a homotopy equivalence with homotopy inverse given by $\pi_{\mathfrak{q}A}$.
\end{thm}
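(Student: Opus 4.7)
Since $A$ is a separable $G$-$C^*$-algebra, both $\mathfrak{q}A$ and $\mathfrak{q}^2 A$ are again separable $G$-$C^*$-algebras, so the proof takes place entirely in the classical equivariant realm. The plan is to mimic the non-equivariant construction of Cuntz (\cite{CUN}), already adapted to the equivariant setting in \cite{CG}; the only ingredient that is genuinely equivariant is \cref{cor: dix-doua for kg}, which allows us to treat $G$-equivariant isometries into $L^2G_\N$ as uniquely determined up to $G$-equivariant homotopy, and similarly the induced $*$-homomorphisms on compacts.

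To construct $\varphi_A$, set $B := A \otimes \kg$ and fix a $G$-equivariant isometry $L^2G_\N \hookrightarrow L^2G_\N \otimes L^2G_\N$, producing a $*$-homomorphism $s : \kg \to \kg \otimes \kg$ (unique up to homotopy by \cref{cor: dix-doua for kg}). Combine $s$ with the canonical pair $\iota, \bar{\iota} : B \to QB$ of \cref{rem: q} to assemble a pre-quasihomomorphism
\[
B \otimes \kg \;\xrightarrow{\iota \otimes s,\, \bar{\iota} \otimes s}\; QB \otimes \kg \otimes \kg \;\triangleright\; qB \otimes \kg \otimes \kg \;\hookrightarrow\; Q(qB \otimes \kg \otimes \kg).
\]
Applying the $q$-functor of \cref{lem: q const}, composing with the natural inclusion into $q(qB \otimes \kg \otimes \kg)$, and tensoring with $\kg$ yields a $G$-equivariant $*$-homomorphism $\varphi_A : \mathfrak{q}A \to \mathfrak{q}^2 A$, whose homotopy class is independent of the choice of $s$ by \cref{cor: dix-doua for kg}. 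The verification $\pi_{\mathfrak{q}A} \circ \varphi_A \simeq \mathrm{id}_{\mathfrak{q}A}$ is then a direct trace through the definitions: after applying $q(\mathrm{id},0)$ the composite reduces on generators $q(b) \otimes k$ to $q(b)$ conjugated by a product of isometries of $L^2G_\N$, and every such composite endomorphism of $\kg$ is homotopic to the identity by \cref{cor: dix-doua for kg}.

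The main obstacle is the reverse composite $\varphi_A \circ \pi_{\mathfrak{q}A} \simeq \mathrm{id}_{\mathfrak{q}^2 A}$. This is the classical Cuntz-style argument: both maps $\mathfrak{q}^2 A \to \mathfrak{q}^2 A$ factor through the canonical inclusion $qB \otimes \kg^{\otimes n} \hookrightarrow Q(qB \otimes \kg^{\otimes n})$, and one builds an explicit rotation homotopy between them by swapping the two copies of the argument in the free product, implemented via an Eilenberg-swindle on one of the outer $\kg$-factors (realized by a path of $G$-equivariant isometries of $L^2G_\N$ into itself, supplied by \cref{cor: dix-doua for kg}). Functoriality up to homotopy in $A$ is then immediate: a morphism $A \to A'$ induces a square of pre-quasihomomorphisms that commutes strictly at the algebraic level, and the only non-strict pieces (the chosen isometries) are natural up to homotopy. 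Overall, we expect no new difficulty beyond the non-equivariant case, since every algebra that appears lies in the separable $G$-$C^*$-subcategory and every required homotopy of isometries is handled uniformly by the Dixmier--Douady statement above.
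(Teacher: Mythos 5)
There is a genuine gap, and it sits at the heart of your construction of $\varphi_A$. After forming the pre-quasihomomorphism $(\iota\otimes s,\bar\iota\otimes s)$ you obtain (via \cref{lem: q const}) a $*$-homomorphism $q(B\otimes\kg)\to qB\otimes\kg\otimes\kg$, and you then propose to compose "with the natural inclusion into $q(qB\otimes\kg\otimes\kg)$". No such inclusion exists: the canonical map $C\to qC$, $c\mapsto\iota(c)-\bar\iota(c)$, is only linear and not multiplicative (see \cref{rem: q}.3), and there is no natural $*$-homomorphism section of $\pi_C:qC\to C$. Producing such a map up to homotopy and stabilization is precisely the content of the theorem, so your argument is circular at this point. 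This is why the paper (following Cuntz and \cite{CG}) does not build $\varphi_A$ from a single pre-quasihomomorphism into $Q(qB\otimes\kg\otimes\kg)$, but instead works with a $2\times 2$ matrix algebra: one sets up the subalgebras $R$ and $D$ of $M_2(Q(Q(A\otimes\kg_1)\otimes\kg_2))$, lifts the off-diagonal multiplier $S_0=\left(\begin{smallmatrix}0&1\\1&0\end{smallmatrix}\right)$ to a self-adjoint multiplier $S$ of $R+D$ commuting mod $J$ with $D$, and uses conjugation by $F'=e^{\pi i S'/2}$ to twist one of the two diagonal homomorphisms so that the \emph{difference} of the pair lands in $J\supseteq M_2(q(q(A\otimes\kg_1)\otimes\kg_2))$. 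Without this conjugation the difference only lands one level of $q$ deep, which is exactly the obstruction your "natural inclusion" sweeps under the rug. The output is therefore a map $\mathfrak{q}A\to M_2(\mathfrak{q}^2A)$, and one needs \cref{cor: dix-doua for kg} at the end to absorb the $M_2$ and the extra copies of $\kg$.

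A second, related gap: you assert that the only genuinely equivariant ingredient is \cref{cor: dix-doua for kg}. In fact the delicate equivariant step is the multiplier lift itself. One first uses \cite[Proposition 2.2]{CG} to arrange $\alpha_g(S)-S\in J$ for all $g\in G$, and then the construction of \cite[Section 6]{CG} to replace $S$ by a genuinely $G$-invariant multiplier $S'$ of $\mathcal{K}(L^2G)\otimes R$ commuting mod $\mathcal{K}(L^2G)\otimes J$ with $\mathcal{K}(L^2G)\otimes D$; this averaging is the reason the extra tensor factor $\mathcal{K}(L^2G)$ appears and is later absorbed. Your description of the two homotopies is directionally right (rotation homotopies do appear, via $F'_t=e^{\pi i t S'/2}$ and the rotation matrices $\tau_t$), but they cannot be written down until $\varphi_A$ has been constructed correctly, so as it stands neither composite can be verified. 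I would encourage you to redo the construction following the matrix picture above; the functoriality-up-to-homotopy claim then does go through essentially as you describe.
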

\cref{thm: the map} follows from the following result, in light of \cref{cor: dix-doua for kg}.
\begin{prop}
    For any separable $G$-$C^*$-algebra $A$, there exists a map, well-defined and functorial upto homotopy
    \[
    \varphi_A: \mathfrak{q}A \rightarrow M_2(\mathfrak{q}^2A).
    \]
    Furthermore, $\varphi_A$ is a stable homotopy equivalence, in the sense that $M_2(\pi_{\mathfrak{q}A}) \circ \varphi_A: \mathfrak{q}A \rightarrow M_2(\mathfrak{q}A)$ and $\varphi_A \circ \pi_{\mathfrak{q}A}: \mathfrak{q}^2A \rightarrow M_2(\mathfrak{q}^2A)$ are homotopic to the left upper corner inclusions.
\end{prop}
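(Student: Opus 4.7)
The plan is to construct $\varphi_A$ from a universal prequasihomomorphism and then to exhibit the stable homotopy equivalence via Cuntz's classical rotation-of-projections argument, adapted to our equivariant and $\nu$-complete setting.

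First, I would construct $\varphi_A$ as follows. The two inclusions $\iota,\bar\iota:A\otimes\kg \to Q(A\otimes\kg)$ yield, after tensoring with $\kg$, a prequasihomomorphism
\[
A\otimes\kg\otimes\kg\ \overset{\iota\otimes 1_{\kg}}{\underset{\bar\iota\otimes 1_{\kg}}{\rightrightarrows}}\ Q(A\otimes\kg)\otimes\kg\ \triangleright\ \mathfrak{q}A.
\]
Precomposing with two orthogonal matrix-unit embeddings $A\otimes\kg\to M_2(A\otimes\kg\otimes\kg)$ produces two $*$-homomorphisms $\alpha,\bar\alpha:A\otimes\kg\to M_2(Q(A\otimes\kg)\otimes\kg)$ whose difference lands in $M_2(\mathfrak{q}A)$. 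Viewing this as a prequasihomomorphism with target $M_2(\mathfrak{q}A)$ and applying the $q$-construction once more, together with tensoring by $\kg$ and the Dixmier-Douady identification $\kg\otimes\kg\simeq\kg$ from \cref{cor: dix-doua for kg}, yields the desired $G$-equivariant $*$-homomorphism $\varphi_A:\mathfrak{q}A\to M_2(\mathfrak{q}^2 A)$. Well-definedness and functoriality up to homotopy follow from the uniqueness up to homotopy of the isometries involved, which is again \cref{cor: dix-doua for kg}.

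For the stable equivalence assertion, consider first the composite $M_2(\pi_{\mathfrak{q}A})\circ\varphi_A:\mathfrak{q}A\to M_2(\mathfrak{q}A)$. Unwinding the definition of $\pi_{\mathfrak{q}A}$ together with the universal property of $q$ (\cref{lem: q const}), this composite is induced by the prequasihomomorphism on $A\otimes\kg$ with arms $\alpha,\bar\alpha$ landing in orthogonal diagonal corners of $M_2(\mathfrak{q}A)$. Let
\[
u_t=\begin{pmatrix}\cos(\pi t/2) & -\sin(\pi t/2)\\ \sin(\pi t/2) & \cos(\pi t/2)\end{pmatrix}\in M_2(\widetilde{\mathbb{C}})
\]
for $t\in[0,1]$. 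The family $\bar\alpha_t:=u_t\bar\alpha u_t^*$ is a continuous path of $*$-homomorphisms from $\bar\alpha$ at $t=0$ to a map with image in the upper-left corner at $t=1$; each pair $(\alpha,\bar\alpha_t)$ is a prequasihomomorphism (since conjugation by $u_t$ preserves $M_2(\mathfrak{q}A)$) and, by the cotensor formalism of \cref{prop: tensored and cotensored}, assembles into a continuous $*$-homomorphism $\mathfrak{q}A\to\mathcal{C}([0,1], M_2(\mathfrak{q}A))$ witnessing a homotopy from the composite (at $t=0$) to the upper-left corner inclusion (at $t=1$, where $\alpha-\bar\alpha_1$ reduces to the original $\iota-\bar\iota$ in the upper-left corner). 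The composite $\varphi_A\circ\pi_{\mathfrak{q}A}$ is handled analogously with the rotation applied inside $M_2(\mathfrak{q}^2 A)$.

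The main obstacle I anticipate is verifying that the rotation trick is cleanly executable in $\GA$: the unitaries $u_t$ live in $M_2(\widetilde{\mathbb{C}})$ rather than $M_2(\mathbb{C})$, but since they have scalar entries the conjugation preserves $M_2(\mathfrak{q}A)$, and joint continuity in $t$ follows from joint continuity of multiplication in topological $*$-algebras. A secondary but more delicate point is that the various Dixmier-Douady collapses $\kg\otimes\kg\simeq\kg$ appearing in the constructions of $\varphi_A$ and of $\pi_{\mathfrak{q}A}$ must be chosen compatibly for the rotation argument to culminate in the upper-left corner inclusion; the required compatibility is guaranteed by \cref{cor: dix-doua for kg} together with the weak contractibility of the space of equivariant isometries.
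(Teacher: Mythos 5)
Your construction of $\varphi_A$ does not actually produce a map into $M_2(\mathfrak{q}^2A)$, and the problem is structural rather than presentational. First, if $\alpha$ and $\overline{\alpha}$ are obtained by feeding $\iota\otimes 1_{\kg}$ and $\overline{\iota}\otimes 1_{\kg}$ into two \emph{orthogonal} corner embeddings, then $\alpha(x)-\overline{\alpha}(x)$ is the diagonal matrix with entries $(\iota\otimes 1)(x)$ and $-(\overline{\iota}\otimes 1)(x)$; these entries lie in the image of $\iota$ and $\overline{\iota}$ separately, not in the ideal $q(A\otimes\kg)\otimes\kg$ (only differences $\iota(y)-\overline{\iota}(y)$ land there), so the pair is not a prequasihomomorphism into $M_2(\mathfrak{q}A)$. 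Second, even if it were, the universal property would only yield a map $\mathfrak{q}A\to M_2(\mathfrak{q}A)\otimes\kg$, not a map into $M_2(\mathfrak{q}^2A)$: to reach $\mathfrak{q}^2A=q\bigl(q(A\otimes\kg)\otimes\kg\otimes\kg\bigr)\otimes\kg$ the difference of the two arms must lie inside a \emph{second} $q$-ideal sitting in $Q\bigl(q(A\otimes\kg)\otimes\kg\bigr)$, and your construction never forms this second free product. This is exactly where the real work lies: one must take the four composites $\eta\epsilon,\eta\overline{\epsilon},\overline{\eta}\epsilon,\overline{\eta}\overline{\epsilon}$ into $Q(Q(A\otimes\kg)\otimes\kg)$ and arrange, by conjugating one diagonal pair by a multiplier $F=e^{\pi i S/2}$ lifting the flip $\bigl(\begin{smallmatrix}0&1\\1&0\end{smallmatrix}\bigr)$, that the difference of the two pairs falls into an ideal containing $M_2\bigl(q(q(A\otimes\kg)\otimes\kg)\bigr)$.

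The same omission breaks your homotopies. Writing $\overline{\alpha}_t=u_t\overline{\alpha}u_t^*$ with $u_t$ a scalar rotation, one has $\alpha-\overline{\alpha}_t=(\alpha-\overline{\alpha})+(\overline{\alpha}-u_t\overline{\alpha}u_t^*)$, and the second summand has entries proportional to $\overline{\iota}(x)$ itself, hence does not lie in the ideal at intermediate $t$; the intermediate pairs are therefore not prequasihomomorphisms and do not define a path in $\operatorname{Hom}(\mathfrak{q}A,M_2(\mathfrak{q}A))_\bullet$. The rotation trick is only valid when the conjugating unitary commutes \emph{modulo the ideal} with the images of the arms, and securing this commutation is precisely the content of the multiplier-lifting step (\cite[Proposition~2.2]{CG}) that your proposal skips. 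Finally, the genuinely equivariant difficulty is absent from your argument: the lifted multiplier $S$ is only almost $G$-invariant, satisfying $\alpha_g(S)-S\in J$, and one must extend it to a strictly $G$-invariant multiplier $S'$ of $\mathcal{K}(L^2G)\otimes R$ before exponentiating; \cref{cor: dix-doua for kg} only handles the bookkeeping of the various copies of $\kg$ and cannot substitute for this step.
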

\begin{proof}
    We begin by setting up some notation. In order to not confuse between the different occurences of $\kg$, we consider three algebras of compact operators on $G$-Hilbert spaces, denoted $\kg_1,\kg_2$. Consider the maps $\epsilon,\overline{\epsilon},\eta,\overline{\eta}$ as prescribed by the following diagram
    \[
\begin{tikzcd}
	{A \otimes \kg_1 \otimes \kg_2 {}} & {Q(A \otimes \kg_1)\otimes \kg_2 {}} & {Q(Q(A \otimes \kg_1)\otimes \kg_2) {}}
	\arrow["{\overline{\epsilon}}"', from=1-1, to=1-2]
	\arrow["\epsilon", shift left=3, from=1-1, to=1-2]
	\arrow["\eta", shift left=3, from=1-2, to=1-3]
	\arrow["{\overline{\eta}}"', shift right=3, from=1-2, to=1-3]
\end{tikzcd}
    \]
    Set
    \[
    R=C^*\{\begin{pmatrix}
        R_1&R_1R_2 \\
        R_2R_1 & R_2
    \end{pmatrix}\} \subseteq Q(q(A \otimes \kg_1)\otimes \kg_2){},
    \]
    where $R_1:=\eta(q(A \otimes \kg_1)\otimes \kg_2 {}), R_2:=\overline{\eta}(q(A \otimes \kg_1)\otimes \kg_2 {})$.
    
Define \[
D:=C^*\{\begin{pmatrix}
    \eta\epsilon(A \otimes \kg_1 \otimes \kg_2) & 0 \\
    0 & \overline{\eta}{\epsilon}(A \otimes \kg_1 \otimes \kg_2 {})
\end{pmatrix}\} \subseteq M_2({Q(Q(A \otimes \kg_1)\otimes \kg_2){}}).
\]
Note that, we have a canonical surjection
\[
Q(q(A \otimes \kg_1)\otimes \kg_2) {} \twoheadrightarrow q(A \otimes \kg_1)\otimes \kg_2 {}
\]
with kernel $q(q(A \otimes \kg_1)\otimes \kg_2) {} $. This yields, in light of the Cohen factorization theorem (see \cite{COHEN}), that there exists a surjection
\[
p:R \twoheadrightarrow M_2(q(A \otimes \kg_1) \otimes \kg_2 {}).
\]
Set
\[
J:=\ker p \supseteq M_2(q(q(A \otimes \kg_1)\otimes \kg_2){}).
\]
Thus, we obtain 
\[
\frac{R}{J} \cong M_2(q(A \otimes \kg_1) \otimes \kg_2 {}),~~~~~\frac{R+D}{J} \cong M_2(q(A \otimes \kg_1) \otimes \kg_2 {}+C^*\{\begin{pmatrix}
    \epsilon(A \otimes \kg_1 \otimes \kg_2 {}) & 0 \\
    0 & \epsilon(A \otimes \kg_1 \otimes \kg_2 {})
\end{pmatrix}\}.
\]
Using \cite[Proposition 2.2]{CG}, we can lift the multiplier $S_0=\begin{pmatrix}
    0 & 1 \\1 & 0
\end{pmatrix}$ of $\frac{R+D}{J}$ to a self-adjoint multiplier $S$ of $R+D$ that commutes mod $J$ with $D$, and which satisfies $\alpha_g(S)-S \in J ~\forall~g \in G$.

This multiplier $S$ can be naturally extended to a $G$-invariant multiplier $S'$ of $\mathcal{K}(L^2G) \otimes R$, which furthermore commutes mod $\mathcal{K}(L^2G) \otimes J$ with $\mathcal{K}(L^2G) \otimes D$, as follows from \cite[Section 6]{CG}. Set
\[
F':=e^{\frac{\pi i}{2}S'}, \sigma':=\operatorname{Ad}F': \mathcal{M}(\mathcal{K}(L^2G) \otimes J) \rightarrow \mathcal{M}(\mathcal{K}(L^2G) \otimes J).
\]
Tensoring by $\mathcal{K}(L^2G)$, we extend the maps $\eta\epsilon, \eta\overline{\epsilon},\overline{\eta}\epsilon,\overline{\eta}\overline{\epsilon}$ to homomorphisms
\[
\mathcal{K}(L^2G) \otimes A \otimes \kg_1 \otimes \kg_2 {} \rightarrow \mathcal{K}(L^2G) \otimes Q(Q(A \otimes \kg_1)\otimes \kg_2) {}.
\]
Then, the pair of homomorphisms
\[
\begin{pmatrix}
    \begin{pmatrix}
        \eta\epsilon & 0\\
        0 & \overline{\eta\epsilon}
    \end{pmatrix}, \sigma'\begin{pmatrix}
        \overline{\eta}\epsilon & 0\\
        0 & \eta\overline{\epsilon}
    \end{pmatrix}
\end{pmatrix}
\]
defines an equivariant homomorphism 
\[
\varphi_0:q(\mathcal{K}(L^2G) \otimes A \otimes \kg_1 \otimes \kg_2 {}) \rightarrow \mathcal{K}(L^2G) \otimes J \hookrightarrow \mathcal{K}(L^2G) \otimes M_2(q(q(A \otimes \kg_1)\otimes \kg_2) {}),
\]
which in turn yields
\[
1_\kg \otimes \varphi_0: \kg \otimes q(\mathcal{K}(L^2G) \otimes A \otimes \kg_1 \otimes \kg_2  {}) \rightarrow \kg \otimes \mathcal{K}(L^2G) \otimes M_2(q(q(A \otimes \kg_1)\otimes \kg_2) {}).
\]
We can now consider $\kg_1=\kg_2=\kg$, and obtain, in light of \cref{cor: dix-doua for kg}, a map
\[
\varphi_A:\mathfrak{q}A \rightarrow M_2(\mathfrak{q}^2A).
\]
We now just mention the homotopies as sought for in the statement of the Proposition. The details can be deduced as in the proof of \cite[Theorem 1.6]{CUN},\cite[Proposition 3.4]{CG}. For each $t\in [0,1]$, we set
\[
F_t':=e^{\frac{\pi it}{2}S'}, \sigma_t':=\operatorname{Ad}F_t': \mathcal{M}(\mathcal{K}(L^2G) \otimes J) \rightarrow \mathcal{M}(\mathcal{K}(L^2G) \otimes J), \tau_t=\begin{pmatrix}
    \operatorname{cos}(\frac{\pi t}{2}) & -\sin(\frac{\pi t}{2})\\
    \operatorname{sin}(\frac{\pi t}{2}) & \cos(\frac{\pi t}{2})
\end{pmatrix} 
\]
A homotopy $\gamma_t$ from the left upper corner inclusion $\mathfrak{q}^2A \rightarrow M_2(\mathfrak{q}^2A)$ to $\varphi_A\circ \pi_{\mathfrak{q}A}$ has the components $1_\kg \otimes \widehat{\gamma^i_t}:q(A \otimes \kg) \otimes \kg \rightarrow M_2(Q(q(A \otimes \kg)\otimes \kg))\otimes \kg \cong M_2(Q(q(A \otimes \kg)\otimes \kg)\otimes \kg)$ given by the map
\[
\widehat{\gamma^0_t}:=q\begin{pmatrix}
    \begin{pmatrix}
        \eta\epsilon& 0\\
        0 & \overline{\eta}\overline{\epsilon}
    \end{pmatrix},\sigma'_t\begin{pmatrix}
        \eta\overline{\epsilon} & 0 \\
        0 & \overline{\eta}\epsilon
    \end{pmatrix}
\end{pmatrix}\otimes 1_\kg, \widehat{\gamma^1_t}:=q\begin{pmatrix}
    \begin{pmatrix}
        \overline{\eta}\epsilon & 0\\
        0 & \overline{\eta}\overline{\epsilon}
    \end{pmatrix},\operatorname{Ad}(\tau_t)\begin{pmatrix}
        \overline{\eta}\overline{\epsilon} & 0\\
        0 & \overline{\eta}\epsilon
    \end{pmatrix}
\end{pmatrix} \otimes 1_\kg.
\]
A homotopy $\lambda_t$ from the left upper corner inclusion $\mathfrak{q}A \rightarrow M_2(\mathfrak{q}A)$ to $M_2(\pi_{\mathfrak{q}A})\circ \varphi_A$ is defined as the composite
\[
\mathfrak{q}A \xrightarrow{\widehat{\gamma^0_t} \otimes 1_\kg} M_2(Q(q(A \otimes \kg)\otimes \kg)\otimes \kg)\xrightarrow{M_2(p\otimes 1_\kg)} M_2(q(A \otimes \kg)\otimes \kg\otimes \kg) \cong M_2(\mathfrak{q}A),
\]
where $p$ denotes the canonical projection $Q(\operatorname{id}_{q(A \otimes \kg)\otimes \kg},0)$.
\end{proof}
The map $\varphi_A$ of \cref{thm: the map} induces the associative Kasparov product $KK^G(A,B) \times KK^G(B,C) \rightarrow KK^G(A,C)$ as follows: let elements $\mu \in KK^G(A,B),\nu \in KK^G(B,C)$ be given by 
\[
\mu: \kg \otimes q(\kg \otimes A) \xrightarrow{}\kg \otimes B,~~~~~~~~~ \nu:\kg \otimes q(\kg \otimes B) \xrightarrow{}\kg \otimes C.
\] Consider the composite given by
\begin{equation}\label{eqn: product}
   \kg \otimes q(\kg \otimes A) \xrightarrow{\varphi_A} \kg\otimes q(\kg \otimes q(\kg \otimes A)) \xrightarrow{1_\kg \otimes q(\mu)} \kg \otimes q(\kg \otimes B) \xrightarrow{\nu} \kg \otimes C, 
\end{equation}
which represents the product of $\nu\#\mu \in KK^G(A,C)$.
We now mention a simplification of \cref{eqn: product} when one of $\mu$ and $\nu$ come from an honest $G$-equivariant $*$-homomorphism; see also \cite[Remark 3.3]{CG}.
\begin{lem} \label{lem: composition}
    Let $A,B,C$ be separable $G$-$C^*$-algebras.
    If $\mu:A\rightarrow B$ be a $G$-equivariant $*$-homomorphism, then $\nu \#\mu$ is given by $\nu \circ (1_\kg \otimes q(1_\kg \otimes \mu))$. Similarly, if $\nu: B \rightarrow C$ be a $G$-equivariant $*$-homomorphism, then $\nu \# \mu$ is given by $(\nu \otimes 1_\kg)\circ \mu$.
\end{lem}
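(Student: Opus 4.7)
The plan is to substitute the two equivalent representatives of an honest $*$-homomorphism given in \cref{rem: cg kk functor} into the product formula \cref{eqn: product}, and simplify using functoriality of $q$ (\cref{lem: q const}) together with the two structural properties of $\varphi$ recorded in \cref{thm: the map}: its naturality up to homotopy, and the fact that $\pi_{\mathfrak{q}X}$ is a homotopy inverse to $\varphi_X$.

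In the first case, where $\mu: A \to B$ is a $*$-homomorphism, I would represent the class $[\mu] \in KK^G(A,B)$ by the composite $m_\mu := \pi_B \circ \mathfrak{q}(\mu)$ (the second of the two equivalent forms in \cref{rem: cg kk functor}). Substituting $m_\mu$ for $\mu$ in the product formula and applying functoriality of $q$ to split $q(m_\mu) = q(\pi_B) \circ q(\mathfrak{q}(\mu))$ rewrites $\nu \# \mu$ as
\[
\nu \circ (1_\kg \otimes q(\pi_B)) \circ (1_\kg \otimes q(\mathfrak{q}(\mu))) \circ \varphi_A.
\]
After identifying $1_\kg \otimes q(\pi_B)$ with $\pi_{\mathfrak{q}B}$ and $1_\kg \otimes q(\mathfrak{q}(\mu))$ with $\mathfrak{q}^2(\mu)$ (absorbing one $\kg$ factor via \cref{cor: dix-doua for kg}), naturality of $\varphi$ yields $\mathfrak{q}^2(\mu) \circ \varphi_A \sim \varphi_B \circ \mathfrak{q}(\mu)$, whereupon $\pi_{\mathfrak{q}B} \circ \varphi_B \sim \operatorname{id}$ from \cref{thm: the map} collapses the expression to $\nu \circ \mathfrak{q}(\mu)$, as claimed.

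For the second case, where $\nu: B \to C$ is a $*$-homomorphism, the dual representative $m_\nu := (\nu \otimes 1_\kg) \circ \pi_B$ is the natural choice. The product then becomes
\[
(\nu \otimes 1_\kg) \circ \pi_B \circ (1_\kg \otimes q(\mu)) \circ \varphi_A,
\]
and naturality of $\pi$ (itself a direct consequence of the naturality of $q(\operatorname{id}, 0)$ recorded in \cref{lem: q const}) rewrites $\pi_B \circ (1_\kg \otimes q(\mu))$ as $(1_\kg \otimes \mu) \circ \pi_{\mathfrak{q}A}$; a further application of $\pi_{\mathfrak{q}A} \circ \varphi_A \sim \operatorname{id}$ finishes the computation with the answer $(\nu \otimes 1_\kg) \circ \mu$. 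The principal bookkeeping hurdle throughout is tracking the various $\kg$ tensor factors and invoking \cref{cor: dix-doua for kg} at the correct moments to absorb $\kg \otimes \kg \simeq \kg$; once one is willing to work up to these Dixmier--Douady identifications, each step reduces to a direct algebraic manipulation and no deeper obstruction arises.
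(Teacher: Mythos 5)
The paper gives no proof of this lemma (it defers to \cite[Remark 3.3]{CG}), so there is no in-text argument to compare against; judged on its own terms, your second case is correct and your first case has one genuine soft spot. For the second case, the chain
$(\nu\otimes 1_\kg)\circ\pi_B\circ(1_\kg\otimes q(\mu))\circ\varphi_A \;=\;(\nu\otimes 1_\kg)\circ(1_\kg\otimes\mu)\circ\pi_{\mathfrak{q}A}\circ\varphi_A\;\sim\;(\nu\otimes 1_\kg)\circ\mu$
uses only the naturality of $q(\operatorname{id},0)$ from \cref{lem: q const} and the homotopy $\pi_{\mathfrak{q}A}\circ\varphi_A\sim\operatorname{id}$ from \cref{thm: the map}; this is clean, because the $q$ being collapsed by $\pi_B$ is the same (outer) $q$ of which $q(\mu)$ is the image.

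In the first case, however, the step ``identifying $1_\kg\otimes q(\pi_B)$ with $\pi_{\mathfrak{q}B}$'' conflates two genuinely different $*$-homomorphisms $\mathfrak{q}^2B\to\mathfrak{q}B$. The map $1_\kg\otimes q(\pi_B)$ is $\mathfrak{q}$ applied to the collapse $\pi_B$, i.e.\ it collapses the \emph{inner} copy of $q$, whereas $\pi_{\mathfrak{q}B}$, as constructed in the paper from \cref{lem: q const} (namely $q(\operatorname{id}_{q(B\otimes\kg)\otimes\kg},0)$ tensored with $\kg$), collapses the \emph{outer} one. \cref{cor: dix-doua for kg} only permutes and absorbs $\kg$-factors; it cannot interchange these two collapses. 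The two maps are in fact homotopic — in Cuntz's theory both are homotopy inverses to $\varphi_B$, cf.\ \cite[Proposition 1.6]{CUN} and the symmetric construction in \cite{CG} — but \cref{thm: the map} as stated only records $\pi_{\mathfrak{q}B}\circ\varphi_B\sim\operatorname{id}$ for the outer collapse, so your step needs either that stronger input or a separate homotopy. Note that switching to the other representative $(\mu\otimes 1_\kg)\circ\pi_A$ does not avoid the issue: one is again left needing (inner collapse)$\,\circ\,\varphi_A\sim\operatorname{id}$. So either supply the homotopy $\mathfrak{q}(\pi_X)\sim\pi_{\mathfrak{q}X}$ (or directly $\mathfrak{q}(\pi_X)\circ\varphi_X\sim\operatorname{id}$) as an additional lemma, or cite it explicitly from \cite{CUN}/\cite{CG}; with that in hand the rest of your first case (functoriality of $q$, naturality of $\varphi$) goes through as written.
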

\begin{cor}\label{cor: composition of inverses}
    Let $B,C$ be separable $G$-$C^*$-algebras. Suppose $f:B \rightarrow C$ is a $G$-equivariant $*$-homomorphism, which is a $KK^G$-equivalence, with inverse given by $g:\kg \otimes q(\kg \otimes C) \rightarrow B \otimes \kg$.
    Then, the composite
    \[
    \kg \otimes q(B \otimes \kg) \xrightarrow{1_\kg \otimes q(f \otimes 1_\kg)} \kg \otimes q(C \otimes \kg) \xrightarrow{g}B \otimes \kg
    \]
    is homotopic to the morphism
    \[\kg \otimes q(B \otimes \kg) \xrightarrow{1_\kg \otimes \pi_{B \otimes \kg}} \kg \otimes B \otimes \kg \xrightarrow{\cong}B \otimes \kg\]
\end{cor}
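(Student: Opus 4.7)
The plan is to recognize both composites as representatives of the same class in $KK^G(B,B) = [\mathfrak{q}B, B \otimes \kg]$, namely the identity class $[\operatorname{id}_B]$. Since two morphisms $\mathfrak{q}B \to B \otimes \kg$ are homotopic if and only if they represent the same $KK$-class by the very definition of $KK^G$ in the Cuntz--Meyer picture (\cref{thm: Mey}), this will give the conclusion.

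For the first composite, I apply \cref{lem: composition} with $\mu = f : B \to C$ (a genuine $G$-equivariant $*$-homomorphism) and $\nu = [g] \in KK^G(C, B)$. The lemma says exactly that the Kasparov product $[g] \# [f] \in KK^G(B, B)$ is represented by $g \circ (1_\kg \otimes q(1_\kg \otimes f))$, which is the first composite of the statement, modulo the canonical swap of tensor factors comparing $q(B \otimes \kg)$ with $q(\kg \otimes B)$. By the hypothesis that $g$ realizes the $KK^G$-inverse of $f$, this product equals $[\operatorname{id}_B]$.

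For the second composite, I invoke \cref{rem: cg kk functor}, which identifies the class of $\operatorname{id}_B$ in $KK^G(B,B)$ with the homotopy class of the canonical projection $\pi_B: \mathfrak{q}B = q(B \otimes \kg) \otimes \kg \to B \otimes \kg$. Unwinding the construction of $\pi_B$, namely $q(\operatorname{id}_{B \otimes \kg}, 0)$ tensored with $1_\kg$, followed by a Dixmier--Douady collapse $\kg \otimes \kg \to \kg$ absorbed into $B \otimes \kg$, one sees that this is precisely the second composite of the statement after the natural reordering of tensor factors identifying $\mathfrak{q}B$ with $\kg \otimes q(B \otimes \kg)$.

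I do not foresee a serious obstacle here. The argument is formal once \cref{lem: composition} and \cref{rem: cg kk functor} are in hand; the only mild care needed is to confirm that the various reorderings of tensor factors and the identification $\kg \otimes \kg \cong \kg$ invoked on the two sides are compatible, which is uniformly handled by the uniqueness-up-to-homotopy statement of \cref{cor: dix-doua for kg}.
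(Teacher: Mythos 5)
Your proposal is correct and follows essentially the same route as the paper's own proof: identify the first composite with $g\#f$ via \cref{lem: composition}, identify the second composite with the image of $\operatorname{id}_B$ via \cref{rem: cg kk functor}, and conclude since both represent the identity in $KK^G(B,B)=[\mathfrak{q}B,B\otimes\kg]$, whose elements are by definition homotopy classes. The extra care you take with tensor-factor reorderings via \cref{cor: dix-doua for kg} is implicit in the paper but not a different argument.
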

\begin{proof}
    The latter morphism is, by definition, the image of $1_B \in \operatorname{Hom}(B,B)$ inside $KK^G(B,B)$, which becomes the identity inside $KK^G(B,B)$ in light of \cref{rem: cg kk functor}. By \cref{lem: composition}, the former morphism is $g\# f$. The rest follows since $g=f^{-1}$ in $KK^G(C,B)$.
\end{proof}
\subsection{Some left $KK^G$-equivalences}
We now discuss some algebras in $\GA$ which are left $KK^G$-equivalent. Apart from providing some natural examples, the results of this subsection will also be crucially used while proving stability of our model structure in Section 8.

Before proceeding, we remind the readers that there is a category of fibrant objects structure on separable $G\text{-}C^*\!\operatorname{-alg}$ whose weak equivalences are the $KK^G_{}$-equivalences, see \cite[Proposition 2.10]{BEL}. In particular, from general properties of category of fibrant objects, we get that  every $KK^G$ equivalence between separable $G$-$C^*$-algebras is a zig-zag of $*$-homomorphisms which are $KK^G$-equivalences.  

The key technical result of this section is the following.
\begin{lem}\label{lem: technical stability}
    Let $f:B \rightarrow C$ be a $*$-homomomorphism between separable $G$-$C^*$-algebras which is a $KK^G$-equivalence. Then, the $*$-homomorphism $A \otimes B \xrightarrow{1_A \otimes f} A \otimes C$ is a left $KK^G$-equivalence for every $A \in \GA$.
\end{lem}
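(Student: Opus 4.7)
The plan is to exhibit an explicit homotopy inverse to
\[
\Phi_A := (1_A \otimes f \otimes 1_\kg)_* \colon \operatorname{Hom}(\mathfrak{q}D, A \otimes B \otimes \kg)_\bullet \to \operatorname{Hom}(\mathfrak{q}D, A \otimes C \otimes \kg)_\bullet
\]
for every separable $G$-$C^*$-algebra $D$; this is exactly what the definition of left $KK^G$-equivalence demands. Since $f$ is a $KK^G$-equivalence of separable $G$-$C^*$-algebras, \cref{lem: class kk} and \cref{thm: Mey} provide a $G$-equivariant $*$-homomorphism $g \colon \mathfrak{q}C \to B \otimes \kg$ whose homotopy class is $[f]^{-1}\in KK^G_{\operatorname{class}}(C,B)$. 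The candidate inverse $\Psi_A$ would send $\xi \colon \mathfrak{q}D \to A \otimes C \otimes \kg$ to the composite
\[
\mathfrak{q}D \xrightarrow{\varphi_D} \mathfrak{q}^2 D \xrightarrow{\mathfrak{q}(\xi)} \mathfrak{q}(A \otimes C \otimes \kg) \xrightarrow{\kappa \otimes 1_\kg} A \otimes q(C \otimes \kg \otimes \kg) \otimes \kg \xrightarrow{\simeq} A \otimes \mathfrak{q}C \xrightarrow{1_A \otimes g} A \otimes B \otimes \kg,
\]
with $\varphi_D$ as in \cref{thm: the map}, $\mathfrak{q}(\xi)=q(\xi\otimes 1_\kg)\otimes 1_\kg$, $\kappa$ the natural transformation from the corollary after \cref{lem: q const}, and the $\simeq$ absorbing the extra $\kg$-factor via \cref{cor: dix-doua for kg}.

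I would then verify $\Phi_A \circ \Psi_A \simeq \operatorname{id}$ and $\Psi_A \circ \Phi_A \simeq \operatorname{id}$ by first handling the case $A = \mathbb{C}$. Here a direct comparison with \cref{eqn: product} shows that $\Psi_\mathbb{C}(\xi)$ is exactly the Cuntz--Meyer Kasparov product $[g]\#[\xi]$, while \cref{lem: composition} gives $\Phi_\mathbb{C}(\eta)=[f]\#[\eta]$. Thus $\Phi_\mathbb{C}\Psi_\mathbb{C}$ and $\Psi_\mathbb{C}\Phi_\mathbb{C}$ amount to Kasparov product with $[f]\#[g]=1_C$ and $[g]\#[f]=1_B$ respectively, both of which act as the identity on $\pi_0$ by \cref{cor: composition of inverses} together with associativity of the Kasparov product. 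For general $A$, every arrow appearing in $\Phi_A$ and $\Psi_A$ arises from its $A=\mathbb{C}$ counterpart by tensoring with $1_A$, and (by naturality of $\kappa$) the reduction from $\Psi_A \Phi_A$ to the form above proceeds in the same way. Since the tensor product on $\GA$ is a simplicially enriched bifunctor (\cref{prop: ten pro simp enrichment}), the explicit homotopies found in the $A=\mathbb{C}$ case can be tensored with $1_A$ to yield the corresponding homotopies for general $A$, establishing bijectivity of $\pi_0(\Phi_A)$.

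To promote the $\pi_0$-bijection to a full weak equivalence of Kan complexes, I would run the same argument with $B,C$ replaced by $\C_*(S^n)\otimes B,\C_*(S^n)\otimes C$ and $f$ replaced by $1_{\C_*(S^n)}\otimes f$; the latter remains a $KK^G$-equivalence by external-product invariance in classical $KK^G$-theory. Indeed, \cref{prop: CXB} identifies $\C_*(S^n, A\otimes X\otimes \kg)$ with $A\otimes \C_*(S^n)\otimes X\otimes \kg$, and \cref{prop: tensored and cotensored} then identifies $\pi_n \operatorname{Hom}(\mathfrak{q}D, A\otimes X \otimes \kg)_\bullet$ with $\pi_0 \operatorname{Hom}(\mathfrak{q}D, A\otimes (\C_*(S^n)\otimes X)\otimes \kg)_\bullet$, so the previous paragraph applied to the suspended map gives the general weak equivalence statement. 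The main obstacle will be the careful book-keeping with the various $\kg$-tensor factors: one must thread through the naturality of $\kappa$ and repeatedly invoke \cref{cor: dix-doua for kg} to absorb spurious $\kg$-factors, so as to precisely match the tensor layout in $\Psi_\mathbb{C}$ with that of the Cuntz--Meyer product formula \cref{eqn: product}.
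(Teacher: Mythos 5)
Your proposal is correct and follows essentially the same route as the paper: construct the candidate inverse from $\varphi_D$, $q(\xi)$, $\kappa$ and $1_A\otimes g$, identify the two composites on $\pi_0$ with Kasparov products against $f\#g$ and $g\#f$ via \cref{eqn: product}, \cref{lem: composition} and \cref{cor: composition of inverses}, and bootstrap to higher homotopy groups by suspension. The only (immaterial) differences are that you suspend $B,C$ where the paper suspends $A$ (replacing $A$ by $\C_*(S^n,A)\in\GA$, which avoids needing suspension-invariance of $KK^G$-equivalence as an extra input), and that you factor the general case through $A=\mathbb{C}$ before tensoring the homotopies with $1_A$, whereas the paper runs the same homotopies directly for general $A$.
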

\begin{proof}
    Let \[g:\mathfrak{q}C \rightarrow B \otimes \kg\] be an inverse to $f$.
    We have to show that, for any separable $G$-$C^*$-algebra $D$, the map of Kan complexes
    \[
    1_A\otimes f \otimes 1_{\kg} : \operatorname{Hom}(\mathfrak{q}D,A \otimes B \otimes \kg)_\bullet \rightarrow \operatorname{Hom}(\mathfrak{q}D,A \otimes C \otimes \kg)_\bullet
    \]
    is a homotopy equivalence. To that end, we come up with a map of Kan complexes 
    \[
    \mathfrak{g}: \operatorname{Hom}(\mathfrak{q}D,A \otimes C \otimes \kg)_\bullet \rightarrow \operatorname{Hom}(\mathfrak{q}D,A \otimes B \otimes \kg)_\bullet
    \]
    such the composites $\Phi:=\mathfrak{g}\circ(1_A\otimes f \otimes 1_{\kg})$ and $\Psi:=(1_A\otimes f \otimes 1_{\kg})\circ \mathfrak{g}$ induce the respective identity maps on $\pi_0$; this is sufficient as we can replace $A$ by $\C_*(S^n,A)$ to get an isomorphism of higher homotopy groups.

    We begin by the definition of $\mathfrak{g}$. On the $0$-simplices, we define it as follows: given $\nu \in \operatorname{Hom}(\mathfrak{q}D,A \otimes C \otimes \kg)$, we set $\mathfrak{g}(\nu)$ to be the composite
    \[
    \mathfrak{q}D\xrightarrow{\varphi_D}  \kg \otimes q(\kg \otimes q(D \otimes \kg)) \xrightarrow{1_\kg \otimes q(\nu)} \kg \otimes q(A \otimes C \otimes \kg) \rightarrow A \otimes \kg \otimes q(\kg \otimes C) \xrightarrow{1_A \otimes g} A \otimes B \otimes \kg.
    \]
    The definition of the higher simplices can be easily deduced from this. Given $\mu \in \operatorname{Hom}(\mathfrak{q}D,A\otimes B \otimes \kg)$, from \cref{eqn: product}, we can write $\Phi(\mu)=\mathfrak{g} \circ (1_A \otimes f \otimes 1_\kg)(\mu)$ as the composite
    \begin{equation} \label{eqn: Varphi}
\begin{tikzcd}
	{\mathfrak{q}D} & {\mathcal{K}_G \otimes q(\mathcal{K}_G \otimes q(\mathcal{K}_G \otimes D))} & {\mathcal{K}_G \otimes q(A \otimes B \otimes \mathcal{K}_G)} \\
	& {} & {\mathcal{K}_G \otimes q(A \otimes C \otimes \mathcal{K}_G)} \\
	&& {\mathcal{K}_G \otimes A \otimes q(\mathcal{K}_G \otimes C)} \\
	&& {A \otimes B \otimes \mathcal{K}_G}
	\arrow["{\varphi_D}", from=1-1, to=1-2]
	\arrow["{1_{\mathcal{K}_G \otimes q(\mu)}}", from=1-2, to=1-3]
	\arrow["{1_{\mathcal{K}_G}\otimes q(1_A \otimes f \otimes 1_{\mathcal{K}_G})}", from=1-3, to=2-3]
	\arrow[from=2-3, to=3-3]
	\arrow[from=3-3, to=4-3]
\end{tikzcd}
    \end{equation}
    Now, note that the vertical part of the composition in \cref{eqn: Varphi} is homotopic to the map
    \[
    \kg \otimes q(A \otimes B \otimes \kg) \xrightarrow{1_\kg \otimes \kappa} \kg \otimes A \otimes q(B \otimes \kg) \xrightarrow{1_{\kg \otimes A} \otimes \pi_{B \otimes \kg}} \kg \otimes A \otimes B  \otimes \kg \xrightarrow{\cong} A \otimes B \otimes \kg,
    \]
    as can be seen from \cref{cor: composition of inverses} and \cref{prop: ten pro simp enrichment}.

    Thus, to show that $\Phi$ induces identity on $\pi_0$, it suffices to show that the following diagram commutes upto homotopy 
    \[
\begin{tikzcd}
	{\mathcal{K}_G \otimes q(D \otimes \mathcal{K}_G)} && {A \otimes B \otimes \mathcal{K}_G \cong A \otimes B \otimes \mathcal{K}_G \otimes \mathcal{K}_G} \\
	{} && {A \otimes \mathcal{K}_G \otimes q(B \otimes \mathcal{K}_G)} \\
	{\mathcal{K}_G \otimes q(\mathcal{K}_G\otimes q(\mathcal{K}_G\otimes D))} && {\mathcal{K}_G\otimes q(A \otimes B \otimes \mathcal{K}_G)}
	\arrow["\mu", from=1-1, to=1-3]
	\arrow["{\varphi_D}", from=1-1, to=3-1]
	\arrow[from=2-3, to=1-3]
	\arrow["{1_{\mathcal{K}_G} \otimes q\mu}", from=3-1, to=3-3]
	\arrow[from=3-3, to=2-3]
\end{tikzcd}
    \]
    The left vertical arrow is a homotopy equivalence by \cref{thm: the map} with homotopy inverse $\pi_{\mathfrak{q}A}$, and the rest follows from the naturality of the $q$-construction.
    
    The case of $\Psi$ is analogous.
\end{proof}
As an immediate corollary, we have:
\begin{prop}
    Suppose $B$ and $C$ are $KK^G$-equivalent separable $G$-$C^*$-algebras. Then, for any $A \in \GA$, $A \otimes B$ and $A \otimes C$ are weakly equivalent in $\GA$.
\end{prop}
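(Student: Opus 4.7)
The plan is to reduce the general $KK^G$-equivalence between $B$ and $C$ to a zig-zag of honest $*$-homomorphisms, each of which is a $KK^G$-equivalence, and then apply \cref{lem: technical stability} termwise.

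First, I would invoke the category of fibrant objects structure on separable $G\text{-}C^*\!\operatorname{-alg}$ with weak equivalences the $KK^G$-equivalences, as recalled just before \cref{lem: technical stability}. A standard property of categories of fibrant objects is that any morphism in the homotopy category is represented by a zig-zag of morphisms of the underlying category in which every backward-pointing map is a weak equivalence. Applied here, any $KK^G$-equivalence between $B$ and $C$ in the Kasparov category is witnessed by a zig-zag
\[
B = D_0 \xleftrightarrow{f_1} D_1 \xleftrightarrow{f_2} \cdots \xleftrightarrow{f_n} D_n = C
\]
of $G$-equivariant $*$-homomorphisms of separable $G$-$C^*$-algebras in which each $f_i$ (forward or backward) is a $KK^G$-equivalence.

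Next, I would apply the functor $A \otimes -$, which is well-defined and functorial by \cref{cor: ten pro} and its extension to $\GA$ in \cref{prop: ten pro simp enrichment}. This yields a zig-zag in $\GA$,
\[
A \otimes B \xleftrightarrow{1_A \otimes f_1} A \otimes D_1 \xleftrightarrow{1_A \otimes f_2} \cdots \xleftrightarrow{1_A \otimes f_n} A \otimes C,
\]
and by \cref{lem: technical stability}, each $1_A \otimes f_i$ is a left $KK^G$-equivalence. Since left $KK^G$-equivalences are, by definition, the morphisms of $\GA$ that become equivalences after mapping into any separable $G$-$C^*$-algebra's $KK^G$-mapping anima, they satisfy 2-out-of-3 and are closed under composition. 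Thus the zig-zag exhibits $A \otimes B$ and $A \otimes C$ as equivalent objects in the Dwyer-Kan localization at left $KK^G$-equivalences, which is exactly the sense of ``weakly equivalent in $\GA$'' alluded to here (and which will be made into an honest weak equivalence by the model structure constructed in the subsequent sections).

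No genuine obstacle is expected: the only point requiring care is the existence of the zig-zag representation, which is immediate from the axioms of a category of fibrant objects, and the verification that $A \otimes -$ takes such a zig-zag to a zig-zag of left $KK^G$-equivalences, which is exactly what \cref{lem: technical stability} provides.
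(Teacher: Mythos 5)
Your proof is correct and is essentially the paper's intended argument: the paper states this proposition as an immediate corollary of \cref{lem: technical stability}, having already noted just before that lemma that every $KK^G$-equivalence of separable $G$-$C^*$-algebras is realized by a zig-zag of $*$-homomorphisms which are $KK^G$-equivalences, so your termwise application of the lemma to that zig-zag is exactly what is meant. Your additional care in spelling out what ``weakly equivalent in $\GA$'' means at this stage (equivalence in the localization at left $KK^G$-equivalences, later upgraded by the model structure) is a reasonable reading and does not change the argument.
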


\begin{cor}\label{cor: Bott}
    $\C_0(\R^2,A)$ and $A$ are left $KK^G$-equivalent.
\end{cor}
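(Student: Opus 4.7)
The plan is to deduce the corollary from the classical equivariant Bott periodicity theorem together with the preceding Proposition. First, I recall Kasparov's equivariant Bott periodicity: the separable $G$-$C^*$-algebras $\mathbb{C}$ and $\C_0(\R^2)$ (where $G$ acts trivially on $\R^2$) are $KK^G$-equivalent, with inverse equivalences provided by the Bott element $\beta \in KK^G(\mathbb{C}, \C_0(\R^2))$ and the Dirac element. This is a standard input from classical equivariant $KK$-theory and requires no new work here.

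Next, I apply the Proposition immediately preceding this corollary to the $KK^G$-equivalent separable $G$-$C^*$-algebras $B = \C_0(\R^2)$ and $C = \mathbb{C}$. This yields that $A \otimes \C_0(\R^2)$ and $A \otimes \mathbb{C}$ are left $KK^G$-equivalent in $\GA$ for every $A \in \GA$. The former Proposition itself was built on \cref{lem: technical stability}, so there is nothing further to verify at the level of left $KK^G$-equivalences.

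The only remaining ingredient is the identification $A \otimes \C_0(\R^2) \cong \C_0(\R^2, A)$ in $\GA$, together with the evident isomorphism $A \otimes \mathbb{C} \cong A$. For the first isomorphism, one passes through the pointed one-point compactification: $\C_0(\R^2) \cong \C_*(S^2)$, and $S^2$ is a pointed metrizable compact Hausdorff space (with trivial $G$-action). The pointed analogue of \cref{prop: CXB}, mentioned in the remark following it, gives $\C_*(S^2) \otimes A \cong \C_*(S^2, A) \cong \C_0(\R^2, A)$ in $\GA$, after taking the Hausdorff $\nu$-completion if necessary.

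The main (minor) obstacle is spelling out the pointed variant of \cref{prop: CXB} and checking it in the metrizable case; but the proof transcribes from the unpointed one by focusing on the ideal of functions vanishing at the basepoint, as already indicated in the paper. Combining the three steps yields $\C_0(\R^2, A) \simeq A$ as left $KK^G$-equivalence, which is the desired conclusion.
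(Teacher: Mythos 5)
Your proposal is correct and follows essentially the same route as the paper: identify $\C_0(\R^2,A)$ with $\C_0(\R^2)\otimes A$ via (the pointed variant of) \cref{prop: CXB}, invoke classical equivariant Bott periodicity for $\C_0(\R^2)\simeq\mathbb{C}$, and conclude by the preceding proposition (built on \cref{lem: technical stability}) that tensoring with $A$ preserves left $KK^G$-equivalence. You simply spell out the pointed one-point-compactification step that the paper leaves implicit.
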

\begin{proof}
    Note that $\C_0(\R^2,A) \cong \C_0(\R^2) \otimes A$ from \cref{prop: CXB}, and the rest follows from the classical $KK$-theory fact that $\C_0(\R^2)$ and $\mathbb{C}$ are $KK^G$-equivalent; see \cite[Section 3.4]{ECH}, for example.
\end{proof}
We end this section by a folklore result, which will be needed in the later sections.
\begin{prop}\label{cor: stable cone seminorm on B(H)}
    Every $G$-$C^*$-algebra $B$ embeds into a left $KK^G$-contractible $G$-$C^*$-algebra. 
\end{prop}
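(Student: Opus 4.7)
The plan is to realize $B$ inside the $G$-continuous part of the multiplier algebra of $B \otimes \kg$, and to show that this larger $G$-$C^*$-algebra is left $KK^G$-contractible via an Eilenberg swindle.

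Set $\mathcal{H} := L^2G_\N$ and let $M$ denote the $G$-continuous subalgebra of $M(B \otimes \mathcal{K}(\mathcal{H}))$, i.e.\ the largest $*$-subalgebra on which the natural $G$-action becomes point-norm continuous. Then $M$ is a $G$-$C^*$-algebra, and the formula $b \mapsto b \otimes 1_{M(\mathcal{K}(\mathcal{H}))}$ defines a $G$-equivariant embedding $B \hookrightarrow M$ (this lands in the $G$-continuous part precisely because $B$ itself is a $G$-$C^*$-algebra).

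To prove that $M$ is left $KK^G$-contractible, the input is the $G$-equivariant Hilbert space isomorphism $\mathcal{H} \cong \mathcal{H} \oplus \mathcal{H}$, coming from the same absorption property that underlies \cref{cor: dix-doua for kg}. Using this splitting one builds a $G$-equivariant $*$-endomorphism $\Phi \colon M \to M$ together with a $G$-invariant unitary $V \in M$ such that $V(\Phi \oplus \operatorname{id}_M)V^* = \Phi$. Passing to $KK^G$-classes in $KK^G(M,M)$ yields $[\Phi] + [\operatorname{id}_M] = [\Phi]$, so $[\operatorname{id}_M] = 0$. Consequently, for any separable $G$-$C^*$-algebra $A$ and any $[\mu] \in KK^G_0(A, M)$, the Kasparov product gives $[\mu] = [\operatorname{id}_M] \# [\mu] = 0$. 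Running the same argument with $M$ replaced by $C_*(S^n, M)$ (which inherits the swindle structure pointwise) yields $KK^G_n(A,M) = 0$ for all $n \ge 0$, and by \cref{cor: kk equi} the Kan complex $\operatorname{Hom}(\mathfrak{q}A, M \otimes \hatkg)_\bullet$ is weakly contractible. Hence $M$ is left $KK^G$-contractible, as required.

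The main technical obstacle is verifying $G$-continuity throughout: one must ensure that both the swindle endomorphism $\Phi$ and the implementing unitary $V$ restrict to the $G$-continuous subalgebra of $M(B \otimes \mathcal{K}(\mathcal{H}))$, and that the block decomposition coming from $\mathcal{H} \cong \mathcal{H} \oplus \mathcal{H}$ can be chosen so that the resulting isometries are $G$-equivariant. These checks ultimately depend on the good behaviour of the $G$-action on $L^2G_\N$.
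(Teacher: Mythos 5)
Your strategy is the same as the paper's in essence: embed $B$ into a ``large'' ambient $G$-$C^*$-algebra that absorbs countable direct sums of itself, and kill its $KK^G$-groups by an Eilenberg swindle. The paper realizes this with $\mathcal{B}(\mathcal{H}_G)^c$, the $G$-continuous part of the bounded operators on a faithful covariant representation of $B$ on $\mathcal{H}_G = L^2G \otimes \mathcal{H}$, whereas you use the $G$-continuous part $M$ of $M(B \otimes \kg)$ with the corner embedding $b \mapsto b \otimes 1$; both ambient algebras work and the choice is essentially cosmetic.

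One step in your write-up needs repair, though. You run the swindle in ``$KK^G(M,M)$'' and then cancel $[\operatorname{id}_M]$ against an arbitrary class via the Kasparov product. But $M$ is not separable, and in the framework of this paper $KK^G(A,B)_\bullet$ is only defined for \emph{separable} $A$; neither the group $KK^G(M,M)$ nor the product $[\operatorname{id}_M]\#[\mu]$ is available as stated. The fix is to run the swindle directly on $KK^G_0(A,M) = [\mathfrak{q}A, M \otimes \kg]$ for a separable $A$: given a representative $\varphi$, the honest $*$-homomorphisms $\Phi$, $\operatorname{id}_M$, and $\Phi \oplus \operatorname{id}_M$ can be post-composed with $\varphi$, and the block-sum monoid structure gives $[\varphi] + [\Phi \circ \varphi] = [(\Phi \oplus \operatorname{id}_M)\circ\varphi] = [\Phi\circ\varphi]$, whence $[\varphi]=0$. (This is exactly how the paper argues, with $\Phi\circ\varphi = \oplus_{i=1}^\infty\varphi$.) For the middle equality you also need that conjugation by your unitary $V$ acts trivially on these classes; this follows because the $G$-invariant unitaries of $\mathcal{B}(\mathcal{H}_G)$ form the unitary group of a von Neumann algebra (the commutant of $G$), hence are norm-path-connected to the identity. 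With these adjustments your argument goes through, and the passage to higher $KK$-groups via $\C_*(S^n,M)$ is handled exactly as in the paper.
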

\begin{proof}
    From the general theory of covariant representations, we get that $B$ admits a faithful covariant representation on $\mathcal{H}_G:=L^2G \otimes \mathcal{H}$ for any (non-equivariant) infinite dimensional Hilbert space $\mathcal{H}$ on which $B$ admits a faithful representation (see \cite[Lemma C.1.9]{HIT}). This induces an injective morphism of $G$-$C^*$-algebras
    \[
    B \rightarrow \mathcal{B}(\mathcal{H}_G)^c,
    \]
    where the superscript $c$ stands for the closed $*$-subalgebra of $\mathcal{B}(\mathcal{H}_G)$ where the group action is continuous.

    We will now show that $\mathcal{B}(\mathcal{H}_G)^c$ is left $KK^G$-contractible. Let $A$ be a separable $G$-$C^*$-algebra. We first show that $KK^G_0(A,\mathcal{B}(\mathcal{H}_G)^c)=0$. It suffices to show that for any $[\varphi] \in KK^G_0(A,\mathcal{B}(\mathcal{H}_G)^c)$, there exists a $[\psi] \in KK^G_0(A,\mathcal{B}(\mathcal{H}_G)^c)$ such that $[\varphi]+[\psi]=[\psi]$. We first note that there are two possible additions on 
    \[
    KK^G_0(A,\mathcal{B}(\mathcal{H}_G)^c)=[q(A \otimes \kg)\otimes \kg,  \mathcal{B}(\mathcal{H}_G)^c\otimes \mathcal{K}(L^2G \otimes l^2\N)],
    \]
    coming from the identifications $\mathcal{H}_G \cong \mathcal{H}_G \oplus \mathcal{H}_G$, and $L^2G \otimes l^2\N \cong L^2G \otimes l^2\N \oplus L^2G \otimes l^2\N$. However, they are the same by a direct Eckmann-Hilton argument. We use the former identification, which allows us to form a countably infinite sum of $\varphi$, by using the identifications $\mathcal{H}_G \cong \oplus_{i=1}^{\infty}\mathcal{H}_G$. Indeed, we have
    \[
    \oplus_{i=1}^{\infty}\varphi: q(A \otimes \kg)\otimes \kg \rightarrow  \mathcal{B}(\oplus_{i=1}^{\infty}\mathcal{H}_G)^c\otimes \mathcal{K}(L^2G \otimes l^2\N)],
    \]
    is a well defined $G$-equivariant bounded linear operator, and the isomorphism $\mathcal{H}_G \cong \oplus_{i=1}^{\infty}\mathcal{H}_G$ yields an isomorphism $\mathcal{B}(\oplus_{i=1}^{\infty}\mathcal{H}_G)^c \cong \mathcal{B}(\mathcal{H}_G)^c$, inducing an element $KK^G_0(A,\mathcal{B}(\mathcal{H}_G))$ which we denote by $[\oplus_{i=1}^{\infty}\varphi]$. As any two $G$-equivariant unitaries between infinite dimensional $G$-Hilbert spaces are path-connected in the norm topology \footnote{this follows by noting that the set of $G$-equivariant unitaries on a $G$-Hilbert space $\h$ is the unitary group of the commutant of $G$ in $\mathcal{B}(H)$, which is a von-Neumann algebra.}, we conclude that 
    \[
    [\varphi] + [\oplus_{i=1}^{\infty}\varphi] = [\oplus_{i=1}^{\infty}\varphi]~\text{in}~KK^G_0(A,\mathcal{B}(\mathcal{H}_G)^c),
    \]
    as required.

    The case of higher $KK$-groups now follows from the above proof, by noting that $\C_*(X,\mathcal{B}(\mathcal{H}_G)^c) \cong \C_*(X) \otimes \mathcal{B}(\mathcal{H}_G)^c$ for any (pointed) compact Hausdorff space $X$. 
\end{proof}
\begin{rem}
    A proof of \cref{cor: stable cone seminorm on B(H)} may also be constructed by formal properties of the functor $KK^G(A,-)$ functor, similar to proof of \cite[Proposition 2.7.7]{HIT}.
\end{rem}
\section{The seminorm extension property}\label{sec: sep}
In this section, we study a class of morphisms in $\GA$ which will include all the cell-complex inclusions in our sought after cofibrantly generated model structure. These are the morphisms having the seminorm extension property; see \cref{defi: sep}. Our goal is to show that every object in $\GA$ is \textit{small} with respect to such morphisms \footnote{and (a version of) this is needed to come up with the cofibrantly generated model structures using the Quillen's small object argument, see \cite[Section 2.1]{HOV}.}; the verification of this gets quite technical since objects in a category of locally convex algebras are manifestly \textit{cosmall}. This is where working with $\nu$-complete algebras helps us, since, albeit $\nu$-completion being a transfinite procedure, a ``large enough" sequential colimit of $\nu$-complete algebras is automatically $\nu$-complete; see \cref{lem: technical 3}. 

We mention that the analogous treatment in \cite{MJ} is inaccurate since the description of the $\nu$-completion functor therein was incorrect.
\begin{defi}\label{defi: sep}
    A morphism $f:A \rightarrow B$ in $G\operatorname{-lmc}C^*\!\operatorname{-alg}$ is said to have the seminorm extension property if the induced map $f^*:\mathcal{S}(B) \rightarrow \mathcal{S}(A)$ is surjective. That is, if every $G$-invariant continuous $C^*$-seminorm on $A$ can be extended along $f$ to a $G$-invariant continuous $C^*$-seminorm on $B$.
\end{defi}
We next present a few examples to demonstrate some morphisms which have the seminorm extension property.
\begin{example}\label{example 2} \hfill
\begin{enumerate}
    \item The canonical maps $A \rightarrow A^{\operatorname{Haus}}$ (where $G\operatorname{-lmc}C^*\!\operatorname{-alg}$) and $A \rightarrow \overline{A}^\nu$ (where $A \in \GA$) clearly have the seminorm extension property. As the seminorm extension property is closed under composition, we conclude that the Hausdorff $\nu$-completion of lmc-$G$-$C^*$-algebra (\cref{rem: Hausdorff nu completion}) also has the seminorm extension property.
    \item If $X$ is a retract of $Y$, then for any $A \in \GA$, the inclusion $A \otimes X \hookrightarrow A \otimes Y$ has the seminorm extension property, as is immediate from the functoriality of the construction.
\end{enumerate}
\end{example}
While dealing with Hausdorff algebras, the seminorm extension property is a upgradation of topological inclusion, as the following lemma suggests. We omit the proof, since it is a direct equivariant analogue of \cite[Lemma 7.3]{MJ}.
\begin{lem} \label{lem: sep implies inclusion}
A homomorphism $f: A \to B$ in $G\text{-}\mathbb{A}$ that has the seminorm extension property is a topological inclusion. 
\end{lem}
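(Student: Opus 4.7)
The plan is to verify the two components of being a topological inclusion separately: (i) $f$ is injective, and (ii) the topology on $A$ coincides with the initial topology pulled back from $B$ along $f$.

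For injectivity, I would use that $A$ is Hausdorff, which by the definition of a lmc $G$-$C^*$-algebra is equivalent to saying that the family $\mathcal{S}(A)$ separates points: $\bigcap_{p \in \mathcal{S}(A)} \ker p = \{0\}$. Suppose $f(a)=0$ for some $a \in A$. Given any $p \in \mathcal{S}(A)$, the seminorm extension property produces a $q \in \mathcal{S}(B)$ with $p = q \circ f$, so $p(a) = q(f(a)) = q(0) = 0$. Since this holds for all $p \in \mathcal{S}(A)$, the Hausdorff assumption forces $a = 0$.

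For the topological statement, I recall that the topology on a lmc $G$-$C^*$-algebra is by definition generated by its family of $G$-invariant continuous $C^*$-seminorms. Continuity of $f$ already shows that the pulled-back topology (i.e.\ the one generated by $\{q \circ f : q \in \mathcal{S}(B)\}$) is coarser than the topology on $A$. Conversely, by the seminorm extension property, every $p \in \mathcal{S}(A)$ belongs to $\{q \circ f : q \in \mathcal{S}(B)\}$, so the pulled-back topology is also finer than the original topology on $A$. Hence the two topologies coincide, which together with injectivity means $f$ is a topological inclusion onto its image with the subspace topology.

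Given the ingredients (Hausdorff $=$ seminorms separate points, plus the tautological identification of the topology on a lmc $C^*$-algebra with the one generated by $\mathcal{S}(A)$), there is no substantial obstacle here; the argument is essentially bookkeeping. The only subtle point worth stressing in writing, to mirror \cite[Lemma 7.3]{MJ} in the equivariant setting, is that one must work with $G$-invariant seminorms throughout, which is automatic since $\mathcal{S}(A)$ and $\mathcal{S}(B)$ were redefined in \cref{defi: eq lmc G alg} to mean the $G$-invariant continuous $C^*$-seminorms.
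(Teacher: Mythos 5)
Your proof is correct and is exactly the standard argument that the paper defers to (it omits the proof, citing \cite[Lemma 7.3]{MJ} as the non-equivariant model): Hausdorffness gives that $\mathcal{S}(A)$ separates points, hence injectivity, and surjectivity of $f^*$ gives that the initial topology generated by $\{q\circ f : q\in\mathcal{S}(B)\}$ agrees with the one generated by $\mathcal{S}(A)$. Your closing remark about $G$-invariance being the only equivariant subtlety is also on point.
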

Our next goal would be to show that objects of $\GA$ are small with respect to morphisms having the seminorm extension property. As mentioned at the beginning of this section, the analogous statement in the non-equivariant case is \cite[Lemma 7.5]{MJ}, but the proof therein is wrong. So, we redo all the proofs here. 

We begin by recalling the definition of a small object in a category, as there are many competing notions with the same name in the literature. Our definitions and conventions in this regard follow \cite[Section 2.1]{HOV}. We refer the reader to \cite[Definition 2.1.1, Definition 2.1.2]{HOV} for the definition of $\lambda$-sequences and $\gamma$-filtered ordinals.
\begin{defi}
    Suppose $\mathcal{C}$ is a category with all small colimits, $\mathcal{D}$ a collection of morphisms in $\C$, $A$ is an object of $\C$, and $\kappa$ a cardinal. We say that $A$ is \textit{$\kappa$-small relative to $D$} if, for all $\kappa$-filtered ordinals $\lambda$ and all $\lambda$-sequences 
    \[
    X_0 \rightarrow X_1 \rightarrow ...\rightarrow X_\beta \rightarrow...
    \]
    such that each map $X_\beta \rightarrow X_{\beta+1}$ is in $\mathcal{D}$ for $\beta+1 < \lambda$, the map of sets
    \[
    \operatorname{colim}_{\beta < \lambda} \C(A,X_\beta) \rightarrow \C(A,\operatorname{colim}_{\beta <\lambda} X_\beta)
    \]
    is an isomorphism.
\end{defi}
We begin with a few technical lemmas, with the eventual goal to establish that every object in $\GA$ is small with respect to morphisms having the seminorm extension property.
\begin{lem}\label{lem: forget pres filtered colim}
    The forgetful functor
    $ G\text{-}\!\operatorname{lmc}\!C^*\operatorname{-alg}\rightarrow \operatorname{Sets}
    $
    preserves filtered colimits.
\end{lem}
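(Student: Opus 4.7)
The plan is to factor the forgetful functor through the chain
\[
G\text{-}\!\operatorname{lmc}\!C^*\operatorname{-alg} \xrightarrow{\iota} G\text{-}\!\operatorname{Top}\!*\operatorname{-alg} \xrightarrow{U_1} \operatorname{Top}\!*\operatorname{-alg} \xrightarrow{U_2} \operatorname{*-alg} \xrightarrow{U_3} \operatorname{Sets}
\]
and analyze how each arrow interacts with (filtered) colimits separately. I will show the first three arrows preserve \emph{all} colimits at the level of underlying sets, and that the last preserves filtered colimits.

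The inclusion $\iota$ is a right adjoint (with left adjoint lmc-fication, see \cref{prop: eq cat}(2)), so it does not preserve colimits on the nose. Nevertheless, by the recipe given in the proof of \cref{cor: non-eq cats}, which applies verbatim in the equivariant setting of \cref{cor: eq bicom}, for any diagram $D$ one has
\[
\operatorname{colim}_{G\text{-}\!\operatorname{lmc}\!C^*\operatorname{-alg}} D \;\cong\; \operatorname{lmc-fication}\bigl(\operatorname{colim}_{G\text{-}\!\operatorname{Top}\!*\operatorname{-alg}} \iota \circ D\bigr).
\]
Since the lmc-fication functor, as explicitly constructed in the proof of \cref{prop: eq cat}(2), merely re-topologizes its argument and leaves the underlying set unchanged, the underlying set of $\operatorname{colim}_{G\text{-}\!\operatorname{lmc}\!C^*\operatorname{-alg}} D$ agrees with that of $\operatorname{colim}_{G\text{-}\!\operatorname{Top}\!*\operatorname{-alg}} \iota \circ D$.

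For $U_1$: the proof of \cref{prop: eq cat}(1) exhibits colimits in $G\text{-}\!\operatorname{Top}\!*\operatorname{-alg}$ as colimits in $\operatorname{Top}\!*\operatorname{-alg}$ equipped with the induced pointwise $G$-action, so $U_1$ preserves all colimits. For $U_2$: this forgetful functor admits a right adjoint, sending a $*$-algebra to the same $*$-algebra equipped with the indiscrete topology (every $*$-homomorphism whose target is indiscretely topologized is automatically continuous), and hence preserves all colimits. Finally, $U_3$ preserves filtered colimits, which is a standard consequence of $\operatorname{*-alg}$ being the category of models of a finitary algebraic theory.

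Chaining these four observations together yields the lemma. The argument is essentially formal, and I do not foresee any genuine obstacle; the only mildly subtle point is the first step, where one must appeal to the explicit colimit formula across the adjunction to circumvent the fact that $\iota$ is only a right adjoint.
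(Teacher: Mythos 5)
Your proof is correct, and it reaches the same ultimate input as the paper — that the forgetful functor $\operatorname{*-alg}\rightarrow\operatorname{Sets}$ preserves filtered colimits — but by a different intermediate reduction. The paper forgets the $G$-action first (via \cref{cor: forget preserve lim/colim}) and then invokes the explicit description of colimits in $\operatorname{lmc}\!C^*\operatorname{-alg}$ recorded in \cref{rem: colimit seminorm}, which already exhibits the underlying $*$-algebra of the colimit as the algebraic colimit; your route instead passes through $G\text{-}\!\operatorname{Top}\!*\operatorname{-alg}$ and $\operatorname{Top}\!*\operatorname{-alg}$, using the reflective-localization formula $\operatorname{colim} D \cong \operatorname{lmc-fication}(\operatorname{colim}\iota\circ D)$ together with the observation that lmc-fication is the identity on underlying sets, and then the indiscrete-topology right adjoint to see that $\operatorname{Top}\!*\operatorname{-alg}\rightarrow\operatorname{*-alg}$ preserves all colimits. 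Your argument is more formal and self-contained (it never needs the seminorm-level description of colimits, only adjunctions), whereas the paper's is shorter because \cref{rem: colimit seminorm} has already done the work; both are valid, and your observation that the indiscrete topology furnishes a right adjoint to $U_2$ is a clean way to justify a step the paper delegates to the cited construction of colimits in $\operatorname{Top}\!*\operatorname{-alg}$.
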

\begin{proof}
    Since the forgetful functor $G\text{-}\!\operatorname{lmc}\!C^*\operatorname{-alg} \rightarrow \operatorname{lmc}\!C^*\operatorname{-alg}$ preserves colimits (\cref{cor: forget preserve lim/colim}), it suffices to show that the forgetful functor 
    $\operatorname{lmc}\!C^*\operatorname{-alg} \rightarrow \operatorname{Sets}
    $
    preserves filtered colimits. To that end, we consider the model of colimits in $\operatorname{lmc}\!C^*\operatorname{-alg}$ as described in \cref{rem: colimit seminorm}, and invoke the well known algebraic fact that the forgetful functor from $\operatorname{*-alg}\rightarrow \operatorname{Sets}$ preserves filtered colimits (see \cite[Proposition IX.2]{Mac}, and the discussion that follows it).
\end{proof}
\begin{lem} \label{lem: technical 2}
    Let $F: \lambda \rightarrow G\text{-}\mathbb{A}$ be a diagram in $G\text{-}\mathbb{A}$, where $\lambda$ is an ordinal. Suppose that all the maps $A_\alpha \rightarrow A_{\alpha+1}$ have the seminorm extension property, and the induced map 
    \[
    \underset{\alpha < \beta}{\operatorname{colim}}^{G\operatorname{-lmc-C*-alg}} A_\alpha\rightarrow A_\beta
    \]
    too have the seminorm extension property for every limit ordinal $\beta < \lambda$\footnote{$F$ is not necessarily a $\lambda$-sequence in this lemma!}. Then, the underlying set of the colimit of $F$ is isomorphic to the colimit of the underlying sets of the $A_\alpha$'s. Furthermore, the bonding maps of the universal cocone
    \[
    f_\alpha : A_\alpha \rightarrow  \underset{\alpha < \lambda}{\operatorname{colim}}^{G\text{-}\mathbb{A}} A_\alpha = \operatorname{colim}_{G\text{-}\mathbb{A}}F
    \]
    too have the seminorm extension property.
\end{lem}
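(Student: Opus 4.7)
My plan is to reduce everything to showing that $A_\infty := \operatorname{colim}^{G\text{-}\!\operatorname{lmc}\!C^*\operatorname{-alg}} F$ is already Hausdorff. Once this is known, \cref{cor: eq bicom} (combined with the recipe for colimits in a reflective subcategory spelled out in the proof of \cref{cor: non-eq cats}) identifies $\operatorname{colim}^{G\text{-}\mathbb{A}} F$ with $A_\infty$, and \cref{lem: forget pres filtered colim} then identifies its underlying set with $\operatorname{colim}|A_\alpha|$ (an ordinal, viewed as a poset, is filtered). The same transfinite construction that establishes Hausdorffness will simultaneously deliver the SEP for each bonding map $f_\alpha$.

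The heart of the argument is an auxiliary construction: given any $\alpha_0 < \lambda$ and any $q_{\alpha_0} \in \mathcal{S}(A_{\alpha_0})$, I want to produce a compatible family $\{q_\alpha \in \mathcal{S}(A_\alpha)\}_{\alpha<\lambda}$ (compatible meaning $q_\alpha = q_\beta \circ g_{\alpha,\beta}$ for $\alpha \le \beta$, where $g_{\alpha,\beta}$ is the structure map of $F$) that restricts to $q_{\alpha_0}$ along $A_{\alpha_0} \to A_\alpha$ whenever $\alpha \ge \alpha_0$. For $\alpha < \alpha_0$ I set $q_\alpha := q_{\alpha_0} \circ g_{\alpha,\alpha_0}$ directly. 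For $\alpha \ge \alpha_0$ I run a transfinite induction: at a successor I use SEP of $A_\alpha \to A_{\alpha+1}$, and at a limit ordinal $\beta$ the compatibility of the already-constructed seminorms lets me assemble them (via \cref{rem: colimit seminorm}) into a continuous $G$-invariant $C^*$-seminorm on $\operatorname{colim}_{\alpha<\beta}^{G\text{-}\!\operatorname{lmc}\!C^*\operatorname{-alg}} A_\alpha$, which the hypothesized SEP then extends to $A_\beta$. One more application of \cref{rem: colimit seminorm} then assembles the whole compatible system into a $p \in \mathcal{S}(A_\infty)$ with $p \circ f_{\alpha_0} = q_{\alpha_0}$.

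Both conclusions now fall out. First, a short transfinite induction using \cref{lem: sep implies inclusion}, together with the fact that filtered colimits of injections of sets remain injective (which is \cref{lem: forget pres filtered colim} applied levelwise), shows that every transition $A_\alpha \to A_\beta$ and every bonding map $f_\alpha : A_\alpha \to A_\infty$ is injective. Given $0 \neq x \in A_\infty$, pick any representative $a \in A_{\alpha_0}$; by injectivity $a \neq 0$, and Hausdorffness of $A_{\alpha_0}$ yields a $q_{\alpha_0} \in \mathcal{S}(A_{\alpha_0})$ with $q_{\alpha_0}(a) > 0$. The construction then gives $p \in \mathcal{S}(A_\infty)$ with $p(x) = q_{\alpha_0}(a) > 0$, so $A_\infty$ is Hausdorff. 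The SEP of $f_\alpha$ is immediate: applying the same construction to an arbitrary $q_\alpha \in \mathcal{S}(A_\alpha)$ produces $p \in \mathcal{S}(A_\infty)$ with $p \circ f_\alpha = q_\alpha$.

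The main obstacle is precisely the limit-ordinal step of the transfinite extension, and this is exactly what makes the seemingly baroque limit-ordinal clause in the hypothesis necessary: without the SEP assumed for $\operatorname{colim}_{\alpha<\beta}^{G\text{-}\!\operatorname{lmc}\!C^*\operatorname{-alg}} A_\alpha \to A_\beta$, there is no mechanism to push the inductively built seminorm across a limit stage. Everything else is routine verification that \cref{rem: colimit seminorm} turns compatible families of seminorms into honest seminorms on the colimit.
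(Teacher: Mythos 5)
Your proposal is correct and follows essentially the same route as the paper: reduce to showing the colimit in $G\text{-}\!\operatorname{lmc}\!C^*\operatorname{-alg}$ is already Hausdorff, and establish this (together with the SEP of the bonding maps) by transfinitely extending a given seminorm, using the seminorm extension property at successor stages and the limit-ordinal hypothesis together with \cref{rem: colimit seminorm} and \cref{lem: forget pres filtered colim} at limit stages. The only cosmetic difference is that you separate a point from $0$ while the paper separates two arbitrary points, and your detour through injectivity of the bonding maps is not actually needed for that step.
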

\begin{proof}
    From \cref{cor: eq bicom}, we know that 
    \[
    \operatorname{colim}_{G\text{-}\mathbb{A}}F \cong (\operatorname{colim}_{G\text{-}\!\operatorname{lmc}\!C^*\operatorname{-alg}}F)^{\operatorname{Haus}}.
    \]
    In light of \cref{lem: forget pres filtered colim}, it suffices to show that \[
    A:=\operatorname{colim}_{G\text{-}\!\operatorname{lmc}\!C^*\operatorname{-alg}}F
    \] is already Hausdorff. 

    We claim that any two distinct points in $A$ are separated by a seminorm in $\mathcal{S}(A)$. To see this, let $x,y \in A$ be two distinct elements. Since $\lambda$ is a filtered category, an explicit description of filtered colimits in $\operatorname{Sets}$ (see the paragraph following \cite[Theorem 3.8.9]{EMILY}, for example), we can, without loss of generality, assume that $x=f_\alpha(x')$ and $y=f_\alpha(y')$, where $\alpha < \lambda$. As $A_\alpha$ is Hausdorff, there is a $p' \in \mathcal{S}(A_\alpha)$ such that $p'(x-y) \neq 0$.
    
    We now show using transfinite induction that there exists a $p \in \mathcal{S}(A)$ such that $p \circ f_\alpha = p'$. The successor ordinal case is handled from the very definition of the seminorm extension property. For each limit ordinal $\beta$, we see, from \cref{lem: forget pres filtered colim}, that we can extend $p$ to a well defined continuous $G$-equivariant $C^*$-seminorm \[
    p'': \underset{\alpha < \beta}{\operatorname{colim}}^{G\operatorname{-lmc-C*-alg}}A_\alpha \rightarrow \mathbb{R}
    \]
    This can further be extended to $p''':A_\beta \rightarrow \R$ by the given hypothesis. This concludes the proof of the first part of the lemma.

    The final assertion of the lemma also follows similarly, by transfinitely extending each seminorm on $A_\alpha$ to $\underset{\alpha < \lambda}{\operatorname{colim}}^{G\text{-}\mathbb{A}} A_\alpha$.
\end{proof}
\begin{lem}\label{lem: technical 3}
    Let $F: \lambda \rightarrow \GA$ be a $\lambda$-sequence\footnote{this lemma could also be stated in the generality of \cref{lem: technical 2}, but we don't have any need of doing so.}, where $\lambda$ is a $\nu$-filtered cardinal. Suppose that all the maps $A_\alpha \rightarrow A_{\alpha+1}$ have the seminorm extension property. Then,
    \[
    \underset{\alpha < \lambda}{\operatorname{colim }}^{\operatorname{G\text{-}\mathbb{A}}} A_\alpha
    \]
    is $\nu$-complete.
\end{lem}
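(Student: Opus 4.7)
The plan is to take an arbitrary $\nu$-Cauchy sequence of length $\kappa \leq \nu$ in the candidate colimit
\[
A := \underset{\alpha<\lambda}{\operatorname{colim}}^{G\text{-}\mathbb{A}} A_\alpha,
\]
push it down into a single $A_\alpha$ using the $\nu$-filteredness of $\lambda$, exploit the $\nu$-completeness of $A_\alpha$ to extract a limit, and push that limit forward into $A$. The whole argument hinges on two preparatory ingredients: that the underlying set of $A$ is the filtered colimit of the underlying sets of the $A_\alpha$, and that each bonding map $f_\alpha : A_\alpha \to A$ is a topological inclusion.

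First I would arrange those ingredients by invoking \cref{lem: technical 2}. To apply it I must check that at every limit ordinal $\beta < \lambda$ the canonical map $\underset{\alpha<\beta}{\operatorname{colim}}^{G\operatorname{-lmc}C^*\!\operatorname{-alg}} A_\alpha \to A_\beta$ has the seminorm extension property; but since $F$ is a $\lambda$-sequence in $\GA$, this map is exactly the Hausdorff $\nu$-completion map, which has the seminorm extension property by the first bullet of \cref{example 2}. The lemma then tells us that $A$ is Hausdorff with underlying set the set-theoretic colimit, and that each $f_\alpha$ has the seminorm extension property; \cref{lem: sep implies inclusion} upgrades each $f_\alpha$ to a topological inclusion, in particular an injection whose image carries the subspace topology.

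Next, given a $\nu$-Cauchy sequence $(x_\gamma)_{\gamma < \kappa}$ in $A$ with $\kappa \leq \nu$, each $x_\gamma$ lies in some $f_{\alpha_\gamma}(A_{\alpha_\gamma})$ by the colimit description of the underlying set. Since the indexing set $\{\alpha_\gamma : \gamma < \kappa\}$ has cardinality $\leq \nu$ and $\lambda$ is $\nu$-filtered, there is a strict upper bound $\alpha < \lambda$; composing with the transition maps $A_{\alpha_\gamma} \to A_\alpha$ produces unique elements $y_\gamma \in A_\alpha$ with $f_\alpha(y_\gamma) = x_\gamma$. Because $f_\alpha$ is a topological inclusion, $(y_\gamma)_{\gamma < \kappa}$ is itself $\nu$-Cauchy in $A_\alpha$; by $\nu$-completeness of $A_\alpha$ it converges to some $y \in A_\alpha$, and continuity of $f_\alpha$ yields $x_\gamma \to f_\alpha(y)$ in $A$.

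The step I expect to be the main subtlety is the verification of the hypotheses of \cref{lem: technical 2} at limit ordinals, since the colimit being tested there is computed in $G\operatorname{-lmc}C^*\!\operatorname{-alg}$ rather than in $\GA$, so one must carefully not confuse the ambient categories when identifying the limit-stage map with a Hausdorff $\nu$-completion. Once that is sorted out, the rest is a clean combinatorial argument: $\nu$-filteredness is precisely the strength needed to corral an entire $\nu$-sequence into a single stage, and topological inclusions automatically transport Cauchy-ness in both directions.
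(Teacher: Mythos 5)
Your proposal is correct and follows essentially the same route as the paper: locate the whole $\nu$-Cauchy sequence inside a single stage $f_{\alpha}(A_{\alpha})$ using $\nu$-filteredness, identify $A_{\alpha}$ with its image via the seminorm extension property and \cref{lem: sep implies inclusion}, and conclude by $\nu$-completeness of $A_{\alpha}$. Your explicit check of the limit-ordinal hypothesis of \cref{lem: technical 2} (via \cref{example 2}.1) is exactly the justification the paper relies on, spelled out there in the proof of \cref{prop: small wrt sep}.
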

\begin{proof}
    Let $(a_\bullet)$ be a $\nu$-Cauchy sequence in $
    \underset{\alpha < \lambda}{\operatorname{colim }}^{\operatorname{G\text{-}\mathbb{A}}} A_\alpha
    $. From \cref{lem: forget pres filtered colim} and \cref{lem: technical 2}, we can choose, for each $\nu_0 \in \nu$, an $\alpha_{\nu_0} \in \lambda$ such that $a_{\nu_0} \in f_{\alpha_{\nu_0}}(A_{\alpha_{\nu_0}})$. Then, 
    \[
    \operatorname{sup}\{\alpha_{\nu_0} \mid \nu_0 \in \nu \} < \lambda,
    \]
    as $\lambda$ is $\nu$-filtered. So, there exists an $\alpha_0$ such that \[
    (a_\bullet) \subseteq f_{\alpha_0}(A_{\alpha_0}) \subseteq \underset{\alpha < \lambda}{\operatorname{colim }}^{\operatorname{G\text{-}\mathbb{A}}} A_\alpha.
    \]
    By the last assertion of \cref{lem: technical 2} and \cref{lem: sep implies inclusion}, we conclude 
    \[
    A_{\alpha_0} \cong_{G\text{-}\mathbb{A}} f_{\alpha_0}(A_{\alpha_0}).
    \]
    Thus, as $A_{\alpha_0}$ is $\nu$-complete by assumption, we conclude that $(a_\bullet)$ converges in $\underset{\alpha < \lambda}{\operatorname{colim }}^{\operatorname{G\text{-}\mathbb{A}}} A_\alpha$.
\end{proof}
We can now prove the main result of this section.
\begin{prop}\label{prop: small wrt sep}
    Let $B \in \GA$. Set $\kappa=\max(\lvert B \rvert,\nu)$. Then, $B$ is $\kappa$-small relative to morphisms having the seminorm extension property.
\end{prop}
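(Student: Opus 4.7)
The plan is to verify the small-object axiom directly: for a $\kappa$-filtered ordinal $\lambda$ and a $\lambda$-sequence $F:\lambda \to \GA$ whose successor maps $A_\alpha \to A_{\alpha+1}$ have the seminorm extension property, I want to show that the canonical map
\[
\operatorname{colim}_{\beta<\lambda}\GA(B,A_\beta) \longrightarrow \GA\!\left(B,\ \operatorname*{colim}_{\beta<\lambda}^{\GA} A_\beta\right)
\]
is a bijection. The key idea is that under these hypotheses the colimit $A := \operatorname{colim}^{\GA}_\beta A_\beta$ admits a very concrete description as a filtered union of the $A_\alpha$, with each structural map $f_\alpha$ realized as a homeomorphism onto its image.

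Before anything else, I would verify the hypotheses of \cref{lem: technical 2} and \cref{lem: technical 3} for our $\lambda$-sequence. The successor case is given. For a limit ordinal $\beta < \lambda$, the canonical map $\operatorname{colim}^{G\operatorname{-lmc-C*-alg}}_{\alpha<\beta} A_\alpha \to A_\beta$ factors through the Hausdorff $\nu$-completion, and both steps have the seminorm extension property by point (1) of \cref{example 2}; composition preserves the property as well. Since $\kappa \geq \nu$, the ordinal $\lambda$ is in particular $\nu$-filtered, so \cref{lem: technical 3} applies and forces $\operatorname{colim}^{G\text{-}\mathbb{A}}_\alpha A_\alpha$ to already be $\nu$-complete, hence to coincide with $A = \operatorname{colim}^{\GA}_\alpha A_\alpha$. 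Then \cref{lem: technical 2} identifies the underlying set of $A$ with the filtered colimit of the underlying sets of the $A_\alpha$ and shows that every $f_\alpha : A_\alpha \to A$ has the seminorm extension property. Each such $f_\alpha$ is automatically injective (if $f_\alpha(a)=0$, then for every $p \in \mathcal{S}(A_\alpha)$ write $p = q\circ f_\alpha$; then $p(a)=q(f_\alpha(a))=0$, so $a=0$ by Hausdorffness) and a topological inclusion by \cref{lem: sep implies inclusion}, i.e., a homeomorphism onto its image.

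For surjectivity, let $\varphi:B \to A$ be any morphism in $\GA$. Using the filtered-colimit-of-sets description, for each $b \in B$ I pick $\alpha_b < \lambda$ with $\varphi(b) \in f_{\alpha_b}(A_{\alpha_b})$. Since $|B| \leq \kappa$ and $\lambda$ is $\kappa$-filtered, the supremum $\alpha_0 := \sup_{b\in B} \alpha_b$ is again strictly less than $\lambda$, and by monotonicity of the diagram $\varphi(B) \subseteq f_{\alpha_0}(A_{\alpha_0})$. Set-theoretically $\varphi = f_{\alpha_0} \circ \tilde\varphi$ for a unique $\tilde\varphi: B \to A_{\alpha_0}$, and $\tilde\varphi$ is automatically a continuous $G$-equivariant $*$-homomorphism because $f_{\alpha_0}$ is a homeomorphism onto its image. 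For injectivity, if $\psi_1:B \to A_{\beta_1}$ and $\psi_2: B \to A_{\beta_2}$ satisfy $f_{\beta_1}\circ\psi_1 = f_{\beta_2}\circ\psi_2$, then passing to any common $\beta \geq \beta_1,\beta_2$ and using injectivity of $f_\beta$ forces their pushforwards to coincide in $\GA(B,A_\beta)$, so they represent the same element of the colimit.

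The only real technical obstacle, already absorbed by \cref{lem: technical 2} and \cref{lem: technical 3}, is ensuring that the $\nu$-completion functor does not destroy the filtered-colimit-of-sets description of $A$; this is precisely why we impose $\kappa \geq \nu$, and why the corresponding argument in \cite{MJ} needed fixing in the first place.
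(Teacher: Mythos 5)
Your proof is correct and follows essentially the same route as the paper: reduce to the colimit in $G\text{-}\mathbb{A}$ via \cref{lem: technical 3}, use \cref{lem: technical 2} and \cref{example 2}.1 to get the filtered-colimit-of-sets description with all bonding maps $f_\alpha$ having the seminorm extension property (hence topological inclusions by \cref{lem: sep implies inclusion}), and then factor any $\varphi:B\to A$ through some $A_{\alpha_0}$ using $\kappa$-filteredness. The only cosmetic difference is that you spell out the smallness-of-sets step explicitly where the paper cites \cite[Example 2.1.5]{HOV}.
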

\begin{proof}
    Let $\lambda$ be a $\kappa$-filtered ordinal, and let $F: \lambda \rightarrow \GA$ be a $\lambda$-sequence, where all the maps $A_\alpha \rightarrow A_{\alpha+1}$ have the seminorm extension property. Let
    \[
    \varphi: B \rightarrow \underset{\alpha < \lambda}{\operatorname{colim }}^{\operatorname{G\text{-}\mathbb{A}^\nu}} A_\alpha = \underset{\alpha < \lambda}{\operatorname{colim }}^{\operatorname{G\text{-}\mathbb{A}}} A_\alpha
    \]
    be any $G$-equivariant continuous *-homomorphism (the last equality follows from \cref{lem: technical 3} and \cref{cor: eq bicom}). Now, 
 note that the composite
 \[
    \lambda \xrightarrow{F} \GA \hookrightarrow G\text{-}\mathbb{A} 
 \] is not necessarily a $\lambda$-sequence anymore, however, it still meets the hypothesis of \cref{lem: technical 2}, in light of \cref{example 2}.1. Thus, we conclude that $\varphi$ induces a set map
    \[
    \varphi: B \rightarrow \underset{\alpha < \lambda}{\operatorname{colim }}^{\operatorname{Sets}} A_\alpha.
    \]
    By \cite[Example 2.1.5]{HOV}, we conclude $\varphi$ factors through
    \[
\begin{tikzcd}
	B && {\underset{\alpha < \lambda}{\operatorname{colim }}^{\operatorname{Sets}} A_\alpha} \\
	& {A_{\alpha_0}}
	\arrow["\varphi", from=1-1, to=1-3]
	\arrow["{\varphi_0}"', from=1-1, to=2-2]
	\arrow[hook, from=2-2, to=1-3]
\end{tikzcd}
    \]
    for some $\alpha_0 < \lambda$. Apriori, $\varphi_0$ is just a set map. But, we can show that it is in fact an equivariant continuous $*$-homomorphism using the fact that all the bonding morphims of universal cocone are (topological) inclusions (see also the proof of \cite[Lemma 2.4.1]{HOV}). This shows that the canonical map
    \[
    \underset{\alpha < \lambda}{\operatorname{colim}}\operatorname{Hom}_{\GA}(B,A_\alpha) \rightarrow
    \operatorname{Hom}_{\GA}(B, \underset{\alpha < \lambda}{\operatorname{colim }}^{\operatorname{G\text{-}\mathbb{A}^\nu}} A_\alpha)
    \]
    is surjective. Invoking again the fact that all the bonding morphisms of the universal cocone are injective will also yield that the above map is injective. This completes the proof.
\end{proof}
We now state a couple of lemmas demonstrating some closure properties of maps having the seminorm extension property. These are direct equivariant analogues of \cite[Lemma 7.7, Lemma 7.8]{MJ}, so we omit the proofs\footnote{At one point in the proof of \cite[Lemma 7.8]{MJ}, the authors stated without justification that a certain pushout of an isometry of $C^*$-algebras is an isometry. This isn't completely formal, and follows from \cite[Theorem 4.2]{PED}, for example. The equivariance doesn't pose any additional difficulty since colimits in $\GA$ are computed in $\mathbb{A}^\nu$ (\cref{cor: forget preserve lim/colim}).}. 
\begin{lem}\label{lem: JJ07}
    Suppose $f:A\rightarrow B, g:B \rightarrow C$ are morphisms in $\GA$ such that $gf$ has the seminorm extension property. Then, $f$ too has the seminorm extension property.
\end{lem}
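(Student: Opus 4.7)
The plan is very short because the statement is essentially formal: the seminorm extension property is a lifting property for maps of seminorm sets, and such lifting properties are always inherited by the first factor of a composition.

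Explicitly, given any $p\in\mathcal{S}(A)$, I would use the hypothesis that $gf$ has the seminorm extension property to produce an $r\in\mathcal{S}(C)$ with $r\circ(gf)=p$. Setting $q:=r\circ g$ then gives the desired extension along $f$. The only thing to verify is that $q$ is an element of $\mathcal{S}(B)$, i.e.\ that $q$ is a continuous $G$-invariant $C^*$-seminorm on $B$; this is immediate from the fact that $g$ is a $G$-equivariant continuous $*$-homomorphism, so for $b_1,b_2\in B$ and $h\in G$ one checks
\[
q(b_1b_2)=r(g(b_1)g(b_2))\leq r(g(b_1))r(g(b_2))=q(b_1)q(b_2),
\]
\[
q(b_1^*b_1)=r(g(b_1)^*g(b_1))=r(g(b_1))^2=q(b_1)^2,\quad q(h\cdot b_1)=r(h\cdot g(b_1))=r(g(b_1))=q(b_1),
\]
and continuity of $q$ follows from continuity of $g$ and of $r$.

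There is no real obstacle here; the lemma is a formal consequence of the definition of the seminorm extension property together with the fact that precomposition with a $G$-equivariant continuous $*$-homomorphism sends $\mathcal{S}(C)$ into $\mathcal{S}(B)$. In particular, no appeal to the structure of $\nu$-completions or of the ambient category $\GA$ is needed.
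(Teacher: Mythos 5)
Your proof is correct and is exactly the formal argument one expects: since $(gf)^*=f^*\circ g^*$ and $g^*$ sends $\mathcal{S}(C)$ into $\mathcal{S}(B)$, surjectivity of $(gf)^*$ forces surjectivity of $f^*$. The paper omits the proof (deferring to the non-equivariant analogue in \cite[Lemma 7.7]{MJ}), but that argument is the same one you give, so there is nothing further to add.
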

\begin{lem}\label{lem: JJ08}
    Let
    \[
    B \xleftarrow {f}A \xrightarrow{g} C
    \]
    be a diagram in $\GA$, with $f: A \rightarrow B$ having the seminorm extension property. Then, the induced map $h:C \rightarrow B*_A C$ also has the seminorm extension property.
\end{lem}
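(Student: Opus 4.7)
The plan is to extend an arbitrary $p \in \mathcal{S}(C)$ along $h$ by amalgamating the corresponding $G$-$C^*$-quotients. Set $q := p \circ g \in \mathcal{S}(A)$; the seminorm extension property of $f$ furnishes $\tilde{q} \in \mathcal{S}(B)$ with $\tilde{q} \circ f = q$. Passing to $G$-$C^*$-quotients produces a commutative square of $G$-equivariant $*$-homomorphisms with isometric arrows $A_q \hookrightarrow B_{\tilde{q}}$ and $A_q \hookrightarrow C_p$, compatible with the canonical maps from $A$, $B$, $C$.

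Let $D := B_{\tilde{q}} *_{A_q} C_p$ denote the maximal amalgamated free product of these $G$-$C^*$-algebras, which itself carries a continuous $G$-action and hence lies in $\GA$. By \cite[Theorem 4.2]{PED}, applied to the underlying non-equivariant amalgamated free product, the canonical maps $B_{\tilde{q}} \to D$ and $C_p \to D$ are isometric embeddings. In particular, the composite $C \to C_p \hookrightarrow D$ pulls the $C^*$-norm of $D$ back to the original seminorm $p$.

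Since the two composites $B \to B_{\tilde{q}} \to D$ and $C \to C_p \to D$ agree after pullback to $A$, the universal property of the pushout $B *_A C$ in $\GA$ yields a $G$-equivariant continuous $*$-homomorphism $\Phi \colon B *_A C \to D$. The $G$-invariant continuous $C^*$-seminorm $r$ on $B *_A C$ defined by $r(x) := \lVert \Phi(x) \rVert_D$ then satisfies $r \circ h = p$ by the isometricity of $C_p \hookrightarrow D$ noted above, exhibiting $p$ as an extension of an element of $\mathcal{S}(B *_A C)$.

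The principal technical point, and the ingredient missing from \cite{MJ}, is the appeal to \cite[Theorem 4.2]{PED} guaranteeing that $C_p \to D$ remains isometric after amalgamation; once this analytic fact is granted, the rest of the argument is formal, relying only on the universal properties of pushouts in $\GA$ and the construction of the $G$-$C^*$-quotients $A_q$, $B_{\tilde{q}}$, $C_p$ developed earlier.
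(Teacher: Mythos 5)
Your proof is correct and is essentially the argument the paper intends: it omits the details precisely because they are the equivariant analogue of \cite[Lemma 7.8]{MJ}, with the one non-formal ingredient being that the pushout $C_p \to B_{\tilde q}*_{A_q}C_p$ of the isometry $A_q\hookrightarrow B_{\tilde q}$ is again isometric, justified by \cite[Theorem 4.2]{PED} exactly as you do. Your reduction of an arbitrary $p\in\mathcal{S}(C)$ to the $C^*$-quotient picture and the use of the universal property of the pushout in $\GA$ to pull the norm of the amalgam back to a seminorm extending $p$ matches the intended proof.
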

We end this section by one technical property circling the seminorm extension property, which will come handy in the next section.
\begin{defi}\label{defi: scsep}
$A \in \GA$ is said to have the stable cone seminorm extension property if the morphism
\[ A \otimes X \xrightarrow{i_1} (A \otimes X) \owedge I\]
has the seminorm extension property for all pointed compact spaces $X$.
\end{defi}
Note that, in light of \cref{rem: main rem}.2, not all $A \in \GA$ have the stable cone seminorm extension property. However, we have the following result.
\begin{prop}\label{prop: stable cone seminorm extension}
    For any separable $G$-$C^*$-algebra $A$, $\mathfrak{q}A \boxtimes \hatkg$ has the stable cone seminorm extension property.
\end{prop}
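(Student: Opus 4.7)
The plan is to exhibit, for any pointed compact Hausdorff space $X$ and any continuous $G$-invariant $C^*$-seminorm $p$ on $Y \otimes X$ with $Y := \mathfrak{q}A \boxtimes \hatkg$, an extension $\tilde p$ to $(Y \otimes X) \owedge I$. I would write $p(-) = \|\varphi(-)\|$ for some $G$-equivariant $*$-homomorphism $\varphi \colon Y \otimes X \to D$ into a $G$-$C^*$-algebra $D$ (e.g.\ the Hausdorff completion $(Y \otimes X)_p$) and, using \cref{cor: stable cone seminorm on B(H)}, isometrically embed $\iota \colon D \hookrightarrow C := \mathcal{B}(\mathcal{H}_G)^c$ for a sufficiently large separable $G$-Hilbert space $\mathcal{H}_G$. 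Any extension of $\iota \circ \varphi$ along $i_1$ to a morphism $\psi \colon (Y \otimes X) \owedge I \to C$ then induces a seminorm $\tilde p = \|\psi(-)\|$ on $(Y \otimes X) \owedge I$ which, by isometry of $\iota$, pulls back along $i_1$ to exactly $p$. So the whole question reduces to constructing such a $\psi$.

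Next, I would chase the adjunctions of \cref{prop: adj unpointed}, \cref{prop: adj pointed}, and \cref{prop: hom class}, combined with the identification $\C_*(I, E) \otimes \hatkg \cong \C_*(I, E \otimes \hatkg)$ provided by \cref{prop: CXB} (applied to the metrizable space $I$), to rewrite
\[
\operatorname{Hom}((Y \otimes X) \owedge I, C) \cong \operatorname{Hom}(\mathfrak{q}A, \C_*(I, \C(X, C) \otimes \hatkg)),
\]
with $i_1^*$ translating to $\operatorname{ev}_1^*$ and $\iota \circ \varphi$ corresponding to an adjoint $\widehat{\iota \varphi} \colon \mathfrak{q}A \to \C(X, C) \otimes \hatkg$. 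Under these identifications, extending $\iota \circ \varphi$ along $i_1$ is equivalent to producing a null-homotopy of $\widehat{\iota \varphi}$ inside the Kan complex $\operatorname{Hom}(\mathfrak{q}A, \C(X, C) \otimes \hatkg)_\bullet$. By \cref{cor: datta picture}, this mapping anima is naturally equivalent to $KK^G(A, \C(X, C))_\bullet$, so it suffices to show that $\C(X, C)$ is left $KK^G$-contractible; the mapping anima would then be weakly contractible and $\widehat{\iota \varphi}$ automatically null-homotopic.

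To verify that $\C(X, C)$ is left $KK^G$-contractible, I plan to repeat the Eilenberg-swindle argument of \cref{cor: stable cone seminorm on B(H)} inside $\C(X, -)$. For any separable $G$-$C^*$-algebra $A'$ and any $\chi \colon \mathfrak{q}A' \to \C(X, C) \otimes \kg$, the pointwise direct sum $\bigoplus_{i=1}^{\infty} \chi$ defines a $G$-equivariant $*$-homomorphism into $\C(X, \mathcal{B}(\bigoplus_i \mathcal{H}_G)^c) \otimes \kg$, which the $G$-equivariant unitary isomorphism $\bigoplus_i \mathcal{H}_G \cong \mathcal{H}_G$ identifies with $\C(X, C) \otimes \kg$. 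The Eckmann-Hilton argument of loc.\ cit.\ then yields $[\chi] + [\bigoplus_i \chi] = [\bigoplus_i \chi]$ in $KK^G_0(A', \C(X, C))$, forcing $[\chi] = 0$; higher $KK^G$-groups vanish via the standard device of replacing $A'$ with $\C_*(S^n, A')$. The main technical obstacle I anticipate is justifying, for general (possibly non-metrizable) compact $X$, that the pointwise direct sum really assembles into a continuous $G$-equivariant morphism into $\C(X, \mathcal{B}(\bigoplus_i \mathcal{H}_G)^c)$; this reduces to the two pointwise-in-$X$ assertions that $\|\bigoplus_i T\| = \|T\|$ and that $G$-continuity is inherited by countable direct sums, both of which hold inside $\mathcal{B}(\mathcal{H}_G)^c$.
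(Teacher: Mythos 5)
Your proposal is correct and follows essentially the same route as the paper, which simply defers to the non-equivariant argument of \cite[Proposition 7.12]{MJ} and singles out the left $KK^G$-contractibility of $\C(X,\mathcal{B}(\mathcal{H}_G)^c)$ as the key equivariant input — exactly the reduction (represent the seminorm in $\mathcal{B}(\mathcal{H}_G)^c$, pass through the adjunctions to a null-homotopy problem, kill the relevant $KK^G$-group by an Eilenberg swindle) that you carry out. The only slip is the word ``separable'': since $(Y\otimes X)_p$ need not be separable, the Hilbert space $\mathcal{H}$ may have to be non-separable, but the faithful covariant representation on $L^2G\otimes\mathcal{H}$ and the swindle go through unchanged, so you should simply drop that adjective.
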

\begin{proof}
    This is a direct equivariant analogue of \cite[Proposition 7.12]{MJ}, so we don't provide a proof. We mention that we need to use \cref{cor: stable cone seminorm on B(H)} on the $KK^G$-contractibility of $C(X,\mathcal{B}(\mathcal{H}_G)^c)$ here.
\end{proof}
\section{Model categories representing equivariant KK-theory}
We are now in a position to obtain the model structure on $\GA$ representing equivariant KK-theory. The proofs in this section hinge on the several technical results we obtained in the last few sections. We first state a general result from the theory of model categories. It can be proved similar to \cite[Theorem 11.3.2]{HIR}, cf. \cite[Proposition 8.7]{JOHN}.
\begin{prop}\label{prop: abstract lifting}
    Let $B$ be a cofibrantly generated model category with $I$ and $J$ as its set of generating cofibrations and generating acyclic cofibrations, respectively. Let $C$ be a bicomplete category. Suppose $\Sigma$ is a set, and we have a family of adjunctions
    \begin{tikzcd}
B
\arrow[r, "F_s"{name=F}, bend left=25] &
C
\arrow[l, "U_s"{name=G}, bend left=25]
\arrow[phantom, from=F, to=G, "\dashv" rotate=-90]
\end{tikzcd}, $\forall~ s \in \Sigma$. Define the following sets:
\begin{align*}
    &\tilde{I}:=\{F_sg \mid g \in I, s \in \Sigma\},~\tilde{J}:=\{F_sg \mid g \in J, s \in \Sigma\}.
\end{align*}
Assume that the domains of $\tilde{I}$ (respectively $\tilde{J}$) are $\kappa$-small with respect to $\tilde{I}$-(respectively $\tilde{J}$-) cells for some cardinal $\kappa$, and that $U_sf$ is a weak equivalence for every $s \in \Sigma$ whenever $f \in \tilde{J}\operatorname{-cof}$. 

Then, there exists a cofibrantly generated model structure on $C$ with $\tilde{I}$ and $\tilde{J}$ as the sets of generating cofibrations and acyclic cofibrations, respectively, and where a map $f$ is a fibration (respectively weak equivalence) if and only if $U_sf$ is a fibration (respectively weak equivalence) $\forall~s \in \Sigma$.
\end{prop}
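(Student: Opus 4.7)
My plan is to apply a standard recognition theorem for cofibrantly generated model categories (for instance, \cite[Theorem 2.1.19]{HOV} or \cite[Theorem 11.3.1]{HIR}) to the candidate structure on $C$ determined by the generating data $\tilde{I},\tilde{J}$ together with the class of weak equivalences
\[
W:=\{f \in C \mid U_s f \text{ is a weak equivalence in } B \text{ for every } s \in \Sigma\}.
\]
Since each $U_s$ preserves compositions, identities, and retracts, and since the weak equivalences of $B$ satisfy the two-out-of-three property and are closed under retracts, the class $W$ inherits both of these properties. The required smallness hypotheses are handed to me directly by assumption.

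The first nontrivial verification will be the containment $\tilde{J}\operatorname{-cell} \subseteq W \cap \tilde{I}\operatorname{-cof}$. The $W$-part is immediate from the hypothesis that $U_s f$ is a weak equivalence whenever $f \in \tilde{J}\operatorname{-cof}$, together with $\tilde{J}\operatorname{-cell} \subseteq \tilde{J}\operatorname{-cof}$. For the $\tilde{I}\operatorname{-cof}$-part I use that in $B$ every acyclic cofibration is a cofibration, so $J \subseteq I\operatorname{-cof}$, and that each left adjoint $F_s$ preserves pushouts and transfinite compositions, hence carries relative $I$-cell complexes to relative $\tilde{I}$-cell complexes. Therefore $F_s(I\operatorname{-cof}) \subseteq \tilde{I}\operatorname{-cof}$, from which $\tilde{J} \subseteq \tilde{I}\operatorname{-cof}$ and then $\tilde{J}\operatorname{-cell} \subseteq \tilde{I}\operatorname{-cof}$ follow.

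The decisive step is the identification $\tilde{I}\operatorname{-inj} = W \cap \tilde{J}\operatorname{-inj}$. The adjunctions translate each lifting problem against $F_s g$ into one against $g$ after applying $U_s$, yielding
\[
\tilde{I}\operatorname{-inj}=\bigcap_{s\in\Sigma} U_s^{-1}(I\operatorname{-inj}), \qquad \tilde{J}\operatorname{-inj}=\bigcap_{s\in\Sigma} U_s^{-1}(J\operatorname{-inj}).
\]
In $B$ one has $I\operatorname{-inj} = J\operatorname{-inj} \cap \{\text{weak equivalences}\}$, and intersecting this equality over $s \in \Sigma$ gives precisely $\tilde{I}\operatorname{-inj} = \tilde{J}\operatorname{-inj} \cap W$. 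This is the last remaining hypothesis of the recognition theorem.

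Combining the three verifications, the recognition theorem produces a cofibrantly generated model structure on $C$ with generating (acyclic) cofibrations $\tilde{I}$ and $\tilde{J}$ and with weak equivalences $W$. The asserted characterization of fibrations then follows immediately: the fibrations are by definition $\tilde{J}\operatorname{-inj}$, and by the adjunctions this class equals $\{f \mid U_s f \in J\operatorname{-inj} \text{ for all } s\}$, which is precisely $\{f \mid U_s f \text{ is a fibration in } B \text{ for all } s\}$. I do not anticipate a genuine obstacle; everything becomes formal once the adjunctions $F_s \dashv U_s$ are used to pull lifting and weak-equivalence conditions back to $B$, and the only non-automatic input, namely that each $U_s$ sends $\tilde{J}$-cofibrations to weak equivalences, is exactly what the proposition assumes.
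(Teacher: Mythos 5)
Your proposal is correct and is essentially the argument the paper intends: the paper simply defers to the standard transfer theorem (Hirschhorn, Theorem 11.3.2), whose proof is exactly this application of the recognition theorem, with the adjunctions $F_s \dashv U_s$ used to identify $\tilde{I}\operatorname{-inj}$ and $\tilde{J}\operatorname{-inj}$ with $\bigcap_s U_s^{-1}(I\operatorname{-inj})$ and $\bigcap_s U_s^{-1}(J\operatorname{-inj})$, and the only non-formal input being the hypothesis on $\tilde{J}\operatorname{-cof}$. The generalization from one adjunction to a family $\Sigma$ changes nothing in the verification, as you correctly observe.
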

\begin{cor}
\label{cor: simp model}    Suppose that in the setup of \cref{prop: abstract lifting}, $B,C$ are simplicial categories which are tensored and cotensored over $\operatorname{sSet}$, and the functors $F_s,U_s$ form a simplicial adjoint pair. Furthermore, suppose that the $\operatorname{sSet}$ cotensoring of $C$ is taken by each $\operatorname{U_s}$ to the $\operatorname{sSet}$ cotensoring of $B$, that is, \[\forall s \in \Sigma, U_s(c^{k_1\rightarrow k_2}) = U_s(c)^{k_1\rightarrow k_2}.\] Then, if $B$ is a simplicial model category, then the transferred model structure and the $\operatorname{sSet}$-enrichment on $C$ are compatible, and make $C$ into a simplicial model category. 
\end{cor}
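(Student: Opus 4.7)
The plan is to verify the pushout-product axiom (SM7), since the transferred model structure from \cref{prop: abstract lifting} combined with the given tensoring and cotensoring of $C$ over $\operatorname{sSet}$ already supplies all other data for a simplicial model category; only the compatibility between the model structure and the simplicial enrichment must be checked. By the standard adjointness between the pushout-product and pullback-hom constructions, SM7 is equivalent to the following statement in the cotensor variable: for every fibration $p:X\to Y$ in $C$ and every monomorphism $i:K\hookrightarrow L$ in $\operatorname{sSet}$, the pullback-hom map
\[
\widehat{i^*p}:X^L \longrightarrow X^K \times_{Y^K} Y^L
\]
is a fibration in $C$, and is acyclic whenever either $i$ or $p$ is a weak equivalence.

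Next, I would exploit the fact that fibrations and weak equivalences in $C$ are precisely those maps $f$ with $U_s(f)$ a fibration, respectively weak equivalence, in $B$ for every $s\in\Sigma$, in order to reduce the verification to $B$. Since $U_s$ is a right adjoint it preserves all small limits, and by the standing hypothesis the functor $U_s$ also preserves cotensoring, that is $U_s(c^K)=U_s(c)^K$ naturally in $c$ and $K$. Combining these two observations yields a canonical identification
\[
U_s(\widehat{i^*p})\;\cong\;\widehat{i^*(U_sp)}:U_s(X)^L \longrightarrow U_s(X)^K \times_{U_s(Y)^K} U_s(Y)^L,
\]
which is precisely the pullback-hom in $B$ associated with $U_s(p)$ and $i$.

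Finally, I would invoke SM7 in the simplicial model category $B$: since $U_s(p)$ is a fibration in $B$ (acyclic if $p$ is) and $i$ is a cofibration in $\operatorname{sSet}$ (acyclic if $i$ is), the map $\widehat{i^*(U_sp)}$ is a fibration in $B$, acyclic if either $i$ or $p$ is. As this holds for every $s\in\Sigma$, the map $\widehat{i^*p}$ has the corresponding property in $C$, which completes the verification of SM7.

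I do not foresee any genuine obstacle here; the hypotheses are tailored to make the pushout-product axiom transfer across the family $\{U_s\}$, and the categorical manipulations (preservation of limits and cotensors by a simplicial right adjoint) are formal. The only point requiring minor care is checking that the natural comparison map $U_s(\widehat{i^*p}) \to \widehat{i^*(U_sp)}$ is an isomorphism rather than merely a morphism, which is immediate from the universal property of pullbacks together with the naturality of the cotensor-preservation hypothesis.
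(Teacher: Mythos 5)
Your proof is correct and is the standard argument: reduce SM7 to its cotensor (pullback-hom) formulation, use that each $U_s$ preserves limits and cotensors to identify $U_s(\widehat{i^*p})$ with $\widehat{i^*(U_sp)}$, and conclude from SM7 in $B$ together with the fact that fibrations and weak equivalences in $C$ are detected by the family $\{U_s\}$. The paper simply delegates this verification to the cited reference (\cite[Proposition 4.2]{transfer}), whose proof is essentially what you have written out, so your argument matches the intended one.
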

\begin{proof}
    This essentially follows from the proof of \cite[Proposition 4.2]{transfer}; the family of adjunctions case doesn't pose any additional difficulty, just as in \cref{prop: abstract lifting}.
\end{proof}
Before we proceed, we remind the reader of the finite generation of the standard model structure on $\operatorname{sSet}$ (refer to \cite{HOV} for details): the set of maps \[
I:=\{\partial\Delta^n\hookrightarrow \Delta^n \mid n\geq 0\}, J:=\{\Lambda_i^n \hookrightarrow \Delta^n \mid 0 \leq i \leq n, n > 0\}
\]
are a set of generating cofibrations and acyclic cofibrations, respectively, for the model structure. We also note here a consequence of the fact that $\operatorname{sSet}$ is in fact a simplicial model category (refer to \cite[Section 4.2]{HOV}), which will be relevant later.

\begin{lem}\label{lem: fib induced by cofib}
    Let $X \rightarrow Y$ be a cofibration in $\operatorname{sSet}$. Then, for any $Z \in \operatorname{sSet}$, the induced map \[\operatorname{Map}_{\operatorname{sSet}}(Y,Z) \rightarrow \operatorname{Map}_{\operatorname{sSet}}(X,Z)\] is a fibration.
\end{lem}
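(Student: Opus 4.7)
The statement as phrased asks for the induced map on mapping spaces to be a Kan fibration for \emph{any} $Z$; as written this is false (take $X=\emptyset$, $Y=\Delta^0$, and $Z$ a non-Kan simplicial set, so the map becomes $Z\to *$), so the plan assumes the implicit hypothesis that $Z$ is a Kan complex, which is how the lemma will actually be applied in the paper (the relevant $Z$'s are always $\operatorname{Hom}$-simplicial sets, already shown to be Kan). Under that reading, the lemma is a direct consequence of the statement that $\operatorname{sSet}$ is a simplicial model category, specifically the SM7 axiom of the Kan--Quillen model structure.

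The plan is to unpack SM7 in this special case. Applying SM7 to the cofibration $X\hookrightarrow Y$ and to the fibration $Z\to \Delta^{0}$ (which is a fibration precisely because $Z$ is a Kan complex), the induced pullback-corner map
\[
\operatorname{Map}_{\operatorname{sSet}}(Y,Z)\longrightarrow \operatorname{Map}_{\operatorname{sSet}}(X,Z)\times_{\operatorname{Map}_{\operatorname{sSet}}(X,\Delta^{0})}\operatorname{Map}_{\operatorname{sSet}}(Y,\Delta^{0})
\]
is a Kan fibration, and since both $\operatorname{Map}_{\operatorname{sSet}}(-,\Delta^{0})$ terms are the point, this pullback-corner map is just $\operatorname{Map}_{\operatorname{sSet}}(Y,Z)\to \operatorname{Map}_{\operatorname{sSet}}(X,Z)$, giving the conclusion.

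If a self-contained argument is preferred, I would instead verify the right lifting property against the generating acyclic cofibrations $\Lambda^{n}_{k}\hookrightarrow \Delta^{n}$ directly. By the usual simplicial adjunction, a square
\[
\Lambda^{n}_{k}\to \operatorname{Map}_{\operatorname{sSet}}(Y,Z),\qquad \Delta^{n}\to \operatorname{Map}_{\operatorname{sSet}}(X,Z)
\]
is equivalent to the data of a map $Y\times \Lambda^{n}_{k}\cup_{X\times \Lambda^{n}_{k}}X\times \Delta^{n}\to Z$, and a lift to $\Delta^{n}\to \operatorname{Map}_{\operatorname{sSet}}(Y,Z)$ is the same as an extension of this along the inclusion into $Y\times \Delta^{n}$. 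The main ingredient is then the classical pushout-product theorem of Quillen: the pushout-product of a monomorphism with an anodyne extension is an anodyne extension. Hence the map
\[
Y\times \Lambda^{n}_{k}\cup_{X\times \Lambda^{n}_{k}}X\times \Delta^{n}\hookrightarrow Y\times \Delta^{n}
\]
is anodyne, and the required extension exists because $Z$ is a Kan complex.

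The only potential obstacle is the pushout-product theorem itself, which is however entirely standard (see, e.g., Quillen's original treatment or any modern text on simplicial homotopy theory), so no genuine difficulty arises. The proof amounts to citing SM7 for $\operatorname{sSet}$.
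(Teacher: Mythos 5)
Your proof is correct and matches the paper's approach exactly: the paper gives no argument beyond citing that $\operatorname{sSet}$ is a simplicial model category, i.e.\ precisely the SM7 instance you spell out. Your observation that the statement as literally written (``for any $Z$'') is false and implicitly requires $Z$ to be a Kan complex is a genuine, if minor, correction to the paper; in every application the target is a $\operatorname{Hom}(A,B)_\bullet$, already shown to be Kan, so nothing downstream is affected.
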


We now state our main result regarding model structures representing equivariant KK-theory. 
\begin{prop}\label{thm: main}
    Let $\mathfrak{A}$ denote the set of isomorphism classes of separable $G$-$C^*$-algebras. Consider the sets
    \[
    \tilde{I}:=\{A \otimes \lvert g \rvert \mid g \in I, A=\mathfrak{q}A' \boxtimes \hatkg~\text{where}~A' \in \mathfrak{A}\}, \tilde{J}:=\{A \otimes \lvert g \rvert \mid g \in J, A=\mathfrak{q}A' \boxtimes \hatkg~\text{where}~A' \in \mathfrak{A}\}. 
    \]
    Then, there exists a cofibrantly generated simplicial model structure on $\GA$ with $\tilde{I}$ and $\tilde{J}$ as the sets of generating cofibrations and acyclic cofibrations, respectively, and where a map $f:B\rightarrow C$ is a fibration (respectively weak equivalence) if and only if \[f_*:\operatorname{Hom}(\mathfrak{q}A' \boxtimes \hatkg,B)_\bullet \rightarrow \operatorname{Hom}(\mathfrak{q}A' \boxtimes \hatkg,C)_\bullet\] is a fibration (respectively weak equivalence) for all separable $G$-C*-algebras $A'$.
\end{prop}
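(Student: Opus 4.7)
The plan is to invoke the transfer-of-model-structures principle (\cref{prop: abstract lifting} and \cref{cor: simp model}) applied to the standard model structure on $\operatorname{sSet}$ together with the family of simplicial adjunctions indexed by $A' \in \mathfrak{A}$,
\[
F_{A'} := (\mathfrak{q}A' \boxtimes \hatkg) \otimes |-| \colon \operatorname{sSet} \rightleftarrows \GA \colon \operatorname{Hom}(\mathfrak{q}A' \boxtimes \hatkg, -)_\bullet =: U_{A'}.
\]
That this is a simplicial adjunction satisfying the cotensor-compatibility required by \cref{cor: simp model} is the content of \cref{cor: adjunction} and \cref{cor: cotensor preserved}. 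Under $F_{A'}$, the standard generating (acyclic) cofibrations of $\operatorname{sSet}$ map precisely onto $\tilde{I}$ (respectively $\tilde{J}$) as written in the statement, so it remains to verify the two hypotheses of \cref{prop: abstract lifting}.

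For smallness of the domains, write $\underline{A'} := \mathfrak{q}A' \boxtimes \hatkg$. The key claim is that every map in $\tilde{I} \cup \tilde{J}$ has the seminorm extension property of \cref{defi: sep}. For the generating acyclic cofibrations $\underline{A'} \otimes |\Lambda^n_k| \to \underline{A'} \otimes |\Delta^n|$ this is immediate from \cref{example 2}(2), since $|\Lambda^n_k|$ is a retract of $|\Delta^n|$. For the generating cofibrations $\underline{A'} \otimes |\partial\Delta^n| \to \underline{A'} \otimes |\Delta^n|$, one appeals to the stable cone seminorm extension property of $\underline{A'}$ furnished by \cref{prop: stable cone seminorm extension}, together with the identification of $|\Delta^n|$ with the reduced cone on $|\partial\Delta^n|$ after basing at a vertex. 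The seminorm extension property is closed under pushouts by \cref{lem: JJ08} and, via \cref{lem: technical 2} together with \cref{lem: technical 3}, under the transfinite compositions that form cell complexes. \cref{prop: small wrt sep} then delivers the desired smallness for each domain relative to the appropriate cell class.

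The second and more delicate hypothesis demands that each $U_{A''}$ carry $\tilde{J}$-cofibrations to weak equivalences of simplicial sets. A stronger assertion suffices: every $\tilde{J}$-cell morphism in $\GA$ is a homotopy equivalence in the sense of \cref{defi: homotopy}, for then the simplicially enriched functor $U_{A''}$ will automatically send it to a homotopy equivalence of Kan complexes. The generators $F_{A'}(\Lambda^n_k \hookrightarrow \Delta^n)$ are themselves homotopy equivalences: the strong deformation retraction $r \colon |\Delta^n| \to |\Lambda^n_k|$ with deformation $H \colon |\Delta^n| \times |\Delta^1| \to |\Delta^n|$ yields, via the canonical identification $\underline{A'} \otimes |\Delta^n \times \Delta^1| \cong (\underline{A'} \otimes |\Delta^n|) \otimes |\Delta^1|$ extracted from \cref{cor: adjunction}, both a retraction of $\underline{A'} \otimes i$ in $\GA$ and a simplicial homotopy witnessing the remaining direction rel the subalgebra. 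The main obstacle is propagating this strong-deformation-retract structure through the pushouts and transfinite compositions that build $\tilde{J}$-cells: the pushout step transports the retraction and the deformation through the pushout square, the relevant checks being compatible with pushouts because the seminorm extension property is pushout-stable (\cref{lem: JJ08}), in the spirit of the non-equivariant argument of \cite{MJ}; the transfinite composition step is controlled by \cref{lem: technical 2} and \cref{lem: technical 3}, which compute the underlying set and $\nu$-completeness of the colimit from those of the terms. Once this is in hand, the explicit description of fibrations and weak equivalences together with cofibrant generation and simpliciality follow directly from \cref{cor: simp model}.
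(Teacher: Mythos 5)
Your overall strategy --- transferring the standard model structure on $\operatorname{sSet}$ along the family of adjunctions $(\mathfrak{q}A' \boxtimes \hatkg) \otimes |-| \dashv \operatorname{Hom}(\mathfrak{q}A'\boxtimes\hatkg,-)_\bullet$ and verifying the two hypotheses of \cref{prop: abstract lifting} via the seminorm extension property --- is exactly the paper's, and your treatment of $\tilde{J}$ (horns are retracts of simplices, \cref{example 2}.2) and of smallness (SEP is stable under pushout and transfinite composition, then \cref{prop: small wrt sep}) matches. Two of your steps, however, have genuine gaps.

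First, writing $\underline{A'}:=\mathfrak{q}A'\boxtimes\hatkg$ as you do, your claim that $\underline{A'}\otimes|\partial\Delta^n| \to \underline{A'}\otimes|\Delta^n|$ has the seminorm extension property rests on ``the identification of $|\Delta^n|$ with the reduced cone on $|\partial\Delta^n|$''. No such identification of \emph{algebras} is available: the object to which \cref{prop: stable cone seminorm extension} applies is $(\underline{A'}\otimes|\partial\Delta^n|)\owedge I$, which is a zero object of $\GA_h$ by \cref{rem: main rem}.3, whereas $\underline{A'}\otimes|\Delta^n|$ is homotopy equivalent to $\underline{A'}$; these are different objects, and $i_1$ is not the generating cofibration. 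The paper's argument is indirect: since $(\underline{A'}\otimes|\partial\Delta^n|)\owedge I \to 0$ is sent by every $\operatorname{Hom}(\underline{A''},-)_\bullet$ to an acyclic fibration, the adjunction of \cref{prop: partial adjunction unpointed} produces a lift $\underline{A'}\otimes|\Delta^n| \to (\underline{A'}\otimes|\partial\Delta^n|)\owedge I$ under $\underline{A'}\otimes|\partial\Delta^n|$; the composite with the generating cofibration is $i_1$, which has SEP by \cref{prop: stable cone seminorm extension}, and SEP is left-cancellable by \cref{lem: JJ07}. You need this factorization-plus-cancellation step; the direct identification you invoke is false.

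Second, for the acyclicity hypothesis you propose to show that every $\tilde{J}$-cell complex is an inclusion of a strong deformation retract by propagating the SDR data through pushouts and transfinite compositions. The pushout step can be made to work, but the transfinite step is not supplied by the lemmas you cite: \cref{lem: technical 2} and \cref{lem: technical 3} only identify the underlying set of the colimit and its $\nu$-completeness, and say nothing about assembling a transfinite family of deformations into a single homomorphism into $\C(I,-)$ out of the colimit; such infinite concatenations of homotopies are precisely the delicate point in this setting. The paper avoids the issue entirely with a retract argument: a map $g$ in $\tilde{J}\operatorname{-cof}$ has the left lifting property against every $p$ for which all $\operatorname{Hom}(\underline{A''},p)_\bullet$ are Kan fibrations; applying this to $B \to 0$ and to the path-object fibration $\C(|\Delta^1|,D)\to\C(|\partial\Delta^1|,D)$ (which is such a $p$ by \cref{prop: tensored and cotensored} and \cref{lem: fib induced by cofib}) yields a retraction $r$ of $g$ and a homotopy from $g\circ r$ to the identity, so the entire class $\tilde{J}\operatorname{-cof}$ --- not only the cell complexes --- consists of homotopy equivalences with no transfinite bookkeeping. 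You should replace your SDR-propagation argument by this lifting argument.
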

Before we proceed to prove the theorem, we note a few observations which will come handy later. We follow the notation of \cref{thm: main} in the rest of this section.
\begin{rem}\label{rem: for stability}
    \hfill
    \begin{enumerate}
        \item Note that, \cref{thm: main} equivalently asserts that we can transfer the simplicial model structure on $\operatorname{sSet}$ along the adjunctions of \cref{cor: adjunction}, one for each $A=\mathfrak{q}A' \boxtimes \hatkg$, where $A' \in \mathfrak{A}$.
        \item In light of \cref{lem: uli} and \cref{cor: kk_g eq}, a map $f: B \rightarrow C$ in the model structure prescribed in \cref{thm: main} is a weak equivalence (resp. fibration)
    if and only if
    \[f_*:\operatorname{Hom}(\mathfrak{q}A',B \otimes \kg)_\bullet \rightarrow \operatorname{Hom}(\mathfrak{q}A' ,C\otimes \kg)_\bullet\] is a $\pi_*$-isomorphism (resp. fibration) for each $A'$. 
    \item For each $B \in \GA$, the map $B \otimes \kg \xrightarrow{1_B \otimes \alpha} B \otimes \hatkg$ is a weak equivalence in $\GA$. Indeed, we have to show that for each $A'$, the induced map
    \[\operatorname{Hom}(\mathfrak{q}A' \boxtimes \hatkg,B \otimes \kg)_\bullet \xrightarrow{\alpha}\operatorname{Hom}(\mathfrak{q}A' \boxtimes \hatkg,B \otimes \hatkg)_\bullet
    \]
    is a weak equivalence. In light of \cref{rem: for stability}, it suffices to show that the adjoint map
    \[
    \operatorname{Hom}(\mathfrak{q}A',B \otimes \kg\otimes\hatkg)_\bullet \xrightarrow{\alpha_*} \operatorname{Hom}(\mathfrak{q}A',B \otimes \hatkg \otimes \hatkg)
    \]
    is a weak equivalence. But this follows from \cref{lem: uli}.
    \item The adjunctions \[
    \begin{tikzcd}
\GA
\arrow[r, "- \boxtimes \kg"{name=F}, bend left=25] &
\GA
\arrow[l, "- \otimes \kg"{name=G}, bend left=25]
\arrow[phantom, from=F, to=G, "\dashv" rotate=-90]
\end{tikzcd}
    \]
    in fact form a Quillen adjunction. Indeed, in light of \cite[Lemma 1.3.4]{HOV}, it suffices to show that $- \otimes \kg: \GA \rightarrow \GA$ preserves fibrations and acylic fibrations. This follows directly from the definition of (acyclic) fibrations, \cref{prop: hom class}, and from the universal absorption property of $\kg$.
    \end{enumerate} 
\end{rem}
The proof of \cref{thm: main} will be executed via a series of lemmas. In what follows, we use the notation of \cref{thm: main}. We also use $\R_A(-)$ to denote the functor $\operatorname{Hom}(A,-)_\bullet$.
\begin{lem} \label{lem: gen cof SEP}
    Morphisms in $\tilde{I}$ and $\tilde{J}$ have the seminorm extension property.
\end{lem}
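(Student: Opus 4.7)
The plan is to treat $\tilde{J}$ and $\tilde{I}$ separately, in both cases leveraging the structural properties of $A = \mathfrak{q}A' \boxtimes \hatkg$ established earlier.

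For the generating acyclic cofibrations $\tilde{J}$, each map is of the form $A \otimes |\Lambda^n_i| \to A \otimes |\Delta^n|$. Here the argument will be immediate: since $|\Lambda^n_i|$ is a (strong deformation, hence topological) retract of $|\Delta^n|$ in the category of compact Hausdorff spaces, the functoriality of $A \otimes -$ exhibits $A \otimes |\Lambda^n_i|$ as a retract of $A \otimes |\Delta^n|$ in $\GA$, and the seminorm extension property follows at once from \cref{example 2}.2.

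For the generating cofibrations $\tilde{I}$, each map is of the form $A \otimes |\partial\Delta^n| \to A \otimes |\Delta^n|$, and the plan is to exploit the stable cone seminorm extension property of $A$ afforded by \cref{prop: stable cone seminorm extension}. The key step will be to establish a natural isomorphism
\[
(A \otimes |\partial\Delta^n|) \owedge I \;\cong\; A \owedge |\Delta^n|,
\]
with $|\Delta^n|$ pointed at the apex of its cone decomposition $|\Delta^n| \cong C|\partial\Delta^n|$. This will follow from the chain of natural bijections
\begin{align*}
\operatorname{Hom}((A \otimes |\partial\Delta^n|) \owedge I, B)
&\cong \operatorname{Hom}(A, \C(|\partial\Delta^n|, \C_*(I, B))) \\
&\cong \operatorname{Hom}(A, \C_*(|\Delta^n|, B)) \\
&\cong \operatorname{Hom}(A \owedge |\Delta^n|, B),
\end{align*}
where the middle step identifies $\C_*(I, B)$-valued continuous functions on $|\partial\Delta^n|$ with $B$-valued continuous functions on $(|\partial\Delta^n| \times I)/(|\partial\Delta^n| \times \{0\}) \cong |\Delta^n|$ that vanish at the collapsed point.

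With this identification in hand, I will verify that the map $i_1$ of \cref{defi: scsep} agrees with the composite
\[
A \otimes |\partial\Delta^n| \longrightarrow A \otimes |\Delta^n| \twoheadrightarrow A \owedge |\Delta^n|,
\]
whose first arrow is $A \otimes -$ applied to the inclusion of $|\partial\Delta^n|$ as the base of the cone and whose second is the canonical quotient collapsing the apex copy of $A$; both composites correspond, under adjunction, to the restriction map $\C_*(|\Delta^n|, B) \to \C(|\partial\Delta^n|, B)$. Since the stable cone property then gives SEP for the composite, \cref{lem: JJ07} will yield SEP for the first factor, as required. The main obstacle is the careful bookkeeping of basepoints in the cone identification and the verification that $i_1$ factors as claimed; once the universal properties are unwound this becomes a routine diagram chase.
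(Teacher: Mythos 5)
Your argument is correct, and for $\tilde J$ it coincides with the paper's (the horn is a retract of the simplex, so \cref{example 2}.2 applies). For $\tilde I$ you reach the same punchline as the paper — factor $i_1\colon A\otimes|\partial\Delta^n|\to(A\otimes|\partial\Delta^n|)\owedge I$ through the generating cofibration and combine \cref{prop: stable cone seminorm extension} with the cancellation property of \cref{lem: JJ07} — but you produce the factorization by a genuinely different route. The paper gets it abstractly: $A\otimes|g|$ has the left lifting property against every $p$ for which $\mathbb{R}_A(p)$ is an acyclic fibration, and $(A\otimes|\partial\Delta^n|)\owedge I\to 0$ is such a map because $B\owedge I$ is a zero object in $\GA_h$ (\cref{rem: main rem}.3), so a lift $A\otimes|\Delta^n|\to(A\otimes|\partial\Delta^n|)\owedge I$ exists. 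You instead identify $(A\otimes|\partial\Delta^n|)\owedge I\cong A\owedge|\Delta^n|$ via the cone homeomorphism $|\Delta^n|\cong C|\partial\Delta^n|$ and observe that $i_1$ is then literally the composite of $A\otimes|\iota|$ with the quotient $A\otimes|\Delta^n|\to A\owedge|\Delta^n|$; your chain of adjunctions is sound (the middle identification $\C(|\partial\Delta^n|,\C_*(I,B))\cong\C_*(|\Delta^n|,B)$ is an isometric isomorphism for the sup-seminorms, and both maps you compare correspond under adjunction to restriction-to-the-boundary of based functions). Your version is more concrete and bypasses the lifting machinery entirely, at the cost of being tied to the fact that $\Delta^n$ is the cone on its boundary, whereas the paper's lifting argument applies verbatim to $A\otimes|g|$ for any cofibration $g$ of finite simplicial sets. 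The only loose end is the degenerate case $n=0$, where $\partial\Delta^0=\emptyset$ and the cone description does not literally apply; this case is trivial, since $0\to A$ always has the seminorm extension property.
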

\begin{proof}
    We begin with the case of $\tilde{J}$. This follows upon noting that $\lvert \Lambda_i^n\rvert \hookrightarrow \lvert \Delta^n\rvert$ is the inclusion of a retract, in light of \cref{example 2}.2. 

    The case of $\tilde{I}$ is a lot more complicated, and hinges upon several results we proved earlier. First, we note that a direct application of the adjunction in \cref{prop: partial adjunction unpointed} yields that every $A \otimes \lvert g \rvert \in \tilde{I}$ has the left lifting property with respect to all maps $p$ for which $\R_A(p)$ is an acyclic fibration. In particular, we have a lift in the following square (in light of \cref{rem: main rem}.3):
    \[
\begin{tikzcd}
	{A \otimes \lvert \Lambda_i^n \rvert} & {(A \otimes \lvert \Lambda_i^n \rvert) \owedge I} \\
	{A \otimes \lvert \Delta^n \rvert} & 0
	\arrow["{i_1}", from=1-1, to=1-2]
	\arrow["{A \otimes \lvert g \rvert}"', from=1-1, to=2-1]
	\arrow[from=1-2, to=2-2]
	\arrow[dashed, from=2-1, to=1-2]
	\arrow[from=2-1, to=2-2]
\end{tikzcd}
    \]
    The map $i_1$ has the seminorm extension property from \cref{prop: stable cone seminorm extension}. The rest follows from \cref{lem: JJ07}.
\end{proof}
\begin{lem}\label{lem: smallness of I and J tilde}
    There exists a cardinal $\kappa$ such that $\tilde{I}$(respectively $\tilde{J}$) is $\kappa$-small with respect to $\tilde{I}$-(respectively $\tilde{J}$-)cells.
\end{lem}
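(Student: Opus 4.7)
The plan is to show every $\tilde{I}$-cell (respectively $\tilde{J}$-cell) has the seminorm extension property, whereupon an application of \cref{prop: small wrt sep}, which asserts that each object of $\GA$ is small relative to SEP morphisms, concludes.

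For the first step, the generators themselves have SEP by \cref{lem: gen cof SEP}, pushouts preserve SEP by \cref{lem: JJ08}, and finite compositions of SEP morphisms are immediately SEP. The main technical point, and the principal obstacle, is closure under transfinite composition: given a $\lambda$-sequence $\{X_\alpha\}_{\alpha<\lambda}$ in $\GA$ with $\lambda$ at least $\nu$-filtered and each successor map having SEP, I would show that each bonding map $X_\alpha \to \operatorname{colim} X_\bullet$ in $\GA$ also has SEP. By \cref{lem: technical 3} the $\GA$-colimit coincides with the $G\text{-}\mathbb{A}$-colimit of the composite diagram $\lambda \to \GA \hookrightarrow G\text{-}\mathbb{A}$, so I would apply \cref{lem: technical 2} to this composite. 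Its hypothesis at each limit ordinal $\beta$ is SEP of the canonical map from the $G\text{-}\!\operatorname{lmc}\!C^*\operatorname{-alg}$-colimit of $\{X_\alpha\}_{\alpha<\beta}$ to $X_\beta$, which factors as a Hausdorff quotient followed by a $\nu$-completion and hence has SEP by \cref{example 2}.1. Combining these closure properties, together with the standard refinement that decomposes a pushout of a coproduct of $\tilde{I}$-maps as a transfinite composition of single pushouts, shows that every $\tilde{I}$-cell has SEP; the argument for $\tilde{J}$-cells is identical.

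For the cardinality bound, the domains and codomains appearing in $\tilde{I}$ and $\tilde{J}$ are the algebras $(\mathfrak{q}A' \boxtimes \hatkg) \otimes |K|$ for $A' \in \mathfrak{A}$ and $K$ a finite simplicial set appearing in some generator. Since $\mathfrak{A}$ is a set by definition, this is a set of objects in $\GA$, so we may choose a cardinal $\kappa \geq \nu$ strictly exceeding the cardinality of each of them. By \cref{prop: small wrt sep} each such algebra is $\kappa$-small relative to SEP morphisms. Since $\kappa \geq \nu$ ensures that $\kappa$-filtered ordinals are $\nu$-filtered (so that the transfinite composition argument above applies), and every $\tilde{I}$-cell (resp.\ $\tilde{J}$-cell) has SEP, we conclude that each such domain is $\kappa$-small relative to $\tilde{I}$-cells (resp.\ $\tilde{J}$-cells), as required.
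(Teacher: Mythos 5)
Your proposal is correct and follows essentially the same route as the paper: show every $\tilde{I}$-cell (resp.\ $\tilde{J}$-cell) has the seminorm extension property using \cref{lem: gen cof SEP}, \cref{lem: JJ08}, and the transfinite-composition lemmas, then invoke \cref{prop: small wrt sep}. Your treatment of the limit-ordinal step (verifying the hypothesis of \cref{lem: technical 2} via \cref{example 2}.1 and identifying the $\GA$-colimit with the $G\text{-}\mathbb{A}$-colimit via \cref{lem: technical 3}) is in fact slightly more explicit than the paper's, which compresses this into a single citation.
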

\begin{proof}
    We just discuss the case of $\tilde{I}$; the case of $\tilde{J}$ is similar. By definition, every $\tilde{I}$-cell complex is a transfinite composition of pushouts of elements of $I$ (see \cite[Definition 2.1.9]{HOV}). Since every morphism in $\tilde{I}$ has the seminorm extension property by \cref{lem: gen cof SEP}, and pushouts and transfinite compositions of morphisms having the seminorm extension property have the seminorm extension property by \cref{lem: JJ08} and \cref{lem: technical 3}, we conclude that every $\tilde{I}$-cell complex has the seminorm extension property. The rest follows from \cref{prop: small wrt sep}.
\end{proof}
\begin{lem}\label{lem: verification 2 of abstract lifting}
    Let $g: B \rightarrow D$ be a morphism in $\GA$ which belongs to $\tilde{J}\operatorname{-cof}$. Then, $\R_A(g)$ is a weak equivalence for every $A=\mathfrak{q}A' \boxtimes \hatkg$, where $A' \in \mathfrak{A}$.
\end{lem}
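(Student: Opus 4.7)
The plan is to reduce to showing $\R_A(q)$ is a weak equivalence for $q$ a $\tilde{J}$-cell indexed over a sufficiently filtered ordinal $\lambda$, and then conclude for all of $\tilde{J}\operatorname{-cof}$ via closure of weak equivalences under retracts. First I would apply the small object argument (available thanks to \cref{lem: smallness of I and J tilde}) to factor the given $g$ as $g = p \circ q$ with $q$ a $\tilde{J}$-cell and $p \in \tilde{J}\operatorname{-inj}$; since $g$ has the left lifting property against $p$, this exhibits $g$ as a retract of $q$. I would arrange the indexing ordinal of $q$ to be $\kappa$-filtered for a $\kappa$ exceeding the smallness cardinals of $A \otimes \lvert \Delta^n \rvert$ for all $n \in \N$ simultaneously, a common upper bound existing since these cardinalities are uniformly controlled.

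Next I would verify that every pushout of a generator $A'' \otimes i \in \tilde{J}$ is a deformation retract in $\GA$. The horn inclusion $i: \lvert \Lambda^n_k \rvert \hookrightarrow \lvert \Delta^n \rvert$ is a deformation retract in $\operatorname{Cpt}$ with retraction $r$ and relative homotopy $H: \lvert \Delta^n \rvert \times I \to \lvert \Delta^n \rvert$. Functoriality of the tensor $A'' \otimes -: \operatorname{Cpt} \to \GA$, combined with the adjunction of \cref{prop: adj unpointed}, transports $H$ to a $\GA$-morphism $A'' \otimes \lvert \Delta^n \rvert \to \C(I, A'' \otimes \lvert \Delta^n \rvert)$ that witnesses $A'' \otimes i$ as a deformation retract relative to $A'' \otimes \lvert \Lambda^n_k \rvert$. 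A direct pushout argument --- assembling into $\C(I, B')$ the constant path on the attached object $B$ together with the transported homotopy on $A'' \otimes \lvert \Delta^n \rvert$, and checking compatibility on $A'' \otimes \lvert \Lambda^n_k \rvert$ --- then promotes this to the statement that every pushout $B \to B'$ of a generator is a deformation retract in $\GA$.

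Since $\R_A$ is simplicially enriched and carries cotensor homotopies to simplicial homotopies by \cref{cor: cotensor preserved}, each such pushout is sent by $\R_A$ to a deformation retract in $\operatorname{sSet}$. Any deformation retract in $\operatorname{sSet}$ is both a split monomorphism and a simplicial homotopy equivalence, hence a trivial cofibration. Applying this to every stage, each transition $X_\alpha \to X_{\alpha+1}$ in the $\lambda$-sequence presenting $q$ becomes a trivial cofibration in $\operatorname{sSet}$ under $\R_A$.

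It remains to check that $\R_A$ preserves the transfinite composition $q = \colim_{\alpha < \lambda}(X_0 \to X_\alpha)$. Using \cref{prop: adj unpointed} we have the identification $\R_A(Y)_n = \operatorname{Hom}(A, \C(\lvert \Delta^n \rvert, Y)) \cong \operatorname{Hom}(A \otimes \lvert \Delta^n \rvert, Y)$, so $\R_A$ at simplicial level $n$ is corepresented by $A \otimes \lvert \Delta^n \rvert$, which is $\kappa_n$-small with respect to morphisms having the seminorm extension property by \cref{prop: small wrt sep}. Each transition $X_\alpha \to X_{\alpha+1}$ has the seminorm extension property by \cref{lem: gen cof SEP} together with \cref{lem: JJ08}, so the choice of $\lambda$ forces $\colim_\alpha \R_A(X_\alpha)_n \to \R_A(\colim_\alpha X_\alpha)_n$ to be a bijection at every $n$, and hence $\R_A(q) \cong \colim_\alpha \R_A(X_\alpha)$ in $\operatorname{sSet}$. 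Since trivial cofibrations in $\operatorname{sSet}$ are closed under transfinite composition, $\R_A(q)$ is a trivial cofibration and in particular a weak equivalence, which concludes the argument. The main technical obstacle I anticipate is the pushout-of-deformation-retract step, which requires careful cotensor bookkeeping, together with arranging that a single ordinal $\lambda$ witnesses the smallness uniformly across all simplicial degrees.
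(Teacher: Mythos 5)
Your proposal is correct, but it takes a genuinely different route from the paper. The paper's proof is a short lifting-property argument: since every object of $\GA$ is ``fibrant'' (each $\operatorname{Hom}(A,B)_\bullet$ is a Kan complex) and the cotensor map $\C(\lvert\Delta^1\rvert,D)\rightarrow\C(\lvert\partial\Delta^1\rvert,D)$ lies in $\tilde{J}\operatorname{-inj}$ by \cref{lem: fib induced by cofib}, one can lift $g$ against $B\rightarrow 0$ to produce a retraction $r$ with $rg=\operatorname{id}_B$, and then against the path-object fibration to produce a homotopy $gr\simeq\operatorname{id}_D$; this shows the stronger statement that $g$ itself is a homotopy equivalence in $\GA$, whence $\R_A(g)$ is a homotopy equivalence of Kan complexes. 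You instead run the classical cellular induction: retract of a $\tilde{J}$-cell, each attachment a deformation retract because horn inclusions are strong deformation retracts of spaces, and preservation of the transfinite composition via the seminorm extension property and \cref{prop: small wrt sep}. Your route is valid --- the SEP machinery of Section 7 (\cref{lem: gen cof SEP}, \cref{lem: JJ08}, \cref{lem: technical 2}) is exactly what makes the colimit-preservation step legitimate in this category, and the exponential law $\C(Y\times I,C)\cong\C(Y,\C(I,C))$ needed to transport the relative homotopy through the adjunction of \cref{prop: adj unpointed} does hold for compact Hausdorff $Y$ --- but it is considerably longer and requires the extra bookkeeping you anticipate (coproducts of generators at each stage, a uniform smallness cardinal across simplicial degrees). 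The paper's argument buys brevity and a stronger conclusion; yours buys nothing extra here, though it is the standard template and would survive in settings where not every object is fibrant.
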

\begin{proof}
    We will in fact show that $g$ is a homotopy equivalence in $\GA$. By definition of $\tilde{J}\operatorname{-cof}$ and the adjunction of \cref{prop: partial adjunction unpointed}, we conclude that $g$ has the left lifting property with respect to all morphisms $p$ for which $\R_A(p)$ is a fibration for each $A=\mathfrak{q}A' \boxtimes \hatkg$, with $A' \in \mathfrak{A}$. Since Kan complexes are precisely the fibrant objects in $\operatorname{sSet}$, in light of \cref{prop: Kan enriched}, the terminal map $B \rightarrow 0$ is such a morphism. In light of \cref{prop: tensored and cotensored} and \cref{lem: fib induced by cofib}, we conclude that the map
    \[
    p: C(\lvert \Delta^1 \rvert,D) \rightarrow C(\lvert \partial\Delta^1 \rvert,D),
    \]
    induced by the inclusion $\lvert \partial\Delta^1 \rvert \hookrightarrow \lvert \Delta^1 \rvert$, is also such a morphism.

    We thus have a lift in the diagram
    \[
\begin{tikzcd}
	B & B \\
	D & 0
	\arrow["{=}", from=1-1, to=1-2]
	\arrow["g"', from=1-1, to=2-1]
	\arrow[from=1-2, to=2-2]
	\arrow["r"{description}, dashed, from=2-1, to=1-2]
	\arrow[from=2-1, to=2-2]
\end{tikzcd},
    \]
    and also in the diagram
    \[
\begin{tikzcd}
	B && {C(\lvert \Delta^1 \rvert,D)} \\
	D && {C(\lvert \partial\Delta^1 \rvert,D)}
	\arrow["{b \mapsto(t \mapsto g(b))}", from=1-1, to=1-3]
	\arrow["g"', from=1-1, to=2-1]
	\arrow[from=1-3, to=2-3]
	\arrow[dashed, from=2-1, to=1-3]
	\arrow["{(g \circ r, \operatorname{id})}"', from=2-1, to=2-3]
\end{tikzcd}.
    \]
    It follows that $g$ and $r$ are mutual homotopy inverses.
\end{proof}
\begin{proof}[Proof of \cref{thm: main}]
    \cref{lem: smallness of I and J tilde} and \cref{lem: verification 2 of abstract lifting} show that the hypothesis necessary to apply \cref{prop: abstract lifting} hold in our set up. The existence of the cofibrantly generated model structure then follows from \cref{prop: abstract lifting}. That this is indeed a simplicial model structure follows from \cref{cor: cotensor preserved} and \cref{cor: simp model}.
\end{proof}
\begin{cor}\label{cor: weak eq in mod cat}
    A map $A \rightarrow B$ between separable $G$-$C^*$-algebras is a weak homotopy equivalence in our model structure if and only if it is a $KK^G$-equivalence.
\end{cor}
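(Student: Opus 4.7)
The corollary is obtained by concatenating identifications already established in Sections 6 and 8; the strategy is simply to unpack the model-categorical definition of weak equivalence and match it with the classical Kasparov notion. First, by \cref{rem: for stability}(2), a morphism $f: A \to B$ between separable $G$-$C^*$-algebras is a weak equivalence in the model structure of \cref{thm: main} if and only if
\[
f_*:\operatorname{Hom}(\mathfrak{q}A', A\otimes \kg)_\bullet \longrightarrow \operatorname{Hom}(\mathfrak{q}A', B\otimes \kg)_\bullet
\]
is a $\pi_*$-isomorphism for every separable $G$-$C^*$-algebra $A'$. By the very definition of $KK^G(-,-)_\bullet$ preceding \cref{cor: datta picture}, these Kan complexes are $KK^G(A', A)_\bullet$ and $KK^G(A', B)_\bullet$, so the weak equivalence condition is that $KK^G(A', f)$ is a $\pi_*$-equivalence of these grouplike commutative monoids in anima, for every separable $A'$.

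Next, I would apply \cref{cor: kk equi}, which identifies $\pi_n KK^G(A', -)_\bullet$ with the classical equivariant Kasparov group $KK^G_n(A', -)$ for separable targets. This reformulates the weak equivalence condition as the statement that $KK^G_*(A', f)$ is an isomorphism of graded abelian groups for every separable $G$-$C^*$-algebra $A'$.

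The concluding step is the Yoneda lemma in the classical Kasparov category. By \cref{lem: class kk}, $KK^G_{\operatorname{class}}$ has the separable $G$-$C^*$-algebras as objects and classical $KK^G_0$ as hom-sets, so a morphism there is invertible if and only if it induces a natural isomorphism between the functors it represents. Applied to the image of $f$ in $KK^G_{\operatorname{class}}$, this shows that $f$ is a $KK^G$-equivalence if and only if $KK^G_0(A', f)$ is a bijection for every separable $A'$; the higher-degree versions are then automatic from functoriality once the isomorphism in $KK^G_{\operatorname{class}}$ is in hand, matching the previous reformulation exactly. The converse direction is immediate: if $f$ is a $KK^G$-equivalence, functoriality of $KK^G_*(A',-)$ gives isomorphisms for every separable $A'$, and reversing the chain above delivers the weak equivalence. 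There is no real obstacle, as all substantive work has been carried out in Sections 6 and 8; the only small point requiring care is invoking the grouplike structure on the mapping anima so that $\pi_*$-equivalence detects weak equivalence, which has already been arranged by \cref{cor: loc kg semi additive} and the classical group structure on $KK^G_0$.
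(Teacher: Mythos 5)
Your argument is correct and follows essentially the same route as the paper, which proves the corollary by combining \cref{cor: kk equi} with the Yoneda lemma in the classical Kasparov category; you have simply spelled out the intermediate identifications (via \cref{rem: for stability}(2) and the definition of $KK^G(-,-)_\bullet$) in more detail. The remark about the grouplike structure ensuring that $\pi_*$-isomorphisms at the basepoint detect weak equivalences is the same point the paper makes in the proof of \cref{cor: kk equi}.
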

\begin{proof}
    Immediate from \cref{cor: kk equi} and the Yoneda lemma.
\end{proof}
We now identify path objects in the sense of model categories (\cite[Definition 1.4.2.2]{HOV}) in $\GA$. This will come handy when we cast the theorem in a form which can be expressed in the classical equivariant $KK$-theory language.
\begin{lem}\label{lem: path object}
    Given any $B \in \GA$, $\C(I,B)$ and $\C_0(\R,B)$ are a path object and a based loop object for $B$, respectively.
\end{lem}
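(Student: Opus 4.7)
The plan is to factor the diagonal $B \to B \times B$ as $B \xrightarrow{c} \mathcal{C}(I,B) \xrightarrow{(\operatorname{ev}_0,\operatorname{ev}_1)} B \times B$, where $c$ sends an element to its constant function, and to verify that $c$ is a weak equivalence while $(\operatorname{ev}_0,\operatorname{ev}_1)$ is a fibration. By the characterization of fibrations and weak equivalences in \cref{thm: main}, it suffices to apply the functor $\operatorname{Hom}(A,-)_\bullet$ for each $A = \mathfrak{q}A' \boxtimes \hatkg$ (with $A'$ a separable $G$-$C^*$-algebra) and check the corresponding statements in $\operatorname{sSet}$.

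The cotensoring isomorphism of \cref{prop: tensored and cotensored} yields $\operatorname{Hom}(A,\mathcal{C}(I,B))_\bullet \cong \operatorname{Map}_{\operatorname{sSet}}(\Delta^1,\operatorname{Hom}(A,B)_\bullet)$, since $I \cong |\Delta^1|$. Moreover, as $B \times B$ is the product in $\GA$ (computed underlying-set-wise), we have $\operatorname{Hom}(A,B\times B)_\bullet \cong \operatorname{Hom}(A,B)_\bullet \times \operatorname{Hom}(A,B)_\bullet \cong \operatorname{Map}_{\operatorname{sSet}}(\partial \Delta^1, \operatorname{Hom}(A,B)_\bullet)$. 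Under these identifications, $c$ is the map induced by the degeneracy $\Delta^1 \to \Delta^0$, and $(\operatorname{ev}_0,\operatorname{ev}_1)$ is the map induced by the boundary inclusion $\partial \Delta^1 \hookrightarrow \Delta^1$.

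Since $\Delta^1 \to \Delta^0$ is a weak equivalence between cofibrant simplicial sets and $\operatorname{Hom}(A,B)_\bullet$ is a Kan complex, the map on mapping spaces induced by $c$ is a weak equivalence by the $\operatorname{sSet}$-simplicial model structure. Likewise, since $\partial \Delta^1 \hookrightarrow \Delta^1$ is a cofibration, \cref{lem: fib induced by cofib} implies that the induced map $\operatorname{Map}_{\operatorname{sSet}}(\Delta^1, \operatorname{Hom}(A,B)_\bullet) \to \operatorname{Map}_{\operatorname{sSet}}(\partial \Delta^1, \operatorname{Hom}(A,B)_\bullet)$ is a Kan fibration. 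Thus $\mathcal{C}(I,B)$ is indeed a path object for $B$.

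For the based loop object, I use that our model structure is pointed (the zero algebra is a zero object), so $\Omega B$ can be computed as the pullback of the path object fibration $(\operatorname{ev}_0,\operatorname{ev}_1): \mathcal{C}(I,B) \to B \times B$ along $0 \to B \times B$. Computing this pullback in $\GA$ (noting pullbacks are computed underlying-set-wise by \cref{cor: forget preserve lim/colim}) gives the sub-algebra of $\mathcal{C}(I,B)$ consisting of functions $f$ with $f(0) = f(1) = 0$, which is precisely the ideal $\mathcal{C}_0((0,1), B) \cong \mathcal{C}_0(\mathbb{R}, B)$. The only mild subtlety is ensuring all identifications respect the $G$-action and the $\nu$-complete lmc structure, but this is immediate from the functoriality of $\mathcal{C}(-,B)$ on compact Hausdorff spaces established in the previous sections.
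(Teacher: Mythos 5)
Your proposal is correct and follows essentially the same route as the paper: the fibration claim is checked by cotensoring against $\Delta^1$ and invoking \cref{lem: fib induced by cofib} (exactly as in the proof of \cref{lem: verification 2 of abstract lifting}, which the paper cites), the constant-function inclusion is a weak equivalence (the paper phrases this as a homotopy equivalence of algebras, you check it on mapping Kan complexes — same content), and the loop object is identified via the pullback $\C(I,B)\times_{B\times B}0 \cong \C_0(\R,B)$, which is precisely the paper's identification.
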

\begin{proof}
    As in the proof of \cref{lem: verification 2 of abstract lifting}, we have that the evaluation at end-points map 
    \[
    C(I,B) \rightarrow B \times B
    \]
    is a fibration in $\GA$. We also have that inclusion $B \rightarrow \C(I,B)$ is a homotopy equivalence of algebras. This shows that $\C(I,B)$ is a path object for $B$. The other assertion follows from the identification $\C_*(I,B) \cong \C(I,B) \times_{B \times B}0$.
\end{proof}
We next want to understand the cofibrant and fibrant objects in $\GA$. Since Kan complexes are precisely the fibrant objects in $\GA$, in light of \cref{prop: Kan enriched} and \cref{thm: main}, every object in $\GA$ is also fibrant. The following lemma deals with the case of cofibrant objects.
\begin{cor}\label{cor: qA boxtimes hatkg is cofibrant}
    For any separable $G$-$C^*$-algebra $A$, $\mathfrak{q}A \boxtimes \hatkg$ is a cofibrant object in $\GA$, and is homotopic (in the model category sense) to a cofibrant replacement of $A$.
\end{cor}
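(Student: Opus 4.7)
The statement has two parts: cofibrancy of $\mathfrak{q}A\boxtimes\hatkg$, and its being weakly equivalent to any cofibrant replacement of $A$. Since $\mathfrak{q}A\boxtimes\hatkg$ and any cofibrant replacement $QA$ are both cofibrant, and every object in $\GA$ is fibrant (by \cref{thm: main} together with \cref{prop: Kan enriched}), a weak equivalence between them will upgrade by Whitehead's theorem in a simplicial model category to a simplicial homotopy equivalence, which is the natural reading of ``homotopic in the model category sense''.

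For cofibrancy I would use that $\partial\Delta^0$ is empty while $\Delta^0$ is a point. Setting $A':=\mathfrak{q}A\boxtimes\hatkg$, the adjunction of \cref{cor: adjunction} gives $A'\otimes|\partial\Delta^0|\cong 0$ and $A'\otimes|\Delta^0|\cong A'$ (test against $\operatorname{Hom}(A',-)_\bullet$). Hence the map $0\to\mathfrak{q}A\boxtimes\hatkg$ appears in the generating set $\tilde I$, so $\mathfrak{q}A\boxtimes\hatkg$ is cofibrant.

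For the second claim I would assemble the zigzag of weak equivalences
\[
\mathfrak{q}A\boxtimes\hatkg \xrightarrow{\varepsilon^{\operatorname{ad}}} \mathfrak{q}A \xrightarrow{\pi_A} A\otimes\kg \xrightarrow{1_A\otimes\alpha} A\otimes\hatkg \xleftarrow{\varepsilon} A,
\]
and verify each arrow is a weak equivalence in $\GA$. The map $\varepsilon^{\operatorname{ad}}$ (defined just before \cref{prop: B boxtimes K mapsto B is weak eq}) is a weak equivalence because, by \cref{thm: main}, this reduces to $\operatorname{Hom}(\mathfrak{q}A''\boxtimes\hatkg,-)_\bullet$ sending it to a weak equivalence of Kan complexes for every separable $G$-$C^*$-algebra $A''$, which is exactly the content of \cref{prop: B boxtimes K mapsto B is weak eq}. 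The map $\pi_A$ from \cref{rem: cg kk functor} is, by construction, the image of $\mathrm{id}_A$ under the canonical functor from separable $G$-$C^*$-algebras to $KK^G_{\operatorname{class}}$; it is therefore a $KK^G$-equivalence between separable $G$-$C^*$-algebras, hence a weak equivalence by \cref{cor: weak eq in mod cat}. The third arrow is a weak equivalence by \cref{rem: for stability}.3. Finally, $\varepsilon:A\to A\otimes\hatkg$ (\cref{cor: hatkg projection}) is the standard Morita stabilization induced by the $G$-invariant $l^2\N$ summand of $L^2G_{\mathbb{N}}\oplus l^2\N$; it is a $KK^G$-equivalence between separable $G$-$C^*$-algebras and hence a weak equivalence by \cref{cor: weak eq in mod cat}.

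Combining the two parts, $\mathfrak{q}A\boxtimes\hatkg$ and $A$ are isomorphic in $\operatorname{Ho}(\GA)$, so $\mathfrak{q}A\boxtimes\hatkg$ and any cofibrant replacement $QA$ are isomorphic in $\operatorname{Ho}(\GA)$ via cofibrant-fibrant objects, yielding the desired simplicial homotopy equivalence. The only delicate step in the plan is identifying $\pi_A$ with the $KK^G$-class of $\mathrm{id}_A$, which is precisely the content of \cref{rem: cg kk functor}; everything else is a formal assembly of results already established in the paper.
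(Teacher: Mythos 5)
Your proof is correct, and the second half (the zigzag $\mathfrak{q}A\boxtimes\hatkg \xrightarrow{\varepsilon^{\operatorname{ad}}} \mathfrak{q}A \to A\otimes\kg \to A\otimes\hatkg \xleftarrow{\varepsilon} A$, justified by \cref{prop: B boxtimes K mapsto B is weak eq}, classical $KK^G$-theory, and \cref{cor: weak eq in mod cat}) is exactly the paper's argument. The cofibrancy step is where you diverge: the paper verifies the left lifting property of $0\to\mathfrak{q}A\boxtimes\hatkg$ against an arbitrary acyclic fibration $B\twoheadrightarrow D$ directly, using that $\operatorname{Hom}(\mathfrak{q}A\boxtimes\hatkg,B)_\bullet\to\operatorname{Hom}(\mathfrak{q}A\boxtimes\hatkg,D)_\bullet$ is then an acyclic Kan fibration and hence surjective on $0$-simplices. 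You instead observe that $0\to\mathfrak{q}A\boxtimes\hatkg$ is literally the generating cofibration $(\mathfrak{q}A\boxtimes\hatkg)\otimes|\partial\Delta^0\hookrightarrow\Delta^0|$, since $\partial\Delta^0=\emptyset$ forces $(\mathfrak{q}A\boxtimes\hatkg)\otimes|\partial\Delta^0|$ to corepresent the terminal functor and hence to be $0$. Both are one-line arguments; yours is slightly slicker in that it makes cofibrancy manifest from the generating set, at the cost of the small check that the $\operatorname{sSet}$-tensoring sends $\emptyset$ to the zero algebra. One cosmetic remark: your justification that $\pi_A$ is a $KK^G$-equivalence via \cref{rem: cg kk functor} (as the image of $\operatorname{id}_A$) is a slightly indirect way of saying what is needed, namely that the $*$-homomorphism $\mathfrak{q}A\to A\otimes\kg$ is itself inverted in $KK^G$; this is the standard Cuntz-picture fact the paper cites from the classical literature, so no gap results.
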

\begin{proof}
    To show that $\mathfrak{q}A \boxtimes \hatkg$ is a cofibrant object, we have to find a lift in the following diagram, where $B \rightarrow D$ is any acyclic fibration
        \[
\begin{tikzcd}
	0 & B \\
	{\mathfrak{q}A \boxtimes \widehat{\mathcal{K}_G}} & D
	\arrow[from=1-1, to=1-2]
	\arrow[from=1-1, to=2-1]
	\arrow["\simeq", two heads, from=1-2, to=2-2]
	\arrow[dashed, from=2-1, to=1-2]
	\arrow[from=2-1, to=2-2]
\end{tikzcd}
        \]
        The existence of the lift follows immediately from the definition of weak equivalences and fibrations in our model structure, and the fact that an acyclic fibration between Kan complexes is surjective on the $0$-simplices.
        
        We next consider the following maps in $\GA$
        \[
        \mathfrak{q}A \boxtimes \hatkg \xrightarrow{\epsilon^{\operatorname{ad}}} \mathfrak{q}A = q(A \otimes \kg) \otimes \kg \xrightarrow{\pi_0 \otimes \operatorname{id}_\kg} A \otimes \kg \otimes \kg  \xrightarrow{\cong} A \otimes \kg \xrightarrow{} A \otimes \hatkg \xleftarrow{\epsilon} A.
        \]
        Each of the maps above are weak equivalences in $\GA$; $\epsilon^{\operatorname{ad}}$ is a weak equivalence by \cref{prop: B boxtimes K mapsto B is weak eq}, and the rest are $KK^G$-equivalences from the classical theory (see \cite{BLAK} for details). The rest follows from \cref{cor: weak eq in mod cat} and the fact that the canonical map
        \[
            \operatorname{Ho}(\mathcal{M}^b) \rightarrow \mathcal{M}[w^{-1}]
        \]
        is an equivalence for any model category $\mathcal{M}$, where $\mathcal{M}^b$ denotes the full subcategory of its bifibrant objects of $\mathcal{M}$.
\end{proof}
We finally cast \cref{thm: main} in the language of classical equivariant $KK$-theory.
\begin{thm}\label{thm: diff formulation of main}
   There is a simplicial model structure on the category of $\nu$-complete locally multiplicative convex $G$-$C^*$-algebras satisfying the following properties:
    \begin{enumerate}
        \item for a separable $G$-$C^*$-algebra $A$ and $B \in \GA$, there is a natural equivalence 
        \[
        \operatorname{Map}_{\GA[w^{-1}]}(A,B)\simeq KK^G(A,B)_\bullet,
        \]
        where the first entity stands for the mapping anima in the $\infty$-category obtained by inverting the weak equivalences of the model structure. In particular, when $A$,$B$ are separable $G$-$C^*$-algebras, we have a natural isomorphism
        \[
        \operatorname{Ho}(A,B) \cong KK^G_{0}(A,B).
        \]
        \item a $G$-equivariant $*$-homomorphism $A \rightarrow A'$ of separable $G$-$C^*$-algebras is a weak equivalence for the model category structure if and only if it is a $KK^G$-equivalence. It is a fibration if and only if the induced map \[\operatorname{Hom}(q(D\otimes \kg)\otimes \kg,A \otimes \kg)_\bullet \rightarrow \operatorname{Hom}(q(D \otimes \kg) \otimes \kg,A' \otimes \kg)_\bullet\] is a Kan fibration for separable $G$-$C^*$-algebras $D$.
        \item the model structure is cofibrantly generated.
        \item each object is fibrant.
        \item the model structure is stable, and the equivalences in Point 1 refine to equivalences of commutative groups.
    \end{enumerate}
\end{thm}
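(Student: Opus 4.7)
My plan is to assemble the theorem directly from \cref{thm: main} together with the results of Sections~6--7. First I will dispatch the easy items: assertion (3) is immediate from \cref{thm: main}; assertion (4) follows because, for every $B$, the simplicial set $\operatorname{Hom}(\mathfrak{q}A'\boxtimes\hatkg, B)_\bullet$ is a Kan complex by \cref{prop: Kan enriched}, so its terminal projection is a Kan fibration, verifying the fibration condition of \cref{thm: main} for $B \to 0$. For assertion (2), the first half is precisely \cref{cor: weak eq in mod cat}, while the second half is obtained by unwinding the definition of fibration in \cref{thm: main} via the enriched adjunction \cref{prop: hom class} (and \cref{lem: uli} to pass between the $\hatkg$- and $\kg$-versions, which give equivalent mapping-space conditions).

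For assertion (1), the key input is \cref{cor: qA boxtimes hatkg is cofibrant}, which exhibits $\mathfrak{q}A \boxtimes \hatkg$ as a cofibrant object weakly equivalent to $A$, combined with the fact that every $B\in\GA$ is already fibrant by (4). The fundamental theorem of simplicial model categories then yields
\[
\operatorname{Map}_{\GA[w^{-1}]}(A,B) \simeq \operatorname{Hom}(\mathfrak{q}A \boxtimes \hatkg, B)_\bullet,
\]
and chaining the adjunction of \cref{prop: hom class} with \cref{cor: datta picture} identifies the right-hand side with $KK^G(A,B)_\bullet$. Taking $\pi_0$ and invoking \cref{lem: class kk} gives the isomorphism $\operatorname{Ho}(A,B) \cong KK^G_0(A,B)$ in the separable case.

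The main obstacle is assertion (5), stability. The category is pointed because $0$ is a zero object, and \cref{lem: path object} identifies the loop functor $\Omega$ on $\operatorname{Ho}(\GA)$ with $\C_0(\R,-)$. I will then invoke Bott periodicity in the form of \cref{cor: Bott} to produce a natural weak equivalence $\C_0(\R^2,B) \simeq B$, showing $\Omega^2 \simeq \operatorname{id}$; this forces $\Omega$, and hence the suspension $\Sigma$, to be an equivalence on $\operatorname{Ho}(\GA)$, proving stability. The refinement of assertion (1) to an equivalence of commutative groups is then automatic: stability upgrades $\operatorname{Map}_{\GA[w^{-1}]}(A,B)$ to an infinite loop space, hence a grouplike commutative monoid, and the resulting group structure must agree with the one on $KK^G(A,B)_\bullet$ coming from \cref{cor: loc kg semi additive}, since both structures are inherited from the same underlying Kan enrichment on $\GA$.
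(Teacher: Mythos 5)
Your proposal is correct and follows essentially the same route as the paper: it assembles the theorem from \cref{thm: main}, \cref{cor: weak eq in mod cat} and \cref{rem: main rem}, uses \cref{cor: qA boxtimes hatkg is cofibrant} with \cref{prop: hom class} and \cref{cor: datta picture} for Point 1, and derives stability from \cref{lem: path object} together with \cref{cor: Bott}. The only cosmetic difference is that the paper justifies the group refinement by citing additivity of stable $\infty$-categories rather than the infinite-loop-space argument, but these amount to the same thing.
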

\begin{proof}
    Point 1 follows from the fundamental theorem of simplicial model categories (\cite[Theorem 1.3.4.20]{HA}), \cref{cor: qA boxtimes hatkg is cofibrant}, \cref{prop: hom class}, together with \cref{cor: datta picture}. Point 2 follows from \cref{cor: weak eq in mod cat} and \cref{rem: main rem}.2. Points 3 and 4 are already discussed above. The first assertion of Point 5 follows from \cref{lem: path object} and \cref{cor: Bott}. For the second assertion, it suffices to recall that every stable $\infty$-category is additive in a canonical way; see \cite[Lemma 1.1.2.9]{HA}.
\end{proof}
\subsection{Comparison with Bunke's $KK^G_{\operatorname{sep}}$}
We end the article by making a comparison of our model category with the recent stable $\infty$-category $KK^G_{\operatorname{sep}}$ à la Bunke, Engel, Land (\cite{BEL}, \cite{BD}). Although \cite{BEL} is written for countable discrete group actions, the results we use from there can be easily verified to also work for a general locally compact, second countable group. For the convenience of the reader, we recall the definition here.
\begin{defi}[\cite{BEL}, Definition 1.2]
    Let $G$ be a locally compact, second countable group. We denote by $KK^G_{\operatorname{sep}}$ the Dwyer-Kan localization of $G\text{-}C^*\operatorname{-alg}$ at the collection of $KK^G$-equivalences.
\end{defi}
We begin by describing some basic features of the $\infty$-category underlying our model category. For the remainder of the section, we denote by $\mathcal{M}$ and $\mathcal{M}_0$ the category $\GA$, and the full subcategory of separable $G$-$C^*$-algebras inside $\GA$, respectively. We also denote by $w$ and $w_0$ the class of weak equivalences in $\mathcal{M}$, and the subclass of weak equivalences whose domain and codomain are in $\mathcal{M}_0$, respectively.
\begin{lem}\label{lem: localize}
    \hfill
    \begin{enumerate}
        \item The $\infty$-category $\mathcal{M}_0[w_0^{-1}]$ is left-exact.
        \item The localization functor $L_{w_0}:\mathcal{M}_0\rightarrow\mathcal{M}_0[w_0^{-1}]$ sends fibrant cartesian squares to cartesian squares.
        \item The induced functor $\overline{L_w \circ \iota}:\mathcal{M}_0[w_0^{-1}] \rightarrow \mathcal{M}[w^{-1}]$ is left-exact.
    \end{enumerate}
\end{lem}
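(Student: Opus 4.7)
The overall strategy is to leverage the pointed category of fibrant objects structure on $\mathcal{M}_0$ established in \cite[Proposition 2.10]{BEL}, whose weak equivalences are the $KK^G$-equivalences and whose fibrations are the $G$-Schochet fibrations. The proofs of (1) and (2) follow from general results on localizations of such structures, while (3) requires an explicit verification that the relevant strict pullbacks in $\mathcal{M}_0$ already compute homotopy pullbacks in $\mathcal{M}$.

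\textbf{Parts (1) and (2).} I would invoke the theorem of Cisinski--Szumi\l o (in the form used throughout \cite{BEL}) stating that the Dwyer--Kan localization of a pointed category of fibrant objects is left-exact, and that the localization functor sends the distinguished fibrant cartesian squares (that is, strict pullbacks of Schochet fibrations) to cartesian squares in the localized $\infty$-category. Since $\mathcal{M}_0$ is precisely such a category, this gives both (1) and (2) in one stroke; in fact $\mathcal{M}_0[w_0^{-1}]$ is equivalent to $KK^G_{\operatorname{sep}}$ and is stable, though for this lemma we only need left-exactness.

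\textbf{Part (3).} Since $w_0\subseteq w$, the universal property of localization yields a canonical factorization $L_w\circ\iota\simeq\overline{L_w\circ\iota}\circ L_{w_0}$. To show $\overline{L_w\circ\iota}$ is left-exact, it suffices to check preservation of the terminal object (both sides send $0$ to $0$) and of pullbacks. By (1) and (2), every pullback in $\mathcal{M}_0[w_0^{-1}]$ arises as the image under $L_{w_0}$ of a fibrant cartesian square $\mathcal{S}$ of the form $P=A\times_B E$, $p\colon E\twoheadrightarrow B$ a Schochet fibration. By \cref{cor: forget preserve lim/colim}, the strict pullback $P$ computed in $\mathcal{M}_0$ agrees with the strict pullback computed in $\mathcal{M}$. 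It remains to argue that $L_w(\mathcal{S})$ is already cartesian in $\mathcal{M}[w^{-1}]$, which is where the work lies.

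\textbf{Main obstacle.} The core difficulty is that a Schochet fibration in $\mathcal{M}_0$ is \emph{not} automatically a fibration in the model structure of \cref{thm: diff formulation of main}. To circumvent this, my plan is as follows: detecting cartesianness in $\mathcal{M}[w^{-1}]$ amounts to showing that for each cofibrant generator $Y=\mathfrak{q}D\boxtimes\widehat{\mathcal{K}_G}$ (with $D$ separable), the Kan complex $\operatorname{Hom}(Y,-)_\bullet$ turns $\mathcal{S}$ into a homotopy pullback of anima. Using the adjunction of \cref{cor: kk_g eq} together with \cref{rem: cpct open}, one identifies $\operatorname{Hom}(Y,X)_\bullet$ with $\operatorname{Sing}$ of the point-norm topological hom $\operatorname{\underline{Hom}}(\mathfrak{q}D,X\otimes\widehat{\mathcal{K}_G})$. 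Since Schochet fibrations are stable under tensoring with $\widehat{\mathcal{K}_G}$ (as follows from the definition and from \cref{cor: forget tensor}), the induced square of topological homs has its rightmost leg a Serre fibration, and $\operatorname{\underline{Hom}}(\mathfrak{q}D,-)$ sends strict pullbacks in $\mathcal{M}$ to pullbacks of topological spaces. Passing to $\operatorname{Sing}$, which preserves pullbacks along Kan fibrations up to equivalence, one concludes that the resulting square of mapping anima is a homotopy pullback, hence $L_w(\mathcal{S})$ is cartesian in $\mathcal{M}[w^{-1}]$, completing the verification of left-exactness.
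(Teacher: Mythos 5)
Your treatment of Parts (1) and (2) is fine in spirit (the paper likewise reduces to Cisinski's result on localizations of categories of fibrant objects), but you have chosen the \emph{wrong fibration class}, and this choice is exactly what creates the "main obstacle" you then fail to close in Part (3). The paper takes $F_0$ to be the class of fibrations \emph{of the model structure of \cref{thm: main}} with separable domain and codomain; one checks directly from \cref{thm: diff formulation of main} that $(\mathcal{M}_0,w_0,F_0)$ is a category of fibrant objects, and then a fibrant cartesian square in $\mathcal{M}_0$ is \emph{tautologically} a fibrant cartesian square in $\mathcal{M}$, so $L_w\circ\iota$ sends it to a cartesian square and \cite[Proposition 7.5.6]{CIS} gives (3) with no further work. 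By instead importing the Schochet-fibration structure from \cite{BEL}, you are forced to prove that a strict pullback along a Schochet fibration is still a homotopy pullback for the model structure, which is a genuinely nontrivial comparison.

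The two steps you need there are not justified. First, you assert that Schochet fibrations are stable under $-\otimes\hatkg$ "from the definition and from \cref{cor: forget tensor}"; that corollary only says the equivariant tensor product is computed on underlying non-equivariant algebras and has nothing to do with fibrations. The claim you actually need — that $p\otimes 1_{\hatkg}$ induces Serre fibrations on $\underline{\operatorname{Hom}}(\mathfrak{q}D,-)$ whenever $p$ does on $\underline{\operatorname{Hom}}(A,-)$ for all separable $A$ — does not follow formally from the definition (the adjoint lifting problem for $p\otimes 1_{\hatkg}$ involves $\operatorname{Hom}(A, f\otimes 1_{\hatkg})$ for a certain comparison map $f$, and surjectivity of $\operatorname{Hom}(A',f)$ for all $A'$ does not obviously transfer). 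Second, you silently use that $-\otimes\hatkg$ takes the strict pullback $A\times_B E$ to the strict pullback of the tensored cospan; for the maximal tensor product this requires the fibration leg to be surjective (Schochet fibrations need not be) together with an exactness argument, neither of which you supply. Unless you either prove these two facts or switch to the model-structure fibrations as the paper does, Part (3) is not established.
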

\begin{proof}
    Let $F_0$ denote the class of fibrations in $\mathcal{M}$ whose domain and codomain are in $\mathcal{M}_0$. Then, as an immediate corollary to \cref{thm: main} and \cref{thm: diff formulation of main}, we conclude that $(\mathcal{M}_0,w_0,F_0)$ is a category of fibrant objects in the sense of \cite[Definition 7.4.12]{CIS}. The conclusions of Assertions 1 and 2 then follow from \cite[Proposition 7.5.6]{CIS}. Assertion 3 also follows from loc. cit., upon observing that the functor \[L_w \circ \iota: \mathcal{M}_0 \rightarrow \mathcal{M}[w^{-1}]\] sends fibrant cartesian squares to cartesian squares; $\iota$ tautologically preserves fibrant cartesian squares, and $L_w$ takes fibrant cartesian squares to cartesian sqaures by a variant of Assertion 2 for $\mathcal{M}$.
\end{proof}
\begin{rem}
    \cref{lem: localize} also holds good with $\mathcal{M}_0$ replaced by $\mathcal{M}$. Using ideas in the proof of \cite[Proposition 7.5.6]{CIS}, we can also show that the pullback along $L_{w_0}$ and $L_w$ respectively induce for every left-exact $\infty$-category $D$ equivalences
        \begin{align*}
            L_{w_0}^*:&\operatorname{Fun}^{\operatorname{lex}}(\mathcal{M}_0[w_0^{-1}],D) \xrightarrow{\simeq} \operatorname{Fun}^{w,\operatorname{fib,red}}(\mathcal{M}_0,D). \\
            L_{w}^*:&\operatorname{Fun}^{\operatorname{lex}}(\mathcal{M}[w^{-1}],D) \xrightarrow{\simeq} \operatorname{Fun}^{w,\operatorname{fib,red}}(\mathcal{M},D).
        \end{align*}
\end{rem}
\begin{prop}\label{bunke fully faithful}
    The natural functor
    \[
    KK^G_{\operatorname{sep}} \rightarrow \mathcal{M}[w^{-1}]
    \]
    induced by the inclusion of separable $G$-$C^*$-algebras inside all $\nu$-complete locally multiplicative convex $G$-$C^*$-algebras is fully faithful.
\end{prop}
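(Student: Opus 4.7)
The plan is to first identify the comparison functor tautologically, then compute the mapping anima on both sides, and finally verify compatibility using $\pi_0$ together with the commutative-group-in-anima structures. For the first step, \cref{cor: weak eq in mod cat} shows that a $G$-equivariant $*$-homomorphism between separable $G$-$C^*$-algebras is a weak equivalence in $\mathcal{M}$ if and only if it is a $KK^G$-equivalence. Consequently $\mathcal{M}_0[w_0^{-1}]$ is tautologically equivalent to $KK^G_{\operatorname{sep}}$, and the natural functor in the statement is precisely the left-exact functor $\overline{L_w \circ \iota}\colon \mathcal{M}_0[w_0^{-1}] \to \mathcal{M}[w^{-1}]$ produced in \cref{lem: localize}.

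For the second step, it suffices to show that for all separable $G$-$C^*$-algebras $A$ and $B$ the induced map $\operatorname{Map}_{KK^G_{\operatorname{sep}}}(A, B) \to \operatorname{Map}_{\mathcal{M}[w^{-1}]}(A, B)$ is an equivalence of anima. By \cref{thm: diff formulation of main}(1) the target is naturally equivalent, as a commutative group in anima, to $KK^G(A, B)_\bullet = \operatorname{Hom}(\mathfrak{q}A, B \otimes \kg)_\bullet$. For the source, I will appeal to \cite[Propositions 2.1, 2.2]{BEL} (whose arguments extend from countable discrete to locally compact second countable groups, as noted in the introduction) combined with the Cuntz-Meyer picture of \cref{thm: Mey}, yielding a natural equivalence of the source with $KK^G(A, B)_\bullet$ as well.

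The main technical obstacle is verifying the compatibility of these two identifications via the comparison functor. On $\pi_0$, both identifications reduce to the classical Kasparov isomorphism $\operatorname{Ho}(A, B) \cong KK^G_0(A, B) = [\mathfrak{q}A, B \otimes \kg]$ that sends the class of a $*$-homomorphism $\varphi\colon A \to B$ to its Kasparov class (cf. \cref{rem: cg kk functor}). Since both mapping anima refine to commutative groups in anima (by \cref{cor: loc kg semi additive} and \cref{thm: diff formulation of main}(5) respectively) and a map of commutative groups in anima is an equivalence if and only if it is a $\pi_*$-isomorphism, it suffices to verify $\pi_n$-compatibility for every $n$. This reduces to $\pi_0$-compatibility applied to $B$ replaced by $\C_*(S^n, B)$, using the identification $\Omega B \simeq \C_*(S^1, B)$ from \cref{lem: path object} and Bott periodicity (\cref{cor: Bott}), completing the proof of full faithfulness.
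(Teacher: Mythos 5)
Your overall architecture is the same as the paper's: identify $KK^G_{\operatorname{sep}}$ with $\mathcal{M}_0[w_0^{-1}]$ via \cref{cor: weak eq in mod cat}, use the left-exactness of $\overline{L_w\circ\iota}$ from \cref{lem: localize} to reduce full faithfulness to a $\pi_0$-bijection on mapping anima (replacing $B$ by $\C_*(S^n,B)$ for the higher homotopy groups), and then compare both sides with $KK^G_0(A,B)$. The genuine gap is in the step you yourself flag as the main obstacle and then dispatch in one sentence: the compatibility of the two identifications. Knowing that $\pi_0\operatorname{Map}_{KK^G_{\operatorname{sep}}}(A,B)$ and $\operatorname{Ho}(A,B)$ are each \emph{abstractly} isomorphic to $KK^G_0(A,B)=[\mathfrak{q}A,B\otimes\kg]$, and that each isomorphism sends the class of a $*$-homomorphism $\varphi\colon A\to B$ to its Kasparov class, does not yet show that the comparison map itself is a bijection: a general element of $KK^G_0(A,B)$ is not the class of a $*$-homomorphism $A\to B$, so agreement on such classes does not determine the map, and "both identifications reduce to the classical Kasparov isomorphism" is precisely the assertion to be proved, not a proof of it.

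To close the gap you must either (a) carry out the explicit verification, which is what the paper's proof spends its length on: stabilize both mapping anima by $-\otimes\kg$ and pass through $\hatkg$ using \cref{lem: uli} and \cref{rem: for stability}, descend to the homotopy localizations where \cref{cor: fully faithful oo cat.} identifies the two sides, and then separately check that inverting $KK^G$-equivalences on the source changes nothing on $\pi_0$ (this is \cite[Proposition 2.18]{BEL} together with the homotopy equivalence $\varphi_A\colon\mathfrak{q}A\to\mathfrak{q}^2A$ of \cref{thm: the map}) and that inverting $w$ on the target changes nothing (this uses the adjunction of \cref{cor: inf cat adjunction}, \cref{prop: B boxtimes K mapsto B is weak eq}, and the fact that $\mathfrak{q}A\boxtimes\hatkg$ is colocal for $w$); or (b) supply a rigidity argument: every morphism in $KK^G_{\operatorname{class}}(A,B)$ factors as $[\hat{x}]\circ[\pi_A]^{-1}$ with $\hat{x}$ and $\pi_A$ honest $*$-homomorphisms and $\pi_A$ a $KK^G$-equivalence (\cref{rem: cg kk functor}), so a family of bijections natural in $*$-homomorphisms that agrees with the induced functor on classes of $*$-homomorphisms must coincide with it everywhere. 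Neither is carried out in your proposal, and without one of them the proof is incomplete.
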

\begin{proof}
    Consider the following diagram:
    \[
\begin{tikzcd}
	{G\text{-}C^*\text{-alg}_{\operatorname{sep}}} & {\mathcal{M}_0} & {\mathcal{M}} \\
	{KK^G_{\operatorname{sep}}} & {\mathcal{M}_0[w_0^{-1}]} & {\mathcal{M}[w^{-1}]}
	\arrow["\cong", from=1-1, to=1-2]
	\arrow["{L_{KK^G}}"', from=1-1, to=2-1]
	\arrow[hook, from=1-2, to=1-3]
	\arrow["{L_{w_0}}"', from=1-2, to=2-2]
	\arrow["{L_w}"', from=1-3, to=2-3]
	\arrow["\simeq", from=2-1, to=2-2]
	\arrow["F", from=2-2, to=2-3]
\end{tikzcd}.
    \]
    The bottom left arrow is evidently an equivalence by \cref{thm: diff formulation of main}.2. Thus, it suffices to show that $F$ is fully faithful. In light of \cref{lem: localize}.3, it suffices to show F induces an isomorphism on $\pi_0$; we call this a \textit{$\pi_0$-equivalence} in the remainder of the proof. To that end, we consider the following ladder diagram
    \[
\begin{tikzcd}
	{\operatorname{Map}_{KK^G_{\operatorname{sep}}}(A,B)} && {\operatorname{Map}_{\mathcal{M}[w^{-1}]}(A,B)} \\
	{\operatorname{Map}_{KK^G_{\operatorname{sep}}}(A \otimes \mathcal{K}_G,B \otimes\mathcal{K}_G)} && {\operatorname{Map}_{\mathcal{M}[w^{-1}]}(A \otimes \mathcal{K}_G,B \otimes \mathcal{K}_G)} \\
	{\operatorname{Map}_{KK^G_{\operatorname{sep}}}(q(A \otimes \mathcal{K}_G)\otimes \mathcal{K}_G,B \otimes\mathcal{K}_G)} && {\operatorname{Map}_{\mathcal{M}[w^{-1}]}(q(A \otimes \mathcal{K}_G)\otimes \mathcal{K}_G,B \otimes\mathcal{K}_G)} \\
	{\operatorname{Map}_{h}(q(A \otimes \mathcal{K}_G)\otimes \mathcal{K}_G,B \otimes\mathcal{K}_G)} && {\operatorname{Map}_{\mathcal{M}[h_{\mathcal{M}}^{-1}]}(q(A \otimes \mathcal{K}_G)\otimes \mathcal{K}_G,B \otimes\mathcal{K}_G)}
	\arrow["F", from=1-1, to=1-3]
	\arrow["{-\otimes\mathcal{K}_G}"', from=1-1, to=2-1]
	\arrow["{-\otimes\mathcal{K}_G}", from=1-3, to=2-3]
	\arrow[from=2-1, to=2-3]
	\arrow["\alpha", from=2-1, to=3-1]
	\arrow["\alpha", from=2-3, to=3-3]
	\arrow[from=3-1, to=3-3]
	\arrow["{\text{invert}~KK^G\text{-equivalences}(!)}", from=4-1, to=3-1]
	\arrow["{F_h}", from=4-1, to=4-3]
	\arrow["{\text{invert}~w(!!)}"', from=4-3, to=3-3]
\end{tikzcd}.
    \]
    In order to show that $F$ is a $\pi_0$-equivalence, it suffices to show that all the vertical arrows are $\pi_0$-equivalences, and also $F_h$ is an $\pi_0$-equivalence. That the downwards pointing vertical maps are equivalences follows immediately from the classical theory; cf. the proof of \cref{cor: qA boxtimes hatkg is cofibrant}. It thus remains to prove the following:
    \begin{itemize}
        \item $F_h$ is an equivalence.
    \end{itemize}
    \begin{proof}
        This follows from the fact that the Kan-enriched categories $G\text{-}C^*\text{-}\operatorname{alg}$ and $\GA$ directly present the localization at homotopy equivalences, and \cref{cor: fully faithful oo cat.}. 
    \end{proof}
    \begin{itemize}
    \item $(!)$ is an $\pi_0$-equivalence.
\end{itemize}
\begin{proof}
    In light of \cite[Proposition 2.18]{BEL}, we have to show that the induced map
    \[
    [\mathfrak{q}A,B \otimes \kg] \xrightarrow{!} KK^G_{\operatorname{class}}(\mathfrak{q}A,B \otimes \kg) = [\mathfrak{q}^2A,B \otimes \kg \otimes \kg]
    \]
    is an isomorphism. This follows from the results obtained in Section 6.
\end{proof}
\begin{itemize}
    \item (!!) is an equivalence.
\end{itemize}
\begin{proof}
    We consider the follow commutative diagram:
    \[
\begin{tikzcd}
	{\operatorname{Map}_{\mathcal{M}[h_\mathcal{M}^{-1}]}(\mathfrak{q}A, B \otimes \mathcal{K}_G)} & {\operatorname{Map}_{\mathcal{M}[h_\mathcal{M}^{-1}]}(\mathfrak{q}A, B \otimes \widehat{\mathcal{K}_G})} & {\operatorname{Map}_{\mathcal{M}[h_\mathcal{M}^{-1}]}(\mathfrak{q}A \boxtimes \widehat{\mathcal{K}_G}, B)} \\
	{\operatorname{Map}_{\mathcal{M}[w^{-1}]}(\mathfrak{q}A, B \otimes \mathcal{K}_G)} & {\operatorname{Map}_{\mathcal{M}[w^{-1}]}(\mathfrak{q}A, B \otimes \widehat{\mathcal{K}_G})} & {\operatorname{Map}_{\mathcal{M}[w^{-1}]}(\mathfrak{q}A \boxtimes \widehat{\mathcal{K}_G}, B)}
	\arrow["{\alpha_*}", from=1-1, to=1-2]
	\arrow["{(!!)}"', from=1-1, to=2-1]
	\arrow[from=1-2, to=1-3]
	\arrow["{(\dagger)}"', from=1-2, to=2-2]
	\arrow["{(\dagger\dagger)}"', from=1-3, to=2-3]
	\arrow["{\alpha_*}", from=2-1, to=2-2]
	\arrow[from=2-2, to=2-3]
\end{tikzcd}.
    \]
    To show that $(!!)$ is an equivalence, it suffices to show that all the horizontal arrows, as well as $(\dagger\dagger)$ are equivalences. The top left horizontal map is an equivalence from \cref{lem: uli}, whereas the bottom left horizontal map is an equivalence by \cref{rem: for stability}.3. The top right horizontal map is an equivalence by \cref{cor: inf cat adjunction}, whereas the bottom right map is an equivalence by \cref{rem: for stability}.4. $(\dagger \dagger)$ is an equivalence follows by observing that $\mathfrak{q}A \boxtimes \hatkg$ is colocal for the weak equivalences in $\mathcal{M}$ by definition, in light of \cite[Remark 9.9]{BD}.
\end{proof}
This completes the proof of the Proposition.
\end{proof}

\printbibliography
\end{document}